\titleformat{\subsection}[runin]
{\normalfont\large\bfseries}{\thesubsection}{1em}{}
\titleformat{\subsubsection}[runin]
{\normalfont\large\bfseries}{\thesubsubsection}{1em}{}
\theoremstyle{plain}
\newtheorem{thm}[subsubsection]{Theorem}
\newtheorem{thm*}{Theorem}
\newtheorem{lem}[subsubsection]{Lemma}
\newtheorem{prop}[subsubsection]{Proposition}
\newtheorem{conj}[subsubsection]{Conjecture}
\theoremstyle{definition}
\newtheorem{defn}[subsubsection]{Definition}
\theoremstyle{remark}
\newtheorem{rem}[subsubsection]{Remark}
\numberwithin{equation}{subsubsection}
\newcommand{\N}{\mathbb N}
\newcommand{\Z}{\mathbb Z}
\newcommand{\Q}{\mathbb Q}
\newcommand{\R}{\mathbb R}
\newcommand{\C}{\mathbb C}
\newcommand{\A}{\mathbb A}
\newcommand{\F}{\mathbb F}
\newcommand{\Fp}{\mathbb{F}_{p}}
\newcommand{\Fpb}{\bar{\mathbb{F}}_{p}}
\newcommand{\Zp}{{\mathbb Z}_p}
\newcommand{\Qp}{{\mathbb Q}_p}
\newcommand{\Ql}{{\mathbb Q}_l}
\newcommand{\Qv}{{\mathbb Q}_v}
\newcommand{\Qb}{\overline{\mathbb Q}}
\newcommand{\Qpb}{\overline{\mathbb{Q}}_p}
\newcommand{\Qlb}{\overline{\mathbb{Q}}_l}
\newcommand{\Qvb}{\overline{\mathbb{Q}}_v}
\newcommand{\kb}{\overline{k}} 
\newcommand{\Lb}{\overline{L}} 
\newcommand{\Qpnr}{\Q_p^{\mathrm{ur}}} 
\newcommand{\Zpnr}{\Z_p^{\mathrm{ur}}}
\newcommand{\nr}{\mathrm{ur}} 
\newcommand{\Gal}{\mathrm{Gal}} 
\newcommand{\Hom}{\mathrm{Hom}}
\newcommand{\Ker}{\mathrm{Ker}}
\newcommand{\Nm}{\mathrm{Nm}} 
\newcommand{\Tr}{\mathrm{Tr}} 
\newcommand{\et}{\text{\'et}} 
\newcommand{\Int }{\mathrm{Int}} 
\newcommand{\Inn}{\mathrm{Int}} 
\newcommand{\Cent}{\mathrm{Cent}} 
\newcommand{\isom}{\stackrel{\sim}{\rightarrow}} 
\newcommand{\lra}{\longrightarrow}
\newcommand{\hra}{\hookrightarrow}
\newcommand{\thra}{\twoheadrightarrow}
\newcommand{\Spec}{\mathrm{Spec}}
\newcommand{\Res}{\mathrm{Res}} 
\newcommand{\cO}{\mathcal{O}} 
\newcommand{\sS}{\mathscr{S}} 
\newcommand{\dS}{\mathbb{S}} 
\newcommand{\Sh}{\mathrm{Sh}} 
\newcommand{\U}{\mathrm{U}}
\newcommand{\SU}{\mathrm{SU}}
\newcommand{\Sp}{\mathrm{Sp}}
\newcommand{\SO}{\mathrm{SO}}
\newcommand{\Spin}{\mathrm{Spin}}
\newcommand{\Gm}{\mathbb{G}_{\mathrm{m}}} 
\newcommand{\ab}{\mathrm{ab}} 
\newcommand{\ad}{\mathrm{ad}} 
\newcommand{\der}{\mathrm{der}} 
\newcommand{\uc}{\mathrm{sc}} 
\mathchardef\mhyphen="2D
\newcommand{\mcB}{\mathcal{B}} 
\newcommand{\mcA}{\mathcal{A}} 
\newcommand{\mbfa}{\mathbf{a}} 
\newcommand{\mbff}{\mathbf{f}} 
\newcommand{\mbfo}{\mathbf{0}} 
\newcommand{\mbfv}{\mathbf{v}}
\newcommand{\mcG}{\mathcal{G}} 
\newcommand{\mro}{\mathrm{o}}
\newcommand{\mbfK}{\mathbf{K}} 
\newcommand{\mbfKt}{\tilde{\mathbf{K}}} 
\newcommand{\mfk}{\mathfrak{k}} 
\newcommand{\mfp}{\mathfrak{p}}
\newcommand{\fG}{\mathfrak{G}} 
\newcommand{\fE}{\mathfrak{E}} 
\newcommand{\fD}{\mathfrak{D}} 
\newcommand{\fQ}{\mathfrak{Q}} 
\newcommand{\fP}{\mathfrak{P}} 
\newcommand{\Adm}{\mathrm{Adm}} 
\begin{document}

\title{Admissible morphisms for Shimura varieties with parahoric level}
\author{Dong Uk Lee}
\date{}
\maketitle

\begin{abstract}
In \textit{Shimuravariet\"{a}ten und Gerben} \cite{LR87}, Langlands and Rapoport developed the theory of pseudo-motivic Galois gerb and admissible morphisms between Galois gerbs, with a view to formulating a conjectural description of the $\Fpb$-point set of the good reduction of a Shimura variety with hyperspecial level, as well as to providing potential tools for its confirmation. 
Here, we generalize, and also improve to some extent, their works to parahoric levels when the group is quasi-split at $p$. In particular, we show that every admissible morphism is conjugate to a special admissible morphism, and, when the level is special maximal parahoric, that any Kottwitz triple with trivial Kottwitz invariant, if it satisfies certain natural necessary conditions implied by the conjecture, comes from an admissible pair. As applications, we give effectivity criteria for elliptic stable conjugacy classes and Kottwitz triples, and establish non-emptiness of Newton strata in the relevant cases. Along the way, we fill some gaps in the original work.
\end{abstract}

\tableofcontents

\section{Introduction}

The celebrated conjecture of Langlands and Rapoport, which was stated in \cite{LR87} and a significant progress towards which was recently made by Kisin \cite{Kisin13}, aims to give a group-theoretic description of the set of $\Fpb$-points of the mod-$p$ reduction of a Shimura variety, as provided with Hecke operators and Frobenius automorphism. 
To explain it, let $(G,X)$ be a Shimura datum, and $\mbfK^p\subset G(\A^p)$, $\mbfK_p\subset G(\Qp)$ compact open subgroups; set $\mbfK:=\mbfK_p\times \mbfK^p\subset G(\A_f)$. The original conjecture mainly concerned the good reduction cases, where $\mbfK_p$ is \textit{hyperspecial}, i.e. $\mbfK_p=G_{\Zp}(\Zp)$ for a reductive $\Zp$-group scheme with generic fiber $G_{\Qp}$. 
We also choose a place $\wp$ of the reflex field $E(G,X)$ dividing $p$, and let $\cO_{\wp}$, $\kappa(\wp)$ denote respectively the integer ring of the local field $E(G,X)_{\wp}$ and its residue field.
Then, Langlands and Rapoport conjectured that there exists an integral model $\sS_{\mbfK_p}(G,X)$ of $\Sh_K(G,X)$ over $\cO_{\wp}$, for which there is a bijection 
\begin{equation} \label{eqn:LRconj-ver1}
\sS_{\mbfK_p}(G,X)(\Fpb)\isom \bigsqcup_{[\phi]}S(\phi)
\end{equation}
where
\[S(\phi)=\varprojlim_{\mbfK^p} I_{\phi}(\Q)\backslash X_p(\phi)\times X^p(\phi)/\mbfK^p.\]
To give an idea of what these objects are like, suppose that our Shimura variety $\Sh_{\mbfK_p}(G,X)$ is a moduli space of abelian varieties endowed with a certain prescribed set of additional structures (called $G$-structures, for short), and that there exists an integral model whose reduction affords a similar moduli description (at least over $\Fpb$). Then, roughly speaking, each $\phi$ is supposed to correspond to an isogeny class of abelian varieties with $G$-structure, and the set $S(\phi)$ is to parameterize the isomorphism classes in the corresponding isogeny class. 
More precisely, $X_p(\phi)$ and $X^p(\phi)$ should correspond to the isogenies of $p$-power order and prime-to-$p$ order (say, leaving from a fixed member in the isogeny class $\phi$), respectively, and $X_p(\phi)$ can be also identified with a suitable affine Deligne-Lusztig variety $X(\{\mu_X\},b)_{\mbfK_p}$. 
The term $I_{\phi}(\Q)$ is to be the automorphism group of the isogeny class attached to $\phi$, and thus acts naturally on $X_p(\phi)$ and $X^p(\phi)$.
Moreover, each of the sets $S(\phi)$ carries a compatible action of $G(\A_f^p)$ and the Frobenius $\Phi$ (in $\Gal(\Fpb/\kappa(\wp))$), and the bijection (\ref{eqn:LRconj-ver1}) should be compatible with these actions.

When it comes to precise definition, the most tricky object is the parameter $\phi$. Its precise definition makes use of the language of \textit{Galois gerbs}: A Galois gerb is a gerb, in the sense of \textit{Cohomologie non ab\'elienne} \`a la Giraud, on the \'etale site of a field (with choice of a neutralizing object). This is motivated by the fact (\cite{Milne94}) that there is a well-determined class of Shimura varieties (i.e. Shimura varieties of abelian type) which, in characteristic zero, have a description of their point sets similar to (\ref{eqn:LRconj-ver1}) with the parameter $\phi$ being an abelian motive. For $\Fpb$-points, the parameter $\phi$, called an admissible morphism, is to represent ``a motive over $\Fpb$ with $G$-structure". But, the Tannakian category of Grothendieck motives over $\Fpb$, being non-neutral, is identified with the representation category of a certain Galois gerb (``motivic Galois gerb"), not an affine group scheme, and ``a motive over $\Fpb$ with $G$-structure" is then constructed as a morphism from this motivic Galois gerb to the neutral Galois gerb attached to $G$.

The conjectural description (\ref{eqn:LRconj-ver1}) also determines the finite sets $\sS_{\mbfK}(G,X)(\F_{q^m})$ for each finite extension $\F_{q^m}$ of $\F_q=\kappa(\wp)$, and eventually allows one to obtain a purely group-theoretic formula for their cardinalities, whose knowledge amounts to that of the local zeta function of $\sS_{\mbfK_p}(G,X)_{\kappa(\wp)}$. It is also this formula which was attacked with success in some special cases (mostly of PEL-type), e.g. in \cite{Kottwitz92}. 
In more detail, each subset $S(\phi)^{\Phi^m}$ of $\sS_{\mbfK}(G,X)(\F_{q^m})=[\sS_{\mbfK}(G,X)(\Fpb)]^{\Phi^m}$
is grouped into further subsets indexed by a pair $(\phi,\epsilon)$ consisting of an admissible morphism $\phi$ and a Frobenius descent datum $\epsilon\in \mathrm{Aut}(\phi)$: such pair $(\phi,\epsilon)$, when it satisfies certain conditions, is supposed to correspond to an $\F_{q^m}$-isogeny class of abelian varieties. With such admissible pair $(\phi,\epsilon)$, one associates a triple of group elements (\textit{Kottwitz triple})
\[(\gamma_0;\gamma=(\gamma_l)_{\neq p},\delta)\in G(\Q)\times G(\A_f^p)\times G(L_n)\] 
(defined up to some suitable equivalence relation), where $L_n$ is the unramified extension of $\Qp$ of degree $n=m[\kappa(\wp):\Fp]$. The formula in question then takes the form of a sum, indexed by (equivalence classes of) these triples, of a product of quantities:
\begin{equation} \label{eqn:formulra_for_number_of_pts}
\sum_{(\gamma_0;\gamma=(\gamma_l),\delta)}\iota(\gamma_0)\cdot \mathrm{vol}(I_0(\Q)Z_{\mbfK}/I_0(\A_f))\cdot O_{\gamma}(f^p)\cdot TO_{\delta}(\phi_p).
\end{equation}
We refer the reader to \cite{Kottwitz84b}, \cite{Kottwitz90} for detailed discussion of this formula. Here, the sum runs only over the triples satisfying certain conditions (among which the most important one is the vanishing of the so-called Kottwitz invariant).

A substantial part of the work \cite{LR87} of Langlands and Rapoport is devoted to constructing these objects and establishing their basic properties. The facts thus obtained are needed to carry out the deduction of the formula (\ref{eqn:formulra_for_number_of_pts}), and were also important ingredients in the works on the original conjecture itself, including the recent one by Kisin \cite{Kisin13}.
The main results of \cite{LR87} assume that the level subgroup $\mbfK_p$ is hyperspecial  (so, in particular that $G_{\Qp}$ is unramified). 
Our primary task in this article is to generalize their works to more general parahoric levels so as to allow possibly bad reductions, in the case that $G_{\Qp}$ is quasi-split; some of the results will further assume that $\mbfK_p$ is special maximal parahoric. 
In this generality, the first notable change occurs in definitions (including that of admissible morphism), in some of which instead of a single affine Deligne-Lusztig variety $X(\{\mu_X\},b)_{\mbfK_p}$ which defined the term $X_p(\phi)$, one needs to use a \emph{finite union} of \emph{generalized} affine Deligne-Lusztig varieties $X(w,b)_{\mbfK_p}$:
\[X(\{\mu_X\},b)_{\mbfK_p}:=\bigsqcup_{w\in\Adm_{\mbfKt_p}(\mu_X)} X(w,b)_{\mbfK_p}\]
Here, $\Adm_{\mbfKt_p}(\mu_X)$ is a certain subset of the extended Weyl group $\widetilde{W}$ of $G_{\Qp}$ determined by the datum $(G,X)$. When $\mbfK_p$ is hyperspecial, this reduces to the previous definition.
Meanwhile, the conjecture (\ref{eqn:LRconj-ver1}) itself was extended by Rapoport \cite{Rapoport05} to cover general parahoric levels.

To keep the statements short, we will call the following condition the \textit{Serre condition for the Shimura datum $(G,X)$}
\footnote{to distinguish this from two other similar conditions: first, from the original Serre condition which is applied to a $\Q$-torus $T$ endowed with a cocharacter $\mu\in X_{\ast}(T)$ (cf. Lemma \ref{lem:defn_of_psi_T,mu}), and secondly from what Kisin calls the Serre condition for a torus $T$ (\cite[3.7.3]{Kisin13}).}
: \textit{the center $Z(G)$ of $G$ splits over a CM field and the weight homomorphism $w_X$ is defined over $\Q$.}

Our first main results are generalizations (sometimes, including improvements) of the key properties of the objects introduced above briefly:

\begin{thm} \label{thm:1st_Main_thm} Let $p>2$ be a rational prime.
Let $(G,X)$ be a Shimura datum satisfying the Serre condition. Assume that $G$ is of classical Lie type, and that $G_{\Qp}$ is quasi-split and splits over a tamely ramified extension of $\Qp$. Let $\mbfK_p$ be a parahoric subgroup of $G(\Qp)$. Then, we have the followings.

(1) Any admissible morphism $\phi:\fP\rightarrow \fG_G$ is \emph{special}, namely there exists a special Shimura sub-datum $(T,h)$ and $g\in G(\Qb)$ such that $\Int g\circ\phi=i\circ\psi_{T,h}$, where $i:\fG_T\rightarrow \fG_G$ is the canonical morphism of neutral Galois gerbs induced by the inclusion $T\hra G$. If $\mbfK_p$ is special maximal parahoric, then every such morphism $i\circ\psi_{T,h}$ is admissible.

(2) Suppose that $\mbfK_p$ is special maximal parahoric and $G^{\der}$ is simply connected. Let $\gamma_0\in G(\Q)$ be a rational element, elliptic over $\R$, and whose image in $G^{\ad}(\A_f^p)$ lies in a compact open subgroup of $G^{\ad}(\A_f^p)$. Then, there exists an admissible pair $(\phi,\epsilon)$ with $\epsilon$ stably conjugate to $\gamma_0$ if and only if there exists $\epsilon\in G(\Q)$ stably conjugate to $\gamma_0$ and satisfying the condition $(\ast(\gamma_0))$ of Subsec. \ref{subsubsec:pre-Kottwitz_triple}. If the latter condition holds, there exists a $\mbfK_p$-effective admissible pair $(\phi,\epsilon)$ with $\epsilon$ stably conjugate to $\gamma_0^t$ for some $t\in\N$.

(3) Under the same assumptions as (2), for every Kottwitz triple $(\gamma_0;(\gamma_l)_{l\neq p},\delta)$ with trivial Kottwitz invariant (Def. \ref{defn:Kottwitz_triple}) and such that $\gamma_0$ (equiv. $(\gamma_l)_l$) lies in a compact open subgroup of $G(\A_f^p)$ and $X(\{\mu_X\},\delta)_{\mbfK_p}\neq\emptyset$, there exists an admissible pair $(\phi,\epsilon)$ giving rise to it. There exists an explicit cohomological expression for the number of non-equivalent pairs $(\phi,\epsilon)$ producing a given triple $(\gamma_0;(\gamma_l)_{l\neq p},\delta)$.
\end{thm}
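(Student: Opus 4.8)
The plan is to realize the given Kottwitz triple by a \emph{special} admissible morphism, using parts (1) and (2) as the main inputs, and then to analyze the fibres of the resulting map by a Galois-cohomological computation along the lines of \cite[\S5]{LR87}. From the triple $(\gamma_0;(\gamma_l)_{l\neq p},\delta)$ I would first construct a maximal $\Q$-torus $T\subset G$ containing $\gamma_0$, together with a cocharacter $\mu\in X_\ast(T)$ in the conjugacy class $\{\mu_X\}$, whose localizations realize the local data of the triple: at $\infty$ the torus should be elliptic (so that $(T,h)$ is a Shimura sub-datum for suitable $h$), at $l\neq p$ it should sit inside a compact open subgroup of $G(\Q_l)$ (possible precisely because $(\gamma_l)_l$, equivalently $\gamma_0$, lies in a compact open subgroup of $G(\A_f^p)$), and at $p$ the $b$-element attached to $(T,\mu)$ should be $\sigma$-conjugate in $G(L_n)$ to $\delta$. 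The hypothesis $X(\{\mu_X\},\delta)_{\mbfK_p}\neq\emptyset$ enters exactly here: by the known non-emptiness criterion for a finite union of generalized affine Deligne--Lusztig varieties at a special maximal parahoric, it is equivalent to $[\delta]\in B(G_{\Qp},\{\mu_X\})$, which is the condition making such a local datum at $p$ available. The vanishing of the Kottwitz invariant is then what allows these local choices, together with the rational element $\gamma_0$, to be glued into a single global torus $T\subset G$ and a special Shimura sub-datum $(T,h)$; this is the standard role of the invariant and is carried out by the same $\ker^1$-bookkeeping underlying the effectivity statements, refining the construction behind part (2) (whose condition $(\ast(\gamma_0))$ is implied by the present hypotheses). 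By part (1), since $\mbfK_p$ is special maximal parahoric, $\phi:=i\circ\psi_{T,h}\colon\fP\to\fG_G$ is admissible.

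Next I would exhibit the Frobenius descent datum $\epsilon\in I_\phi(\Q)=\mathrm{Aut}(\phi)$. Here I would use the explicit description of $I_\phi$ for a special morphism --- an inner form of the centralizer $I_0$ of $\gamma_0$ in $G$ (connected because $G^{\der}$ is simply connected) --- and of the realization functors of $\psi_{T,h}$: the element $\gamma_0\in T(\Q)$ singles out a candidate $\epsilon$, and one checks that $(\phi,\epsilon)$ is an admissible pair (the required boundedness and positivity of $\epsilon$ being built into the local behaviour of $T$) and that the Kottwitz triple it produces is equivalent to the given one. The components $\gamma_l$ for $l\neq p$ match because the $\ell$-adic realization of $\psi_{T,h}$ is conjugate to the image of $\gamma_0$ by construction; the component at $p$ matches by the choice of $\mu$ and of $\delta$ made above; and, as already in part (2), one may have to pass to a power of $\gamma_0$ at a few places, which does not affect the triple up to equivalence. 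The step I expect to be the main obstacle is precisely this $p$-adic matching at parahoric level: one has to show that the $\sigma$-conjugacy class of the $b$-element of a suitably chosen pair $(T,\mu)$ can be arranged to equal any prescribed $[\delta]$ satisfying $X(\{\mu_X\},\delta)_{\mbfK_p}\neq\emptyset$, which requires both the non-emptiness criterion for unions of generalized affine Deligne--Lusztig varieties and a careful analysis of the $p$-adic component of special admissible morphisms --- the passage from a single affine Deligne--Lusztig variety $X(\{\mu_X\},b)$ to the $\mu$-admissible set $\Adm_{\mbfKt_p}(\mu_X)$ being the genuinely new feature compared with the hyperspecial case treated in \cite{LR87}.

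Finally, for the counting statement I would determine the fibre, over the given triple, of the map from equivalence classes of admissible pairs $(\phi,\epsilon)$ to Kottwitz triples with trivial invariant. If $(\phi,\epsilon)$ and $(\phi',\epsilon')$ yield the same triple, then $I_\phi$ and $I_{\phi'}$ are inner forms of $I_0$ that become isomorphic at every place of $\Q$, and the discrepancy between the two realizations defines a class in $H^1(\Q,I_0)$ that is everywhere locally trivial, hence lies in $\ker^1(\Q,I_0)$; conversely, each such class is realized by an appropriate twist of $\phi$. Quotienting by the equivalence relation and accounting for the analogous ambiguity coming from $\ker^1(\Q,Z(G))$ then yields an explicit expression for the cardinality of the fibre in terms of these Tate--Shafarevich-type groups --- of the same shape as, and closely related to the factor $\iota(\gamma_0)$ in, the point-count formula (\ref{eqn:formulra_for_number_of_pts}) --- exactly as in \cite[\S5]{LR87}.
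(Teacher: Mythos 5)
Your proposal addresses only part (3), taking (1) and (2) as inputs; I focus on that.

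The key difficulty in part (3) is the local components $\gamma_l$ for $l\neq p$, and this is where your plan breaks down. You propose to encode all the local data of the triple into a single special Shimura sub-datum $(T,h)$ with $\gamma_0\in T(\Q)$, then take $\epsilon=\gamma_0$ and claim that ``the components $\gamma_l$ for $l\neq p$ match because the $\ell$-adic realization of $\psi_{T,h}$ is conjugate to the image of $\gamma_0$ by construction.'' But for a nested pair $(\psi_{T,h},\epsilon)$ the $\gamma_l$ it produces are automatically $G(\Q_l)$-conjugate to $\epsilon$: by Lemma \ref{lem:properties_of_psi_T,mu} the morphism $\phi(l)\circ\zeta_l$ equals $\Int(t_l)\circ\xi_l$ for some $t_l\in T(\Qlb)$, so any $y_l\in X_l(\phi)$ is of the form $t_lh_l$ with $h_l\in G(\Q_l)$, and then $\gamma_l=y_l^{-1}\epsilon y_l=h_l^{-1}\epsilon h_l$. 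A Kottwitz triple with trivial Kottwitz invariant, however, is allowed to have $\gamma_l$ merely \emph{stably} conjugate to $\gamma_0$; the classes $\mathrm{inv}(\gamma_0,\gamma_l)\in H^1(\Q_l,\Cent_G(\gamma_0))$ may be nontrivial. To realize such a triple by a nested pair one would need $\epsilon\in G(\Q)$ stably conjugate to $\gamma_0$ that is $G(\Q_l)$-conjugate to $\gamma_l$ at every $l\neq p$ \emph{simultaneously} (with matching behaviour at $p$ and $\infty$). Producing this $\epsilon$ is exactly the global-to-local realization problem that the theorem is about; asserting that the vanishing of the Kottwitz invariant ``allows these local choices, together with the rational element $\gamma_0$, to be glued into a single global torus'' is not an argument, it is a restatement of what must be proved.

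The paper (Theorem \ref{thm:LR-Satz5.25}) circumvents this by a two-step construction. First, Theorem \ref{thm:LR-Satz5.21}(2) applied to $\gamma_0$ and $\delta$ (this step already uses $X(\delta,\{\mu_X\})_{\mbfK_p}\neq\emptyset$ via Lemma \ref{lem:LR-Lemma5.11}) produces an admissible pair $(\phi_1,\epsilon)$ with $\epsilon$ stably conjugate to $\gamma_0$, which one nests in a special Shimura sub-datum; it matches only the $\gamma_0$-component. Second, $\phi_1$ is \emph{twisted} by a cocycle $a\in Z^1(\Q,\Cent_G(\epsilon)')$ — a modification that leaves $\phi_1^\Delta$ unchanged but alters the local realizations of $\phi_1$ — and one shows, using vanishing of the Kottwitz invariant, that $a$ can be chosen so that $\phi:=a\phi_1$ realizes the given $(\gamma_l)$ and $\delta$ while $(\phi,\epsilon)$ remains admissible (the constraints being that $a$ have trivial image in $H^1(\Q,G^{\ab})$ and in $H^1(\R,\Cent_G(\epsilon)')$). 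The fibre count then comes out to $i(\gamma_0)=\#\ker\bigl(H^1(\Q,\Cent_G(\gamma_0))\to H^1(\Q,G^{\ab})\times\prod_v H^1(\Q_v,\Cent_G(\gamma_0))\bigr)$, which is not the same as the pair of $\ker^1$-groups you name: the global abelianization $H^1(\Q,G^{\ab})$ enters because of condition (1) in the definition of admissible morphism, and the local constraints run over \emph{all} places.
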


These statements (and their counterpart results in \cite{LR87}) are found respectively in Theorem \ref{thm:LR-Satz5.3}, Lemma \ref{lem:LR-Lemma5.2} (for (1)), Theorem \ref{thm:LR-Satz5.21} (for (2)), and Theorem \ref{thm:LR-Satz5.25} (for (3)).
Being generalizations, these facts are more or less known in the hyperspecial level case.
But the statements (2), (3) also contain some meaningful improvements: strengthening of the original assertions and correction of gaps in the original arguments. More precisely, in (2) the condition $(\ast(\gamma_0))$ being used is different from the original condition appearing in \cite[Satz 5.21]{LR87}, which we find insufficient for that theorem (and the subsequent statements depending on it). Also, we will see that the fact (3) is stronger and more natural than its counterpart result \cite[Satz 5.25]{LR87}, which meantime suffers from the same gap. 
For detailed discussions on these issues, we refer to the above theorems as well as the remarks accompanying them. 

The statement (1) is a fundamental fact about admissible morphisms, and underlies another closely related conjecture (which was proved in the hyperspecial level case by Zink for PEL-types and by Kisin (\cite[Thm. 0.4]{Kisin13}) for Hodge-types) that every isogeny class in $\sS_{\mbfK_p}(G,X)(\Fpb)$ contains a point which is the reduction of a special(=CM) point. 

On the other hand, according to the conjecture (\ref{eqn:LRconj-ver1}), to any $\F_{p^n}$-point ($n$ being a multiple of $[\kappa(\wp):\F_p]$), one should be able to attach a Kottwitz triple $(\gamma_0;(\gamma_l)_{l\neq p},\delta)$ of level $n$ (i.e. with $\delta\in G(L_n)$). For Hodge-type Shimura varieties with hyperspecial level, this was done by Kisin (\cite[Cor. 2.3.1]{Kisin13}), but without having control on the level, one can still attach a Kottwitz triple to any point which is the reduction of a CM point (thus in this case the triple is well-defined only up to powers). Now, the formula (\ref{eqn:formulra_for_number_of_pts}) implies that only those triples whose corresponding summation term is non-zero are \textit{effective}, i.e. arises from an $\F_{p^n}$-point (where $n$ is the level of the triple). For that, we point out that in the formula the only possibly zero quantities are $O_{(\gamma_l)}(f^p)$ (orbital integral) and $TO_{\delta}(\phi_p)$ (twisted orbital integral), and the non-vanishing of each of these quantities can be translated into explicit conditions on, respectively, $(\gamma_l)_{l\neq p}$ and $\delta$. Then, a natural question is whether such necessary conditions for effectivity are also sufficient conditions.
As an application of Theorem \ref{thm:2nd_Main_thm}, we verify a slightly weaker version of this, namely, we show that if a Kottwitz triple  $(\gamma_0;(\gamma_l)_{l\neq p},\delta)$ satisfies such effectivity conditions, then indeed some power $(\gamma_0^t;(\gamma_l^t)_l,\delta)$ of it (where $t\in\N$ and $\delta$ is considered as an element of $G(L_{tn})$) is the triple associated with the reduction of a CM point.

Closely related questions are which (elliptic) stably conjugacy class in $G(\Q)$ and which $\sigma$-conjugacy class in $G(L_n)$  are ``effective" (i.e. can be the classes of the elements $\gamma_0$ and $\delta$ attached to some $\F_{p^n}$-point, respectively). 
This question for (the stable conjugacy class of) an elliptic rational element $\gamma_0\in G(\Q)$ can be regarded as the Honda-Tate theorem in the context of Shimura varieties, while the question for $\delta$ is known as the non-emptiness problem of Newton strata. 

The next two theorems provide answers to both of these questions, while the aforementioned effectivity criteria is given in the first one.  
Here, we consider Shimura varieties of Hodge type and fix an integral model $\sS_{\mbfK}$ over $\cO_{\wp}$ of the canonical model $\Sh_{\mbfK}(G,X)_{E_{\wp}}$ with the extension property that every $F$-point of $\Sh_{\mbfK}(G,X)$ for a finite extension $F$ of $E_{\wp}$ extends uniquely to $\sS_{\mbfK}$ over its local ring (for example, a normal integral model); see \cite{KP15} for a construction of such integral model in general parhoric levels. 

\begin{thm} [Thm. \ref{thm:effectivity_criteria}] \label{thm:2nd_Main_thm}
Keep the assumptions of Theorem \ref{thm:1st_Main_thm}, and suppose further that $(G,X)$ is simply connected.

(1) Let $\mathcal{C}\subset G(\Q)$ be a stable conjugacy class. Then, some power of $\mathcal{C}$ contains the relative Frobenius of the reduction of a CM point of $\Sh_{\mbfK}(G,X)(\C)$ if and only if a (possibly different) power of $\mathcal{C}$ contains some $\gamma_0\in G(\Q)$ which satisfies the condition $(\ast(\gamma_0))$ of Subsec. \ref{subsubsec:pre-Kottwitz_triple} and lies in a compact open subgroup of $G(\A_f^p)$.

(2) Let $(\gamma_0;(\gamma_l)_{l\neq p},\delta)$ be a Kottwitz triple with trivial Kottwitz invariant and such that $\gamma_0$ (equiv. $(\gamma_l)_l$) lies in a compact open subgroup of $G(\A_f^p)$ and
\[Y_p:=\{x\in G(L)/\mbfKt_p\ |\ \sigma^n x=x,\ \mathrm{inv}_{\mbfKt_p}(x,\delta\sigma x)\in \Adm_{\mbfKt_p}(\{\mu_X\})\}\neq \emptyset.\]
Then, there exists a special Shimura datum $(T,h)$ such that the reduction of the CM point $[h,1\cdot\mbfK]\in \Sh_{\mbfK}(G,X)(\Qb)$ has the associated Kottwitz triple equal to $(\gamma_0;(\gamma_l)_{l\neq p},\delta)$, up to powers.
\end{thm}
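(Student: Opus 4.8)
The plan is to deduce both statements from Theorem \ref{thm:1st_Main_thm} together with the comparison, in the Hodge-type situation, between reductions of CM points and the CM admissible morphisms $\psi_{T,h}$. Concretely, for a special Shimura sub-datum $(T,h)$ the morphism $\psi_{T,h}$ is (by part (1) of Theorem \ref{thm:1st_Main_thm}) admissible, and one checks, using the extension property of the fixed integral model $\sS_{\mbfK}$ and the $p$-adic uniformization of the formal neighbourhood of the image of $[h,1\cdot\mbfK]$, that the reduction of this CM point lies in the isogeny class parameterized by (the conjugacy class of) $\psi_{T,h}$; its relative Frobenius, together with the local data at all places, is exactly the Kottwitz triple attached to a suitable Frobenius descent datum $\epsilon\in\mathrm{Aut}(\psi_{T,h})$. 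Thus the theorem becomes a translation, back and forth, between ``Kottwitz triple attached to the reduction of a CM point'' and ``Kottwitz triple arising from an admissible pair $(\phi,\epsilon)$ with $\phi$ special''.

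For (1), I would argue as follows. If some power $\mathcal{C}^t$ contains the relative Frobenius $\gamma_0$ of the reduction of a CM point, then by the previous paragraph $\gamma_0$ occurs as the $\epsilon$-component of a $\mbfK_p$-effective admissible pair $(\psi_{T,h},\epsilon)$; the necessary conditions built into the definition of a Kottwitz triple — in particular the existence of the local components $(\gamma_l)_l$ in a compact open subgroup of $G(\A_f^p)$ and the $p$-adic condition encoded in $(\ast(\gamma_0))$ — then hold for $\gamma_0$, which is one direction. Conversely, if a power of $\mathcal{C}$ contains a $\gamma_0$ satisfying $(\ast(\gamma_0))$ and lying in a compact open subgroup of $G(\A_f^p)$, then Theorem \ref{thm:1st_Main_thm}(2) produces a $\mbfK_p$-effective admissible pair $(\phi,\epsilon)$ with $\epsilon$ stably conjugate to $\gamma_0^s$ for some $s\in\N$, and $\phi$ can be taken special by Theorem \ref{thm:1st_Main_thm}(1); reversing the comparison of the first paragraph realizes $\epsilon=\gamma_0^s$ as the relative Frobenius of the reduction of the CM point attached to the corresponding $(T,h)$, so a power of $\mathcal{C}$ has the desired property.

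For (2), the input is Theorem \ref{thm:1st_Main_thm}(3): the hypotheses — trivial Kottwitz invariant, $\gamma_0$ in a compact open subgroup of $G(\A_f^p)$, and $Y_p\neq\emptyset$ (which is precisely the statement $X(\{\mu_X\},\delta)_{\mbfK_p}\neq\emptyset$ unwound through $\Adm_{\mbfKt_p}(\{\mu_X\})$) — are exactly those under which Theorem \ref{thm:1st_Main_thm}(3) yields an admissible pair $(\phi,\epsilon)$ producing the triple $(\gamma_0;(\gamma_l)_{l\neq p},\delta)$. By Theorem \ref{thm:1st_Main_thm}(1) we may take $\phi=i\circ\psi_{T,h}$ for a special Shimura datum $(T,h)$, and then the comparison of the first paragraph identifies the Kottwitz triple of the reduction of $[h,1\cdot\mbfK]$ with $(\gamma_0;(\gamma_l)_{l\neq p},\delta)$, up to powers. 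The phrase ``up to powers'' is unavoidable here because, as noted in the introduction, without control on the level the triple attached to the reduction of a CM point is only well-defined after passing to a power $(\gamma_0^t;(\gamma_l^t)_l,\delta\in G(L_{tn}))$.

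The main obstacle I anticipate is the first paragraph: making rigorous the assertion that the reduction of the CM point $[h,1\cdot\mbfK]$ has as its associated Kottwitz triple precisely the triple coming from $(\psi_{T,h},\epsilon)$. For Hodge-type Shimura varieties with hyperspecial level this is Kisin's work, but in the general parahoric setting one must invoke the integral model of \cite{KP15}, its extension property, and the local structure of $\sS_{\mbfK}$ at CM points to match the crystalline/$p$-adic data $\delta\in G(L_n)$ on the two sides — and to control how the level $n$ interacts with $[\kappa(\wp):\F_p]$, which is exactly the source of the unavoidable ``up to powers''. Verifying the ellipticity-over-$\R$ and the compact-open conditions pass correctly through this comparison is routine by comparison; the crystalline matching at $p$ is the delicate point.
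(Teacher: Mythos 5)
Your overall strategy is right in outline — deduce both parts from Theorem \ref{thm:1st_Main_thm} by relating admissible pairs nested in special Shimura sub-data to reductions of CM points — but the bridging step you propose in your first paragraph is a genuine gap, and it is not the route the paper takes. You propose to identify the relative Frobenius of the reduced CM point with the triple coming from $(\psi_{T,h},\epsilon)$ via $p$-adic uniformization of a formal neighbourhood and a crystalline comparison at $p$, and you correctly observe that such a comparison is only available (via Kisin) at hyperspecial level. That is exactly the problem: stated that way, the comparison is unproven in the parahoric setting, and the whole argument founders on it. The paper avoids this completely. It sets up, for a $(T,\mu)$ and a choice of $(E,\pi)$ with $(\pi)=\mfp^r$, the element $\gamma_0(T,\mu):=\Nm_{E/\Q}(\mu(\pi))$ together with an associated $\delta$, and observes the purely formal identity $\gamma_0(T,\mu)=\psi_{T,\mu}(\delta_m)$. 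The identification of $\gamma_0(T,\mu_h)^t$ (for large $t$) with the relative Frobenius of the reduced CM abelian variety is then a consequence of classical Shimura--Taniyama CM theory (the reflex norm, cf. \cite[A.2.5.7, A.2.5.8]{CCO14}), using only the extension property of the fixed integral model to make sense of the reduction — no affine Deligne--Lusztig variety, no uniformization of isogeny classes, no appeal to the Langlands--Rapoport conjecture for the CM point. The ``up to powers'' ambiguity arises simply from the finite Galois closure/power-of-$\pi$ choices, not from a subtle crystalline matching.

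With that bridge in place, the rest of your reductions are essentially the paper's. For (1), after Lemma \ref{lem:LR-Lemma5.23} one can take the pair nested in a special Shimura sub-datum, and Proposition \ref{prop:phi(delta)=gamma_0_up_to_center}, (4) pins $\epsilon$ down as a power of $\psi_{T,\mu_h}(\delta_k)$ so that the explicit CM formula applies. For (2), the paper invokes Theorem \ref{thm:LR-Satz5.21}, (2) (part of Theorem \ref{thm:1st_Main_thm}, (2)) rather than Theorem \ref{thm:1st_Main_thm}, (3), but that is a minor routing choice; also note your claim that $Y_p\neq\emptyset$ is ``precisely'' $X(\{\mu_X\},\delta)_{\mbfK_p}\neq\emptyset$ is wrong — $Y_p$ carries the additional requirement $\sigma^n x=x$, which is strictly stronger (nonemptiness of a variety over $\Fpb$ does not give rational points over a fixed finite field) and is what encodes $\mbfK_p$-effectivity. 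It implies $X(\{\mu_X\},\delta)_{\mbfK_p}\neq\emptyset$, so your application of Theorem \ref{thm:1st_Main_thm}, (3) still goes through, but the extra condition is not redundant for the conclusion you want. The substantive thing you need to repair is the first paragraph: replace the conjectural uniformization argument by the elementary Shimura--Taniyama identification of $\gamma_0(T,h)$ with the Frobenius.
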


We point out that the conditions in (2) are the aforementioned conditions implied by non-vanishing of the quantities $O_{\gamma}(f^p)$, $TO_{\delta}(\phi_p)$ in the formula (\ref{eqn:formulra_for_number_of_pts}), (cf. \cite[$\S$1.5]{Kottwitz84b}).

\begin{thm} [Thm. \ref{thm:non-emptiness_of_NS}]  \label{thm:3rd_Main_thm}
Keep the assumptions of Theorem \ref{thm:1st_Main_thm}, and further assume that $G_{\Qp}$ splits over a \emph{cyclic} tame extension of $\Qp$.
Let $\mbfK_p$ be a (not necessarily special) parahoric subgroup of $G(\Qp)$ and put $\mbfK=\mbfK_p\mbfK^p$ for a compact open subgroup $\mbfK^p$ of $G(\A_f^p)$. 

(1) Then, for any $[b]\in B(G_{\Qp},\{\mu_X\}$) (Subsec. \ref{subsubsec:B(G,{mu})}), there exists a special Shimura sub-datum $(T,h\in\Hom(\dS,T_{\R})\cap X)$ such that for any $g_f\in\mbfK^p$, the reduction in $\sS_{\mbfK}\otimes\Fpb$ of the special point $[h,g_f\cdot\mbfK]\in \Sh_{\mbfK}(G,X)(\Qb)$ has the $F$-isocrystal represented by $[b]$.

(2) The reduction $\sS_{\mbfK}(G,X)\otimes\Fpb$ has non-empty ordinary locus if and only if $\wp$ has absolute height one (i.e. $E(G,X)_{\wp}=\Q_p$). 
\end{thm}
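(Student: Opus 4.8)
The plan is to deduce Theorem~\ref{thm:3rd_Main_thm} from Theorem~\ref{thm:1st_Main_thm} (in particular from the existence half of part (1) and the effectivity refinements of parts (2)--(3)) together with the fact that for Hodge-type Shimura varieties with the integral model $\sS_{\mbfK}$ fixed above, the reduction of a CM point $[h,g_f\cdot\mbfK]$ associated with a special Shimura sub-datum $(T,h)$ has $F$-isocrystal (and more refined Kottwitz datum) computed by the cocharacter $\mu_h$ of $T$ and the Galois action; this is the CM-point compatibility already used in the proof of Theorem~\ref{thm:2nd_Main_thm}. So the heart of the matter is purely group-theoretic: given a target $\sigma$-conjugacy class $[b]\in B(G_{\Qp},\{\mu_X\})$, produce a \emph{special} Shimura sub-datum $(T,h\in X)$ whose local Newton/isocrystal invariant at $p$ is exactly $[b]$, and then invoke Theorem~\ref{thm:1st_Main_thm}(1) to guarantee that $i\circ\psi_{T,h}$ is an admissible morphism (so that the CM point indeed lies in the correct isogeny class and its reduction is non-empty in $\sS_{\mbfK}\otimes\Fpb$), whence the corresponding Newton stratum is non-empty. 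The independence of $g_f\in\mbfK^p$ is immediate since the prime-to-$p$ Hecke translate of a CM point is again a CM point for the same $(T,h)$ and does not affect the $F$-isocrystal.

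First I would set up the local problem. After base change to $\Qp$, a $[b]\in B(G_{\Qp},\{\mu_X\})$ is detected by its image $\bar\nu_b\in \mcN(G_{\Qp})$ (the Newton point, a Galois-fixed dominant rational cocharacter) together with its Kottwitz point $\kappa_{G_{\Qp}}(b)\in\pi_1(G_{\Qp})_{\Gamma}$, subject to the compatibilities defining $B(G_{\Qp},\{\mu_X\})$. The standard device (going back to Kottwitz, and used in the hyperspecial case in \cite{LR87} and in the non-emptiness literature) is to choose a maximal torus $T_p\subset G_{\Qp}$, elliptic over $\Qp$ in the appropriate sense, such that $[b]$ is the image of a class $[b_{T_p}]\in B(T_p)$; concretely one takes $T_p$ to contain a representative of the Newton cocharacter, uses that $G_{\Qp}$ is quasi-split (so maximal tori with prescribed local behaviour are abundant and the relevant Galois-cohomological surjectivities $H^1(\Qp,T_p)\to\cdots$ hold), and when needed passes to a power $[b]^t$ to kill obstructions coming from the torsion in $\pi_1$, exactly as the ``up to powers'' hypotheses in Theorems~\ref{thm:1st_Main_thm} and \ref{thm:2nd_Main_thm} permit. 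The tameness (and, for part (1), cyclicity of the splitting field of $G_{\Qp}$) is what makes the combinatorics of $B(G_{\Qp},\{\mu_X\})$ and the choice of $T_p$ tractable; the cyclic hypothesis in particular is used to control the $\sigma$-conjugacy/norm maps on tori so that every $[b]$ in the admissible set is actually hit.

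Next I would globalize: find a number field $L$, a maximal $\Q$-torus $T\subset G$ with $T_{\Qp}$ conjugate (in $G(\Qp)$) to the chosen $T_p$, and an $h\in\Hom(\dS,T_{\R})$ lying in the Shimura conjugacy class $X$ (so that $(T,h)$ is a special sub-datum). This is the usual ``embedding a local torus into a global one, compatibly with a real form'' argument: one uses that $(G,X)$ satisfies the Serre condition (so that the relevant tori split over CM fields and the weight cocharacter is rational, which is exactly what is needed to realize $h$ over $\R$ with the right Hodge/weight data and to run the gerb-theoretic constructions $\psi_{T,h}$), together with weak approximation for tori and the surjectivity of rational conjugacy classes of maximal tori onto local ones for quasi-split groups (Kottwitz's results on embeddings of tori). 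One must arrange simultaneously: the archimedean condition that $h$ is $G(\R)$-conjugate into $X$ and (for ellipticity of the resulting $\gamma_0$, needed if one wants to feed into Theorem~\ref{thm:1st_Main_thm}(2)) that $T_{\R}$ is anisotropic mod center; and the $p$-adic condition that the image of $\mu_h$ in $B(T_{\Qp})$ maps to the prescribed $[b]$. Having produced $(T,h)$, Theorem~\ref{thm:1st_Main_thm}(1) (its ``converse'' half, valid since $\mbfK_p$ is special maximal parahoric in part of the argument, and extended to general parahoric $\mbfK_p$ by the standard change-of-parahoric comparison of affine Deligne--Lusztig sets once non-emptiness is known at one level) shows $i\circ\psi_{T,h}$ is admissible; then the CM-point reduction compatibility gives that the reduction of $[h,g_f\cdot\mbfK]$ has $F$-isocrystal $[b]$, proving (1).

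For part (2), I would specialize (1) to $[b]=[b_{\mathrm{ord}}]$, the unique $\mu$-ordinary class, which exists in $B(G_{\Qp},\{\mu_X\})$ always but is the basic-to-$\mu$ class whose Newton point equals the Galois average of $\mu_X$. The ordinary locus of $\sS_{\mbfK}\otimes\Fpb$ is by definition the stratum where the $F$-isocrystal is (isoclinic on each isotypic piece) given by the ``most generic'' slopes, and for a Shimura variety this coincides with the $\mu$-ordinary locus; it is non-empty iff the $\mu$-ordinary class is actually the class attached to a point. By part (1) it is always non-empty \emph{as a Newton stratum for $[b_{\mathrm{ord}}]$}, so the real content of (2) is that the \emph{ordinary} locus in the classical sense (slopes $0$ and $1$ only, i.e. $\mu$-ordinary $=$ ordinary) occurs iff the reflex place $\wp$ has absolute height one. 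The point is that $\mu$-ordinary $=$ ordinary precisely when the cocharacter $\mu_X$ is defined over $\Qp$ (equivalently when the local reflex field $E(G,X)_{\wp}=E_{\wp}=\Q_p$), since the $\mu$-ordinary Newton point is $\Qp$-rational and equals the naive average; if $[E_{\wp}:\Q_p]>1$ then the Galois twisting forces the $\mu$-ordinary slopes to be strictly between $0$ and $1$ on some factor, so the ``ordinary'' locus is empty. I would spell this out by computing the $\mu$-ordinary slopes in terms of the Galois orbit of $\mu_X$ and the action of $\Gal(E_{\wp}/\Q_p)$, the standard computation of Moonen and Wortmann, and conclude.

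The main obstacle I anticipate is the simultaneous global-with-prescribed-local-behaviour choice of the special sub-datum $(T,h)$: one must hit the \emph{exact} class $[b]\in B(G_{\Qp},\{\mu_X\})$ at $p$ while keeping $h$ in the Shimura conjugacy class $X$ at $\infty$ and $(T,h)$ special, and the cohomological surjectivity needed for this is where the hypotheses (quasi-split at $p$, tame --- and cyclic tame in part (1), Serre condition, $G^{\der}$ simply connected) all get used, and where one may be forced to pass to a power $[b]^t$ and then to argue the non-emptiness is insensitive to replacing $[b]$ by $[b]^t$ (which for Newton strata is clear, since $t$-th powers move the stratum only by rescaling slopes and the statement to be proved is only existence of \emph{some} point in the stratum for $[b]$, obtained by instead targeting a class whose $t$-th power is $[b]$ --- here the surjectivity of $B(T_{\Qp})\to B(G_{\Qp},\{\mu_X\})$ after the torus choice is what must be checked without any power). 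Organizing the torus choice so that no power is needed for the \emph{Newton} invariant --- only possibly for the finer Kottwitz datum, which is irrelevant for part (1) --- is the technical crux; once that is in place, parts (1) and (2) follow formally from Theorem~\ref{thm:1st_Main_thm}(1) and the $\mu$-ordinary slope computation.
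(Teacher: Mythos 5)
Your high-level strategy is on the right track: globalize a suitable local torus into a $\Q$-torus elliptic at $\infty$ and one auxiliary finite place, use Lemma~\ref{lem:LR-Lemma5.11} to find a cocharacter $\mu'\in X_\ast(T)\cap\{\mu_X\}$ with $\Nm_{K/\Qp}\mu'=[K:\Qp]\nu_b$, use Lemma~\ref{lem:LR-Lemma5.12} to turn $\mu'$ into some $\mu_h$ with $h\in X$ via a transfer of maximal tori that is $G(\Qp)$-inner, and then compute the $F$-isocrystal of the CM point's reduction. This is essentially what the paper does. However, the mechanism you propose for the last step has a genuine defect.

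You plan to invoke Theorem~\ref{thm:1st_Main_thm}(1) to show $i\circ\psi_{T,h}$ is admissible and conclude from admissibility that the reduction lies in the $[b]$-stratum. This is both unnecessary and, as written, problematic. It is unnecessary because the $F$-isocrystal of the reduction of a CM point in a Hodge-type Shimura variety with an integral model having the extension property is computed \emph{directly} by the Shimura--Taniyama formula: it is represented by $\Nm_{K_{v_2}/K_0}(\mu_h(\pi))$. No appeal to the Langlands--Rapoport framework or to admissibility of $i\circ\psi_{T,h}$ is needed to identify this isocrystal. One then checks that this class equals $[b]$ by matching Newton points (via Lemma~\ref{lem:unramified_conj_of_special_morphism} and the explicit Newton homomorphism of $\Nm_{K/\Qp}\mu_h$) and Kottwitz points, and using injectivity of $(\overline{\nu},\kappa)$. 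It is problematic because Theorem~\ref{thm:1st_Main_thm}(1) establishes admissibility of $i\circ\psi_{T,h}$ only when $\mbfK_p$ is special maximal parahoric, whereas the present theorem allows an arbitrary parahoric; your proposed fix (``change-of-parahoric comparison of affine Deligne--Lusztig sets once non-emptiness is known at one level'') is circular, since the non-emptiness at issue is exactly what we are trying to produce. The paper avoids this by noting that Lemma~\ref{lem:LR-Lemma5.11} is stated and proved for an \emph{arbitrary} parahoric $\mbfK_p$ (its hypothesis $X(\{\mu_X\},b)_{\mbfK_p}\neq\emptyset$ is supplied by He's theorem from $[b]\in B(G_{\Qp},\{\mu_X\})$), so no passage between levels is required.

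Your concern about ``passing to powers'' is also misplaced for part (1): the torus construction hits $[b]$ on the nose, since both the Newton and Kottwitz invariants of $\Nm_{K_{v_2}/K_0}(\mu_h(\pi))$ are computed exactly and $(\overline{\nu},\kappa)$ is injective on $B(G_{\Qp})$. Passing to powers is only needed in the companion effectivity statements (Theorems~\ref{thm:1st_Main_thm}(2),(3) and \ref{thm:2nd_Main_thm}), not here. For part (2), your route via the $\mu$-ordinary Newton point (``$\mu$-ordinary $=$ ordinary iff $\mu_X$ is $\Gal(\Qpb/\Qp)$-fixed iff $E(G,X)_\wp=\Qp$'') is a valid alternative to the paper's argument, which constructs a special sub-datum $(T,h)$ with $E(T,h)_{\mathfrak{p}}=E(G,X)_\wp$ and then cites the hyperspecial-case computation; both ultimately rest on the same identification. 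In short: drop the admissibility detour, rely on Lemma~\ref{lem:LR-Lemma5.11} directly for arbitrary parahoric level plus the Shimura--Taniyama computation, and the argument closes.
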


These results generalize Theorem 4.3.1 and Corollary 4.3.2 of \cite{Lee14} in the hyperspecial cases.

We make some comments on the various assumptions appearing in this article.
The running assumption, which will be effective except in some general discussions, is that $G_{\Qp}$ is quasi-split. Equally universal assumption, although it is not needed for the important Theorem \ref{thm:1st_Main_thm}, (1), is that $\mbfK_p$ is special maximal parahoric.
These two assumptions are somewhat forced on us because for proofs we follow closely the original line of arguments.
The assumption that $G$ is of classical Lie type and splits over a tame extension of $\Qp$ is less serious, even though it appears in many places (including all the statements of Theorem \ref{thm:1st_Main_thm}). It is imposed whenever we invoke Prop. \ref{prop:existence_of_elliptic_tori_in_special_parahorics}, which assumes it. In turn, this assumption shows up in that proposition, simply because we were only able to verify that proposition for classical groups that splits over a tamely ramified extension of $\Qp$. Therefore, if Prop. \ref{prop:existence_of_elliptic_tori_in_special_parahorics} could be extended beyond such cases, we can relax that restriction accordingly (or even purge that assumption of this paper).

Finally, we remark that our sign convention is the same as that of Langlands-Rapoport in \cite{LR87}; so for example, it is opposite to that of Kisin in \cite{Kisin13}.

This article is organized as follows. 
The second section is of preliminary nature, devoted to a review of some basic objects, including the Newton and Kottwitz maps (defined for algebraic groups over $p$-adic fields), parahoric groups (in the Bruhat-Tits theory), extended Weyl groups, and the $\{\mu\}$-admissible set. 
In the third section, we attempt to give a self-contained overview of the notions of Galois gerbs, the pseudo-motivic Galois gerb, admissible morphisms, Kottwitz triples, and admissible pairs, following closely the original source \cite{LR87}. We also give a statement of the Langlands-Rapoport conjecture, as formulated by Rapoport \cite[$\S$8]{Rapoport05} so as to cover parahoric levels. Along the way, we extend results on special admissible morphisms to (special maximal) parahoric subgroups, under the assumption that $G_{\Qp}$ is quasi-split. 
In the fourth section, we prove Theorem \ref{thm:1st_Main_thm}, (1) above, namely that every admissible morphism is conjugate to a special admissible morphism (in our case of general parahoric level), as well as the fact that every admissible pair is nested in a special Shimura datum. 
For some other potential applications in mind, we spilt the proof into a few steps and formalize each of them into a separate proposition (incorporating slight improvements). The results in this section are in large part translations of the original results, except for reorganization (with small improvements) and the non-trivial generalization of \cite[Lemma 5.11]{LR87} in Lemma \ref{lem:LR-Lemma5.11}, which is also a key input in our proof of the non-emptiness of Newton strata.
In the final section, we prove the remaining statements (2) and (3) of Theorem \ref{thm:1st_Main_thm}, and the effectivity criteria of elliptic stable conjugacy classes and Kottwitz triples. Here, as remarked before, we point out and fix some gaps in the original proof of the corresponding results in the hyperspecial level case. 
Throughout this article, we will mainly work with the pseudo-motivic Galois gerb by assuming the aforementioned Serre condition for the Shimura datum $(G,X)$. But still some definitions use the quasi-motivic Galois gerb, so we provide its definition in the appendix. Furthermore, in this work, a certain result in the Bruhat-Tits theory, whose hyperspecial case was established in our previous work \cite[Lemma A.0.4]{Lee14}, plays a key role. We provide its proof in the appendix.

\textbf{Acknowledgement}
This work was supported by IBS-R003-D1. The author would like to thank M. Rapoport and C.-L. Chai for their interests in this work and encouragement.

\textbf{Notations}

Throughout this paper, $\Qb$ denotes the algebraic closure of $\Q$ inside $\C$. 

For a (connected) reductive group $G$ over a field, we let $G^{\uc}$ be the universal covering of its derived group $G^{\der}$, and for a (linear algebraic) group $G$, $Z(G)$, and $G^{\ad}$ denote its center, and the adjoint group $G/Z(G)$, respectively.

For a group $I$ and an $I$-module $A$, we let $A_I$ denote the quotient group of $I$-coinvariants: $A_I=A/\langle ia-a\ |\ i\in I, a\in A\rangle$. For an element $a\in A$, we write $\underline{a}$ for the image of $a$ in $A_I$. In case of need for distinction, sometimes we write $\underline{a}_A$.

For a finitely generated abelian group $A$, we denote by $A_{\mathrm{tors}}$ its subgroup of torsion elements. Also, for a locally compact abelian group $A$, we let $X_{\ast}(A)$, $X^{\ast}(A)$, and $A^D$ denote the (co)character groups $\Hom(\C^{\times},A)$, $\Hom(A,\C^{\times})$, and the Pontryagin dual group $\Hom(A,\Q/\Z)$, respectively.

In this article, the letter $L$ denotes primarily the completion of the maximal unramified extension (in a fixed algebraic closure $\Qpb$) of $\Qp$. On few occasions, it also denotes a finite CM extension of $\Q$, in which case, if one also needs a notation for the completion of the maximal unramified extension of $\Qp$, we will use the german letter $\mfk$ (the original notation of Langlands-Rapoport).


\section{Parahoric subgroups and $\mu$-admissible set}


\subsection{Kottwitz maps and Newton map}

In this section, we briefly recall the definitions of the Kottwitz maps and the Newton map. We refer to \cite{Kottwitz97}, \cite{Kottwitz85}, \cite{RR96}, and references therein for further details.

\subsubsection{The Kottwitz maps $w_G$, $v_G$, $\kappa_{G}$} \label{subsubsec:Kottwitz_hom}

Let $L$ be a strictly henselian discrete valued field and set $I:=\Gal(\overline{L}/L)$.
For any connected reductive group $G$ over $L$, Kottwitz (\cite[$\S$7]{Kottwitz97}) constructs a group homomorphism
\[w_G:G(L)\rightarrow X^{\ast}(Z(\widehat{G})^{I})=\pi_1(G)_{I}.\]
Here, $\widehat{G}$ denotes the Langlands dual group of $G$, $\pi_1(G)=X_{\ast}(T)/\Sigma_{\alpha\in R^{\ast}}\Z\alpha^{\vee}$ is the fundamental group of $G$ (\`a la Borovoi) (i.e. the quotient of $X_{\ast}(T)$ for a maximal torus $T$ over $F$ of $G$ by the coroot lattice), and $\pi_1(G)_I$ is the (quotient) group of coinvariants of the $I$-module $\pi_1(G)$. This map $w_G$ is sometimes denoted by $\widetilde{\kappa}_G$, e.g. in \cite{Rapoport05}. When $G^{\der}$ is simply connected (so that $\pi_1(G)=X_{\ast}(G^{\ab})$ for $G^{\ab}=G/G^{\der}$), $w_G$ factors through $G^{\ab}$: $w_G=w_{G^{\ab}}\circ p$, where $p:G\rightarrow G^{\ab}$ is the natural projection (\cite[7.4]{Kottwitz97}).

There is also a homomorphism 
\[v_G:G(L)\rightarrow \Hom(X_{\ast}(Z(\widehat{G}))^I,\Z)\] 
sending $g\in G(L)$ to the homomorphism $\chi\mapsto \mathrm{val}(\lambda(g))$ from $X_{\ast}(Z(\widehat{G}))^I=\Hom_L(G,\Gm)$ to $\Z$, where $\mathrm{val}$ is the usual valuation on $L$, normalized so that uniformizing elements have valuation $1$. 

There is the relation: $v_G=q_G\circ w_G$, where $q_G$ is the natural surjective map 
\[q_G:X^{\ast}(Z(\widehat{G})^{I})=X^{\ast}(Z(\widehat{G}))_I \rightarrow \Hom(X_{\ast}(Z(\widehat{G}))^I,\Z).\]
The kernel of $q_G$ is the torsion subgroup of $X^{\ast}(Z(\widehat{G}))_I$, i.e. $\Hom(X_{\ast}(Z(\widehat{G}))^I,\Z)\cong \pi_1(G)_I/\text{torsions}$; in particular, $q_G$ is an isomorphism if the coinvariant group $X^{\ast}(Z(\widehat{G}))_I$ is free (e.g. the $I$-module $X^{\ast}(Z(\widehat{G}))$ is trivial or more generally \textit{induced}, i.e. has a $\Z$-basis permuted by $I$).

For example, when $G$ is a torus $T$, we have that $\langle \chi,w_T(t)\rangle= \mathrm{val}(\chi(t))$ for $ t\in T(L)$, $\chi\in X^{\ast}(T)^I$, where $\langle\ ,\ \rangle$ is the canonical pairing between $X^{\ast}(T)^I$ and $X_{\ast}(T)_I$.

Now suppose that $G$ is defined over a local field $F$, i.e. a finite extension of $\Qp$ (in a fixed algebraic closure $\Qpb$). Let $L$ be the completion of the maximal unramified extension $F^{\nr}$ of $F$ in $\Qpb$ and let $\sigma$ denote the Frobenius automorphism on $L$ which fixes $F$ and induces $x\mapsto x^q$ on the residue field of $L$ ($\cong\Fpb$), where the residue field of $F$ is $\F_q$. In this situation, the maps $v_{G_L}$, $w_{G_L}$ each induce a notable map.

First, as $v_{G_L}$ and $w_{G_L}$ commute with the action of $\Gal(F^{\nr}/F)$, by taking $H^0(\Gal(F^{\nr}/F),-)$ on both sides of $v_{G_L}$, we obtain a homomorphism
\[\lambda_G:G(F)\rightarrow \Hom(X_{\ast}(Z(\widehat{G}))^I,\Z)^{\langle\sigma\rangle}\cong(\pi_1(G)_I/\text{torsions})^{\langle\sigma\rangle},\]
where $I\cong\Gal(\overline{F}/F^{\nr})$. This is the map introduced in \cite[$\S$3]{Kottwitz84b} and used in \cite{LR87} (there, denoted by the same symbol) when $G$ is \emph{unramified} over $F$, in which case the canonical action of $I$ on $Z(\widehat{G})$ is trivial, so the target becomes $X^{\ast}(Z(\widehat{G}))^{\Gamma_F}$ ($\Gamma_F:=\Gal(\overline{F}/F)$) (and also $w_{G_L}=v_{G_L}$).

Next, let $B(G)$ denote the set of $\sigma$-conjugacy classes:
\[B(G):=G(L)/\stackrel{\sigma}{\sim},\]
where two elements $b_1$, $b_2$ of $G(L)$ are said to be \textit{$\sigma$-conjugate}, denoted $b_1\stackrel{\sigma}{\sim} b_2$, if there exists $g\in G(L)$ such that $b_2=gb_1\sigma(g)^{-1}$. 
Then, by taking $H^1(\Gal(F^{\nr}/F),-)$, $w_{G_L}$ induces a map of sets 
\[\kappa_G:B(G)\rightarrow X^{\ast}(Z(\widehat{G})^{\Gamma_F})=\pi_1(G)_{\Gamma_F}: \kappa_G([b])=\overline{w_{G_L}(b)}\]
Here, for $b\in G(L)$, $[b]$ denotes its $\sigma$-conjugacy class, and for $x\in \pi(G)_{I}$, $\overline{x}$ denotes its image under the natural quotient map $\pi(G)_{I}\rightarrow \pi(G)_{\Gamma_F}$. This map is functorial in $G$. For further details, see \cite[7.5]{Kottwitz97}.

\subsubsection{The Newton map $\nu_G$}

Let $\mathbb{D}$ denote the protorus $\varprojlim\Gm$ with the character group $\Q=\varinjlim\Z$.
For an algebraic group $G$ over a $p$-adic local field $F$, we put
\[\mathcal{N}(G):=(\Hom_{L}(\mathbb{D},G)/\Int(G(L)))^{\sigma}\]
(the subset of $\sigma$-invariants in the set of $G(L)$-conjugacy classes of $L$-rational quasi-cocharacters into $G_L$). We will use the notation $\overline{\nu}$ for the the conjugacy class of $\nu\in\Hom_{L}(\mathbb{D},G)$. 

For every $b\in G(L)$, Kottwitz \cite[$\S$4.3]{Kottwitz85} constructs an element $\nu=\nu_b\in\Hom_L(\mathbb{D},G)$ uniquely characterized by the property that there are an integer $s>0$, an element $c\in G(L)$ and a uniformizing element $\pi$ of $F$ such that:
\begin{itemize}\addtolength{\itemsep}{-4pt}
\item[(i)] $s\nu\in\Hom_L(\Gm,G)$.
\item[(ii)] $\Int (c)\circ s\nu$ is defined over the fixed field of $\sigma^s$ in $L$.
\item[(iii)] $c\cdot (b\rtimes\sigma)^s\cdot c^{-1}=c\cdot 
(s\nu)(\pi)\cdot c^{-1}\rtimes \sigma^{s}$.
\end{itemize}
In (iii), the product (and the equality as well) take place in the semi-direct product group $G(L)\rtimes\langle\sigma\rangle$. We call $\nu_b$ the \textit{Newton homomorphism} attached to $b\in G(L)$. 

When $G$ is a torus $T$, $\nu_b=\mathrm{av}\circ w_{T_L}(b)$, where $\mathrm{av}:X_{\ast}(T)_I\rightarrow X_{\ast}(T)_{\Q}^{\Gamma_F}$ is ``the average map" $X_{\ast}(T)_I\rightarrow X_{\ast}(T)_{\Gamma_F}\rightarrow X_{\ast}(T)_{\Q}^{\Gamma_F}$ sending $\underline{\mu}\ (\mu\in X_{\ast}(T))$ to $|\Gamma_F\cdot\mu|^{-1} \sum_{\mu'\in \Gamma_F\cdot\mu}\mu'$ (cf. \cite[Thm. 1.15, (iii)]{RR96}). Hence, it follows that if $T$ is split by a finite Galois extension $K\supset F$, for $b\in T(L)$, $[K:F]\nu_b\in X_{\ast}(T)$ and that $\langle \chi,\nu_b\rangle= \mathrm{val}(\chi(b))$ (especially $\in\Z$) for every $F$-rational character $\chi$ of $T$.

The map $b\mapsto \nu_b$ has the following properties.
\begin{itemize}\addtolength{\itemsep}{-4pt}
\item[(a)] $\nu_{\sigma(b)}=\sigma(\nu_b)$.
\item[(b)] $gb\sigma(g)^{-1}\mapsto \Int (g)\circ \nu,\ g\in G(L)$.
\item[(c)] $\nu_b=\Int (b)\circ\sigma(\nu_b)$.
\end{itemize}
It follows from (b) and (c) that $\nu_G:G(L)\rightarrow \Hom_L(\mathbb{D},G)$ gives rise to a map $\overline{\nu}_G:B(G)\rightarrow \mathcal{N}(G)$, which we call the \textit{Newton map}. This can be also regarded as a functor from the category of connected reductive groups to the category of sets (endowed with partial orders defined as below):
\begin{equation*}\overline{\nu}:B(\cdot)\rightarrow \mathcal{N}(\cdot)\ ;\
\overline{\nu}_{G}([b])=\overline{\nu}_{b},\quad
b\in[b].\end{equation*}

\subsubsection{} For a connected reductive group $G$ over an arbitrary (i.e. not necessarily $p$-adic) field $F$, let $\mathcal{BR}(G)=(X^{\ast},R^{\ast},X_{\ast},R_{\ast},\Delta)$ be the based root datum of $G$: we may take $X^{\ast}=X^{\ast}(T)$, $X_{\ast}=X_{\ast}(T)$ for a maximal $F$-torus $T$ of $G$ and $R^{\ast}\subset X^{\ast}(T)$, $R_{\ast}\subset X_{\ast}(T)$ are respectively the roots and the coroots for the pair $(G,T)$ with a choice of basis $\Delta$ of $R^{\ast}$ (whose choice corresponds to that of a Borel subgroup $B$ over $\overline{F}$).
Let $\overline{C}\subset (X_{\ast})_{\Q}$ denote the closed Weyl chamber associated with the root base $\Delta$. It comes with a canonical action of $\Gamma_F:=\Gal(\overline{F}/F)$ on $\overline{C}$. 

For a cocharacter $\mu\in\Hom_{\overline{F}}(\Gm,G)$ lying in $\overline{C}$, we set 
\[\overline{\mu}:=|\Gamma_F\cdot\mu|^{-1} \sum_{\mu'\in\Gamma_F\cdot\mu}\mu'\quad\in\overline{C}.\]
Here, the orbit $\Gamma_F\cdot\mu$ is obtained using the canonical Galois action on $\overline{C}$. Once a Weyl chamber $\overline{C}$ (equivalently, a Borel subgroup $B$ or a root base $\Delta$) is chosen, $\overline{\mu}$ depends only on the $G(\overline{F})$-conjugacy class $\{\mu\}$ of $\mu$.

Suppose that $\mu\in X_{\ast}(T)\cap\overline{C}$. 
As $X_{\ast}(T)=X^{\ast}(\widehat{T})$ for the dual torus $\widehat{T}$ of $T$, regarded as a character on $\widehat{T}$, we can restrict $\mu$ to the subgroup $Z(\widehat{G})^{\Gamma_F}$ of $\widehat{T}$, obtaining an element 
\begin{equation} \label{eqn:mu_natural}
\mu^{\natural}\in X^{\ast}(Z(\widehat{G})^{\Gamma_F})=\pi_1(G)_{\Gamma_F}.
\end{equation}
Again, $\mu^{\natural}$ depends only on the $G(\overline{F})$-conjugacy class $\{\mu\}$ of $\mu$.
Alternatively, $\mu^{\natural}$ equals the image (sometimes, also denoted by $\underline{\mu}$) of $\mu\in X_{\ast}(T)$ under the canonical map $X_{\ast}(T)\rightarrow \pi_1(G)_{\Gamma_F}$.

\subsubsection{The set $B(G,\{\mu\})$} \label{subsubsec:B(G,{mu})} 
Again, let us return to a $p$-adic field $F$. We fix a closed Weyl chamber $\overline{C}$ (equiv. a Borel subgroup $B$ over $\overline{F}$). 
Suppose given a $G(\overline{F})$-conjugacy class $\{\mu\}$ of cocharacters into $G_{\overline{F}}$.
Let $\mu$ be the representative of $\{\mu\}$ in $\overline{C}$; so we have $\overline{\mu}=\overline{\mu}(G,\{\mu\})\in\overline{C}$ and $\mu^{\natural}\in X^{\ast}(Z(\widehat{G})^{\Gamma_F})$. 
We define a finite subset $B(G,\{\mu\})$ of $B(G)$ (cf. \cite[Sec.6]{Kottwitz97}, \cite[Sec.4]{Rapoport05}): 
\[B(G,\{\mu\}):=\left\{\ [b]\in B(G)\ |\quad \kappa_{G}([b])=\mu^{\natural},\quad \overline{\nu}_{G}([b])\preceq \overline{\mu}\ \right\},\]
where $\preceq$ is the natural partial order on the closed Weyl chamber $\overline{C}$ defined by that $\nu\preceq \nu'$ if $\nu'-\nu$ is a nonnegative linear combination (with \emph{rational} coefficients) of simple coroots in $R_{\ast}(T)$ (\cite{RR96}, Lemma 2.2). 
One knows (\cite[4.13]{Kottwitz97}) that the map 
\[(\overline{\nu},\kappa):B(G)\rightarrow \mathcal{N}(G)\times X^{\ast}(Z(\widehat{G})^{\Gamma_F})\] 
is injective, hence $B(G,\{\mu\})$ can be identified with a subset of $\mathcal{N}(G)$.


\subsection{Parahoric subgroups}

Our main references here are \cite{Rapoport05}, \cite{HainesRapoport08}, \cite{HainesRostami10}, as well as \cite{BT72}, \cite{BT84}.

\subsubsection{}  \label{subsubsec:parahoric}
Let $G$ be a connected reductive group $G$ over a strictly henselian discrete valued field $L$. Let $\mcB(G,L)$ be the Bruhat-Tits building of $G$ over $L$ (cf. \cite{Tits79}, \cite{BT72}, \cite{BT84}). Then, a \textit{parahoric subgroup} of $G(L)$ is a subgroup of the form
\[ K_{\mbff}=\mathrm{Fix}\ \mbff \cap \Ker\ w_G\]
for a facet $\mbff$ of $\mcB(G,L)$. Here, $\mathrm{Fix}\ \mbff$ denotes the subgroup of $G(L)$ fixing $\mbff$ pointwise and $w_G$ is the Kottwitz map (\ref{subsubsec:Kottwitz_hom}). When $\mbff$ is an alcove of $\mcB(G,L)$ (i.e. a maximal facet), the parahoric subgroup is called an \textit{Iwahori} subgroup. A \textit{special maximal parahoric subgroup} of $G(L)$ is the parahoric subgroup attached to a special point in $\mcB(G,L)$.
More precisely, choose a maximal split torus $A$ of $G$ and let $\mcA(A,L)$ be the associated apartment; let $\mcA(A^{\ad},L)$ be the apartment in $\mcB(G^{\ad},L)$ corresponding to the image $A^{\ad}$ of $A$ in $G^{\ad}$. Then, there exists a canonical simplicial isomorphism (\cite[1.2]{Tits79})
\[\mcA(A,L)\cong \mcA(A^{\ad},L)\times X_{\ast}(Z(G))_{\Gamma_F}\otimes\R.\]
Then, every special point in $\mcA(A,L)$ is of the form $\{\mbfv\}\times x$ for a unique special \emph{vertex} $\mbfv$ of $\mcA(A^{\ad},L)$ (in the sense of \cite[1.9]{Tits79}) and some $x\in X_{\ast}(Z(G))_{\Gamma_F}\otimes\R$.

The original definition of pararhoic subgroups by Bruhat-Tits (\cite[5.2.6]{BT84}, cf. \cite[3.4]{Tits79}) uses group schemes. With every facet $\mbff$ of $\mcB(G,L)$ they associate a smooth group scheme $\mcG_{\mbff}$ over $\Spec(\cO_L)$ with generic fiber $G$ such that $\mcG_{\mbff}(\cO_L)=\mathrm{Fix}\ \mbff$. Also, there exists an open subgroup $\mcG_{\mbff}^{\mathrm{o}}$ with the same generic fiber $G$ and the connected special fiber. Then, the parahoric subgroup attached to $\mbff$ by Bruhat-Tits is $\mcG_{\mbff}^{\mathrm{o}}(\cO_L)$. It is known (\cite[Prop.3]{HainesRapoport08}) that they coincide:
\[K_{\mbff}=\mcG_{\mbff}^{\mathrm{o}}(\cO_L).\]

Now suppose that $G$ is defined over a local field $F$, as before given as a finite extension of $\Qp$ in $\Qpb$.
Again, $L$ denotes the completion of the maximal unramified extension of $F$ in $\Qpb$ and let $\sigma$ be the Frobenius automorphism. Let $\mcB(G,L)$ (resp. $\mcB(G,F)$) be the Bruhat-Tits building of $G$ over $L$ (resp. over $F$); as $G$ is defined over $F$, $\mcB(G,L)$ carries an action of $G(L)\rtimes\langle\sigma\rangle$ and $\mcB(G,F)$ is identified with the set of fixed points of $\mcB(G,L)$ under $\langle\sigma\rangle$ (\cite[5.1.25]{BT84}). This procedure of taking $\sigma$-fixed points $\mbff\mapsto \mbff^{\sigma}$ gives a bijection from the set of $\sigma$-stable facets in $\mcB(G,L)$ to the set of facets in $\mcB(G,F)$.

A \textit{parahoric subgroup} of $G(F)$ is by definition $\mcG_{\mbff}^{\mathrm{o}}(\cO_L)^{\sigma}$ for a $\sigma$-stable facet $\mbff$ of $\mcB(G,L)$. A \textit{special maximal parahoric subgroup} of $G(F)$ is $\mcG_{\mbff}^{\mathrm{o}}(\cO_L)^{\sigma}$ for a special point $\mbff\in\mcB(G,F)$ (see \cite[1.9]{Tits79} for the definition of a special point).

\subsubsection{Extended affine Weyl group} \label{subsubsec:EAWG}

Let $G$ be a connected reductive group $G$ over a strictly henselian discrete valued field $L$. Let $S$ be a maximal split $L$-torus of $G$ and $T$ its centralizer; $T$ is a maximal torus since $G_L$ is quasi-split by a well-known theorem of Steinberg. Let $N=N_G(T)$ be the normalizer of $T$. The \textit{extended affine Weyl group} (or \textit{Iwahori Weyl group}) associated with $S$ is the quotient group
\[\tilde{W}:=N(L)/T(L)_1,\]
where $T(L)_1$ is the kernel of the Kottwitz map $w_T:T(L)\rightarrow X_{\ast}(T)_{I}$. As $w_T$ is surjective, it is an extension of the relative Weyl group $W_0:=N(L)/T(L)$ by $X_{\ast}(T)_I$: 
\begin{equation} \label{eqn:EAWG1}
0\rightarrow X_{\ast}(T)_I\rightarrow \tilde{W}\rightarrow W_0\rightarrow 0.
\end{equation}
The normal subgroup $X_{\ast}(T)_I$ is called the \textit{translation subgroup} of $\tilde{W}$, and any $\lambda\in X_{\ast}(T)_I$, viewed as an element in $\tilde{W}$ in this way, will be denoted by $t^{\lambda}$ (\textit{translation element}).

This extension splits by choosing a special vertex $\mbfv$ in the apartment corresponding to $S$, namely if $K=K_{\mbfv}\subset G(L)$ is the associated parahoric subgroup, the subgroup
\[\tilde{W}_K:=(N(L)\cap K)/T(L)_1\]
of $\tilde{W}$ projects isomorphically to $W_0$, and thus gives a splitting
\[\tilde{W}=X_{\ast}(T)_I\rtimes \tilde{W}_K.\]

For two parahoric subgroups $K$ and $K'$ associated with facets in the apartment corresponding to $S$, there exists an isomorphism
\begin{equation} \label{eqn:parahoric_double_coset}
K\backslash G(L)/ K'\cong \tilde{W}_K\backslash \tilde{W}/ \tilde{W}_{K'}.
\end{equation}

Let $S^{\uc}$ (resp. $T^{\uc}$, $N^{\uc}$) be the inverse image of $S$ (resp. $T$, $N$) in the universal covering $G^{\uc}$ of $G^{\der}$; then, $S^{\uc}$ is a maximal split torus of $G^{\uc}$ and $T^{\uc}$ (resp. $N^{\uc}$) is its centralizer (resp. the normalizer of $N^{\uc}$). 
The natural map $N^{\uc}(L)\rightarrow N(L)$ induces an injection $X_{\ast}(T^{\uc})_I\hra X_{\ast}(T)_I$ and presents
the extended affine Weyl group associated with $(G^{\uc},S^{\uc})$
\[W_a:=N^{\uc}(L)/ T^{\uc}(L)_1\]
as a normal subgroup of the extended affine Weyl group $\tilde{W}$ (attached to $S$) such that the translation subgroup $X_{\ast}(T)_I$ maps onto the quotient $\tilde{W}/W_a$ with kernel $X_{\ast}(T^{\uc})_I$:
\begin{equation} \label{eqn:EAWG2}
0\rightarrow W_a\rightarrow \tilde{W}\rightarrow X_{\ast}(T)_I/X_{\ast}(T^{\uc})_I\rightarrow 0.
\end{equation}
Note that the quotient group $X_{\ast}(T)_I/X_{\ast}(T^{\uc})_I$ is in a natural way a subgroup of $\pi_1(G)_I$.
The group $W_a$ can be also regarded as an affine Weyl group attached to some reduced root system. 

This extension (\ref{eqn:EAWG2}) also splits by the choice of an alcove in the apartment $\mcA(S,L)$ of $S$. More precisely, the extended affine Weyl group $\tilde{W}$ (resp. the affine Weyl group $W_a$) acts transitively (resp. simply transitively) on the set of alcoves in $\mcA(S,L)$, hence when we choose a base alcove $\mbfa$ in $\mcA(S,L)$, 
\begin{equation} \label{eqn:splitting_of_EAWG2}
\tilde{W}=W_a\rtimes \Omega_{\mbfa},
\end{equation}
where $\Omega_{\mbfa}$ is the normalizer of $\mbfa$; $\Omega_{\mbfa}$ will be often identified with $X_{\ast}(T)_I/X_{\ast}(T^{\uc})_I$.

Finally, suppose that there is an automorphism $\sigma$ of $L$ such that $L$ is the strict henselization of its fixed field $L^{\natural}$ and that $G$ is defined over $L^{\natural}$. Then, we can find a $L^{\natural}$-torus $S$ such that $S_L$ becomes maximal split $L$-torus and a maximal $L^{\natural}$-torus $T$ which contains $S$; set $N$ to be the normalizer of $T$. Then $\sigma$ acts on the extended Weyl group $\tilde{W}$ in the obvious way. Moreover, if $K_{\mbfv}\subset G(L)$ is the parahoric subgroup attached to a $\sigma$-stable facet $\mbfv$, then
the subgroup $\tilde{W}_{K_{\mbfv}}$ is stable under $\sigma$. We refer the reader to \cite[Remark 9]{HainesRapoport08} for a ``descent theory" in this situation.

\subsubsection{The $\{\mu\}$-admissible set} \label{subsubsec:mu-admissible_set}
As before, let $G$ be a connected reductive group $G$ over a complete discrete valued field $L$ with algebraically closed residue field. Let $W=N(\Lb)/T(\Lb)$ be the absolute Weyl group. Let
$\{\mu\}$ be a $G(\Lb)$-conjugacy class of cocharacters of $G$ over $\Lb$. We use $\{\mu\}$ again to denote the corresponding $W$-orbit in $X_{\ast}(T)$. Let us choose a Borel subgroup $B$ over $L$ containing $T$ (which exists as $G_L$ is automatically quasi-split), and let $\mu_B$ be the unique representative of $\{\mu\}$ lying in the associated absolute closed Weyl chamber in $X_{\ast}(T)_{\R}$. Then, the $W_0$-orbit of the image $\underline{\mu_B}$ of $\mu_B$ in $X_{\ast}(T)_I$ is well-determined, since any two Borel subgroups over $L$ containing $T$ are conjugate under $G(L)$. We denote this $W_0$-orbit by $\Lambda(\{\mu\})$:
\[ \Lambda(\{\mu\}):=W_0\cdot\underline{\mu_B}\ \subset\ X_{\ast}(T)_I.\]
 It is known (\cite[Lemma 3.1]{Rapoport05}) that the image of $\Lambda(\{\mu\})$ in the quotient group $X_{\ast}(T)_I/X_{\ast}(T^{\uc})_I$ consists of a single element, which we denote by $\tau(\{\mu\})$. 

Let us now fix an alcove $\mbfa$ in the apartment corresponding to $S$. This determines a Bruhat order on the affine Weyl group $W_a$ which extends to the extended Weyl group $\tilde{W}=W_a\rtimes \Omega_{\mbfa}$ (\ref{eqn:splitting_of_EAWG2}), (\cite[$\S$1]{KR00}). Also, when $K\subset G(L)$ is a parahoric subgroup associated with a facet of $\mbfa$, it induces a Bruhat order on the double coset space $\tilde{W}_K\backslash \tilde{W}/\tilde{W}_K$ (\cite[$\S$8]{KR00}). We will denote all these orders by $\leq$; this should not cause much confusion.

\begin{defn} \label{defn:mu-admissible_subset}
The \textit{$\{\mu\}$-admissible subset} of $\tilde{W}$ is 
\[\mathrm{Adm}(\{\mu\})=\{w\in\tilde{W}\ |\ w\leq t^{\lambda}\text{ for some }\lambda\in\Lambda(\{\mu\})\},\]
and the \textit{$\{\mu\}$-admissible subset} of $\tilde{W}_K\backslash \tilde{W}/\tilde{W}_K$ is 
\[\mathrm{Adm}_K(\{\mu\})=\{w\in\tilde{W}_K\backslash \tilde{W}/\tilde{W}_K\ |\ w\leq \tilde{W}_Kt^{\lambda}\tilde{W}_K\text{ for some }\lambda\in\Lambda(\{\mu\})\}.\]
\end{defn}
One knows (\cite[(3.8)]{Rapoport05}) that $\mathrm{Adm}_K(\{\mu\})$ is the image of $\mathrm{Adm}(\{\mu\})$ under the natural map $\tilde{W}\rightarrow \tilde{W}_K\backslash \tilde{W}/\tilde{W}_K$.

\begin{prop}
Suppose that $G$ splits over $L$ (thus $S=T$) and $K$ is a special maximal parahoric subgroup. Then, 
\[\mathrm{Adm}_K(\{\mu\})=\{\nu\in X_{\ast}(S)\cap\overline{C}\ |\ \nu\stackrel{!}{\leq} \mu\},\]
where $\mu$ denotes the representative in $\overline{C}$ of $\{\mu\}$.
If furthermore $\{\mu\}$ is minuscule, $\mathrm{Adm}_K(\{\mu\})$ consists of a single element, i.e. $\{\mu\}\in X_{\ast}(T)/W$ itself.
\end{prop}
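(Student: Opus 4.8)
The plan is to reduce the split special-maximal-parahoric case to combinatorics in the (absolute, finite) Weyl group, using the description of $\mathrm{Adm}_K(\{\mu\})$ as the image of $\mathrm{Adm}(\{\mu\})$ in $\tilde{W}_K\backslash\tilde{W}/\tilde{W}_K$. First I would note that since $G$ splits over $L$ we have $S=T$, $X_{\ast}(T)_I=X_{\ast}(T)$, $W_0=W$, and choosing $K$ to be the special maximal parahoric attached to the special vertex $\mbfv$ we have $\tilde{W}_K\cong W_0=W$, so that the map $\tilde{W}\to \tilde{W}_K\backslash\tilde{W}/\tilde{W}_K$ identifies the double cosets with $W$-orbits in $X_{\ast}(T)$, i.e. with $X_{\ast}(T)\cap\overline{C}$ via the unique dominant representative. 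Under this identification, $\Lambda(\{\mu\})=W\cdot\mu$ maps to the single dominant coweight $\mu$. So the content of the first displayed equality is the statement that a dominant coweight $\nu$ lies in $\mathrm{Adm}_K(\{\mu\})$ if and only if $\nu\leq\mu$ in the usual dominance order on $\overline{C}$ (the order written $\stackrel{!}{\leq}$, i.e. $\mu-\nu$ a nonnegative rational combination of simple coroots).

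The key steps are then: (i) For the ``$\subseteq$'' direction, take $w\in\mathrm{Adm}(\{\mu\})$ with image $\nu$ (dominant); then $w\leq t^{\lambda}$ for some $\lambda\in W\cdot\mu$. I would invoke the compatibility of the Bruhat order on $\tilde{W}$ with the one on the double coset space (\cite[\S8]{KR00}): the image of $w$ is $\leq$ the image of $t^{\lambda}$, which is $\mu$ itself. This gives $\nu\leq\mu$ in the double-coset Bruhat order; then one identifies this Bruhat order on $X_{\ast}(T)\cap\overline{C}$ with the dominance order $\stackrel{!}{\leq}$ — this is a standard fact (for a special maximal parahoric the Bruhat order on $W_a\backslash\tilde{W}/\text{nothing}\cdots$ restricted to translations by dominant coweights is the dominance order), which one can cite from \cite{KR00} or \cite{Rapoport05}. (ii) For the ``$\supseteq$'' direction, given dominant $\nu\leq\mu$, I need some $w\in\tilde{W}$ with $w\leq t^{\lambda}$, $\lambda\in W\cdot\mu$, whose double coset is that of $t^{\nu}$; the natural candidate is $w=t^{\nu}$ itself, and I must show $t^{\nu}\leq t^{\mu}$ in the Bruhat order on $\tilde{W}$ when $\nu\leq\mu$ are both dominant. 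This is again a known fact (e.g. it follows from the characterization of admissible sets, or from \cite[\S4]{Rapoport05} / properties of the affine Bruhat order and translations — Rapoport states exactly $\mathrm{Adm}(\{\mu\})\cap X_{\ast}(T)=\{\nu\in X_\ast(T):\nu\ \text{conv}\ \leq\ \mu\}$ type results). So mostly I would be assembling citations.

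For the minuscule clause: if $\{\mu\}$ is minuscule, then every dominant coweight $\nu$ with $\nu\leq\mu$ and $\nu\equiv\mu$ modulo the coroot lattice must equal $\mu$ — this is the defining combinatorial property of minuscule coweights (the dominant coweights below a minuscule $\mu$ in the dominance order form a single point). But I should double-check that the admissible set only sees coweights in the fixed coset $\mu+\Z R_\ast$: indeed all elements of $\mathrm{Adm}(\{\mu\})$ have the same image $\tau(\{\mu\})$ in $\tilde{W}/W_a\hookrightarrow\pi_1(G)$, hence all $\nu\in\mathrm{Adm}_K(\{\mu\})$ satisfy $\nu\equiv\mu\pmod{\Z R_\ast}$; combined with $\nu\leq\mu$ and minusculeness this forces $\nu=\mu$, so $\mathrm{Adm}_K(\{\mu\})=\{\mu\}$ as claimed.

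The main obstacle I expect is the precise identification in step (i)–(ii) of the Bruhat order on the double coset space $\tilde{W}_K\backslash\tilde{W}/\tilde{W}_K$ (for $K$ special maximal) with the dominance order on dominant coweights, together with the monotonicity $\nu\leq\mu\Rightarrow t^\nu\leq t^\mu$ for dominant coweights in $\tilde{W}$; these are ``well-known'' but need care with conventions (which alcove, which positive system, the relation between $\stackrel{!}{\leq}$ and the affine Bruhat order). I would handle this by reducing to the simply-connected case where $W_a$ is a genuine affine Weyl group, citing \cite[\S1, \S8]{KR00} for the order and \cite[Lemma 3.1, (3.8)]{Rapoport05} for the $\tau$-invariant and the image description, and then quoting the classical combinatorial statement that on dominant coweights the (partial) Bruhat order coincides with the dominance order.
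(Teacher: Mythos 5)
The paper itself offers no proof, only a citation to Rapoport's survey (Prop.\ 3.11 and Cor.\ 3.12 of \cite{Rapoport05}), so there is no internal argument to compare yours against; your reconstruction via the identification $\tilde{W}_K\backslash\tilde{W}/\tilde{W}_K\cong X_{\ast}(T)\cap\overline{C}$ and the Kottwitz--Rapoport compatibility of Bruhat orders is the standard route and is structurally sound.

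There is, however, one genuine misreading that you should fix. You gloss $\stackrel{!}{\leq}$ as ``$\mu-\nu$ a nonnegative \emph{rational} combination of simple coroots,'' but the paper explicitly states that $\nu\stackrel{!}{\leq}\mu$ means $\mu-\nu$ is a sum of simple coroots with nonnegative \emph{integer} coefficients; the exclamation mark is there precisely to distinguish this from the order $\preceq$ (nonnegative rational coefficients) used in the definition of $B(G,\{\mu\})$. The distinction is not cosmetic: with rational coefficients the proposition would be false. For example, take $G^{\mathrm{ad}}$-type groups that are not simply connected, say $G=\mathrm{PGL}_2$ with $\mu$ the minuscule dominant coweight; then $\nu=0$ is dominant and $\mu-\nu$ is a nonnegative rational multiple of the simple coroot, yet $\nu\not\equiv\mu$ modulo the coroot lattice, so $t^{\nu}$ and $t^{\mu}$ lie in different $W_a$-cosets and are incomparable in the Bruhat order; $\nu\notin\mathrm{Adm}_K(\{\mu\})$, whereas it would lie in the ``rational'' right-hand side. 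The substantive fact you invoke in step (ii) --- that $t^{\nu}\leq t^{\mu}$ in $\tilde{W}$ for dominant $\nu,\mu$ --- is equivalent to $\mu-\nu$ being a nonnegative \emph{integer} sum of simple coroots; so your argument actually proves the correct (integer) statement, but your stated understanding of what $\stackrel{!}{\leq}$ means does not match the paper or the logic you use. Once you replace ``rational'' by ``integer'' throughout, the $\tau(\{\mu\})$ congruence you flag in the minuscule discussion is automatic rather than an extra hypothesis, and the argument goes through as you outline.
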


Here, $\nu\stackrel{!}{\leq} \mu$ means that $\mu-\nu$ is a sum of simple coroots with non-negative \emph{integer} coefficients. See Prop. 3.11 and Cor. 3.12 of \cite{Rapoport05} for a proof.


\section{Pseudo-motivic Galois gerb and admissible morphisms}

This section is devoted to a review of the theory of pseudo-motivic Galois gerbs and admissible morphisms, as explained in \cite{LR87}. In addition to this original source \cite{LR87}, our main references are \cite{Milne92},  \cite{Kottwitz92}, \cite{Reimann97}.

\subsection{Galois gerbs}
We review the notion of Galois gerbs as used by Langlands-Rapoport in \cite[$\S$2]{LR87} (cf. \cite[$\S$4]{Breen94}, \cite[$\S$8]{Rapoport05}, \cite[Appendix B]{Reimann97}).
 
Let $k$ be a field of characteristic zero (which will be for us either a global or a local field) and $\kb$ an algebraic closure. For an affine group scheme $G=\Spec A$ over a Galois extension $k'\subset\kb$ of $k$ and $\sigma\in\Gal(k'/k)$, 
an automorphism $\kappa$ of $G(k')$ is said to be \textit{$\sigma$-linear} if there is a $\sigma$-linear automorphism $\kappa'$ of the algebra $A$ such that
\[\kappa'(f)(\kappa(g))=\sigma(f(g)),\quad f\in A,\ g\in G(k').\]
The simplest example is given by the natural action of $\Gal(k'/k)$ on $G(k')$, when $G$ is defined over $k$. In this article, we will be concerned mainly with the following kind of Galois gerbs, which will be called \textit{algebraic}. 
For $\sigma\in\Gal(k'/k)$, let
\[\sigma_{k'}:G(k')\rightarrow (\sigma^{\ast}G)(k')\] 
be the unique map for which $f\otimes 1(\sigma_{k'}(g))=\sigma(f(g))$ holds for $f\in A,\ g\in G(k')$, where $f\otimes1\in A\otimes_{k',\sigma}k'$.
Then, for any algebraic isomorphism $\theta$ of $k'$-group schemes from $\sigma^{\ast}G$ to $G$, 
the automorphism $\theta\circ\sigma_{k'}$ of $G(k')$ is $\sigma$-linear, since then one can take $\kappa:=\theta\circ\sigma_{k'}$ and $\kappa'(f):=(\theta^{\ast})^{-1}(f\otimes1)$ (Here, $\theta^{\ast}:A\isom A\otimes_{k',\sigma}k'$ denotes the associated map on the structure sheaf). 
We will call such $\sigma$-linear automorphism of $G(k')$ \textit{algebraic}. 
Hence, one can identify an algebraic $\sigma$-linear isomorphism $\kappa(\sigma)$ with an algebraic $k'$-isomorphism $\theta(\sigma):\sigma^{\ast}(G)\isom G$ via $\kappa(\sigma)=\theta(\sigma)\circ\sigma_{k'}$.

\begin{defn} \label{defn:Galois_gerb}
Let $k'\subset \kb$ be a Galois extension of $k$. A \textit{$k'/k$-Galois gerb} is an extension of topological groups
\[1\lra G(k')\lra \fG\lra \Gal(k'/k)\lra 1,\]
where $G$ is an affine smooth group scheme (i.e. a linear algebraic group) over $k'$ and $G(k')$ (resp. $\Gal(k'/k)$) has the discrete (resp. the Krull) topology, such that
\begin{itemize}
\item[(i)] for every representative $g_{\sigma}\in\fG$ of $\sigma\in \Gal(k'/k)$, the automorphism $\kappa(\sigma):g\mapsto g_{\sigma}gg_{\sigma}^{-1}$ of $G(k')$ is algebraic $\sigma$-linear.
\item[(ii)] for some finite sub-extension $k\subset K\subset k'$, there exists a continuous section
\[\Gal(k'/K)\lra \fG\ :\ \sigma\mapsto g_{\sigma}\] 
which is a group homomorphism.
\end{itemize}
\end{defn}

In the presence of (i), the condition (ii) means that the family $\{\theta(\sigma):\sigma^{\ast}(G)\isom G\}$ of isomorphisms associated with $\Int (g_{\sigma})$ is a $k'/K$-descent datum on $G$: the homomorphism property of (ii) gives the cocycle condition of descent datum. 
Thus the section $\sigma\mapsto g_{\sigma}\ (\sigma\in\Gal(k'/K))$ determines a $K$-structure on $G$ and accordingly an action of $\Gal(k'/K)$ on $G(k')$. 
This Galois action is nothing other than $\theta(\sigma)\circ\sigma_{k'}$,
\footnote{Suppose that the $K$-structure is given by an isomorphism $\alpha:G_0\otimes_{K}k'\isom G$ for an algebraic $K$-group $G_0$. Then, it gives rise to a descent isomorphism $\alpha\circ\sigma^{\ast}(\alpha^{-1}):\sigma^{\ast}(G)\isom G$ and a Galois action $\sigma(g)=\alpha\circ \sigma(\alpha^{-1}(g))$ on $G(k')=G_0(k')$. Since the descent isomorphism was $\theta(\sigma)$, we have $\sigma(g)=\alpha\circ \sigma^{\ast}(\alpha^{-1})(\sigma_{k'}(g))=\theta(\sigma)\circ\sigma_{k'}(g)$.}
namely, we have the relation 
\begin{equation} \label{eq:conjugation=Galois_action}
g_{\sigma}gg_{\sigma}^{-1}=\sigma(g),\quad \sigma\in \Gal(k'/K),
\end{equation}
where $\sigma(g)$ is the just mentioned action of $\sigma\in\Gal(k'/K)$ on $G(k')$. In other words, the conditions (i), (ii) imply that over some finite Galois extension $K\subset k'$ of $k$, there exists a group-theoretic section $\sigma\mapsto \rho_{\sigma}$, via which the pull-back to $\Gal(k'/K)$ of $\fG$ becomes a semi-direct product $G(k')\rtimes \Gal(k'/K)$, with the action of $\Gal(k'/K)$ on $G(k')$ (i.e. the conjugation action of $\Gal(k'/K)$ on $G(k')$ via the section) being the natural Galois acton resulting from a $K$-structure on $G$.

We remark that our definition of Galois gerb is equivalent to that of affine smooth gerb
\footnote{in the sense of Giruad, or in the sense of the theory of Tannakian categories, namely, a stack in groupoids over an \'etale site which is locally nonempty and locally connected, cf. \cite[Appendix]{Milne92}, \cite[Ch.II]{DMOS82}, \cite[2.2]{Breen94}.}
on the \'etale site $\Spec(k)_{\et}$ \textit{equipped with a neutralizing object over $\Spec(K)$}.
\footnote{The $2$-category of such affine gerbs endowed with a distinguished neutralizing object is equivalent to the $2$-category of affine $\Spec(K)/\Spec(k)$-groupoid schemes, acting transitively on $\Spec(K)$, (\cite{Milne92}, Appendex, Prop. A.15). 
For this reason, Milne insists to call Galois gerbs in our sense groupoids (\cite{Milne92}, \cite{Milne03}). 
But any two neutralizing local objects become isomorphic over $\kb$, thus a gerb $\fG$ (as a stack) is uniquely determined by its associated groupoid $(\fG,x\in \mathrm{Ob}(\fG(\kb)))$, up to conjugation by an element of $\mathrm{Aut}(x)=\fG^{\Delta}(\kb)$. Hopefully, this justifies our decision to stick to the original terminology of Langlands-Rapoport.}
For a detailed discussion of this relation, we refer to \cite[p.152-153]{LR87}, \cite[$\S$4]{Breen94}.

We call the group scheme $G$ the \textit{kernel} of $\fG$ and write $G=\fG^{\Delta}$. A \textit{morphism} between $k'/k$-Galois gerbs $\varphi:\fG\rightarrow\fG'$ is a continuous map of extensions which induces the identity on $\Gal(k'/k)$ and an algebraic homomorphism on the kernel groups. Two morphisms $\phi_1$ and $\phi_2$ are said to be \textit{conjugate} if there exists $g'\in G'(k')$ with $\phi_2=\Int (g')\circ \phi_1$. With every linear algebraic group $G$ over $k$, the semi-direct product gives a gerb
\[\fG_G=G(k')\rtimes\Gal(k'/k).\]
We call it the \textit{neutral gerb} attached to $G$.

For two successive Galois extensions $k\subset k'\subset k''\subset \kb$, 
any $k'/k$-Galois gerb $\fG$ gives rise to a $k''/k$-Galois gerb $\fG'$, by first pulling-back the extension $\fG$ by the surjection $\Gal(k''/k)\twoheadrightarrow\Gal(k'/k)$ and then pushing-out via $G(k')\rightarrow G(k'')$. In this situation, we will call $\fG'$ the \textit{inflation} to $k''$ of the $k'/k$-Galois gerb $\fG$; this terminology will be justified when we relate Galois gerbs with commutative kernels to Galois cohomology. 
We also call a $\kb/k$-Galois gerb, simply a Galois gerb over $k$. It follows from definition that any Galois gerb over $k$ is the inflation of a $k'/k$-Galois gerb for some finite Galois extension $k\subset k'\subset \kb$ and that every morphism between $k'/k$-Galois gerbs induces a morphism between their inflations to $k''$ for any sub-extension $k''\subset \kb$.

For two morphisms of $k'/k$-Galois gerbs $\phi_1,\phi_2:\fG\rightarrow\fG'$, there exists a $k$-scheme $\underline{\mathrm{Isom}}(\phi_1,\phi_2)$, whose set of $R$-points, for a $k$-algebra $R$, is given by
\[\underline{\mathrm{Isom}}(\phi_1,\phi_2)(R)=\{g\in\fG'^{\Delta}(k'\otimes_kR)\ |\ \Int (g)\circ\phi_1=\phi_2\},\]
where the equality is considered in the push-out of $\fG'$ by $\fG'^{\Delta}(k')\rightarrow \fG'^{\Delta}(k'\otimes_kR)$. 
When, $\phi_1=\phi_2$, this is a $k$-group scheme which we also denote by $I_{\phi_1}=\mathrm{Aut}(\phi_1)$.

\subsubsection{Galois gerbs defined by $2$-cocyles with values in commutative affine group schemes}

In our work (as well as in the work of Langlands-Rapoport), 
besides the neutral gerbs attached to arbitrary algebraic groups, all the nontrivial Galois gerbs have as associated kernel \textit{commutative affine group schemes (in fact, (pro-)tori) defined over base fields}. In such cases, a Galois gerb has an explicit description in terms of $2$-cocycles (in the usual sense) on the absolute Galois group of the base field with values in the geometric points of given commutative affine group scheme endowed with the natural Galois action.

Recall that for a group $H$ and an $H$-module $A$ (i.e. an abelian group with $H$-action), a normalized%
\footnote{i.e. $e_{h,1}=e_{1,h}=1$ for all $h\in H$.}
$2$-cocyle (or ``factor set") $(e_{h_1,h_2})$ on $H$ with values in $A$ gives rise to an extension of $H$ by $A$:
\[1\rightarrow A\rightarrow E\stackrel{p}{\rightarrow}H\rightarrow1\] 
with property 
\begin{equation} \label{eq:extension_with_given_conjugation_action}
e\cdot a\cdot e^{-1}=p(e)(a)\quad (e\in E,a\in A),
\end{equation}
where the right action of $p(e)$ on $A$ is the given one.
Explicitly, $E$ is generated by $A$ and $\{e_h\}_{h\in H}$ ($a\mapsto 0,e_h\mapsto h$ giving the projection $E\rightarrow H$) with relations 
\[e_h\cdot a\cdot e_h^{-1}=h(a)\ (a\in A,h\in H),\qquad e_{h_1,h_2}=e_{h_1}\cdot e_{h_2}\cdot e_{h_1h_2}^{-1},\quad e_1=1\] ($e_{h_1,h_2}\in Z^2(H,A)$ guarantees the associativity of the resulting composition law). 
Two extensions $E$, $E'$ of $H$ by $A$ with the property (\ref{eq:extension_with_given_conjugation_action}) are said to be \textit{isomorphic} if there exists an isomorphism $E\isom E'$ which restricts to identity on $A$ and also induces identity on $H$. 
Then, this construction gives a bijection of pointed sets between $H^2(H,A)$ and the set of isomorphisms classes of group extensions of $H$ by $A$ with the induced conjugation action of $H$ on $A$ being the given one. Here, the distinguished points are the cohomology class of the trivial $2$-cocycle and the semi-direct product, respectively. For two isomorphic extensions $E,E'$ with the property (\ref{eq:extension_with_given_conjugation_action}), we say that two isomorphisms $f_1,f_2:E\isom E'$ 
which induce identities on $A$ and $H$ are \textit{equivalent} (or \textit{conjugate}) if $f_2=\Int  a\circ f_1$ for some $a\in A$, where $\Int  a$ is the conjugation automorphism of $E'$. Then, there is a natural action of $H^1(H,A)$ on the set of equivalence classes of isomorphisms from $E$ to $E'$, which makes the latter set into a torsor under $H^1(H,A)$. 

Now, suppose that $G$ is a \textit{separable} commutative affine group scheme over $k$: then $G$ is the inverse limit of a strict system of commutative algebraic groups indexed by $(\N,\leq)$ (cf. \cite[2.6]{Milne03}). If $k'\subset\kb$ is a Galois extension of $k$, $G(k')$ is endowed with the inverse limit topology (for algebraic group $Q$, $Q(k')$ is given the discrete topology) and we get a continuous action of $\Gal(k'/k)$ on $G(k')$ provided by the $k$-structure of $G$. 
Then, for any continuous $2$-cocycle $(e_{\rho,\tau})\in Z^2_{cts}(\Gal(k'/k),G(k'))$, the resulting extension 
\[1\rightarrow G(k')\rightarrow \fE_{k'}\rightarrow \Gal(k'/k)\rightarrow 1\] 
is a $k'/k$-Galois gerb.
Indeed, the condition (i) of Definition \ref{defn:Galois_gerb} is obvious, and for (ii), we note that
since $G(k')$ has discrete topology, any class in $H^2_{cts}(\Gal(k'/k),G(k'))$ lies in 
$H^2(\Gal(K/k),G(K))$ for a \emph{finite} Galois extension $K$ of $k$, so becomes trivial when restricted to $\Gal(k'/K)$.
Furthermore, by pulling-back along $\Gal(\kb/k)\twoheadrightarrow\Gal(k'/k)$ and push-out via $G(k')\hra G(\kb)$, we obtain a Galois gerb $\fE$ over $k$
\[1\rightarrow G(\kb)\rightarrow \fE\rightarrow \Gal(\kb/k)\rightarrow 1,\]
which we called the inflation of $\fE_{k'}$ to $\kb$. 
Now, one can verify that the corresponding cohomology class in $H^2_{cts}(\Gal(\kb/k),G(\kb))$ is indeed the image of $(e_{\rho,\tau})\in H^2_{cts}(\Gal(k'/k),G(k'))$ under the inflation map $H^2_{cts}(\Gal(k'/k),G(k'))\rightarrow H^2_{cts}(\Gal(\kb/k),G(\kb))$.

\subsection{Pseudo-motivic Galois gerb}

\subsubsection{Local Galois gerbs} \label{subsubsec:Local_Galois_gerbs}
Here, we define a Galois gerb $\fG_v$ over $\Q_v$ for each place $v$ of $\Q$. 

For $v\neq p,\infty$, we define $\fG_v$ to be the trivial Galois gerb $\Gal(\overline{\Q}_v/\Q_v)$:
\[\begin{array} {ccccccccc}
1 & \rightarrow & 1 & \rightarrow & \Gal(\overline{\Q}_v/\Q_v) & \rightarrow & \Gal(\overline{\Q}_v/\Q_v) & \rightarrow & 1.
\end{array}\]

For $v=\infty$, the cocycle $(d_{\rho,\gamma})\in Z^2(\Gal(\C/\R),\C^{\times})$ 
\[d_{1,1}=d_{1,\iota}=d_{\iota,1}=1,\quad d_{\iota,\iota}=-1,\]
where $\Gal(\C/\R)=\{1,\iota\}$, represents the fundamental class in $H^2(\Gal(\C/\R),\C^{\times})$ (\cite{Milne13}). We set $\fG_{\infty}$ to be the (isomorphism class of) Galois gerb defined by this cocycle (or its cohomology class):
\[\begin{array} {ccccccccc}
1 & \rightarrow & \C^{\times} & \rightarrow & \fG_{\infty} & \rightarrow & \Gal(\C/\R) & \rightarrow & 1,
\end{array}\]
So,
$\fG_{\infty}$ has generators $\C^{\times}$ and $w=w(\iota)$ (lift of $\iota$) satisfying that
\[w(\iota)^2=-1\in\C^{\times},\ \text{ and}\quad wzw^{-1}=\iota(z)=\overline{z}\ (z\in\C^{\times}).\]

For $v=p$ also, for any finite Galois extension $K$ of $\Qp$ in $\Qpb$, there is the fundamental class in $H^2(\Gal(K/\Qp),K^{\times})$ (\cite{Milne13}).
For unramified extension $L_n/\Qp$, it is represented by the cocycle: for $0\leq i,j<n$, 
\begin{equation} \label{eq:canonical_fundamental_cocycle}
d_{\overline{\sigma}^i,\overline{\sigma}^j}=\begin{cases}
p^{-1} & \text{if } i+j\geq n, \\
1 & \text{ otherwise, }
\end{cases}
\end{equation} 
where $\overline{\sigma}\in \Gal(L_n/\Qp)$ is the \emph{arithmetic} Frobenius. 

We let $\fG_{p,K}^K$ be the corresponding (isomorphism class of) $K/\Qp$-Galois gerb and $\fG_p^K$ the Galois gerb over $\Qp$ obtained from $\fG_{p,K}^K$ by inflation:
\footnote{Note that our notations for these gerbs differ from those of \cite{Kisin13}: our $\fG^K_{p,K}$ (resp. $\fG^\mbfK_p$) is his $\fG^\mbfK_p$ (resp. $\tilde{\fG}^\mbfK_p$).}
\[\xymatrix{
1 & \rightarrow & K^{\times} & \rightarrow & \fG_{p,K}^K & \rightarrow & \Gal(K/\Qp) & \rightarrow & 1\\ 
1 & \rightarrow & K^{\times} \ar@{=}[u] \ar@{^{(}->}[d] & \rightarrow & \pi_K^{\ast}\fG_{p,K}^K \ar[u] \ar[d] & \rightarrow & \Gal(\Qp/\Qp) \ar@{->>}[u]^{\pi_K} \ar@{=}[d] & \rightarrow & 1\\
1 & \rightarrow & \Gm(\Qpb) & \rightarrow & \fG_p^K & \rightarrow & \Gal(\Qp/\Qp) & \rightarrow & 1.}
\]
Here, $\pi_K^{\ast}\fG_{p,K}^K$ is the pull-back of $\fG_{p,K}^K$ along $\pi_K:\Gal(\Qp/\Qp)\twoheadrightarrow \Gal(K/\Qp)$, and $\fG_p^K$ is the push-out of $\pi_K^{\ast}\fG_{p,K}^K$ along $K^{\times}\hra \Gm(\Qpb)$.

For each $K\subset K'\subset\Qpb$ ($K'$ being a Galois extension of $\Qp$ containing $K$), there exists a homomorphism
\[\fG_p^{K'}\rightarrow \fG_p^K\]
which, on the kernel, is given by $z\mapsto z^{[K':K]}$ (\cite[p.119]{LR87}, 
\cite[Remark B1.2]{Reimann97}). By passing to the inverse limit over $K\supset \Qp$, we obtain a pro-Galois gerb $\fG_p$ over $\Qp$ with kernel $\mathbb{D}=\varprojlim\Gm$ (the protorus over $\Qp$ with character group $X^{\ast}(\mathbb{D})=\Q$). 

For each Galois extension $K\subset\Qpb$ of $\Qp$, we make a choice of a normalized cocycle $(d_{\tau_1,\tau_2}^K)$ on $\Gal(K/\Qp)$ with values in $K^{\times}$ defining $\fG_{p,K}^K$, and fix a section $\tau\mapsto s_{\tau}^K$ to the projection $\fG_{p,K}^K\rightarrow\Gal(K/\Qp)$ with the property that
\[s_{\tau_1}^Ks_{\tau_2}^K=d_{\tau_1,\tau_2}^Ks_{\tau_1\tau_2}^K,\qquad s_{1}^K=1.\]
Since $\fG_p^K$ is obtained from $\fG_{p,K}^K$ by inflation, this gives rise to a section to $\fG_p^K \rightarrow \Gal(\Qpb/\Qp)$, which we also denote by $s^K$.%
\footnote{At the moment, we do not require that one can choose the sections $\tau\mapsto s_{\tau}^K$ in a compatible way for extensions $K\subset K'\subset\Qpb$, which will be however the case (cf. the proof of Theorem \ref{thm:pseudo-motivic_Galois_gerb}).}
By Hilbert 90, any such section $s^K$ is uniquely determined up to conjugation by an element of $K^{\times}$.

\subsubsection{Dieudonn\'e gerb} 
We also need an unramified version of the Galois gerbs $\fG^K_{p,K}$, $\fG_p$. 
Let $\Qpnr$ be the maximal unramified extension of $\Qp$ in $\Qpb$.
For $n\in\N$, we denote by $\fD_n=\fD_{L_n}$ the inflation to $\Qpnr$ of the $L_n/\Qp$-Galois gerb $\fG^{L_n}_{p,L_n}$.
As before, for every pair $m|n$, there exists a homomorphism $\fD_n\rightarrow\fD_m$ which, on the kernel, is given by $z\mapsto z^{n/m}$ (cf. 
\cite{Reimann97}, Remark B1.2). By passing to the inverse limit, we get a pro-$\Qpnr/\Qp$-Galois gerb $\fD$ over $\Qp$ with kernel $\mathbb{D}$. We call $\fD$ the \textit{Dieudonn\'e gerb}. Obviously, the Galois gerb $\fG_p^{L_n}$ (resp. the pro-Galois gerb $\fG_p$) is (equivalent to) the inflation to $\Qpb$ of $\fD_n$ (resp. $\fD$). Again, a choice of a section to $\fG_{p,L_n}^{L_n} \rightarrow \Gal(L_n/\Qp)$ made above gives us a section to $\fD_n \rightarrow \Gal(\Qpnr/\Qp)$ which is again denoted by $s^{L_n}$.

\subsubsection{Unramified morphisms} \label{subsubsec:cls}

For any (connected) reductive group $H$ over $\Qp$, there exists a canonical map
\[\mathrm{cls}_H:\Hom_{\Qpb/\Qp}(\fG_p,\fG_H)\rightarrow B(H),\]
where $B(H)$ is the set of $\sigma$-conjugacy classes of elements in $H(L)$. 

Let $K\subset \Qpb$ be a finite Galois extension. Recall that we fixed a normalized cocycle $(d_{\tau_1,\tau_2}^K)\in Z^2(\Gal(K/\Qp),K^{\times})$ defining $\fG^K_{p,K}$ as well as a section $s^K$ to the projection $\fG^K_{p,K}\rightarrow\Gal(K/\Qp)$ with the property that $s_{\tau_1}^Ks_{\tau_2}^K=d_{\tau_1,\tau_2}^Ks_{\tau_1\tau_2}^K$ and $s_{1}^K=1$; one uses the same notations for the induced cocycle defining $\fG_p^K$ and the induced section $\Gal(\Qpb/\Qp)\rightarrow \fG_p^K$.
A morphism $\theta:\fG_p^K\rightarrow \fG_H$ is said to be \textit{unramified} (with respect to the chosen section $s^K$) if $\theta(s_{\tau}^K)=1\rtimes\tau$ for all $\tau\in\Gal(\Qpb/\Qpnr)$. Note that if $K$ is unramified and $\theta^{\Delta}:\mathbb{G}_{\mathrm{m},\Qpb}\rightarrow H_{\Qpb}$ is defined over $\Qpnr$, this definition does not depend on the choice of the section $s^K$.
A morphism $\theta:\fG_p\rightarrow \fG_H$ is then said to be \textit{unramified} if $\theta$ factors through $\fG_p^K$ for some finite Galois extension $K$ of $\Qp$ such that the induced map $\fG_p^K\rightarrow\fG_H$ is unramified in the just defined sense. 

For a connected reductive group $H$ over $\Qp$, we introduce the associated neutral $\Qpnr/\Qp$-Galois gerb by
\[\fG_H^{\nr}:=H(\Qpnr)\rtimes\Gal(\Qpnr/\Qp).\]

\begin{lem} \label{lem:unramified_morphism}
(1) For any morphism $\theta^{\nr}:\fD\rightarrow\fG_H^{\nr}$ of $\Qp^{\nr}/\Qp$-Galois gerbs, its inflation $\overline{\theta^{\nr}}:\fG_p\rightarrow \fG_H$ to $\Qpb$ is an unramified morphism.

(2) For every morphism $\theta:\fG_p\rightarrow \fG_H$ of $\Qpb/\Qp$-Galois gerbs, there is a morphism $\theta^{\nr}:\fD\rightarrow\fG_H^{\nr}$ of $\Qp^{\nr}/\Qp$-Galois gerbs whose inflation to $\Qpb$ is conjugate to $\theta$. More precisely, if $\theta$ factors through $\fG_p^K$ for a finite extension $K$ of $\Qp$, there is a morphism $\theta^{\nr}:\fD_n\rightarrow\fG_H^{\nr}$ with $n=[K:\Qp]$ whose inflation to $\Qpb$ is conjugate to $\theta$.

(3) A morphism $\theta^{\nr}$ in (2) is determined uniquely up to conjugation by an element of $H(\Qpnr)$.

(4) For every unramified morphism $\theta:\fG_p\rightarrow \fG_H$ of $\Qpb/\Qp$-Galois gerbs, if $b\in H(\Qpb)$ is defined by $\theta(s_{\widetilde{\sigma}})=b\rtimes \widetilde{\sigma}$ for an element $\widetilde{\sigma}\in \Gal(\Qpb/\Qp)$ lifting $\sigma$, we have that $b\in H(\Qpnr)$ and $b$ does not depend on the choice of $\widetilde{\sigma}$.
\end{lem}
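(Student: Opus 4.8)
The plan is to establish part (2) first and deduce the remaining three parts from it; the crux is that every morphism out of $\fG_p$ can be conjugated into \emph{unramified form}. Part (1) is essentially bookkeeping: a morphism $\theta^{\nr}\colon\fD\to\fG_H^{\nr}$ factors through some $\fD_n$ (its restriction to the kernel is an algebraic map $\mathbb{D}\to H$ and hence factors through the $n$-th layer $\Gm$), and its inflation factors as $\fG_p\to\fG_p^{L_n}\to\fG_H$ with the second arrow the inflation of $\theta^{\nr}\colon\fD_n\to\fG_H^{\nr}$; since $\fG_p^{L_n}$ is the inflation of $\fD_n$ and $\fG_H$ the inflation of $\fG_H^{\nr}$, unwinding the pull-back construction shows $s^{L_n}_\tau=1\rtimes\tau$ for $\tau\in\Gal(\Qpb/\Qpnr)$ and that this element is carried to $1\rtimes\tau\in\fG_H$.

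For part (2), I would first note that $\theta$ factors through $\fG_p^K$ for some finite Galois $K/\Qp$ (again because $\theta^\Delta\colon\mathbb{D}\to H$ factors through a finite layer). Then, using the compatibility of the canonical classes with the push-out maps $\fG_p^{K'}\to\fG_p^K$ (which act by $z\mapsto z^{[K':K]}$ on kernels), the class of $\fG_p^K$ in $H^2_{\cts}(\Gal(\Qpb/\Qp),\Gm(\Qpb))=\mathrm{Br}(\Qp)=\Q/\Z$ coincides with that of $\fG_p^{L_n}$ for $n=[K:\Qp]$; hence $\fG_p^K\isom\fG_p^{L_n}$ as $\Qpb/\Qp$-Galois gerbs with kernel $\Gm$, and after composing we may assume $\theta\colon\fG_p^{L_n}\to\fG_H$. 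Now the main step: the $H(\Qpb)$-conjugacy class of $\theta^\Delta\colon\Gm\to H$ is $\Gal(\Qpb/\Qp)$-stable --- a formal consequence of $\theta$ being a gerb morphism, since conjugation by $\theta(s_{\tilde\gamma})$ carries the Galois conjugate of $\theta^\Delta$ back to $\theta^\Delta$ --- and as $H_{\Qpnr}$ is quasi-split (Steinberg, using $\mathrm{cd}(\Qpnr)\le1$), this class has a representative defined over $\Qpnr$; conjugating $\theta$, assume $\theta^\Delta$ is defined over $\Qpnr$. Writing $\theta(s^{L_n}_\tau)=h_\tau\rtimes\tau$ for $\tau\in\Gal(\Qpb/\Qpnr)$, the map $\tau\mapsto h_\tau$ is a continuous $1$-cocycle (because the cocycle $(d^{L_n}_{\tau_1,\tau_2})$ is inflated from $\Gal(L_n/\Qp)$, hence trivial on $\Gal(\Qpb/\Qpnr)$), and the relation $\theta(s_\tau)\theta(z)\theta(s_\tau)^{-1}=\theta(\tau z)$ together with $\theta^\Delta$ being $\Qpnr$-rational forces each $h_\tau$ to centralize $\mathrm{im}(\theta^\Delta)$. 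So $(h_\tau)\in Z^1(\Qpnr,Z_H(\theta^\Delta))$, the centralizer $Z_H(\theta^\Delta)$ is connected reductive, and by the vanishing of $H^1(\Qpnr,\cdot)$ for connected groups the cocycle is a coboundary $h_\tau=g^{-1}\tau(g)$ with $g\in Z_H(\theta^\Delta)(\Qpb)$; replacing $\theta$ by $\Int g\circ\theta$ leaves $\theta^\Delta$ untouched and makes $\theta$ unramified. Finally, one checks that an unramified $\theta\colon\fG_p^{L_n}\to\fG_H$ with $\theta^\Delta$ defined over $\Qpnr$ descends to $\theta^{\nr}\colon\fD_n\to\fG_H^{\nr}$ with inflation $\theta$: in the pull-back picture $s^{L_n}_{\tilde\sigma}=(s^{L_n}_\sigma,\tilde\sigma)$ has first coordinate depending only on $\sigma$, so $\theta(s^{L_n}_{\tilde\sigma})=b\rtimes\tilde\sigma$ with $b\in H(\Qpnr)$ independent of $\tilde\sigma$ (this is part (4)), and $\theta^{\nr}$ is then assembled from $b$ and $\theta^\Delta$.

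Parts (3) and (4) are then formal. For (3): if $\theta^{\nr}_1,\theta^{\nr}_2$ both inflate to morphisms conjugate to $\theta$, choose $g\in H(\Qpb)$ with $\Int g\circ\overline{\theta^{\nr}_1}=\overline{\theta^{\nr}_2}$; both inflations are unramified by (1), so evaluating on $s^{L_n}_\tau=1\rtimes\tau$ yields $g\,\tau(g)^{-1}=1$ for all $\tau\in\Gal(\Qpb/\Qpnr)$, i.e.\ $g\in H(\Qpnr)$, and $\Int g$ already conjugates $\theta^{\nr}_1$ to $\theta^{\nr}_2$ over $\Qpnr$. For (4): after the reduction of (2) the unramified $\theta$ may be assumed to factor through an unramified $\fG_p^{L_n}$; then $\theta^\Delta$ is automatically $\Qpnr$-rational (from $\theta(s^{L_n}_\tau)=1\rtimes\tau$ one reads $\tau(\theta^\Delta(z))=\theta^\Delta(\tau z)$), any two lifts $\tilde\sigma,\tilde\sigma'$ of $\sigma$ give $s^{L_n}_{\tilde\sigma}$ and $s^{L_n}_{\tilde\sigma'}$ with the same first coordinate $s^{L_n}_\sigma$ in the pull-back, so the corresponding $b$'s agree, and $b\in H(\Qpnr)$ since the descended morphism has target $H(\Qpnr)\rtimes\Gal(\Qpnr/\Qp)$.

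The step I expect to be the real obstacle is the ``unramified form'' step in (2): conjugating $\theta$ so as to keep $\theta^\Delta$ rational over $\Qpnr$ \emph{and} kill the inertia cocycle at once. The way around it is the observation that, once $\theta^\Delta$ is $\Qpnr$-rational, the cocycle $(h_\tau)$ automatically lands in the connected centralizer $Z_H(\theta^\Delta)$, so the trivializing element can be chosen inside that centralizer; both external inputs --- a $\Qpnr$-rational representative of a Galois-stable cocharacter class and the vanishing of $H^1(\Qpnr,\cdot)$ for connected groups --- ultimately come from $\mathrm{cd}(\Qpnr)\le1$ and Steinberg's theorem.
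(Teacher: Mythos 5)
Your proof is correct and parts (1), (3), (4) follow essentially the same bookkeeping as the paper's. The one place you diverge is part (2): the paper simply cites \cite[Lemma 2.1]{LR87} for it, whereas you spell out the argument — conjugating $\theta^\Delta$ into $\Qpnr$-rational form via Steinberg's theorem (quasi-splitness plus a rational representative in the Galois-stable cocharacter class), observing the inertia cocycle $\tau\mapsto h_\tau$ then lands in the connected centralizer $Z_H(\theta^\Delta)$, and killing it by vanishing of $H^1$ over the $\mathrm{cd}\le 1$ field $\Qpnr$. This is the argument one expects to underlie the LR87 citation, and it is sound; your observation that the cocycle automatically falls into the centralizer once $\theta^\Delta$ is rationalized (so that one trivialization suffices) is exactly the right resolution of the two-step obstacle you flag. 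One small imprecision: in (4) you say the unramified $\theta$ ``may be assumed'' to factor through $\fG_p^{L_n}$, but for a $\theta$ factoring through $\fG_p^K$ with $K$ ramified this reduction needs the content of (3) (the conjugating element between two unramified morphisms lies in $H(\Qpnr)$) to transport the conclusion back to the original $\theta$ — the paper does this explicitly; you acknowledge it is ``formal'' but it should be invoked.
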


\begin{proof} 
(1) Suppose that $\theta^{\nr}$ factors through $\fD_n$ so that $\overline{\theta^{\nr}}$ factors through $\fG_p^{L_n}$. The $\Qpb/\Qp$-Galois gerb $\fG_p^{L_n}$ is obtained from $\Qpnr/\Qp$-Galois gerb $\fD_n$ by pull-back along $\pi:\Gal(\Qpb/\Qp)\thra \Gal(\Qpnr/\Qp)$, followed by push-out along $\Gm(\Qpnr)\hra\Gm(\Qpb)$. 
To show that $\overline{\theta^{\nr}}$ is unramified, we may consider the morphism $\pi^{\ast}\theta^{\nr}: \pi^{\ast}\fD_n\rightarrow \pi^{\ast}\fG_H^{\nr}$ obtained by pull-back only, as the section to $\fG_p^{L_n}\thra \Gal(\Qpb/\Qp)$ induced, via inflation, from a section to $\fD_n\thra \Gal(\Qpnr/\Qp)$  lands in (the image in the push-out of) the pull-back $\pi^{\ast}\fD_n$. 
But, the pull-back $\pi^{\ast}\fD_n$ is also obtained as the pull-back of the $L_n/\Qp$-Galois gerb $\fG^{L_n}_{p,L_n}$ along the surjection $\Gal(\Qpb/\Qp)\thra \Gal(L_n/\Qp)$, followed by push-out along $\Gm(L_n)\hra\Gm(\Qpnr)$. Then, as the section $s^{L_n}:\Gal(\Qpb/\Qp)\rightarrow \fG_p^{L_n}$ is induced from a section $\Gal(L_n/\Qp)\rightarrow \fG^{L_n}_{p,L_n}$, we have that $s^{L_n}_{\tau}=1$ for all $\tau\in\Gal(\Qpb/L_n)$.
This proves the claim, since by definition $\pi^{\ast}\fD_n\subset \fD_n\times \Gal(\Qpb/\Qp)$ and the pull-back $\pi^{\ast}\theta^{\nr}$ is defined on the second factor $\Gal(\Qpb/\Qp)$ as the identity.

(2) This is Lemma 2.1 of \cite{LR87} (cf. first paragraph on p.167 of loc.cit). The second assertion is shown in the proof of loc.cit.

(3) In general, for any two unramified morphisms $\theta,\theta':\fG_p\rightarrow\fG_H$, if $\theta'=\mathrm{Int}(g_p)\circ \theta$ for some $g_p\in H(\Qpb)$, then it must be that $g_p\in H(\Qpnr)$, since for every $\tau\in\Gal(\Qpb/\Qpnr)$, 
\[1\rtimes\tau=\theta'(s_{\tau}^{K})=g_p\theta(s_{\tau}^{K})g_p^{-1}=g_p(1\rtimes\tau)g_p^{-1}=g_p\tau(g_p^{-1})\rtimes\tau.\]
Here, $K\subset \Qpb$ is some finite Galois extension of $\Qp$ for which both $\theta$ and $\theta'$ factor through $\fG_p^K$.

(4) Let $\theta^{\nr}:\fD\rightarrow\fG_H^{\nr}$ be a morphism of $\Qp^{\nr}/\Qp$-Galois gerbs whose inflation to $\Qpb$ is conjugate to $\theta$. By the proof of (3), we have that $\theta=g_p\overline{\theta^{\nr}}g_p^{-1}$ for some $g_p\in H(\Qpnr)$. So, if $\theta^{\nr}$ factors through $\fD^{L_n}$ for $n\in\N$, $\theta(s_{\widetilde{\sigma}}^{L_n})=g_p\overline{\theta^{\nr}}(s_{\widetilde{\sigma}}^{L_n})g_p^{-1}=g_p\theta^{\nr}(s_{\sigma}^{L_n})g_p^{-1}\in H(\Qpnr)$. Here, $s^{L_n}$ denotes both the section to $\fD^{L_n}\rightarrow \Gal(\Qpnr/\Qp)$ chosen before and the induced section to $\fG_p^{L_n}\rightarrow \Gal(\Qpb/\Qp)$. The second equality is easily seen to follow from the definition of the inflation $\overline{\theta^{\nr}}$ of a morphism $\theta^{\nr}$.
If $(g_p',{\theta^{\nr}}')$ is another pair with $\theta=g_p'\overline{{\theta^{\nr}}'}(g_p')^{-1}$, then we have that \[g_p\theta^{\nr}(s_{\sigma}^{L_n})g_p^{-1}=\theta(s_{\widetilde{\sigma}}^{L_n})=g_p'{\theta^{\nr}}'(s_{\sigma}^{L_n})(g_p')^{-1},\]
so $\theta(s_{\widetilde{\sigma}}^{L_n})=g_p\theta^{\nr}(s_{\sigma}^{L_n})g_p^{-1}$ is independent of the choice of $\widetilde{\sigma}$ as well as that of the pair $(g_p,\theta^{\nr})$.
\end{proof}

\begin{rem}
(1) For a morphism $\theta:\fG_p\rightarrow \fG_H$ of $\Qpb/\Qp$-Galois gerbs, if one chooses a morphism $\theta^{\nr}$ as in (2) and $\theta^{\nr}(s_{\sigma})=b\rtimes\sigma$ for $b\in H(\Qp^{\nr})$, then by (3) the $\sigma$-conjugacy class of $b$ in $H(L)$ is uniquely determined by $\theta$. Also, any other choice $s_{\sigma}'$ of the preimage $s_{\sigma}$ gives the same $\sigma$-conjugacy class, since $s_{\sigma}'=us_{\sigma}\sigma(u^{-1})$ for some $u\in \cO_L^{\times}$ (cf. \cite{LR87}, second paragraph on p.167).

(2) Suppose that $\theta$ is itself unramified, and let $b_1\in H(\Qpnr)$ be defined by $\theta(s_{\widetilde{\sigma}})=b_1\rtimes\widetilde{\sigma}$ for \emph{some} lift $\widetilde{\sigma}\in \Gal(\Qpb/\Qp)$ of $\sigma$. 
Also, let $b\in H(\Qp^{\nr})$ be defined as in (1) for some choice of $\theta^{\nr}:\fD\rightarrow\fG_H^{\nr}$ ($\Qp^{\nr}/\Qp$-Galois gerb morphism). Then (again by Lemma \ref{lem:unramified_morphism}, (3)) the $\sigma$-conjugacy classes of $b$ and $b_1$ are equal.

(3) In \cite[Remark B.12]{Reimann97}, Reimann uses some specific $s_{\sigma}\in \fD$, namely  there exists a unique $s_{\sigma}\in \fD$ whose image $s_{\sigma}^n$ in $\fD_n$, for every $n$, is $p^{-[1/n]}\in \Gm(\Qp^{\nr})\subset \fD_n$ (i.e. equals $p^{-1}$ if $n=1$, or otherwise is $1$),
\footnote{The definition of this element in \cite[(3.3.3)]{Kisin13} is wrong: he asserts that the image of $s_{\sigma}^n$ in $\fG_p^{\Q_{p^n}}$ is $p^{-1}\in \fG_p^{\Q_{p^n}\Delta}$, but it must be $p^{-[1/n]}\in \fG_p^{\Q_{p^n}\Delta}$.}
and one can check that indeed this element maps to $\sigma$ under $\fD\rightarrow\Gal(\Qpnr/\Qp)$. 
\end{rem}

Now, the map $\mathrm{cls}_H$ in question is $\theta\mapsto \overline{b(\theta)}\in B(H)$. Note that this map $\mathrm{cls}$ gives the same element in $B(H)$ for all morphisms $\fG_p\rightarrow\fG_H$ lying in a single equivalence class.

\begin{lem} \label{lem:Newton_hom_attached_to_unramified_morphism}
Let $H$ be a connected reductive group over $\Qp$ and $\theta:\fD\rightarrow \fG_H^{\nr}$ a morphism of $\Qpnr/\Qp$-Galois gerbs. Let $b\in H(\Qpnr)$ be defined by $\theta(s_{\widetilde{\sigma}})=b\rtimes \widetilde{\sigma}$ as in Lemma \ref{lem:unramified_morphism}, (4). Suppose that $\theta$ factors through $\fD^{L_n}$. Then,
the Newton homomorphism $\nu_{b}$ attached to $b\in H(\Qpnr)$ (in the sense of \cite{Kottwitz85}, $\S$4.3) is equal to the quasi-cocharcter
\[-\frac{1}{n}\theta^{\Delta}\quad \in\Hom_{L}(\mathbb{D},G),\]
where $\theta^{\Delta}$ denotes the restriction of $\theta$ to the kernel $\Gm$ of $\fD^{L_n}$.
\end{lem}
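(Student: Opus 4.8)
The plan is to verify, by hand, the three conditions of \cite[$\S$4.3]{Kottwitz85} characterising the Newton homomorphism: I will produce auxiliary data $s>0$, $c\in H(L)$ and a uniformizer $\pi$ for which (i)--(iii) hold with $\nu=-\frac{1}{n}\theta^{\Delta}$ (viewed as a quasi-cocharacter of $H$ over $L$ through $\Gm\subset\mathbb{D}$), after which the uniqueness in that characterisation forces $\nu_{b}=-\frac{1}{n}\theta^{\Delta}$. By the factorisation hypothesis I replace $\theta$ by the induced morphism $\fD^{L_n}\to\fG_H^{\nr}$, still written $\theta$, so that $\theta(s^{L_n}_{\sigma})=b\rtimes\sigma$; and I take $\pi=p$ throughout.

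First I make the structure of $\fD^{L_n}$ explicit. Since $\fD^{L_n}$ is obtained from $\fG^{L_n}_{p,L_n}$ by pull-back along $\Gal(\Qpnr/\Qp)\twoheadrightarrow\Gal(L_n/\Qp)$ and push-out, the preimage in $\fD^{L_n}$ of $\Gal(\Qpnr/L_n)$ is a split extension of $\Gal(\Qpnr/L_n)$ by $\Gm(\Qpnr)$, and the chosen section $s^{L_n}$ restricts on it to the canonical (homomorphic) splitting; write $e_k:=s^{L_n}_{\sigma^{nk}}$ for $k\geq 0$, so $e_ke_l=e_{k+l}$, and $d:=s^{L_n}_{\sigma}$. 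The cocycle (\ref{eq:canonical_fundamental_cocycle}) gives $(s^{L_n}_{\bar\sigma})^{n}=p^{-1}$ in $\fG^{L_n}_{p,L_n}$ (here $\bar\sigma$ is the arithmetic Frobenius), hence $d^{n}=p^{-1}e_1$ in $\fD^{L_n}$, and then, using that $p\in\Qp^{\times}$ is central, $d^{nk}=p^{-k}e_k$ for all $k\geq 1$. Applying $\theta$ and using $\theta(d)=b\rtimes\sigma$ and $\theta|_{\Gm}=\theta^{\Delta}$ yields
\[
(b\rtimes\sigma)^{nk}=\theta^{\Delta}(p)^{-k}\cdot\theta(e_k)\qquad(k\geq 1).
\]

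It remains to choose $k$ so that $\theta(e_k)=1\rtimes\sigma^{nk}$. Restricting $\theta$ to the preimage of $\Gal(\Qpnr/L_n)$ and writing $\theta(s^{L_n}_{\tau})=z_{\tau}\rtimes\tau$, the map $z\colon\Gal(\Qpnr/L_n)\to H(\Qpnr)$ is a continuous $1$-cocycle valued in the discrete group $H(\Qpnr)$, hence locally constant; as $z_{1}=1$, there is $M\in\N$ with $z_{\tau}=1$ for every $\tau\in\Gal(\Qpnr/L_{nM})$. Put $s:=nM$ and $c:=1$. Then $\theta(e_M)=1\rtimes\sigma^{s}$, so the displayed identity with $k=M$ reads $(b\rtimes\sigma)^{s}=\theta^{\Delta}(p)^{-M}\rtimes\sigma^{s}=(s\nu)(p)\rtimes\sigma^{s}$, which is exactly condition (iii). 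Moreover, because $\theta$ is a morphism of Galois gerbs and the conjugation action of $s^{L_n}_{\tau}$ on the kernel $\Gm(\Qpnr)$ is the natural Galois action of $\tau$, one gets $\Int(z_{\tau})\circ{}^{\tau}\theta^{\Delta}=\theta^{\Delta}$ for all $\tau\in\Gal(\Qpnr/L_n)$; specialising to $\tau\in\Gal(\Qpnr/L_{nM})$, where $z_{\tau}=1$, this says $\theta^{\Delta}$ is $\Gal(\Qpnr/L_{nM})$-invariant, i.e. already defined over $L_{nM}=L^{\sigma^{s}}$. Hence $s\nu=-M\theta^{\Delta}$ is an honest cocharacter defined over $L^{\sigma^{s}}$, which gives (i) and (ii) with $c=1$, and by uniqueness $\nu_{b}=-\frac{1}{n}\theta^{\Delta}$.

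The one step requiring real care is this reduction: it is the continuity of the restricted cocycle $z$ that allows one, after replacing $n$ by a suitable multiple, to assume $\theta$ matches the canonical splitting of $\fD^{L_n}$ with that of $\fG_H^{\nr}$, which in turn permits the choice $c=1$; everything else then follows from the single computation above. The remaining bookkeeping — carrying the arithmetic-Frobenius normalisation of (\ref{eq:canonical_fundamental_cocycle}) through the passage from $(s^{L_n}_{\bar\sigma})^{n}=p^{-1}$ to $d^{nk}=p^{-k}e_k$ — is routine; as a check of the sign, when $H=T$ is a torus the computation above specialises to $n\cdot\mathrm{val}(\chi(b))=-\langle\chi,\theta^{\Delta}\rangle$ for every $\Qp$-rational character $\chi$ of $T$, which is consistent with the known formula $\langle\chi,\nu_{b}\rangle=\mathrm{val}(\chi(b))$.
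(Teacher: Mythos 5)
Your proof is correct and self-contained. The paper does not itself prove the lemma — it only cites the \emph{Anmerkung} on p.~197 of \cite{LR87} — so you are supplying an argument that the paper delegates to the literature. Your route is the natural one: produce explicit data $(s,c,\pi)$ verifying the three characterising conditions of \cite[$\S$4.3]{Kottwitz85} for the candidate $\nu=-\tfrac{1}{n}\theta^{\Delta}$ and then invoke uniqueness. The two points that require genuine care are both handled correctly: the identity $d^{nk}=p^{-k}e_k$ follows from the cocycle (\ref{eq:canonical_fundamental_cocycle}) together with the centrality of $p\in\Qp^{\times}$, and the passage to an open subgroup $\Gal(\Qpnr/L_{nM})$ on which the cocycle $\tau\mapsto z_{\tau}$ is trivial uses only the continuity of $\theta$ and the discreteness of the kernel $H(\Qpnr)$. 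It is a nice feature of your argument that this same vanishing of $z_{\tau}$ gives condition (ii) for free, since $\Int(z_{\tau})\circ{}^{\tau}\theta^{\Delta}=\theta^{\Delta}$ then forces $\theta^{\Delta}$ to be $\Gal(\Qpnr/L_{nM})$-invariant, so that $c=1$ works.
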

\begin{proof} See \textit{Anmerkung} on p.197 of \cite{LR87}.

\subsubsection{The Weil-number protorus and the pseudo-motivic Galois gerb} \label{subsubsec:pseudo-motivic_Galois_gerb}

In \cite{LR87}, Langlands and Rapoport work with two kinds of Galois gerbs, the quasi-motivic Galois gerb and the pseudo-motivic Galois gerb. The latter is the Galois gerb whose associated Tannakian category is supposed to be the Tannakian category of Grothendieck motives over $\Fpb$ (\cite{LR87}, $\S$4). The former's major role in loc.cit. is for formulation of the conjecture for the most general Shimura varieties (beyond those with simply-connected derived groups). Here, we will work mainly with the pseudo-motivic Galois gerb. According to \cite[Lemma B3.9]{Reimann97}, this is harmless, at least when the Serre condition for $(G,X)$ holds (i.e. $Z(G)$ splits over a CM field and the weight homomorphism $w_X$ is defined over $\Q$), e.g. if the Shimura datum is of Hodge-type.

Since this hypothesis $(\dagger)$ will be effective largely in most of the statements and our use of the quasi-motivic Galois gerb will be limited to formulation of some definitions, here we discuss the pseudo-motivc Galois gerb in detail and postpone the discussion of the quasi-motivic Galois gerb to the appendix.

The pseudo-motivic Galois gerb is a Galois gerb over $\Q$, which is also the projective limit of Galois gerbs $\fP(K,m)$ over $\Q$, indexed by CM fields $K\subset\Qb$ Galois over $\Q$ and $m\in\N$. The kernel $P(K,m)$ of $\fP(K,m)$ is a torus over $\Q$ whose character group consists of certain Weil numbers. Here, we give a brief review of their constructions. We begin with $P(K,m)$. 
As before, we fix embeddings $\Qb\hra\Qlb$, for every place $l$ of $\Q$. 

Recall that for a power $q$ of a rational prime $p$ and an integer $\nu\in\Z$, a \textit{Weil $q$-number of weight $\nu=\nu(\pi)$} is an algebraic number $\pi$ such that $\rho(\pi)\overline{\rho(\pi)}=q^{\nu}$ for every embedding $\rho:\Q(\pi)\hra\C$. When $K$ is a field containing $\pi$, then for every archimedean place $v$ of $K$, one has
\begin{equation} \label{eq:Weil-number_archimedean_condition}
|\pi|_v=|\prod_{\sigma\in\Gal(K_v/\Q_{\infty})}\sigma\pi|_{\infty}=q^{\frac{1}{2}[K_v:\R]\nu}.
\end{equation}
Here, $|x+\sqrt{-1}y|_v=x^2+y^2$ if $K_v=\C$, while if $K_v=\R=\Q_{\infty}$, $|x|_v$ is the usual absolute value $|x|_{\infty}$ on $\R$ (hence the first equality always holds for any $\pi\in K$).

\begin{defn} \label{defn:Weil-number_torus}
Let $K\subset\Qb$ be a CM-field which is finite, Galois over $\Q$ and $m\in\N$.

(1) The group $X(K,m)$ consists of the Weil $q=p^m$-numbers $\pi$ in $K$ (for some weight $\nu=\nu_1(\pi)$) with the following properties.
\begin{itemize}
\item[(a)] For each prime $v$ of $K$ above $p$, there is $\nu_2(\pi,v)\in\Z$ with
\[|\pi|_v=|\prod_{\sigma\in\Gal(K_v/\Qp)}\sigma\pi|_p=q^{\nu_2(\pi,v)}.\]
\item[(b)] At all finite places outside $p$, $\pi$ is a unit.
\end{itemize}

(2) Let $X^{\ast}(K,m)$ be the quotient of $X(K,m)$ (which is finitely generated by Dirichlet unit theorem) by the finite group of roots of unity contained therein (so that $X^{\ast}(K,m)$ is torsion free).  
Let $P(K,m)$ be the $\Q$-torus whose character group $X^{\ast}(P(K,m))$ is $X^{\ast}(K,m)$.
\end{defn}

The point of the condition (a), while the first equality is always true (for any $\pi\in K$), is that $|\pi|_v$ is an \emph{integral} power of $q$ (which however may well depend on $v$). One also has 
\[\nu_2(\pi,v)+\nu_2(\pi,\overline{v})=-[K_v:\Qp]\nu_1(\pi),\]
since $\pi\overline{\pi}=q^{\nu_1}$ ($K$ being a CM field, the complex conjugation $\overline{\cdot}$ of $K$ lies in the center of $\Gal(K/\Q)$).

If necessary, to avoid any misunderstandings, we write $\chi_{\pi}$ for the character of $P(K, m)$ which corresponds to a Weil number $\pi\in X(K, m)$. Recall that we fixed embeddings $\Qb\rightarrow\Qpb$, $\Qb\rightarrow\C$. Let $K\subset\Q$ be a Galois CM-field and $v_1$, $v_2$ the thereby determined archimedean and $p$-adic places of $K$, respectively. Then, one can readily see that there exist cocharacters $\nu_1^K$, $\nu_2^K$ in $X_{\ast}(K, m)=X_{\ast}(P(K, m))$ with following properties: 
\begin{eqnarray} \label{eqn:cocharacters_nu^K}
\langle\chi_{\pi},\nu_1^K\rangle&=&\nu_1(\pi), \\
\langle\chi_{\pi},\nu_2^K\rangle&=&\nu_2(\pi,v_2). \nonumber
\end{eqnarray}
A priori, $\nu_1^K$ and $\nu_2^K$ are defined over respectively $K_{v_1}=\C$ and $K_{v_2}\subset\Qpb$, but one readily sees from their definition that
they are defined over respectively $\Q$ and $\Qp$.
Furthermore, for $K\subset K'$ and $m|m'$ (divisible), there exist maps of tori over $\Q$
\[\phi_{K,K'}:P(K',m)\rightarrow P(K,m),\quad \phi_{m,m'}:P(K,m')\rightarrow P(K,m)\]
induced by $\phi_{K,K'}^{\ast}(\pi)=\pi$ and $\phi_{m,m'}^{\ast}(\pi)=\pi^{m'/m}$ for $\pi\in X^{\ast}(K,m)$, and they satisfy that $\phi_{m,m'}(\nu_i')=\nu_i$ and $\phi_{K,K'}(\nu_i')=[K'_{v_i'}:K_{v_i}]\nu_i$ (\cite[p.141]{LR87}). Let $P^K:=\varprojlim_{m|m'}P(K,m)$. This protorus over $\Q$ is in fact a torus (\cite{LR87}, Lemma 3.8), with character group $X^{\ast}(P^K)=\varinjlim X^{\ast}(K,m)$ and which splits over $K$. Let $\nu_1^K$,$\nu_2^K$ be the induced cocharacters of $P^K$. 

The triple $(P^K,\nu_1^K,\nu_2^K)$ is characterized by a universal property:

\begin{lem}  \label{lem:Reimann97-B2.3}
(1) For every CM-field $K\subset\Qb$ which is Galois over $\Q$, $(P^K,\nu_1^K,\nu_2^K)$ is an initial object in the category of all triples $(T,\nu_{\infty},\nu_p)$ where $T$ is a $\Q$-torus which splits over $K$, and, $\nu_{\infty}$ and $\nu_p$ are cocharacters of $T$ defined over $\Q$ and $\Qp$, respectively, and such that 
\[\Tr_{K/K_0}(\nu_p)+[K_{v_2}:\Q_p]\nu_{\infty}=0,\]
where $K_0$ is the totally real subfield of $K$ of index $2$. 

(2) There exists a set $\{\delta_n\}$ with $m|n$, $n$ sufficiently large, of distinguished elements in $P(K,m)(\Q)$ such that for every $\pi\in X^{\ast}(K,m)=X^{\ast}(P(K,m))$, 
\[\chi_{\pi}(\delta_n)=\pi^{\frac{n}{m}},\]
($\frac{n}{m}$ should be divisible by the torsion order of $X(K,m)$) and that, when $K\subset K'$ and $m|m'$ (divisible),
\[\phi_{m,m'}(\delta_n)=\delta_n,\quad \phi_{K,K'}(\delta_n)=\delta_n.\]
Moreover, the set $\{\delta_m^k\ |\ k\in\Z\}$ is Zariski-dense in $P(K,m)$.
\end{lem}

\begin{proof} 
For (1), see \cite[B2.3]{Reimann97}. For (2), if the subset $\{\pi_1,\cdots,\pi_r\} \subset X(K,m)$ forms a basis of $X^{\ast}(K,m)$ (up to torsions) with dual basis $\{\pi_1^{\vee},\cdots,\pi_r^{\vee}\} \subset X(K,m)^{\vee}=X_{\ast}(P(K,m))$, we set $\delta_n:=\sum \pi_i^{n/m}\otimes\pi_i^{\vee}\in \Gm(\Qb)\otimes X_{\ast}(P(K,m))=P(K,m)(\Qb)$, then it clearly satisfies the required properties, cf. \cite[p.142]{LR87}. The last property is Lemma 5.5 of \cite{LR87}; it is stated for a different torus $Q(K,m)$, but the proof carries over to $P(K,m)$. 
\end{proof} 

Set $P:=\varprojlim_K P^K$ (protorus). It is equipped with two morphisms $\nu_1:=\varprojlim_K\nu_1^K:\mathbb{G}_{\mathrm{m}}\rightarrow P$ (defined over $\Q$), $\nu_2:=\varprojlim_K\nu_2^K:\mathbb{D}\rightarrow P_{\Qp}$ (defined over $\Qp$). Often, $\nu_1$ and $\nu_2$ are also denoted by $\nu_{\infty}$ and $\nu_p$, respectively.

\begin{thm} \label{thm:pseudo-motivic_Galois_gerb}
(1) There exists a Galois gerb $\fP$ over $\Q$ together with morphisms $\zeta_v:\fG_v\rightarrow \fP(v)$ for all places $v$ of $\Q$ such that
\begin{itemize}
\item[(i)] $(\fP^{\Delta},\zeta_{\infty}^{\Delta},\zeta_{p}^{\Delta})=(P_{\Qb},(\nu_1)_{\C},(\nu_2)_{\Qpb})$, the identifications being compatible with the Galois actions of $\Gal(\overline{\Q}/\Q)$, $\Gal(\C/\R)$, and $\Gal(\Qpb/\Qp)$ respectively;
\item[(ii)] the morphisms $\zeta_v$, for all $v\neq \infty,p$, are induced by a section of $\fP$ over $\Spec(\overline{\A_f^p}\otimes_{\A_f^p}\overline{\A_f^p})$;
\end{itemize}
where $\overline{\A_f^p}$ denotes the image of the map $\Qb\otimes_{\Q}\A_f^p\rightarrow \prod_{l\neq\infty,p}\Qlb$. 

(2) If $(\fP',(\zeta_v'))$ is another such system, there exists an isomorphism $\alpha:\fP\rightarrow \fP'$ such that, for all $v$, $\zeta_v'$ is isomorphic to $\alpha\circ\zeta_v$, and any to $\alpha$'s arising in this way are isomorphic.

(3) There is a surjective morphism $\pi:\fQ\rightarrow\fP$ such that, for all $l$, $\zeta_l^P$ is algebraically equivalent to $\pi\circ\zeta_l^Q$ (\cite[Def. B1.1]{Reimann97}), where $\fQ$ is the quasi-motivic Galois gerb (cf. Appendix \ref{sec:quasi-motivic_Galois_gerb}).
\end{thm}

\begin{proof} 
(1) and (2): In \cite[$\S$3]{LR87}, Langlands and Rapoport first define, for each CM Galois field $K$, a Galois gerb $\fP^K$ with kernel $P^K$ which, for every place $v$ of $\Q$, is equipped with morphisms  $\zeta_v=\zeta_v^{K_w}:\fG_v^{K_w}\rightarrow \fP^K(v)$ whose restrictions to the kernels are $\nu_v^K$ for $v=\infty,p$, where $w$ is a place of $K$ above $v$. Then, they define $\fP$ as the projective limit of $\fP^K$'s; this requires choosing a place of $\Qb$ above each place $v$ of $\Q$. 
The construction of $\fP^K$ is a direct consequence of their Satz 2.2, but that of $\fP$ is more delicate: for example, for a projective system of algebraic tori $\{T_n\}_{n\in\N}$ over a field $F$, the natural map $H^2_{cts}(F,\varprojlim T_n)\rightarrow \varprojlim_n H^2(F,T_n)$ is not bijective in general (cf. \cite[Prop. 2.8]{Milne03}).
A proper treatment of the construction of $\fP$ can be found in \cite{Milne03}, 
(see also the proof of Theorem B 2.8 of \cite{Reimann97}, where Reimann constructs the quasi-motivic Galois gerb $\fQ$, but the whole arguments should carry over to $\fP$ too, since all the corresponding cohomological facts remain valid).
In more detail, for $v=p,\infty$, let $d_v^K$ be the image in $H^2(\Qv,P^K)$ of the fundamental class of the field extension $K_v/\Qv$ under the map $\nu_v^K$, where $K_v$ denotes (by abuse of notation) the completion of $K$ at the place induced by the embedding $\Qb\hra\Qvb$ (so, $K_{\infty}=\C$). Then, 
the Galois gerb $\fP^K$ corresponds to a cohomology class in $H^2(\Q,P^K)$ with image
\[(0,d_p^K,d_{\infty}^K)\in H^2(\A^{\{p,\infty\}},P^K)\times H^2(\Qp,P^K)\times H^2(\Q_{\infty},P^K).\]
The same statement holds for $\fP$, too (cf. \cite[$\S$4]{Milne03}). 
The work of Langland and Rapoport \cite[$\S$3]{LR87} and Milne \cite[(3.5b)]{Milne03} show that there exists a unique element in $H^2(\Q,P^K)$ with that property.
Then, by showing that the canonical maps 
\[H^2_{cts}(\Q,P)\rightarrow \varprojlim_K H^2(\Q,P^K),\quad H^1_{cts}(\Q,P)\rightarrow H^1_{cts}(\A,P)\] are isomorphisms (\cite{Milne03}, Prop. 3.6, Prop. 3.10), 
Milne concludes the existence of $\fP$ as required.  
The statement (3) is proved in \cite{Reimann97}, Theorem B 2.8. 
\end{proof}

\begin{rem} \label{rem:comments_on_zeta_v}
(1) As was remarked in the proof, to construct $\zeta_v$ (for a place $v$ of $\Q$), we need to choose a place a place $w$ of $K$ for each CM field $K$ Galois over $\Q$, in a compatible manner. From now on, when we talk about the pair $(\fP,(\zeta_v)_v)$, we will understand that such choice was already made. Clearly, it is enough to fix an embedding $\Qb\hra\Qvb$.

(2) As was also pointed out in the proof, for every CM field $K$ Galois over $\Q$ and each place $v$ of $\Q$, by construction, $\zeta_v$ induces a morphism $\fG_p^{K_w}\rightarrow \fP^K(v)$ of Galois $\Qv$-gerbs, where $w$ is the pre-chosen place of $K$ above $v$ (cf. \cite[Satz 2.2]{LR87}).

(3) The proof also establishes the existence of a (constinous) section to the projection $\fP\rightarrow\Gal(\Qb/\Q)$. We fix one and denote it by $\rho\mapsto q_{\rho}$.
\end{rem}

\subsection{The morphism $\psi_{T,\mu}$ and admissible morphisms}  

We consider the $\Q$-pro-torus $R:=\varprojlim_{L}\mathrm{Res}_{L/\\Q}(\mathbb{G}_{\mathrm{m},L})$ ($L$ running through the set of all Galois extensions of $\Q$ inside $\Qb$). Its character group $X^{\ast}(R)$ is naturally identified with the set of all continuous maps $f:\Gal(\Qb/\Q)\rightarrow\Z$, where the Galois action is given by $\rho(f)(\tau)=f(\rho^{-1}\tau),\ \forall\rho,\tau\in\Gal(\Qb/\Q)$.

\begin{lem} \label{lem:defn_of_psi_T,mu}
(1) For any $\Q$-torus $T$ and every cocharacter $\mu$ of $T$, there exists a unique homomorphism $\xi:R\rightarrow T$ such that $\mu=\xi\circ\mu_0$, where $\mu_0\in X_{\ast}(R)$ is defined by $\langle f, \mu_0\rangle =f(id)\in\Z$ for $f\in X^{\ast}(R)$. 

We define 
\[\psi_{T,\mu}:\fQ\rightarrow\fG_T\]
to be the composite of $\psi:\fQ\rightarrow\fG_R$ (cf. Theorem \ref{thm:Reimann97-B.2.8}) and the morphism $\fG_R\rightarrow \fG_T$ induced by $\xi:R\rightarrow T$.

(2) $\psi_{T,\mu}$ factors through $\fP$ if $\mu$ satisfies the Serre condition: if $K$ is the field of definition of $\mu$, 
\[(\rho-1)(\iota+1)\mu=(\iota+1)(\rho-1)\mu=0,\quad \forall\rho\in \Gal(K/\Q)\]
(e.g. if $T$ splits over a CM-field and the \textit{weight} $\mu\cdot\iota(\mu)$ of $\mu$ is defined over $\Q$).
If furthermore $K$ splits $T$, $\psi_{T,\mu}$ factors through $\fP^K$.

(3) The restriction $\psi_{T,\mu}^{\Delta}: \fQ^{\Delta}=Q_{\Qb}\rightarrow \fG_T^{\Delta}=T_{\Qb}$ of $\psi_{T,\mu}$ to the kernels is defined over $\Q$.
\end{lem}

\begin{proof} 
For (1) and (2), see \cite{Reimann97}, Definition B 2.10 and Remark B 2.11. The last statement of (2) follows from the very construction of $\psi_{T,\mu}$ in Satz 2.2, 2.3 of \cite{LR87} (which is equivalent to that of Reimann, \cite{Reimann97}, at least when it factors through $\fP$).

For (3), it is enough to show that the morphism $\psi:\fQ\rightarrow \fG_R$ is defined over $\Q$. But, this restriction is the homomorphism  $\psi^{\Delta}$ in (\ref{eq:morphism_from_Q_to_R}) (Theorem \ref{thm:Reimann97-B.2.8}), from which the claim is obvious. 
\end{proof}

Let $v$ a place of $\Q$ (mainly, one of $p,\infty$), $T$ a torus over $\Qv$, and $\mu\in X_{\ast}(T)$. 
Suppose that $T$ splits over a finite Galois extension $F$ of $\Qv$. Set 
\[\nu^F:=\sum_{\tau\in\Gal(F/\Q_{v})}\tau\mu,\]and let 
\[1\rightarrow F^{\times}\rightarrow W_{F/\Q_{v}}\rightarrow\Gal(F/\Q_{v})\rightarrow 1\] be the Weil group extension of $F/\Q_{v}$ (cf. \cite{Tate79}); we fix a section $s^{F}_{\rho}$ to the projection $W_{F/\Q_{v}}\rightarrow \Gal(F/\Q_{v})$ so that $d^F_{\rho,\tau}:=s_{\rho}\rho(s_{\tau})s_{\rho\tau}^{-1}$ is a cocycle defining $W_{F/\Q_{v}}$.

\begin{defn}  \label{defn:psi_T,mu}
We define $\xi_{\mu,F}^F:W_{F/\Q_{v}}\rightarrow T(F)\rtimes\Gal(F/\Q_{v})$ by
\begin{eqnarray*}
\xi_{\mu,F}^F(z)&=&\nu^F(z)\quad (z\in F^{\times}),\\
\xi_{\mu,F}^F(s^F_{\rho})&=&\prod_{\tau\in\Gal(F/\Q_{v})}\rho\tau\mu(d^F_{\rho,\tau})\rtimes\rho.
\end{eqnarray*}
\end{defn}

One easily checks that $\xi_{\mu,F}^F$ is a homomorphism (cf. \cite[p.134]{LR87}, \cite{Milne92}, Lemma 3.30 - Example 3.32). By obvious pull-back and push-out, one gets a morphism of Galois gerbs over $\Q_v$: 
\[\xi_{\mu}^F:\fG_v^{F}\rightarrow\fG_T,\] 
(where for $v\neq p$, we set $\fG_v^{F}$ to be $\fG_v=\Gal(\Qvb/\Qv)$) and further, by passing to the projective limit, a morphism of Galois gerbs over $\Q_v$:
\[\xi_{\mu}:\fG_v\rightarrow\fG_T,\]
which does not depend on the choice of a field $F$ splitting $T$.
These maps are independent, up to conjugation by an element of $T(\Qvb)$, of the choice of section $s_{\rho}$.

\begin{lem} \label{lem:properties_of_psi_T,mu}
(1) If $v=p$ and $F$ is unramified, $\xi_{\mu}$ is unramified (in the sense of Subsubsection \ref{subsubsec:cls}), and if $\xi_{\mu}(s_{\sigma})=b_{\mu}\rtimes \sigma$ for $b_{\mu}\in T(L)$, one has $\overline{b_{\mu}}=\overline{\mu(p^{-1})}$ in $B(T_{\Qp})$. 

(2) Suppose that $T$ is a torus defined over $\Q$, split over a finite Galois extension $K$ of $\Q$. For each $v=\infty,p$, let $\xi_{\pm\mu}$ be the morphism defined above for $(T_{\Qv},F=K_w,\pm\mu)$.
Then, $\psi_{T,\mu}(\infty)\circ \zeta_{\infty}$ is conjugate to $\xi_{\mu}$, and $\psi_{T,\mu}(p)\circ \zeta_{p}$ is conjugate to $\xi_{-\mu}$. For $v\neq \infty,p$, $\psi_{T,\mu}(\infty)\circ \zeta_{v}$ is conjugate to the canonical neutralization of $\fG_T(v)$. 
\end{lem}

\begin{proof}
(1) See Lemma 4.3 of \cite{Milne92}. (2) This follows from the construction of $\psi_{T,\mu}$ and $\fP(K,m)$, cf. \cite{LR87}, Satz 2.3 and $\S$3 (esp. (3.i)). 
\end{proof}

\subsubsection{Shimura data} Let $(G,X)$ be a Shimura datum. 
For a morphism $h:\dS\rightarrow G_{\R}$ in $X$, the associated \emph{Hodge cocharacter}
\[\mu_{h}:\mathbb{G}_{\mathrm{m}\C}\rightarrow G_{\C}\]
is the composite of $h_{\C}:\dS_{\C}\rightarrow G_{\C}$ and the cocharacter of $\dS_{\C}\cong\mathbb{G}_{\mathrm{m}\C}\times \mathbb{G}_{\mathrm{m}\C}$ corresponding to the identity embedding $\C\hookrightarrow\C$. Let $\{\mu_X\}$ denote the $G(\C)$-conjugacy class of cocharacters of $G_{\C}$ containing $\mu_h$ (for any $h\in X$). For a maximal torus $T$ of $G_{\Qb}$, we can consider $\{\mu_X\}$ as an element of $X_{\ast}(T)/W$. Alternatively, when we fix a based root datum $\mathcal{BR}(G,T,B)$, $c(G,X)$ has a unique representative in the associated closed Weyl chamber $\overline{C}(T,B)$, hence will be also identified with this representative: 
\begin{equation*} \label{eq:representataive_Hodge_cocharacter}
\{\mu_X\}\in \overline{C}(T,B).
\end{equation*}
The \emph{reflex field} $E(G,X)$ of a Shimura datum $(G,X)$ is the field of definition of $c(G,X)\in\mathcal{C}_G(\Qb)$, i.e. the fixed field of the stabilizer of $c(G,X)$ in $\Gal(\Qb/\Q)$; so a reflex field, which is a finite extension of $\Q$, is always a subfield of $\C$. When $T$ is a torus, the reflex field $E(T,\{h\})$ is just the smallest subfield of $\Qb\subset\C$ over which the single
morphism $\mu_h$ is defined.

For each $j\in\N$, we denote by $L_j$ the unramified extension of degree $j$ of $\Qp$ in $\Qpb$, and by $\Qpnr$ the maximal unramified extension of $\Qp$ in $\Qpb$. We let $L$ and $\sigma$ denote the completion of $\Qpnr$ and the absolute Frobenius on it, respectively.

\subsubsection{Strictly monoidal categories $G/\widetilde{G}(\kb)$, $\fG_{G/\widetilde{G}}$}

In order to have a satisfactory formalism without the simply-connected derived group condition, Kisin \cite[(3.2)]{Kisin13} introduced certain strictly monoidal categories. Recall (cf. \cite[App. B]{Milne92}) that a \textit{crossed module} is a group homomorphism $\alpha:\tilde{H}\rightarrow H$ together with an action of $H$ on $\tilde{H}$, denoted by ${}^h\tilde{h}$ for $h\in H$, $\tilde{h}\in \tilde{H}$, which lifts the conjugation action on itself (i.e. 
$\alpha({}^h\tilde{h})=h\alpha(\tilde{h})h^{-1}$ for $h\in H$, $\tilde{h}\in \tilde{H}$) and such that the induced action of $\tilde{H}$ on itself is also the conjugation action (i.e. ${}^{\alpha(\tilde{g})}\tilde{h}=\tilde{g}\tilde{h}\tilde{g}^{-1}$ for $\tilde{g},\tilde{h}\in \tilde{H}$).  A crossed module $\tilde{H}\rightarrow H$ gives rise to a strictly monoidal category $H/\tilde{H}$. Its underlying category is the groupoid whose objects are the elements of $H$ and whose morphisms are given by $\Hom(h_1,h_2)=\{\tilde{h}\in\tilde{H}\ |\ h_2=\alpha(\tilde{h})h_1\}$; thus the set of morphisms is identified with the set $\tilde{H}\times H$.
The monoidal structure $\otimes$ on this groupoid is given on the objects by the group multiplication on $H$ and on the set of morphisms $\tilde{H}\times H$ by the semi-direct product for the action of $H$ on $\tilde{H}$:
\[(\tilde{h}_1,h_1)\otimes (\tilde{h}_2,h_2):=(\tilde{h}_1{}^{h_1}\tilde{h}_2,h_1h_2).\]

We may regard any group $H$ as the strictly monoidal category $H=H/\{1\}$.

Let $k$ be a field with an algebraic closure $\kb$, and $G$ a connected reductive group over $k$. Here, we will use the notation $\tilde{G}$ for the simply connected cover of $G^{\der}$ (which was denoted previously by $G^{\uc}$). Then, the commutator map $[\ ,\ ]:G\times G\rightarrow G$ factors through $[\ ,\ ]:G^{\ad}\times G^{\ad}\rightarrow G$. In particular, as $G^{\ad}=\tilde{G}^{\ad}$, we get a map $[\ ,\ ]:G^{\ad}\times G^{\ad}\rightarrow \tilde{G}$ (\cite[2.0.2]{Deligne77}).
It follows that the conjugation action of $\tilde{G}$ on itself extends to an action of $G$, and thus
the natural map $\tilde{G}\rightarrow G$ has a canonical crossed module structure.
We write $G/\widetilde{G}(\kb)$ for the resulting strictly monoidal category $G(\kb)/\widetilde{G}(\kb)$, and 
$\fG_{G/\widetilde{G}}$ for the strictly monoidal category $\fG_G/\tilde{G}(\kb)$.

\subsubsection{Admissible morphisms}
Let $(G,X)$ be a Shimura datum with reflex field $E\subset\C$. We fix an embedding $\Qb\hra\Qvb$ for every  place $v$. Suppose given a parahoric subgroup $\mbfK_p\subset G(\Qp)$; there exists a unique $\sigma$-stable parahoric subgroup $\mbfKt_p$ of $G(L)$ such that $\mbfK_p=\mbfKt_p\cap G(\Qp)$.

Fix $h\in X$. Then, there exists a homomorphism of $\C/\R$-Galois gerbs 
\[\xi_{\infty}:\fG_{\infty}\rightarrow\fG_G(\infty)\]
defined by $\xi_{\infty}(z)=w_h(z)=\mu_h\cdot\overline{\mu_h}(z),\ z\in\C^{\times}$ and $\xi_{\infty}(w)=\mu_h(-1)\rtimes \iota$, where $w=w(\iota)$. Clearly, its equivalence class depends only on $X$.

For $v\neq\infty, p$, we have the canonical section $\xi_v$ to $\fG_G(v)\rightarrow\Gal(\overline{\Q}_v/\Q_v)$:
\[\xi_v:\fG_v=\Gal(\overline{\Q}_v/\Q_v) \rightarrow\fG_G(v)\ :\ \rho\mapsto 1\rtimes\rho.\] 

For a cocharacter $\mu$ of $G$, we consider the composite of morphisms of strictly monoidal categories \[\mu_{\tilde{\ab}}:\Gm(\Qb)\stackrel{\mu}{\rightarrow} G(\Qb) \rightarrow G/\tilde{G}(\Qb).\]

For a cocharacter $\mu$ of $G$, the composite
\[\psi_{\mu_{\widetilde{\ab}}}:\fQ \stackrel{i\circ\psi_{\mu}}{\rightarrow}\fG_G\rightarrow \fG_G/\tilde{G}(\kb).\]
(of morphisms of strictly monoidal categories) depends only on the $G(\Qb)$ conjugacy class of $\mu$; 
One easily verifies that this is equal to the morphism denoted by the same symbol in \cite[(3.3.1)]{Kisin13}.

\begin{defn} \label{defn:admissible_morphism} \cite[p.166-168]{LR87}
A morphism $\phi:\fQ\rightarrow\fG_G$ is called \textit{admissible} if
\begin{itemize}
\item[(1)] The composite
\[\phi_{\widetilde{\ab}}:\fQ\stackrel{\phi}{\rightarrow}\fG_G\rightarrow \fG_G/\tilde{G}(\kb)\] 
is conjugate to the composite $\Psi_{\mu_{\widetilde{\ab}}}:\fQ \stackrel{i\circ\psi_{\mu}}{\rightarrow}\fG_G\rightarrow \fG_G/\tilde{G}(\kb)$.
\item[(2)] For every place $v\neq p$ (including $\infty$), the composite $\phi\circ\zeta_v$ is conjugate to $\xi_v$ (by an element of $G(\Qlb)$).
\item[(3)] For some (equiv. any) $b\in G(L)$ in the $\sigma$-conjugacy class $\mathrm{cls}(\phi\circ\zeta_p)\in B(G)$ (\ref{subsubsec:cls}),
the following set (which is a union of affine Deligne-Lusztig varieties) $X(\{\mu_X\},b)_{\mbfK_p}$ is non-empty:
\[X(\{\mu_X\},b)_{\mbfK_p}:=\{g\in G(L)/\mbfKt_p\ |\ \mathrm{inv}_{\mbfKt_p}(g,b\sigma(g))\in\Adm_{\mbfKt_p}(\{\mu_X\})\}.\]
Here, $\Adm_{\mbfKt_p}(\{\mu_X\})$ is the $\{\mu_X\}$-admissible subset (Def. \ref{defn:mu-admissible_subset}) defined for the parahoric subgroup $\mbfKt_p\subset G(L)$ attached to $\mbfK_p$, and
\[\mathrm{inv}_{\mbfKt_p}: G(L)/\mbfKt_p \times G(L)/\mbfKt_p \rightarrow \mbfKt_p\backslash G(L)/\mbfKt_p \cong \tilde{W}_{\mbfKt_p}\backslash \tilde{W}/ \tilde{W}_{\mbfKt_p}\] 
is defined by $(g_1\mbfKt_p,g_2\mbfKt_p)\mapsto \mbfKt_pg_1^{-1}g_2\mbfKt_p$,
cf. (\ref{eqn:parahoric_double_coset}).
\end{itemize}
\end{defn}

Suppose that an admissible morphism $\phi:\fQ\rightarrow\fG_G$ factors through the pseudo-motivic Galois gerb $\fP$. 
As $G$ is an algebraic group, it further factors through $\fP(K,m)$ for some CM field $K$ Galois over $\Q$ and $m\in\N$.

\begin{rem} \label{rem:Kisin's_defn_of_admissible_morphism}
This definition of admissible morphism is slightly different from the original definition by Langlands-Rapoport \cite[p.166]{LR87}. Instead of the condition (1) here, which was introduced by Kisin \cite[(3.3.6)]{Kisin13}, they require the equality $pr\circ\phi=pr\circ i\circ\psi_{T,\mu}$, where $pr:\fG_G\rightarrow \fG_{G^{\ab}}$ is the natural map; so, these conditions are identical if $G^{\der}$ is simply connected. This original condition however turns out to be adequate only in their set-up, i.e. when $G^{\der}$ is simply connected. For example, Satz 5.3 of \cite{LR87} shows that for such $G$, every admissible morphism (in the original sense) is conjugate to a special admissible morphism (in the sense of Lemma \ref{lem:LR-Lemma5.2} below). But, they also show (\cite{LR87}, $\S$6, the first example) that this fact is not true in general with the simply connected condition, cf. \cite{Milne92}, Remark 4.20. In contrast, with the new condition, it turns out to always hold (as shown by Kisin in the hyperspecial level case, and by Thm. \ref{thm:LR-Satz5.3} below for general parahoric levels when $G_{\Qp}$ is quasi-split).
\end{rem}

The following lemma was proved by Langlands-Rapoport for unramified $T$ (cf. \cite[4.3]{Milne92}).

\begin{lem} \label{lem:unramified_conj_of_special_morphism}
Let $T$ be a torus over $\Qp$, split by a finite Galois extension $K$ of $\Qp$, say of degree $n$, and $\mu\in X_{\ast}(T)$. Let $K_1$ be the composite in $\Qpb$ of $K$ and $L_n$ (the unramified extension of $\Qp$ in $\Qpb$ of degree $n=[K:\Qp]$). Then, $\xi_{\mu}^{K_1}$ factors through $\fG^{L_n}_p$: let $\xi_{\mu}^{L_n}:\fG^{L_n}_p\rightarrow \fG_{T}$ be the resulting morphism.
When $\xi_p'$ is an unramified conjugate of $\xi_{\mu}^{L_n}$, 
\[\xi_p'(s_{\rho}^{L_n})=\Nm_{K/K_0}(\mu(\pi^{-1}))\rtimes \rho,\] 
up to conjugation by an element of $T(\Qpnr)$.
Here, $K_0=K\cap L_n\subset K$ is the maximal unramified subextension of $\Qp$, $\pi$ is a uniformizer of $K$, and $\rho$ is any element in $\Gal(\Qpb/\Qp)$ whose restriction to $L_n$ is the Frobenius automorphism $\sigma$.
Moreover, we have the equality in $X_{\ast}(T)_{\Gamma(p)}$:
\[\kappa_{T}(\Nm_{K/K_0}(\mu(\pi^{-1})))=-\underline{\mu},\] 
where $\underline{\mu}$ is the image of $\mu\in X_{\ast}(T)$ in $X_{\ast}(T)_{\Gamma(p)}$ ($\Gamma(p)=\Gal(\Qpb/\Qp)$).
\end{lem}

\begin{proof} 
Let $d^{L_n}_{\rho,\tau}$ denote the canonical fundamental $2$-cocycle defined in (\ref{eq:canonical_fundamental_cocycle}) which represents the fundamental class $u_{L_n/\Qp}=[1/n]\in H^2(L_n/\Qp)\cong \frac{1}{n}\Z/\Z$. Also, for $F=K_1$ and $L_n$, fix a section $s_{\rho}^{F}:\Gal(F/\Qp)\rightarrow W_{F/\Qp}$ to $1\rightarrow F^{\times}\rightarrow W_{F/\Qp}\rightarrow \Gal(F/\Qp)\rightarrow 1$ which induces a $2$-cocycle on $\Gal(F/\Qp)$
\[d^{F}_{\rho,\tau}:=s^{F}_{\rho}\rho(s^{F}_{\tau})(s^{F}_{\rho\tau})^{-1}\in F^{\times}\] 
representing the fundamental class $u_{F/\Qp}\in H^2(F/\Qp)\cong \frac{1}{[F:\Qp]}\Z/\Z$ (for $F=L_n$, the induced $2$-cocycle is required to be the canonical one).
Thus there exists a function $b:\Gal(K_1/\Qp)\rightarrow K_1^{\times}$ such that
\begin{equation} \label{eqn:inflations_of_two_cocycles}
(d^{K_1}_{\rho,\tau})^{[K_1:L_n]}\cdot\partial(b)_{\rho,\tau}=d^{L_n}_{\rho|_{L_n},\tau|_{L_n}},
\end{equation}
where $\partial(b)_{\rho,\tau}:=b_{\rho}\rho(b_{\tau})b_{\rho\tau}^{-1}$.
In terms of these generators, the natural map $p_{L_n,K_1}:\fG^{K_1}_{\Qp}\rightarrow \fG^{L_n}_{\Qp}$ is defined by
\[z\mapsto z^{[K_1:K]}\ (z\in\Qpb^{\times}),\quad s^{K_1}_{\rho}\mapsto \ b_{\rho}^{-1}s^{L_n}_{\rho}.\]
Then, the morphisms $\xi_{\mu}^{K_1}, \xi_{\mu}^{L_n}\circ p_{L_n,K_1}:\fG^{K_1}_{\Qp}\rightarrow\fG_T$ differ from each other by conjugation with an element of $T(\Qpb)$. 

Recall that for $(T,\mu,K_1)$, $\xi_{\mu}^{K_1}:\fG_p\rightarrow \fG_T$ is induced, via obvious pull-back and push-out, from a map $\xi_{\mu,K_1}^{K_1}:W_{K_1/\Qp}\rightarrow T(K_1)\rtimes \Gal(K_1/\Qp)$: for $a\in K_1^{\times}$ and $\rho\in \Gal(K_1/\Qp)$, \[\xi_{\mu,K_1}^{K_1}:a\cdot s_{\rho}^{K_1} \mapsto \nu^{K_1}(a)\cdot c^{K_1}_{\rho}\rtimes\rho,\] where $\nu^{K_1}=\Nm_{K_1/\Qp}\mu\in \Hom_{\Qp}(\Gm,T)$ and $c^{K_1}_{\rho}=\prod_{\tau_1\in\Gal(K_1/\Qp)}(\rho\tau_1\mu)(d^{K_1}_{\rho,\tau_1})$.
Now, for any $x\in T(\Qpb)$, if we define $\psi'_x:\fG^{L_n}_p\rightarrow \fG_T$ by 
\[z\mapsto \nu^K(z),\quad s^{L_n}_{\rho}\mapsto \nu^K(b_{\rho})\cdot c^{K_1}_{\rho}\cdot x\cdot\rho(x^{-1}),\]
where $\nu^K=\Nm_{K/\Qp}\mu$, then it is clear that $\psi'_x\circ\pi_{K_1,L_n}=\Int (x)\circ \xi^{K_1}_{\mu}$. This proves the first claim. Since $\psi'_x=\Int x\circ\psi'_1$, the second statement will follow if there exists $x\in T(\Qpb)$ such that $\psi'_x(s^{L_n}_{\rho})$ equals $\Nm_{K/K_0}(\mu(\pi^{-1}))$ whenever $\rho|_{L_n}=\sigma$. This means that the two elements $\nu^K(b_{\rho})\cdot c^{K_1}_{\rho}$, $\Nm_{K/K_0}(\mu(\pi^{-1}))$ of $T(L')$ have the same image under $\kappa_{T_K}:B(T_K)\isom X_{\ast}(T)_{\Gal(\Qpb/K)}$, where $L'$ is the completion of the maximal unramified extension of $K$ in $\Qpb$ and $B(T_K)$ is the set of $\sigma$-conjugacy classes in $T(L')$ (with respect to the Frobenius automorphism of $L'/K$).
But, as $\kappa_{T_K}$ is induced from $w_{T_{L'}}:T(L')\rightarrow X_{\ast}(T)_{\Gal(\overline{L'}/L')}=X_{\ast}(T)$ (Subsec. \ref{subsubsec:Kottwitz_hom}), in turn it suffices to show the equality of the images under $w_{T_{L'}}$ of $c^{K_1}_{\rho}\cdot\nu^{K}(b_{\rho})$ and $\Nm_{K/K_0}(\mu(\pi^{-1}))$ when $\rho|_{\Qpnr}=\sigma$.

Choose a set of representatives $\Gamma_1\subset\Gal(K_1/\Qp)$ for the family of left cosets $\Gal(K_1/\Qp)/\Gal(K_1/L_n)$ (so that restriction to $L_n$ gives a bijection $\Gamma_1\isom\Gal(L_n/\Qp)$) and $\rho\in\Gal(K_1/\Qp)$ such that $\rho|_{L_n}=\sigma$. Then, we get
\begin{eqnarray*}
\prod_{\tau_1\in\Gal(K_1/\Qp)}(\rho\tau_1\mu)(\mathrm{Inf}_{L_n}^{K_1}(d^{L_n/\Qp})_{\rho,\tau_1})&=&\prod_{\tau\in\Gamma_1}\prod_{\gamma\in\Gal(K_1/L_n)}(\rho\tau\gamma\mu)(\mathrm{Inf}_{L_n}^{K_1}(d^{L_n/\Qp})_{\rho,\tau\gamma})\\
&=&\rho\prod_{\tau\in\Gal(L_n/\Qp)}(\tau(\Nm_{K_1/L_n}\mu))(d^{L_n}_{\rho|_{L_n},\tau}) \\
&=&\prod_{0\leq i\leq n-1}(\sigma^{i+1}(\Nm_{K_1/L_n}\mu))(d^{L_n}_{\sigma,\sigma^i}) \\
&=&(\Nm_{K_1/L_n}\mu)(p^{-1})=(\Nm_{K/K_0}\mu)(p^{-1}).
\end{eqnarray*}
Here, the last equality $\Nm_{K_1/L_n}\mu=\Nm_{K/K_0}\mu$ (in $X_{\ast}(T)$) holds since $\mu$ is defined over $K$ and restriction to $K$ is a bijection $\Gal(K_1/L_n)\isom\Gal(K/K_0)$.
Then, by taking $\prod_{\tau\in\Gal(K_1/\Qp)}(\rho\tau\mu)$ on both sides of (\ref{eqn:inflations_of_two_cocycles}), we obtain 
\[(c^{K_1}_{\rho}\cdot\nu^{K}(b_{\rho}))^{[K_1:K]}\cdot\rho(f)f^{-1}=(\Nm_{K/K_0}\mu)(p^{-1}),\]
where $f=\prod_{\tau\in\Gal(K_1/\Qp)}\tau\mu(e_{\tau})$. 
Now applying $w_{T_{L'}}$ to both sides, we get 
\begin{eqnarray*}
[K_1:K] w_{T_{L'}}(c^{K_1}_{\rho}\cdot\nu^{K}(b_{\rho}))&=& w_{T_{L'}}((\Nm_{K/K_0}\mu)(p^{-1}))\\
&\stackrel{(\ast)}{=}&[K:K_0] w_{T_{L'}}(\Nm_{K/K_0}(\mu(\pi^{-1}))).
\end{eqnarray*}
Due to the property \cite[(7.3.2)]{Kottwitz97} of the map $w$, the equality $(\ast)$ is deduced from the following stronger formula (comparing the images under $w_{T_L}$, instead of $w_{T_{L'}}$):
\begin{equation} \label{eqn:comparison_of_two_norms}
 w_{T_L}(\Nm_{K/K_0}(\mu)(p))=[K:K_0] w_{T_L}(\Nm_{K/K_0}(\mu(\pi))).
\end{equation}
Here, $\Nm_{K/K_0}(\mu)\in X_{\ast}(T)^{\Gal(K/K_0)}$ so $\Nm_{K/K_0}(\mu)(p)\in \mathrm{Im}(K_0^{\times}\rightarrow T(K_0))$, while $\Nm_{K/K_0}(\mu(\pi))$ is the image of $\mu(\pi)\in T(K)$ under the norm map $T(K)\rightarrow T(K_0)$. 
To show this formula, by functoriality for tori $T$ endowed with a cocharacter $\mu$, it is enough to prove this formula in the universal case $T=\Res_{K/\Qp}\Gm$ and $\mu=\mu_K$, the cocharacter of $T_K=(\Gm)^{\oplus \Hom(K,K)}$ corresponding to the identity embedding $K\hra K$. Note that in this case $w_{T_L}=v_{T_L}$ as $X_{\ast}(T)$ is an induced $\Gal(K/\Qp)$-module (Subsec. \ref{subsubsec:Kottwitz_hom}).
For any extension $E\supset K$, Galois over $\Qp$, there exists a canonical isomorphism
$T_{E}\cong (\mathbb{G}_{\mathrm{m},E})^{\oplus\Hom(K,E)}$ (product of copies of $\mathbb{G}_{\mathrm{m},E}$, indexed by $\Hom(K,E)$) such that $\tau\in\Gal(E/\Qp)$ acts on $T(E)=(E^{\times})^{\oplus\Hom(K,E)}$ by
 $\tau (x_{\rho})_{\rho\in \Hom(K,E)}=(\tau (x_{\rho}))_{\tau\circ\rho}$. Then, $\mu_K=(f_{\rho})_{\rho}\in\prod_{\rho\in\Hom(K,K)}X_{\ast}(\Gm)$, where $f_{\rho}=1\in X_{\ast}(\Gm)=\Z$ if $\rho$ is the inclusion $K\hra E$, and $f_{\rho}=0$ otherwise. So, $\Nm_{K/K_0}(\mu_K)$ is $(f_{\rho})_{\rho}\in\prod_{\rho}X_{\ast}(\Gm)$, where $f_{\rho}=1$ if $\rho|_{K_0}$ is the inclusion $K_0\hra K$, and $f_{\rho}=0$ otherwise, and similarly
$\Nm_{K/K_0}(\mu_K(\pi))=(x_{\rho})_{\rho}$, where $x_{\rho}=\rho(\pi)$ if $\rho|_{K_0}=(K_0\hra K)$, and $x_{\rho}=1$ otherwise.
It follows that the element of $T(K_0)$
\[\Nm_{K/K_0}(\mu_K)(p)\cdot \Nm_{K/K_0}(\mu_K(\pi))^{-[K:K_0]},\]
lies in the maximal compact subgroup of $T(K_0)$,  which is nothing but $\Ker(v_{T_L})\cap T(K_0)$. This proves the equation (\ref{eqn:comparison_of_two_norms}).

Finally, the equality $\kappa_{T}(\Nm_{K/K_0}(\mu(\pi^{-1})))=-\underline{\mu}= $ in $X_{\ast}(T)_{\Gamma(p)}$ is proved in \cite[2.5]{Kottwitz85} (be careful that here we are considering $\kappa_{T}$, not $\kappa_{T_K}$). Or, as in the proof of the equation (\ref{eqn:comparison_of_two_norms}) above, we reduce the proof to the universal case $(T,\mu)=(\Res_{K/\Qp}\Gm,\mu_K)$, in which case the claim becomes a consequence of (\ref{eqn:comparison_of_two_norms}). 
This completes the proof. 
\end{proof}

\begin{lem} \label{lem:LR-Lemma5.2}
Suppose that $G_{\Qp}$ is quasi-split and $\mbfK_p$ is a special maximal parahoric subgroup. Then,
for any special Shimura datum $(T,h:\dS\rightarrow T_{\R})$ satisfying the Serre condition (e.g. if $T$ splits over a CM field and the weight homomorphism $\mu_h\cdot\iota(\mu_h)$ is defined over $\Q$), the morphism $i\circ\psi_{T,\mu_h}:\fP\rightarrow \fG_T\hra\fG_G$ (Lemma \ref{lem:defn_of_psi_T,mu}) is admissible, where $i:\fG_T\rightarrow\fG_G$ is the canonical morphism defined by the inclusion $i:T\hra G$.
\end{lem}

Such admissible morphism $i\circ\psi_{T,\mu_h}$ will be said to be \textit{special}; in our use of this notation, $i$ will be often spared its explanation (or sometimes will be even omitted). This fact was proved in \cite[Lemma 5.2]{LR87} for hyperspecial level subgroups.

\begin{proof} 
The only nontrivial condition in Def. \ref{defn:admissible_morphism} is (3). Let $L$ be a finite Galois extension of $\Q$ splitting $T$ and $v_2$ the place of $L$ induced by the chosen embedding $\Qb\hra\Qpb$.
Put 
\[\nu_p:=(\xi_{-\mu_h}^{L_{v_2}})^{\Delta}=-\sum_{\sigma\in\Gal(L_{v_2}/\Qp)}\sigma\mu_h\quad (\in\Hom_{\Qp}(\Gm,T_{\Qp})),\]
and let $J$ be the centralizer in $G_{\Qp}$ of the image of $\nu_p$. Then, $J$ is a $\Qp$-Levi subgroup of $G_{\Qp}$ which is also quasi-split as $G_{\Qp}$ is so (Lemma \ref{lem:specaial_parahoric_in_Levi}, (1)). Hence, according to Lemma \ref{lem:specaial_parahoric_in_Levi}, there exists $g\in G(\Qp)$ such that $gJ(L)g^{-1}\cap \mbfKt_p$ is a special maximal parahoric subgroup of $gJ(L)g^{-1}$, where $\mbfKt_p\subset G(L)$ is the special maximal parahoric subgroup associated with $\mbfK_p$: it is enough that for a maximal split $\Qp$-torus $S$ of $G_{\Qp}$ contained in $J$, the apartment $\mcA(\Int g(S),\Qp)$ contains a special point in $\mcB(G,\Qp)$ giving $\mbfK_p$. Set $J':=\Int g(J)$.
Then, by Prop. \ref{prop:existence_of_elliptic_tori_in_special_parahorics} (cf. Remark \ref{rem:properties_of_certain_elliptic_tori_in_special_parahorics}), there exists an elliptic maximal $\Qp$-torus $T'$ of $J'$ such that $T'_{\Qpnr}$ contains (equiv. is the centralizer of) a maximal $\Qpnr$-split $\Qpnr$-torus, say $S'_1$, of $J'_{\Qpnr}$ and that the (unique) parahoric subgroup $T'(L)_1=\Ker\ w_{T'_L}$ of $T'(L)$ is contained in $J'(L)\cap \mbfKt_p$. Let $\mu'$ be the cocharacter of $T'$ that is conjugate to $\Int g(\mu_h)$ under $J'(\Qpb)$ and such that it lies in the closed Weyl chamber of $X_{\ast}(T')$ associated with a Borel subgroup of $G_L$ (defined over $L$) containing $T'_L$.

Then, $\Int g\circ\xi_{-\mu_h}=\xi_{-\Int g(\mu_h)}$ and $\xi_{-\mu'}$ are equivalent as homomorphisms from $\fG_p$ to $\fG_{J'}$. This can be proved by the original argument in \cite[Lemma 5.2]{LR87}, but one can also resort to Lemma \ref{lem:equality_restrictions_to_kernels_imply_conjugacy} below as follows. First, we observe that when we take $L$ to be large enough so that $L_{v_2}$ splits $T'$ as well, the following two cocharacters $\Int g(\nu_p)\in X_{\ast}(\Int g(T_{\Qp}))$, $\nu_p'\in X_{\ast}(T')$ are equal:
\begin{equation} \label{eqn:equality_of_two_cochar}
\Int g(\nu_p):=-\sum_{\sigma\in\Gal(L_{v_2}/\Qp)}\sigma (g\mu_hg^{-1})\qquad =\qquad \nu_p':=-\sum_{\sigma\in\Gal(L_{v_2}/\Qp)}\sigma\mu'.
\end{equation}
Indeed, they both map into the center of $\Int g(J)$: this is clear for $\Int g(\nu_p)$ as $J=\Cent(\nu_p)$, while $\nu_p'$ maps into a split $\Qp$-subtorus of $T'$, so into $Z(J')$ (as $T'$ is elliptic in $J'$). So, their equality can be checked after composing them with the natural projection $J'\rightarrow J'^{\ab}=J'/J'^{\der}$, but this is obvious since $\Int g(\mu_h)$ is conjugate to $\mu'$ under $J'(\Qpb)$.
Next, we conclude by applying Lemma \ref{lem:equality_restrictions_to_kernels_imply_conjugacy} to the two morphisms $\Int (vg)\circ\xi_{-\mu_h}$, $\xi_{-\mu'}$, where $v\in J'(\Qpb)$ satisfies $\Int (vg)(T_{\Qp})=T'$.

But, Lemma \ref{lem:unramified_conj_of_special_morphism} tells us that for an unramified conjugate $\xi_p'$ of $\xi_{-\mu'}:\fG_p\rightarrow\fG_{T'}$ under $T'(\Qpb)$, we have that for $\mathrm{inv}_{T'(L)_1}:T'(L)/T'(L)_1\times T'(L)/T'(L)_1\rightarrow T'(L)/T'(L)_1\cong X_{\ast}(T')_{\Gal(\Qpb/\Qpnr)}$, 
\[\mathrm{inv}_{T'(L)_1}(1,\xi_p'(s_{\rho}))=\underline{\mu'},\]
where $\rho\in\Gal(\Qpb/\Qp)$ is a lift of the Frobenius automorphism $\sigma$.
Because $T'(L)_1\subset \mbfKt_p$, it follows that $\mathrm{inv}_{\mbfKt_p}(x_0,\xi_p'(s_{\rho})x_0)$ ($x_0:=1\cdot\mbfKt_p$) is equal to the image of $t^{\underline{\mu'}}$ in $\tilde{W}_{\mbfK_p}\backslash \tilde{W}/ \tilde{W}_{\mbfK_p}\cong X_{\ast}(T')_{\Gal(\Qpb/\Qpnr)}/\tilde{W}_{\mbfK_p}$. It remains to show that $\tilde{W}_{\mbfK_p}t^{\underline{\mu'}} \tilde{W}_{\mbfK_p}\in\Adm_{\mbfKt_p}(\{\mu_X\})$. 

For that, let $\tilde{W}=N(L)/T'(L)_1$ denote the extended affine Weyl group, where $N$ is the normalizer of $T'$ (note that $T'_L$ contains a maximal split $L$-torus $(S'_1)_L$). By our choice of $T'$ (and as $\mbfK_p$ is special), $\tilde{W}$ is a semi-direct product $X_{\ast}(T')_{\Gal(\Qpb/\Qpnr)}\rtimes \tilde{W}_{\mbfKt_p}$, where $\mbfKt_p$ is the special maximal parahoric subgroup of $G(L)$ corresponding to $\mbfK_p$ and $\tilde{W}_{\mbfKt_p}=(N(L)\cap \mbfK_p)/T'(L)_1$ (which maps isomorphically onto the relative Weyl group $W_0=N(L)/T'(L)$). To fix a Bruhat order on $\tilde{W}$, we choose a $\sigma$-stable alcove $\mbfa$ in the apartment $\mcA(S'_1,L)$ containing a special point, say $\mbfo$, corresponding to $\mbfK_p$. The choice of $\mbfa$ and $\mbfo$ give the semi-direct product decomposition $\tilde{W}=W_a\rtimes\Omega_{\mbfa}$, where $W_a$ is the (extended) affine Weyl group of $(G^{\uc},T'^{\uc})$ and $\Omega_{\mbfa}\subset \tilde{W}$ is the normalizer of $\mbfa$, and a reduced root system ${}^{\mbfo}\Sigma$ such that $W_a$ is the affine Weyl group $Q^{\vee}({}^{\mbfo}\Sigma)\rtimes W({}^{\mbfo}\Sigma)$, (cf. Subsec. \ref{subsubsec:EAWG}). Also, the choice of $\mbfa$ fixes a set of simple affine roots for $W_a$, in particular a set ${}^{\mbfo}\Delta$ of simple roots for ${}^{\mbfo}\Sigma$, whose corresponding coroots $\{\alpha^{\vee}\ |\ \alpha\in {}^{\mbfo}\Delta\}$ form a basis of $Q^{\vee}({}^{\mbfo}\Sigma)\cong X_{\ast}(T'^{\uc})_{\Gal(\Qpb/\Qpnr)}$. Then there exists a set $\Delta\subset X^{\ast}(T')$ of simple roots for the root datum of $(G,T')$ with the property that every $\alpha\in{}^{\mbfo}\Delta$ is the restriction of some $\tilde{\alpha}\in\Delta$.
Let $\overline{C}_{\mbfa}\subset X_{\ast}(S'_1)_{\R}$ and $\overline{C}\subset X_{\ast}(T')_{\R}$ be the associated closed Weyl chambers.
It follows that $\pi(\overline{C})\subset \overline{C}_{\mbfa}$, where $\pi$ is the natural surjection $X_{\ast}(T')_{\R}\rightarrow  (X_{\ast}(T')_{\Gal(\Qpb/\Qpnr)})_{\R}=X_{\ast}(S'_1)_{\R}$.
Now, let $\mu_0\in X_{\ast}(T')$ be the conjugate of $\mu_h$ lying in $\overline{C}$ (so its image $\underline{\mu_0}$ in $(X_{\ast}(T')_{\Gal(\Qpb/\Qpnr)})_{\R}$ lies in $\overline{C}_{\mbfa}$). 
Then, it suffices to show that $t^{\underline{\mu'}}\leq t^{\underline{\mu_0}}$ in $\tilde{W}$. 
But, as $\mu'=w\mu_0$ for some $w\in N(\Qpb)/T'(\Qpb)$ and $\mu_0\in \overline{C}$, we have that
$\mu_0-\mu'=\sum_{\tilde{\alpha}\in\Delta}n_{\tilde{\alpha}}\tilde{\alpha}^{\vee}\in X_{\ast}(T'^{\uc})$ with $n_{\tilde{\alpha}}\in\Z_{\geq0}$ (cf. \cite[2.2]{RR96}). 
Since for each $\tilde{\alpha}\in\Delta$ $\pi(\tilde{\alpha}^{\vee})\in \R_{\geq0}\alpha^{\vee}$ for some $\alpha\in {}^{\mbfo}\Delta$, $t^{\underline{\mu_0}}- t^{\underline{\mu'}}=\pi(\mu_0)-\pi(\mu')\in C_{\mbfa}^{\vee}:=\{\sum_{\beta\in{}^{\mbfo}\Delta}c_{\beta}\beta^{\vee}\ |\ c_{\beta}\in\R_{\geq0}\}$, as wanted. 
\end{proof}

\subsection{The Langlands-Rapoport conjeture} \label{subsec:Langalnds-Rapoport conjeture}
In this subsection, we give a formulation of the Langlands-Rapoport conjecture for parahoric levels, following Kisin \cite[(3.3)]{Kisin13} and Rapoport \cite[$\S$9]{Rapoport05}.

For $v\neq p,\infty$, set $X_v(\phi):=\{g_v\in G(\Qvb)\ |\ \phi(v)\circ\zeta_v=\Int(g_v)\circ\xi_v\}$,
and \[X^p(\phi):=\prod_{v\neq\infty,p}{}^{'}\ X_v(\phi),\]
where $'$ denotes the restricted product of $X_v(\phi)$'s as defined in the line 15-26 on p.168 of \cite{LR87}. By condition (2) of Def. \ref {defn:admissible_morphism}, $X^p(\phi)$ is non-empty  (cf. \cite[B 3.6]{Reimann97}), and  
is a right torsor under $G(\A_f^p)$. 

To define the component at $p$, put $\mbfK_p(\Qpnr):=\mcG_{\mbff}^{\mathrm{o}}(\Zpnr)$ for the parahoric group scheme $\mcG_{\mbff}^{\mathrm{o}}$ over $\Zpnr$ attached to $\mbfK_p$ (Subsec. \ref{subsubsec:parahoric}) and for $b\in G(L)$, let $\mbfKt_p\cdot b\cdot \mbfKt_p$ denote the invariant $\mathrm{inv}_{\mbfKt_p}(1,b)$. We also recall that for $\theta_g^{\nr}:\fD\rightarrow\fG^{\nr}_{G_{\Qp}}$ (a morphism of $\Qpnr|\Qp$-Galois gerbs), $\overline{\theta_g^{\nr}}$ denotes its inflation to $\Qpb$, and $s_{\sigma}\in \fD$ is the lift of $\sigma\in\Gal(\Qpnr/\Qp)$ chosen in (\ref{subsubsec:cls}). Then, we set
\begin{eqnarray*}
X_p(\phi):=\{ g\mbfK_p(\Qpnr) \in G(\Qpb)/\mbfK_p(\Qpnr) & | & \phi(p)\circ \zeta_p=\Int(g)\circ \overline{\theta_g^{\nr}}\text{ for some }\theta_g^{\nr}:\fD\rightarrow\fG^{\nr}_{G_{\Qp}} \text{ s.t. } \\
& &\mbfKt_p\cdot b_g\cdot \mbfKt_p\in \Adm_{\mbfKt_p}(\{\mu_X\}),\text{ where }\theta_g^{\nr}(s_{\sigma})=b_g\rtimes\sigma \}.
\end{eqnarray*}
There is also the following explicit description for $X_p(\phi)$. 
Choose $g_0\in G(\Qpb)$ with $g_0\mbfK_p(\Qpnr)\in X_p(\phi)$. Then, using $g_0$ as a reference point, we obtain a bijection
\begin{equation} \label{eqn:X_p(phi)=AffineDL}
X_p(\phi)\isom X(\{\mu_X\},b_{g_0})_{\mbfK_p}\ :\ h\mbfK_p(\Qpnr)\mapsto g_0^{-1}h\mbfKt_p,
\end{equation}
where $b_{g_0}\rtimes\sigma=\theta_{g_0}^{\nr}(s_{\sigma})$ for $\theta_{g_0}^{\nr}$ with $\phi(p)\circ \zeta_p=\Int(g_0)\circ \overline{\theta_{g_0}^{\nr}}$.
Indeed, for $h\in G(\Qpb)$, if $\Int h^{-1}\circ\phi(p)\circ\zeta_p=\Int(g_0^{-1}h)^{-1}\circ \overline{\theta_{g_0}^{\nr}}$ is unramified, $g:=g_0^{-1}h\in G(\Qpnr)$ (cf. proof of Lemma \ref{lem:unramified_morphism}, (3)), and  
\[b_{h}\rtimes\sigma=\theta_{h}^{\nr}(s_{\sigma})=\Int(g^{-1})\circ \theta_{g_0}^{\nr}(s_{\sigma})=g^{-1}b_{g_0}\sigma(g)\rtimes\sigma,\]
and by definition, $g\mbfKt_p\in X(\{\mu_X\},b_{g_0})_{\mbfK_p}$ if and only if $\mbfKt_p\cdot g^{-1}b_{g_0}\sigma(g)\cdot\mbfKt_p\in \Adm_{\mbfKt_p}(\{\mu_X\})$. 
So, $h\in X_p(\phi)$ if and only if $g_0^{-1}h \mbfKt_p\in X(\{\mu_X\},b_{g_0})_{\mbfK_p}$.  
Note that there are natural left (or right) actions of $Z(\Qp)$ on $X_p(\phi)$ and $X(\{\mu_X\},b_{g_0})_{\mbfK_p}$ compatible with the above bijection: $z\in Z(\Qp)$ sends $g\in X(\{\mu_X\},b_{g_0})_{\mbfK_p}$ (resp. $g\mbfK_p(\Qpnr)\in X_p(\phi)$) to $zg$ (resp. to $zg\mbfK_p(\Qpnr)$).

The absolute Frobenius automorphism $F=\theta_g^{\nr}(s_{\sigma})$ on $G(L)$ (sending $g\in G(L)$ to $b\sigma(g)$) induces an action on $G(L)/\mbfKt_p$ as the facet in $\mcB(G_L)$ defining $\mbfKt_p$ is stable under $\sigma$. To see that, choose a maximal ($\Qp$-)split $\Qp$-torus $S$, and a maximal $\Qpnr$-split $\Qp$-torus $S_1$ containing $S$ (which exists \cite[5.1.12]{BT84}). With $T$, $N$ being the centralizer of $S_1$ and its normalizer as usual, the $r$-th Frobenius automorphism 
\[\Phi=F^r\ (r=[\kappa(\wp):\Fp])\ :\ g\mapsto (b_{g_0}\rtimes\sigma)^r(g)=b_{g_0} \sigma(b_{g_0}) \cdots \sigma^{r-1}(b_{g_0})\cdot\sigma^r(g)\]
leaves $X(\{\mu_X\},b_{g_0})_{\mbfK_p}$ stable.
Indeed, one easily checks that $(\Phi g)^{-1}\cdot b_{g_0}\cdot\sigma(\Phi g)=\sigma^r(g^{-1}b_{g_0}\sigma(g))$ and that $\Adm_{\mbfKt_p}(\{\mu_X\})$ is stable under the action of $\sigma^r$ on $\tilde{W}_{\mbfKt_p}\backslash \tilde{W}/\tilde{W}_{\mbfKt_p}$ since $\mu$ is defined over $E_{\wp}$.
Obviously, $\Phi$ commutes with the action of $Z(\Qp)$. 
One readily sees that when translated to an action on $X_p(\phi)$, $\Phi$ acts by $\Phi(g\mbfK_p(\Qpnr))=g\cdot \Nm_{L_r/\Qp}b_g\cdot\mbfK_p(\Qpnr)$; as $b_{gk}=k^{-1}b_g\sigma(k)$ for $k\in G(\Qpnr)$, 
this does not depend on the choice of a representative $g$ in the coset $g\mbfK_p(\Qpnr)$.

Let $I_{\phi}:=\mathrm{Aut}(\phi)$; this is an algebraic group over $\Q$ such that
\begin{equation*}
I_{\phi}(\Q)=\{g\in G(\Qb)\ |\ \mathrm{Int}(g)\circ\phi=\phi\}.
\end{equation*}
It naturally acts on $X^p(\phi)$ from the left and commutes with the (right) action of $G(\A_f^p)$. It also acts on $X_p(\phi)$ via the map (which depends on the choice of $g_0\in X_p(\phi)$)
\[\Int(g_0):I_{\phi}(\Q)\rightarrow \mathrm{Aut}(\theta_{g_0}^{\nr})(\Qp)=\{g\in G(L) |\ \mathrm{Int}(g)\circ\theta_{g_0}^{\nr}=\theta_{g_0}^{\nr}\}\] which is given by $\alpha\mapsto g_0^{-1}\alpha g_0$, and clearly this action commutes with $Z(\Qp)\times \Phi$, too. Finally, we define
\[S(\phi):=\varprojlim_{\mbfK^p} I_{\phi}(\Q)\backslash (X^p(\phi)/\mbfK^p)\times X_p(\phi),\]
where $\mbfK^p$ runs over the compact open subgroups of $G(\Q^p)$. 
This set is equipped with an action of $G(\A_f^p)\times Z(\Qp)$ and a commuting action of $\Phi$, and as such is determined, up to isomorphism, by the equivalence class of $\phi$.

\begin{conj} \label{conj:Langlands-Rapoport_conjecture_ver1}
Suppose that $\mbfK_p\subset G(\Qp)$ is a parahoric subgroup.
Then, there exists an integral model $\sS_{\mbfK_p}(G,X)$ of $\Sh_{\mbfK_p}(G,X)$ over $\cO_{E_{\wp}}$ for which there exists a bijection 
\[\sS_{\mbfK_p}(G,X)(\Fpb)\isom \bigsqcup_{[\phi]}S(\phi)\]
compatible with the actions of $Z(\Qp)\times G(\A_f^p)$ and $\Phi$, where $\Phi$ acts on the left side as the $r$-th (geometric) Frobenius. Here, $\phi$ runs over a set of representatives for the equivalence classes of admissible morphisms $\fQ\rightarrow\fG_G$.
\end{conj}

\begin{rem}
(1) The original conjecture was made under the assumption that $G^{\der}$ is simply connected (due to the expectation that only special admissible morphisms are to contribute to the $\Fpb$-points), cf. Remark \ref{rem:Kisin's_defn_of_admissible_morphism}.
 
(2) In \cite[Conj. 9.2]{Rapoport05}, Rapoport gave another version of this conjecture, using a different definition of admissible morphisms, where the condition (3) of Def. \ref{defn:admissible_morphism} is replaced by the more natural, from the group-theoretical viewpoint, and a priori weaker condition (3') that \textit{the $\sigma$-conjugacy class $\mathrm{cls}_{G_{\Qp}}(\phi(p))$ of $b_{\phi}$ lies in $B(G,\{\mu_X\})$}. Our theorem \ref{thm:non-emptiness_of_NS} together with Theorem A of \cite{He15} establishes the equivalence of these two versions: before, it was known that (3) $\Rightarrow$ (3'). 
\end{rem}

\subsection{Kottwitz triples and Kottwitz invariant} 
Our main references for the material covered here are \cite[p.182-183]{LR87}, \cite[$\S2$]{Kottwitz90}, and \cite{Kottwitz92}.
Here, we make the assumption that $G^{\der}$ is simply connected, one of whose consequences (of heavy use here) is that the centralizer of a semisimple element is connected.

\subsubsection{Kottwitz triple} \label{subsubsec:pre-Kottwitz_triple}
A Kottwitz triple is a triple $(\gamma_0;\gamma=(\gamma_l)_{l\neq p},\delta)$ of elements satisfying certain conditions, where
\begin{itemize} \addtolength{\itemsep}{-4pt} 
\item[(i)] $\gamma_0$ is a semi-simple element of $G(\Q)$ that is elliptic in $G(\R)$, defined up to conjugacy in $G(\overline{\Q})$;
\item[(ii)] for $l\neq p$, $\gamma_l$ is a semi-simple element in $G(\Q_l)$, defined up to conjugacy in $G(\Q_l)$, which is conjugate to $\gamma_0$ in $G(\overline{\Q}_l)$;
\item[(iii)] $\delta$ is an element of $G(L_n)$ (for some $n$), defined up to $\sigma$-conjugacy in $G(L)$, such that the norm $\Nm_n\delta$ of $\delta$  is conjugate to $\gamma_0$ under $G(\overline{L})$, where $\Nm_n\delta:=\delta\cdot\sigma(\delta)\cdots\sigma^{n-1}(\delta)\in G(\Qp)$.
\end{itemize}

There are two conditions to be satisfied by such triple. 
To explain the first one, put 
\[I_0:=\Cent_G(\gamma_0)\] 
(centralizer of $\gamma_0$ in $G$); as $G^{\der}$ is simply connected, it is a connected group. Then, for every place $v$ of $\Q$, we now construct an algebraic $\Q_v$-group $I(v)$ and an inner twisting 
\[\psi_v:(I_0)_{\Qvb}\rightarrow I(v)_{\Qvb}\] 
over $\Qvb$. 
First, for each finite place $v\neq p$ of $\Q$, set $I(v):=\Cent_{G_{\Q_v}}(\gamma_v)$. For any $g_v\in G(\overline{\Q}_v)$ with $g_v\gamma_0g_v^{-1}=\gamma_v$, the restriction of $\Int (g_v)$ to $(I_0)_{\Qv}$ gives us an inner twisting $\psi_v:(I_0)_{\Qv}\rightarrow I(v)$, which is well defined up to inner automorphism of $I_0$. For $v=p$, we define an algebraic $\Qp$-group $I(p)$ by
\[I(p):=\{x\in \Res_{L_n/\Qp}(G_{L_n})\ |\ x^{-1}\delta\theta(x)=\delta\},\]
where $\theta$ is the $\Qp$-automorphism of $\Res_{L_n/\Qp}(G_{L_n})$ induced by the restriction of $\sigma$ to $L_n$. For more details we refer to \cite{Kottwitz82}, where $\theta$ and $I(p)$ are denoted by $s$ (on p. 801) and $I_{s\delta}$ (on p. 802), respectively. Then, Lemma 5.8 of loc. cit. provides an inner twisting $\psi_p:(I_0)_{\Qpb}\rightarrow (I(p))_{\Qpb}$, which is canonical up to inner automorphisms of $I_0$; note that
$\gamma_0\in G(\Qp)$ lies in the stable conjugacy class of $\delta$, as $\gamma_0$ and $\Nm_{L_n/\Qp}(\delta)$ are conjugate under $G(\Qpb)$ (cf. loc.cit. $\S5$).
Finally, at the infinite place, we choose an elliptic maximal torus $T_{\R}$ of $G_{\R}$ containing $\gamma_0$, as well as $h\in X\cap \Hom(\dS,T_{\R})$. We twist $I_0$ using the Cartan involution  $\Int (h(i))$ on $I_0/Z(G)$, and get an inner twisting $\psi_{\infty}:(I_0)_{\C}\rightarrow I(\infty)_{\C}$. Here, $I(\infty)/Z(G)$ is anisotropic over $\R$.

\begin{defn}  \label{defn:Kottwitz_triple}
A triple $(\gamma_0;(\gamma_l)_{l\neq p},\delta)$ as in (i) - (iii) is called a Kottwitz triple of level $n=m[\kappa(\wp):\Fp]$ ($m\geq1$) if it satisfies the following two conditions (iv), ($\ast(\delta)$):
\begin{itemize} \addtolength{\itemsep}{-4pt}
\item[(iv)] There exists a triple $(I,\psi,(j_v))$ consisting of a $\Q$-group $I$, an inner twisting $\psi:I_0\rightarrow I$ and for each place $v$ of $\Q$, an isomorphism $j_v:(I)_{\Qv}\rightarrow I(v)$ over $\Q_v$, unramified almost everywhere, such that $j_v\circ\psi$ and $\psi_v$ differ by an inner automorphism of $I_0$ over $\Qb_v$. 
\item[($\ast(\delta)$)] the image of $\overline{\delta}$ under the Kottwitz homomorphism $\kappa_{G_{\Q_p}}:B(G_{\Q_p})\rightarrow \pi_1(G_{\Q_p})_{\Gamma(p)}$ (\ref{subsubsec:Kottwitz_hom}) is equal to $\mu^{\natural}$ (defined in (\ref{eqn:mu_natural})).
\end{itemize}
\end{defn}

\begin{rem} \label{rem:Kottwitz_triples}
1) Two triples $(\gamma_0;(\gamma_l)_{l\neq p},\delta)$, $(\gamma_0';(\gamma_l')_{l\neq p},\delta')$ as in (\ref{subsubsec:pre-Kottwitz_triple}) with $\delta,\delta'\in G(L_n)$ for (iii)
are said to be \textit{equivalent}, if $\gamma_0$ is stably conjugate to $\gamma_0'$, $\gamma_l$ is conjugate to $\gamma_l'$ in $G(\Ql)$ for each $l\neq p,\infty$, and $\delta$ is $\sigma$-conjugate to $\delta'$ in $G(L_n)$. Then, for two such equivalent triples, one of them is a Kottwitz triple of level $n$ if and only if the other one is so.

2) According to \cite[p.172]{Kottwitz90}, the condition (iv) is satisfied if the Kottwitz invariant $\alpha(\gamma_0;\gamma,\delta)$ (to be recalled below) is trivial.
\end{rem}

We will also consider the following condition:
\begin{itemize}
\item[$(\ast(\gamma_0))$] Let $H$ be the centralizer in $G_{\Qp}$ of the maximal $\Qp$-split torus in the center of $\Cent_{G}(\gamma_0)_{\Qp}$. Then, there exist a maximal $\Q$-torus $T$ of $\Cent_{G}(\gamma_0)$, elliptic over $\R$, and $\mu\in X_{\ast}(T)\cap \{\mu_X\}$, such that $\Nm_{K/\Qp}\mu$ maps into the center of $\Cent_{G}(\gamma_0)_{\Qp}$, where $K$ is a finite Galois extension of $\Qp$ over which $\mu\in X_{\ast}(T)$ is defined, and that
\[\lambda_H(\gamma_0)=n\underline{\mu}\] 
for some $n\in\N$, where $\lambda_H:H(\Qp)\rightarrow (\pi_1(H)_I/\text{torsions})^{\langle\sigma\rangle}$ is the map from Subsec. \ref{subsubsec:Kottwitz_hom} (restriction of $v_{H_L}$ to $H(\Qp)$) and $\underline{\mu}$ denotes the image of $\mu$ in $\pi_1(H)_I/\text{torsions}$.
\end{itemize}

\begin{rem} \label{rem:condition_(ast(gamma_0))}
1) Note that $H$ is a $\Qp$-Levi subgroup of $G_{\Qp}$ with the center being the maximal $\Qp$-split torus in the center of $\Cent_{G}(\gamma_0)_{\Qp}$ ($H$ is the centralizer of its center). In particular, the first sub-condition in $(\ast(\gamma_0))$ is the same as that $\Nm_{K/\Qp}\mu$ maps into the center of $H$, and $H^{\ab}=H/H^{\der}$ is $\Qp$-split. So, as $H^{\der}$ is simply-connected by assumption, the canonical action of $\Gal(\Qpb/\Qp)$ on $\pi_1(H)$ is trivial.
Sometimes we will refer to the number $n$ appearing in $(\ast(\gamma_0))$ as the \textit{level} of that condition (e.g., we will say that the condition $(\ast(\gamma_0))$ holds for $\gamma_0$ with level $n$).

2) A similar condition also appears in \cite[p.183]{LR87} (in the hyperspecial case), where it is denoted by ($\ast(\epsilon)$). There are two differences between their condition ($\ast(\epsilon)$) and ours $(\ast(\gamma_0))$.
In ($\ast(\epsilon)$), there is no requirement that $\Nm_{K/\Qp}\mu$ maps into the center of $\Cent_{G}(\gamma_0)_{\Qp}$, but instead it demands that $\mu$ is defined over the unramified extension $L_n$ of $E_{\wp}$ of degree $m$. The main use of the original condition in \cite{LR87} is 
Satz 5.21 and the subsequent statements depending on it. This theorem Satz 5.21 asserts that in order for a rational element $\gamma_0\in G(\Q)$ to admit an admissible pair $(\phi,\epsilon)$ with $\epsilon$ stably conjugated to $\gamma_0$, it is necessary and sufficient that $\gamma_0$ is elliptic at $\R$ and satisfies their condition ($\ast(\epsilon)$). To the author's opinion,
\footnote{although I will be happy to be pointed out to be wrong.}
their condition ($\ast(\epsilon)$) is insufficient to prove this claim. And, later we will prove this claim with our condition $(\ast(\gamma_0))$ (Thm. \ref{thm:LR-Satz5.21}). 
But, this is not the only value of the new condition. In fact, it is a quite natural condition, as it is satisfied by every Kottwitz triple that is expected, by the conjecture of Langlands-Rapoport, to arise from an $\Fpb$-point (Thm. \ref{thm:LR-Satz5.21}) and is also a crucial condition for an elliptic stable conjugacy class which is $l$-adic unit for every $l\neq p$ to fulfill, in order for it to contain the relative Frobenius of the reduction of a CM-point (Thm. \ref{thm:effectivity_criteria}). In contrast, in Satz 5.25 of \cite{LR87} (theorem corresponding to the former statement), it is not made clear whether an effective Kottwitz triple would satisfy the original condition $(\ast(\epsilon))$.
\end{rem}

\subsubsection{Kottwitz invariant} \label{subsec:Kottwitz_invariant}

We recall the definition of the Kottwitz invariant, cf. \cite{Kottwitz90}, $\S2$. 
Recall out convention for the notations $X_{\ast}(A)$, $X^{\ast}(A)$, and $A^D$: for a locally compact abelian group $A$, they denote the (co)character groups $\Hom(\C^{\times},A)$, $\Hom(A,\C^{\times})$, and the Pontryagin dual group $\Hom(A,\Q/\Z)$, respectively.
Note that the centralizer $I_0$ of $\gamma_0$ in $G$ is a connected group reductive since $\gamma_0$ is semisimple and $G^{\der}$ is simply connected. 
The exact sequence 
\[1\rightarrow Z(\widehat{G})\rightarrow Z(\widehat{I}_0)\rightarrow Z(\widehat{I}_0)/Z(\widehat{G}) \rightarrow 1\]
induces a homomorphism (\cite[Cor.2.3]{Kottwitz84a})
\[\pi_0((Z(\widehat{I}_0)/Z(\widehat{G}) )^{\Gamma})\rightarrow H^1(\Q,Z(\widehat{G})),\]
where $\Gamma:=\Gal(\Qb/\Q)$. Let $\ker^1(\Q,Z(\widehat{G}))$ denote the kernel of the map
$H^1(\Q,Z(\widehat{G}))\rightarrow \prod_v H^1(\Q_v,Z(\widehat{G}))$, and
define $\mathfrak{K}(I_0/\Q)$ to be the subgroup of $\pi_0((Z(\widehat{I}_0)/Z(\widehat{G}))^{\Gamma})$ consisting of elements whose image in $H^1(\Q,Z(\widehat{G}))$ lies in $\ker^1(\Q,Z(\widehat{G}))$. This is known to be a finite group. Also, since $\gamma_0$ is elliptic, there is an identification
\[\mathfrak{K}(I_0/\Q)=\left(\cap_v Z(\widehat{I}_0)^{\Gamma(v)}Z(\widehat{G})\right)/Z(\widehat{G}).\]
The Kottwitz invariant $\alpha(\gamma_0;\gamma,\delta)$ is then a certain character of $\mathfrak{K}(I_0/\Q)$ (i.e. an element of $X^{\ast}(\mathfrak{K}(I_0/\Q))=\mathfrak{K}(I_0/\Q)^D$). It is defined as a product over all places of $\Q$:
\[\alpha(\gamma_0;\gamma,\delta)=\prod_v \beta_v(\gamma_0;\gamma,\delta)|_{\cap_v Z(\widehat{I}_0)^{\Gamma(v)}Z(\widehat{G})},\]
Here, for each place $v$, $\beta_v(\gamma_0;\gamma,\delta)$ is a character on $Z(\widehat{I}_0)^{\Gamma(v)}Z(\widehat{G})$ which is the unique extension of another character $\alpha_v(\gamma_0;\gamma,\delta)$ on $Z(\widehat{I}_0)^{\Gamma(v)}$ with the restriction to $Z(\widehat{G})$ being 
\[
\beta_v|_{Z(\widehat{G})}=
\begin{cases}
\quad \mu_1 &\text{ if }\quad  v=\infty \\
\ -\mu_1 &\text{ if }\quad  v=p \\
\text{ trivial } &\text{ if }\quad  v\neq p,\infty.
\end{cases}
\]
Finally, $\alpha_v(\gamma_0;\gamma,\delta)\in X^{\ast}(Z(\widehat{I}_0)^{\Gamma(v)})$ is defined as follows: For a place $v\neq p,\infty$, choose $g\in G(\Qvb)$ with $g\gamma_0g^{-1}=\gamma_v$. Then $\tau\mapsto g^{-1}\tau(g)$ is a cocycle of $\Gamma(v)$ in $I_0(\Qvb)$, and its cohomology class lies in $\ker[H^1(\Qv,I_0)\rightarrow H^1(\Qv,G)]$. In view of the canonical isomorphism $H^1(\Qv,H)\isom\pi_0(Z(\widehat{I}_0)^{\Gamma(v)})^D$, this gives the desired character of $Z(\widehat{I}_0)^{\Gamma(v)}$. For $v=p$, choose $c\in G(L)$ with $c\gamma_0 c^{-1}=\Nm_n(\delta)$, and set $b:=c^{-1}\delta\sigma(c)$. Then, one readily sees that $b\in I_0(L)$ and its $\sigma$-conjugacy class in $B((I_0)_{\Qp})$ is well-defined (independent of the choice of $c$). Then, we define $\alpha_p(\gamma_0;\gamma,\delta)$ be the image of this class under the canonical map $B((I_0)_{\Qp})\rightarrow X^{\ast}(Z(\widehat{I}_0)^{\Gamma(p)})$. For $v=\infty$, we let $\alpha_{\infty}(\gamma_0;\gamma,\delta)$ to be the image of $\mu_h$ in $\pi_1(I_0)_{\Gamma(\infty)}\cong X^{\ast}(Z(\widehat{I}_0)^{\Gamma(\infty)})$ for some $h\in X$ factoring through a maximal torus $T$ of $G$ containing $\gamma_0$ and elliptic over $\R$. Again, the character defined this way is independent of all the choices made.
One can check that the product $\prod_v \beta_v(\gamma_0;\gamma,\delta)$ is trivial on $Z(\widehat{G})$.

\subsection{Admissible pairs and associated Kottwitz triples}

Recall that our parahoric subgroup $\mbfK_p\subset G(\Qp)$ is defined by a $\sigma$-stable facet $\mbfa$ in the building $\mcB(G,L)$ which also gives rise to a parahoric subgroup $\mbfKt_p\subset G(L)$ with $\mbfK_p=\mbfKt_p\cap G(\Qp)$. To formulate the definition of admissible pair for general parahoric subgroups, we need to consider the orbit space $G(L)/\mbfKt_p$ endowed with the obvious action of the semi-direct product $G(L)\rtimes\langle\sigma\rangle$, instead of the Bruhat-Tits building $\mcB(G,L)$ that was used in the hyperspecial level case; note that unless $\mbfK_p$ is hyperspecial (in which case $\mbfKt_p$ equals the whole stabilizer $\mathrm{Stab}(\mbfo)\subset G(L)$), $G(L)/\mbfKt_p$ is not a subset of $\mcB(G,L)$ in any natural manner.

\begin{defn} (\cite[p.189]{LR87}) \label{def:admissible_pair}
A pair $(\phi,\epsilon)$ is called \textit{admissible (of level $n=[\kappa(\wp):\Fp]m$)} if 
\begin{itemize}
\item[(1)] $\phi:\fQ\rightarrow \fG_G$ is admissible in the sense of Def. \ref{defn:admissible_morphism}.
\item[(2)] $\epsilon\in I_{\phi}(\Q)(=\{g\in G(\Qb)\ |\ \mathrm{Int}(g)\circ\phi=\phi\})$.
\item[(3)] There are $\gamma=(\gamma(v))\in G(\A_f^p)$, $y\in X^p(\phi)$, and $x$ in the $G(L)$-orbit $G(L)/\mbfKt_p$ 
of the point $x^0:=1\cdot \mbfKt_p$ such that
\[\epsilon y=y\gamma\quad \text{ and }\quad \epsilon x=\Phi^mx.\]
\end{itemize}
\end{defn}

Some comments on the condition (3) are in order.
Recall (Lemma \ref{lem:unramified_morphism}) that if $\xi_p'=\Int u\circ\xi_p\ (u\in G(\Qpb))$ is an unramified conjugate of $\xi_p=\phi(p)\circ\zeta_p:\fG_p\rightarrow \fG_G(p)$ and $\xi_p'(s_{\widetilde{\sigma}})=b\rtimes\widetilde{\sigma}$, where $\widetilde{\sigma}\in\Gal(\Qpb/\Qp)$ is a (fixed) lifting of the Frobenius automorphism $\sigma$ of $\Qpnr$ and $\tau\mapsto s_{\tau}$ is the chosen section to the projection $\fG_p\rightarrow\Gal(\Qpb/\Qp)$, then $b$ and $\epsilon':=u\epsilon u^{-1}\in G(\Qpb)$ in fact belong to $G(\Qpnr)$ (as $\xi_p'(s_{\tau})=1\rtimes\tau$ for every $\tau\in\Gal(\Qpb/\Qpnr)$, one has $\epsilon'\rtimes\tau=\epsilon'\cdot \xi_p'(s_{\tau})=\xi_p'(s_{\tau})\cdot \epsilon'=\tau(\epsilon')\rtimes\tau$ for all $\tau\in\Gal(\Qpb/\Qpnr)$).
Then, by existence of $x$ in $X(\{\mu_X\},b)_{\mbfK_p}$ satisfying $\epsilon x=\Phi^mx$ in (3), we mean existence of $u\in G(\Qpb)$ such that $\Int u\circ\xi_p$ is unramified, for which there exists $x$ in the orbit space $G(L)/\mbfKt_p$ with 
\begin{equation} \label{def:admissible_pair_(3)a}
\epsilon'x=\Phi^mx,
\end{equation}
where $\Phi=F^{[\kappa(\wp):\Fp]}$ for $F=(\Int u\circ\xi_p)(s_{\widetilde{\sigma}})=b\rtimes\widetilde{\sigma}$. 
Here, to define $F$, instead of $\xi_p'$, we can also use a Galois $\Qpnr/\Qp$-gerb morphism $\theta^{\nr}$ whose inflation to $\Qpb$ is $\xi_p'$: it does not change $b$ (Lemma \ref{lem:unramified_morphism}).

\begin{rem} \label{rem:admissible_pair}
(1) Suppose that the anisotropic kernel of $Z(G)$ (maximal anisotropic subtorus of $Z(G)$) remains anisotropic over $\R$ (then $Z(G)(\Q)$ is discrete in $Z(G)(\A_f)$); take $\mbfK^p$ to be small enough so that $Z(\Q)\cap \mbfK^p=\{1\}$ and also the condition (1.3.7) of \cite{Kottwitz84b} holds.

Then, the conjecture (\ref{conj:Langlands-Rapoport_conjecture_ver1}) provides the following description of the finite sets $\sS_{\mbfK}(G,X)(\F_{q^m})=[\sS_{\mbfK}(G,X)(\Fpb)]^{\Phi^m}$ for each finite extension $\F_{q^m}$ of $\F_q=\kappa(\wp)$, where $\Phi:=F^{[\kappa(\wp):\Fp]}$ is the relative Frobenius (cf. \cite[1.3-1.5]{Kottwitz84b}, \cite[$\S$5]{Milne92}): There exists a bijection (forming a compatible family for varying $\mbfK^p$)
\begin{equation} \label{eqn:LRconj-ver2} 
\sS_{\mbfK}(G,X)(\F_{q^m})\isom \bigsqcup_{(\phi,\epsilon)} S(\phi,\epsilon), \end{equation} where $S(\phi,\epsilon)=I_{\phi,\epsilon}(\Q)\backslash X_p(\phi,\epsilon)\times X^p(\phi,\epsilon)/K^p$ with 
\begin{eqnarray} \label{eqn:LRconj-ver2_constituents} 
I_{\phi,\epsilon}&=&\text{ centralizer of }\epsilon\text{ in }I_{\phi}(\Q), \\  X^p(\phi,\epsilon)&=&\{ x^p\in X^p(\phi)\ |\ \epsilon x^p=x^p\mod \mbfK^p\}, \nonumber \\  X_p(\phi,\epsilon)&=&\{ x_p\in X(\{\mu_X\},b)_{\mbfK_p}\ |\ \epsilon x_p=\Phi^mx_p\}. \nonumber \end{eqnarray}
In (\ref{eqn:LRconj-ver2}), $(\phi,\epsilon)$ runs over a set of representatives for the conjugacy classes of pairs consisting of an admissible morphism $\phi$ and $\epsilon\in I_{\phi}(\Q)(\subset G(\Qb))$. 

(2) According to this conjecture (\ref{eqn:LRconj-ver2}), an admissible pair $(\phi,\epsilon)$ will contribute to the set $\sS_{\mbfK}(G,X)(\F_{q^m})$ if and only if in the condition (3) one can find $y\in X^p(\phi)(\subset G(\bar{\A}_f^p))$ and a solution $x\in G(L)/\mbfKt_p$ of the equation $\epsilon x=\Phi^mx$, further satisfying that
\begin{equation} \label{eqn:effectivity_admssible_pair_condition}
y^{-1}\epsilon y\in \mbfK^p,\ \text{ and }\ x\in X(\{\mu_X\},b)_{\mbfK_p}
\end{equation}

In regarding this, we emphasize that in the condition (3) of the definition of an admissible pair, $y\in X^p(\phi)$ and $x\in G(L)/\mbfKt_p$ are \emph{not} demanded to satisfy this (seemingly natural) condition (\ref{eqn:effectivity_admssible_pair_condition}); so, Def. 5.8 of \cite{Milne92} is not correct. Indeed, Langlands and Rapoport made this point explicit: \textit{``Es wird \"ubrigens nicht verlangt, dass $x$ in $X_p$ liegt."} \cite[p. 189, line +18]{LR87}. We will just remark that this definition of admissible pair plays an auxiliary role in \cite{LR87} but serves well the intended purpose.

(3) We will call an admissible pair $(\phi,\epsilon)$ $\mbfK^p$-\textit{effective}, respectively $\mbfK_p$-\textit{effective}  if there exist $y\in X^p(\phi)$, respectively $x\in G(L)/\mbfKt_p$ with $\epsilon x=\Phi^mx$, satisfying the conditions (\ref{eqn:effectivity_admssible_pair_condition}). Note that if  $(\phi,\epsilon)$ is $\mbfK^p$-effective and $\epsilon\in G(\A_f^p)$, $\epsilon$ itself lies in a compact open subgroup of $G(\A_f^p)$ (cf. proof of Lemma \ref{lem:invariance_of_(ast(gamma_0))_under_transfer_of_maximal_tori}, (2)).
\end{rem}

Two admissible pairs $(\phi,\epsilon)$, $(\phi',\epsilon')$ are said to be \textit{equivalent} (or \textit{conjugate}) if there exists $g\in G(\Qb)$ such that $(\phi',\epsilon')=g(\phi,\epsilon)g^{-1}$.

\subsubsection{} 
Now, we explain how with any admissible pair $(\phi,\epsilon)$, one can associate a Kottwitz triple $(\gamma_0;(\gamma_l)_{l\neq p},\delta)$, cf. \cite{LR87}, p189. Thus, again we assume that $G^{\der}$ is simply connected; see Lemma 5.13 in  \cite{LR87} and the remarks preceding it where this assumption is really used.

First, we set $\gamma_l:=\gamma(l)$. As when we replace $y$ by $yh, h\in G(\A_f^p)$, $\gamma$ goes over to $h^{-1}\gamma h$, the $G(\A_f^p)$-conjugacy class of $\gamma$ is well-determined by the pair $(\phi,\epsilon)$. Also, one easily sees that if $(\phi',\epsilon')=g(\phi,\epsilon)g^{-1}\ (g\in G(\Qb))$, the corresponding $G(\A_f^p)$-conjugacy classes of $(\gamma_l')_{l\neq p}$, $(\gamma_l)_{l\neq p}$ are the same, since $y\mapsto gy$ gives a bijection $X^p(\phi)\isom X^p(\Int g\circ\phi)$.

Next, to find $\gamma_0$, we use Lemma \ref{lem:LR-Lemma5.23} below, which is a generalization of Lemma 5.23 of \cite{LR87}. If an admissible pair $(\phi,\epsilon)$ satisfies that $\phi=i\circ\psi_{T,\mu_h}$ and $\epsilon\in T(\Q)$ for a special Shimura datum $(T,h)$, then we will say that it is \textit{nested} 
\footnote{This is our translation of the German word \textit{eingeschachtelt} used by Langlands-Rapoport \cite[p.190, line19]{LR87}.}
in $(T,h)$. Hence, this lemma says that every admissible pair is equivalent to an admissible pair that is nested in a special Shimura sub-datum of $(G,X)$, in particular to an admissible pair $(\phi,\epsilon)$ such that $\epsilon\in G(\Q)$.
Then, we define $\gamma_0$ to be any semi-simple rational element of $G$ which is stably conjugate to $\epsilon$ and also lies in a maximal $\Q$-torus $T'$ of $G$ which is elliptic at $\R$, which we now know exists; note that $T'_{\R}$ is elliptic (since $\Inn(h(i))$ defines a Cartan involution of $G^{\ad}_{\R}$ and so of $T'_{\R}/Z_{\R}$ as well), hence this $\gamma_0$ given by Lemma \ref{lem:LR-Lemma5.23} is also semi-simple. By definition, this rational element is well-defined up to stable conjugacy.

Finally, to define $\delta$, choose $u\in G(\Qpb)$ such that $\Int u\circ \xi_p$ is unramified and also satisfying the condition (3) of Def. \ref{def:admissible_pair}.
Let $b\in G(L)$ be defined by $\Int u\circ\xi_p(s_{\widetilde{\sigma}})=b\rtimes\widetilde{\sigma}$, and put $\epsilon'=u\epsilon u^{-1}\in G(L)$.

\begin{lem} \label{lem:Kottwitz84-a1.4.9_b3.3}
(1) Let $\Psi=b\rtimes\sigma^n\in G(L)\rtimes\langle\sigma\rangle$ such that $n\neq0$. Then, $\Psi$ is conjugate under $G(L)$ to an element of $\langle\sigma\rangle$ if and only if $\Psi$ fixes some point in the orbit space $G(L)/\mbfKt_p$. .

(2) Let $H$ be a quasi-split group over a $p$-adic field, either a local field $F$ or the completion $L$ of its maximal unramified extension $F^{\nr}$ in $\overline{F}$. Then, the map $v_{H_L}:H(L)\rightarrow \Hom(X_{\ast}(Z(\widehat{H}))^I,\Z)$ (Subsec \ref{subsubsec:Kottwitz_hom}) vanishes on any special maximal bounded subgroup of $H(L)$ (not just on special parahoric subgroups).
\end{lem}

\begin{proof}
(1) This is Lemma 1.4.9 of \cite{Kottwitz84b} when $x^0$ is a hyperspecial point, and its proof continues to work in our setup: note that the parahoric group scheme over $\Zp^{\nr}$ attached to $x^0$ has connected special fiber, so the result of Greenberg \cite[Prop. 3]{Greenberg63} still applies.

(2) This is (stated and) proved in Lemma 3.3 of \cite{Kottwitz84a} when $H$ is unramified and $F$ is a local field, for the map $\lambda_H$ (recall that in such case, the Kottwitz's definition of $\lambda_H$ coincides with our definition and that $\lambda_H$ is the restriction of $v_{H_L}$ to $H(F)$). But again it is easily seen to hold in general for any quasi-split $H$, for the map $v_{H_L}$ and any special maximal bounded subgroup. More explicitly, the relation (3.3.4) in loc. cit. continues to hold for any quasi-split $H$ over $F$ or $L$, and one just need to note (in the last step of the original argument) that for any maximal split $L$-torus $S$ whose apartment contains a special point $x^0\in\mcB(G,L)$ and $T=\Cent_H(S)$, $T(L)_0:=T(L)\cap K\ (K:=\mathrm{Fix} (x^0))$ (the maximal bounded subgroup of $T(L)$) now equals $\Ker(v_{T})$ (\cite{HainesRapoport08}).  
\end{proof}

\begin{lem} \label{lem:delta_from_b}
There exists $c\in G(L)$ such that $\delta:=cb\sigma(c^{-1})$ lies in $G(L_n)$ and that 
$\Nm_{L_n/\Qp}\delta$ and $\gamma_0$ are conjugate under $G(\Qpnr)$.
\end{lem}

\begin{proof} 
By definition (cf. \ref{def:admissible_pair_(3)a}) and Lemma \ref{lem:Kottwitz84-a1.4.9_b3.3}, there exists $c'\in G(L)$ such that $c'(\epsilon'^{-1}F^n)c'^{-1}=\sigma^n$. If we define $\delta\in G(L)$ by 
\[\delta\rtimes\sigma:=c'Fc'^{-1}=c'(b\rtimes\sigma)c'^{-1},\] 
i.e. $\delta=c'b\sigma(c'^{-1})$, then it follows (as shown on p. 291 of \cite{Kottwitz84b}) that $\delta\in G(L_n)$ and $\sigma^n=c'(\epsilon'^{-1}F^n) c'^{-1}=c'\epsilon'^{-1}c'^{-1}(\delta\rtimes\sigma)^n=(c'\epsilon'^{-1}c'^{-1}\Nm_{L_n/\Qp}\delta)\rtimes\sigma^n$, i.e. $\Nm_{L_n/\Qp}\delta=c'\epsilon'c'^{-1}$, so that 
\begin{equation} \label{eqn:stable_conjugacy_reln_betwn_Nmdelta_and_gamma_0}
\Nm_{L_n/\Qp}\delta=c'\epsilon'c'^{-1}=c'u\epsilon (c'u)^{-1}=c'ug\gamma_0(c'ug)^{-1},
\end{equation}
where $\epsilon= g\gamma_0 g^{-1}$ for $g\in G(\Qb)$. Namely, $\Nm_{L_n/\Qp}\delta$ and $\gamma_0$ are conjugate under $G(\Qpb)$.
As both $\Nm_{L_n/\Qp}\delta$ and $\gamma_0$ belong to $G(\Qp)$, by the Steinberg's theorem ($H^1(\Qpnr, G(\gamma_0))=0$), they are conjugate under $G(\Qpnr)$.
\end{proof}

Choose $\delta\in G(L_n)$ as in Lemma \ref{lem:delta_from_b}: its $\sigma$-conjugacy class in $G(L_n)$ is uniquely determined by the given admissible pair $(\phi,\epsilon)$. 
We will say that the triple $(\gamma_0;(\gamma_l)_{l\neq p},\delta)$ \textit{corresponds to} the admissible pair $(\phi,\epsilon)$; then $\epsilon$ and $\gamma_0$ are stably conjugate, among others. Also note that by definition (cf. (\ref{subsubsec:cls})), the $\sigma$-conjugacy class $\mathrm{cls}(\phi)\in B(G_{\Qp})$ attached to $\phi(p)$ (the pull-back of $\phi$ to $\Gal(\Qpb/\Qp)$) is nothing but $\overline{\delta}\in B(G_{\Qp})$.

\begin{rem} \label{rem:two_different_b's}
Choose $c\in G(L)$ such that $\Nm_{L_n/\Qp}\delta=c\gamma_0 c^{-1}$. Then as
\[\delta^{-1}c\gamma_0 c^{-1}\delta=\sigma(\delta)\cdots\sigma^n(\delta)=\sigma(c\gamma_0 c^{-1})=\sigma(c)\gamma_0\sigma(c^{-1}),\]
we see that $b':=c^{-1}\delta\sigma(c)$ commutes with $\gamma_0$. According to \cite[Lem. 5.15]{LR87}, the $\sigma$-conjugacy of $b'\in G(\gamma_0)(L)$ is basic in $B(G(\gamma_0))$, where $G(\gamma_0)$ denotes the centralizer of $\gamma_0$ in $G$.
In general, this $b'$ (whose definition depends on $c\in G(L)$ wtih $\Nm_{L_n/\Qp}\delta=c\gamma_0 c^{-1}$) is different from the $b$ defined by $\xi_p^{\nr}(s_{\sigma})=b\rtimes\sigma$ (which, together with $\epsilon'$, was the input data used to produce $\delta$). But, when $(\phi,\epsilon)$ is \emph{well-located in a maximal $\Q$-torus $T$}, in the sense that $\epsilon,\phi(\delta_k)\in T(\Q)$ and $\phi$ factors through $\fG_T\subset\fG_G$, we can choose $c$ so that $b'=b$. Indeed, we can take $\epsilon$ for $\gamma_0$ (as $\epsilon\in G(\Q)$) and also we can choose $u$ from $T(\Qpb)$ (as $\phi(p)$ factors through $\fG_{T_{\Qp}}$) so that $\epsilon'=\epsilon$ (as $\epsilon\in T(\Qb)$). 
Then, we have $\Nm_{L_n/\Qp}\delta=c'\epsilon'c'^{-1}=c'\gamma_0c'^{-1}$ in (\ref{eqn:stable_conjugacy_reln_betwn_Nmdelta_and_gamma_0}) (note that $c'$ was determined by $\xi_p^{\nr}(s_{\sigma})=b\rtimes\sigma$ and $\epsilon'$, cf. proof of Lemma \ref{lem:delta_from_b}). In other words, we can take $c=c'$, so $b'=(c')^{-1}\delta\sigma(c')=b$.
\end{rem}

It is easy to see that equivalent admissible pairs give rise to equivalent Kottwitz triples. 
However, non-equivalent admissible pairs can also give equivalent Kottwitz triples; there is a cohomological expression for the number of non-equivalent admissible pairs corresponding to a given Kottwitz triple, cf. Thm. \ref{thm:LR-Satz5.25}.

We also remark that for a triple $(\gamma_0;(\gamma_l)_{l\neq p},\delta)$ constructed now from an admissible pair $(\phi,\epsilon)$, one can still define its Kottwitz invariant using the fact that $(\phi,\epsilon)$ is nested in a torus $(T,h)$, (cf. proof of Satz 5.25 in \cite{LR87}).

\begin{prop}
The triple $(\gamma_0;(\gamma_l)_{l\neq p},\delta)$ is a Kottwitz triple of level $n=[\kappa(\wp):\Fp]m$ (Def. \ref{defn:Kottwitz_triple}) whose Kottwitz invariant vanishes.
\end{prop}

\begin{proof} 
First, by construction, $\gamma_0\in G(\Q)$ lies in a maximal $\Q$-torus $T'$ which is elliptic at $\R$; this gives the condition (i). The condition (iii) follows from the condition (2) of Def. \ref{defn:admissible_morphism} (i.e. $X_l\neq\emptyset$). We have already seen that $\Nm_{L_n/\Qp}\delta$ is conjugate to $\gamma_0$ under $G(\Qpb)$. Therefore, the triple satisfies the conditions (i) - (iii) of Def. \ref{defn:Kottwitz_triple}. That the condition $(\ast(\delta))$ holds is a consequence of $\phi$ being admissible (specifically, the condition (3) of Def. \ref{defn:admissible_morphism}) in view of Theorem A of \cite{He15}.
Finally, it remains to verify vanishing of the Kottwitz invariant $\alpha(\gamma_0;(\gamma_l)_{l\neq p},\delta)$; as observed before (\cite[p.172]{Kottwitz90}), this will also establish the condition (iv) (alternatively, we can take $I_{\phi,\epsilon}$ for $I$ in the condition (iv), (\cite[Lem6.10]{Milne92})).
For this, we can assume that $(\phi,\epsilon)$ is nested in a special Shimura subdatum by Lemma \ref{lem:LR-Lemma5.23}. From this point, the original argument works almost word for word with all the necessary ingredients being established in our situation of parahoric levels.
\end{proof}


\section{Every admissible morphism is conjugate to a special admissible morphism}

We make some comments on various assumptions called upon in this section.
First, we remind ourselves the running assumption that $G_{\Qp}$ is quasi-split. In this section the level subgroup can be arbitrary parahoric subgroup. Also every statement involving the pseudo-motivic Galois gerb $\fP$ (instead of the quasi-motivic Galois gerb $\fQ$) will (for safety) assume that the Serre condition holds for the Shimura datum $(G,X)$ (i.e. the center $Z(G)$ splits over a CM field and the weight homomorphism $w_X$ is defined over $\Q$). At some places (specifically, Thm. \ref{thm:LR-Satz5.3}, (2), Prop. \ref{prop:equivalence_to_special_adimssible_morphism}, and Lemma \ref{lem:LR-Lemma5.23}), we require that $G^{\der}$ is simply-connected. Finally, when we prove non-emptiness of Newton strata, we need to assume that $G_{\Qp}$ splits over a cyclic (tamely ramified) extension of $\Qp$. Of course, in every statement we will make explicit the assumptions that we impose.

\subsection{Transfer of maximal tori and strategy of proof of Theorem \ref{thm:LR-Satz5.3}}

\subsubsection{}
In the work of Langlands-Rapoport, a critical role is played by the notion of \textit{transfer} (or \textit{admissible embedding}) \textit{of maximal torus}. Recall (\cite[$\S$9]{Kottwitz84b}) that for a connected reductive group $G$ over a field $F$, if $\psi:H_{\overline{F}}\isom G_{\overline{F}}$ is an inner-twisting and $T$ is a maximal $F$-torus of $H$, an $F$-embedding $i:T\rightarrow G$ is called \textit{admissible} if it is of the form $\Int g\circ\psi|_T$ for some $g\in G(\overline{F})$ (equiv. of the form $\psi\circ \Int  h|_T$ for some $h\in H(\overline{F})$). Whether an $F$-embedding $i:T\rightarrow G$ is admissible or not depends only on the conjugacy class of the inner-twisting $\psi$. We will also say that $T$ \textit{transfers to} $G$ (with a conjugacy class of inner twistings $\psi:H_{\overline{F}}\isom G_{\overline{F}}$ understood) if there exists an admissible $F$-embedding $T\rightarrow G$ (with respect to the same conjugacy class of inner twistings). Usually, this notion is considered when $H$ is a quasi-split inner form of $G$, due to the well-known fact (\cite[p.340]{PR94}) that \textit{every maximal torus in a reductive group transfers to its quasi-split inner form}, but here we do not necessarily restrict ourselves to such cases. In fact, we will consider more often the identity inner twisting $\mathrm{Id}_G:G_{\overline{F}}=G_{\overline{F}}$, in which case if $\Int g:T_{\overline{F}}\hra G_{\overline{F}}\stackrel{=}{\rightarrow}G_{\overline{F}}$ is an admissible embedding, we will also let $\Int g$ denote both the $\Q$-isomorphism $T\rightarrow \Int g(T)$ of $\Q$-tori and the induced morphism of Galois gerbs $\fG_T\rightarrow \fG_{\Int g(T)}$; of course the latter is also the restriction of $\Int g$, regarded as an automorphism of the Galois gerb $\fG_G$, to $\fG_T$.

\subsubsection{} 
We say that an admissible morphism $\phi:\fP\rightarrow\fG_G$ is \textit{well-located}
\footnote{This is our translation of the German word \textit{g\"unstig gelegen} used by Langlands-Rapoport \cite[p.190, line8]{LR87}.}
if $\phi(\delta_n)\in G(\Q)$ for all sufficiently large $n$. Here, $\delta_n$ is the element in $P(K,m)(\Q)$ introduced in Lemma \ref{lem:Reimann97-B2.3}, (2), for any $\fP(K,m)$ which $\phi$ factors through: $\phi(\delta_n)$ does not depend on the choice of such $\fP(K,m)$ by the compatibility property of $\delta_n$ (loc.cit.). More generally, for a $\Q$-subgroup $H$ of $G$, we will say that an admissible morphism $\phi:\fP\rightarrow\fG_G$ is \textit{well-located in $H$} if $\phi(\delta_n)\in H(\Q)$ (for all sufficiently large $n$) and $\phi$ maps into $\fG_H (\hra \fG_G)$; note the second additional requirement. For example, every special admissible morphism $i\circ\psi_{T,\mu_h}$ (for a special Shimura sub-datum $(T,h)$) is always well-located (in $T$). Indeed, $\delta_n\in P(K,m)(\Q)$ (for sufficiently large $n$ divisible by $m$) and the restriction of $i\circ\psi_{T,\mu_h}$ to kernels is defined over $\Q$ (\cite[p.143, second paragraph]{LR87}). 

For a well-located morphism $\phi:\fP\rightarrow\fG_G$, we denote by 
\[I:=\Cent_G(\phi(\delta_n))\] 
the centralizer $\Q$-group of $\phi(\delta_n)\in G(\Q)$ (for any sufficiently large $n$: $I$ does not depend on the choice of $n$), and by $I_{\phi}$ the inner twisting of $I$ defined by $\phi$. More precisely, if $\phi(q_{\rho})=g_{\rho}\rtimes\rho$ for the chosen section $\rho\mapsto q_{\rho}$ to $\fP\rightarrow\Gal(\Qb/\Q)$ (Remark \ref{rem:comments_on_zeta_v}, (3)), the map  $(\rho\mapsto g_{\rho})\in Z^1(\Q,G)$ is a cocycle defining $I_{\phi}$, namely, there is a natural inner twisting $\Qb$-isomorphism
\begin{equation} \label{eqn:inner-twisting}
\psi:I_{\Qb}\isom (I_{\phi})_{\Qb},
\end{equation}
under which
\[I_{\phi}(\Q)=\{g\in I(\Qb)\ |\ g_{\rho}\rho(g)g_{\rho}^{-1}=g\}=\{g\in G(\Qb)\ |\ \Inn(g)\circ\phi=\phi\}.\]

We will also say that an admissible pair $(\phi,\epsilon)$ is \textit{well-located} if $\phi$ is a well-located admissible morphism and $\epsilon\in I_{\phi}(\Q)$ lies in $G(\Q)$ (a priori, $\epsilon\in I_{\phi}(\Q)$ is only an element of $G(\Qb)$ via $I_{\phi}(\Q)\subset I_{\phi}(\Qb)=I(\Qb)\subset G(\Qb)$). If $\phi$ is well-located in $H$ and $\epsilon\in I_{\phi}(\Q)\cap H(\Q)$ for a $\Q$-subgroup $H$ of $G$, the admissible pair $(\phi,\epsilon)$ will be said to be \textit{well-located in $H$}. Clearly, any admissible pair $(\phi,\epsilon)$ nested in some special Shimura datum $(T,h)$ is well-located (in $T$). 

We also note that for any admissible morphism $\phi$ mapping into $\fG_T$ for a maximal $\Q$-torus $T$, we have that $T$ is also a $\Q$-subgroup of $I_{\phi}$ (so $T(\Q)\subset I_{\phi}(\Q)=\{g\in G(\Qb)\ |\ \Inn(g)\circ\phi=\phi\}$). Indeed, suppose that $\phi:\fP\rightarrow \fG_T$ factors through $\fP(K,m)$ for a CM field $K$ Galois over $\Q$ and $m\in\N$. We need to check that $\epsilon$ commutes with $\phi(\delta_n)$ and $\phi(q_{\rho})$ (for all $n\gg 1$ and $\rho\in\Gal(\Qb/\Q)$). But, the first is obvious since both $\epsilon$,$\phi(\delta_n)$ belong to $T(\Qb)(\subset \fG_T=T(\Qb)\rtimes\Gal(\Qb/\Q))$, and for the second, in general, for $x\in G(\Qb)$, we have
\[\phi(q_{\rho})x\phi(q_{\rho})^{-1}=(g_{\rho}\rtimes\rho)x(g_{\rho}\rtimes\rho)^{-1}=(g_{\rho}\rtimes\rho)x(\rho^{-1}(g_{\rho}^{-1})\rtimes\rho^{-1})=g_{\rho}\rho(x)g_{\rho}^{-1}\rtimes1.\]
So, $\epsilon\in T(\Q)$ and $g_{\rho}\in T(\Qb)$ together imply that $\epsilon\in I_{\phi}(\Q)$.

\begin{thm} \label{thm:LR-Satz5.3}
Assume that $G_{\Qp}$ is quasi-split, and that the Serre condition for $(G,X)$ holds. Let $\mbfK_p$ be a parahoric subgroup of $G(\Qp)$. 

(1) Every admissible morphism $\phi:\fP\rightarrow \fG_G$ is conjugate to a special admissible morphism $i\circ\psi_{T,\mu_h}:\fP\rightarrow \fG_T\hra\fG_G$ (for some special Shimura sub-datum $(T,h\in\Hom(\dS,T_{\R})\cap X)$ and $i:\fG_T\rightarrow\fG_G$ the canonical morphism defined by the inclusion $i:T\hra G$).

(2) Assume that $G^{\der}$ is simply connected. 
Let $\phi:\fP\rightarrow \fG_G$ be a \emph{well-located} (i.e. $\phi(\delta_n)\in G(\Q)$) admissible morphism. Then, for any maximal torus $T$ of $G$, elliptic at $\R$, such that
\begin{itemize}
\item[(i)] $\phi(\delta_n)\in T(\Q)$ for a sufficiently large $n$ (i.e. $T\subset I:=\Cent_G(\phi(\delta_n))$), and
\item[(ii)] $T_{\Qp}\subset I_{\Qp}$ is elliptic,
\end{itemize}
$\phi$ is conjugate to a special admissible morphism $\psi_{T',\mu_{h'}}$, where $T'=\Int g'(T)$ for some transfer of maximal torus $\Int g':T\hra G\ (g'\in G(\Qb))$ (with respect to the identity inner twisting, i.e. the composite map $T_{\Qb}\hra G_{\Qb}\stackrel{\Int  g'}{\lra}G_{\Qb}$ is defined over $\Q$).

\begin{itemize}
\item[(iii)] If furthermore $T_{\Ql}\subset G_{\Ql}$ is elliptic at some prime $l\neq p$,
\end{itemize}
we may find such transfer of maximal torus $\Int g':T\hra G$ which is also conjugation by an element of $G(\Qp)$ (i.e. $\Int g':T_{\Qpb}\rightarrow G_{\Qpb}$ equals $\Int y$ for some $y\in G(\Qp)$).
\end{thm}

The first statement, in the hyperspecial level case, is Satz 5.3 of \cite{LR87}. It does not offer any information on the special admissible morphisms that are conjugate to given admissible morphism, especially on the tori $T$. In contrast, the point of the second statement, which is due to us in any level case, is to shed light on such special admissible morphisms, focusing on the tori in the special Shimura subdata. 

To establish these statements, we will adapt the arguments from the proof of loc. cit., which we now review briefly. It can be divided into three steps: 
\begin{itemize}
\item[I.] The first step is to replace a given admissible morphism $\phi$ by an equivalent one which is well-located (i.e. when we denote the conjugate again by $\phi$, we have $\phi(\delta_n)\in G(\Q)$).
\item[II.] The second step is to show that there is a conjugate $\Int  g\circ\phi:\fP\rightarrow\fG_G\ (g\in G(\Qb))$ of $\phi$ in the step I which factors through $\fG_T$ for \emph{some} maximal $\Q$-torus $T$ (of $G$) that is elliptic over $\R$.
Again, let us denote the conjugate $\Int  g\circ\phi$ by $\phi$. 
\item[III.]
The final step is to find an admissible embedding $\Int g':T \hra G$ (with respect to the identity inner twisting $\mathrm{id}:G_{\Qb}=G_{\Qb}$) with $g'\in I(\Qb)$
such that $\Int  g'\circ\phi:\fP\rightarrow \fG_{\Int  g'(T)}$ becomes a special admissible morphism. 
\end{itemize}

We point out that while the first two steps are established by arguments in Galois cohomology which do not use level subgroups, we need to validate the third step for parahoric level subgroups: see Lemma \ref{lem:LR-Lemma5.11} below. For the finer claim (2), it is also necessary to strengthen the second and the third steps. 
For that purpose (and some other applications as well), we formalize (with some improvements incorporated) these steps into two propositions respectively: Prop. \ref{prop:existence_of_admissible_morphism_factoring_thru_given_maximal_torus}, \ref{prop:equivalence_to_special_adimssible_morphism}. We first introduce these propositions, postponing their proofs to the next subsection.

\begin{prop} \label{prop:existence_of_admissible_morphism_factoring_thru_given_maximal_torus}
Let $\phi:\fP\rightarrow\fG_G$ be an admissible morphism and $T$ a maximal $\Q$-torus of $G$, elliptic over $\R$, fulfilling the properties (i) and (ii) of Thm. \ref{thm:LR-Satz5.3}, (2). Then, there exists $g\in I(\Qb)$ such that $\Int  g\circ\phi$ maps $\fP$ into $\fG_T(\hra \fG_G)$. 
\end{prop}

Notice that here we only modify $\phi$ (while keeping $T$).
This proposition can be regarded as a soup-up version of the second step above, in the following sense. In the original second step (i.e. \cite{LR87}, p. 176, from line 1 to -5), we are given an admissible morphism $\phi$ and want to find a maximal $\Q$-torus $T$ such that some conjugate of $\phi$ maps into $\fG_{T}\subset \fG_G$. There, $\phi$ is considered somewhat as given and one looks for $T$ with this property. As such, choice of $T$ is restricted by $\phi$, namely, for arbitrary $\phi$ there is very little room of choice for such $T$. In our case, however, we start with some fixed torus $T$ (notably, the tori $T'$ as in Thm. \ref{thm:LR-Satz5.3}, (2) and (3)), and demand that a conjugate of $\phi$ factors through $\fG_T$. It turns out that this becomes possible if $T$ satisfies the properties (i) and (ii) of Thm. \ref{thm:LR-Satz5.3}, (2).

Also we note that the new pair $(\Int g\circ\phi,T)$ still enjoys the properties (i) and (ii),
since $\Int g\circ\phi(\delta_n)=g\phi(\delta_n)g^{-1}=\phi(\delta_n)$ (as $g\in I(\Qb)$). 

The next proposition is also an enhanced version of the third step above.

\begin{prop} \label{prop:equivalence_to_special_adimssible_morphism}
Assume that $G^{\der}$ is simply connected.
If an admissible morphism $\phi:\fP\rightarrow\fG_G$ is well-located in a maximal $\Q$-torus $T$ of $G$ that is elliptic over $\R$, there exists an admissible embedding of maximal torus $\Int g'|_{T}:T\hra G$ such that $\Int  g'\circ\phi:\fP\rightarrow\fG_{T'}$ is special, where $T':=\Int  g'(T)$ (a $\Q$-torus), i.e. $\Int  g'\circ\phi=\psi_{T',\mu_{h'}}$ for some $h'\in X\cap\Hom(\dS,T'_{\R})$.

If $T$ further fulfills the condition (iii) of Thm. \ref{thm:LR-Satz5.3}, one can find such transfer of maximal torus $\Int g'$ (i.e. $\Int g'\circ\phi$ becomes special admissible) which also satisfies that
\begin{itemize}
\item $\Int g'|_{T_{\Qpb}}:T_{\Qpb}\hra G_{\Qpb}$ equals $\Int y|_{T_{\Qp}}:T_{\Qp}\hra G_{\Qp}$ for some $y\in G(\Qp)$. 
\end{itemize}
In particular, $T_{\Qp}$ and $T'_{\Qp}$ are conjugate under $G(\Qp)$.
\end{prop}

The first statement of this proposition is proved in \cite{LR87}, for hyperspecial $\mbfK_p$, in the course of proving Satz 5.3 (more precisely, in the part beginning from Lemma 5.11 until the end of the proof of Satz 5.3). We remark that the idea of exploiting a transfer of maximal torus $i:T\isom T'$ which becomes $G(\Qp)$-conjugacy first appeared in our previous work (\cite[Thm. 4.1.1]{Lee14}), and will find similar application here later.

Finally, we point out that the three properties (i) - (iii) of Theorem \ref{thm:LR-Satz5.3} remain intact under any transfer of maximal torus. Indeed, an inner automorphism does not interfere with the center and a transfer of maximal torus $i:T\rightarrow G$ also restricts to a $\Q$-isomorphism $i:Z(\Cent_{G}(t))\isom Z(\Cent_{G}(i(t)))$ for any $t\in T(\Q)$.

In the next subsection, we present the proofs of Prop.
\ref{prop:existence_of_admissible_morphism_factoring_thru_given_maximal_torus}, \ref{prop:equivalence_to_special_adimssible_morphism}, and Theorem \ref{thm:LR-Satz5.3}.
\footnote{The original arguments in \cite{LR87} use the quasi-motivic Galois gerb $\fQ$ instead of the pseudo-motivic Galois gerb $\fP$, but as we mentioned before, the definition of $\fQ$ given there is wrong. However, the whole arguments remain valid with $\fQ$ replaced by $\fP$, as long as the (admissible) morphisms in question factor through $\fP$.}

\subsection{Proof of Propositions \ref{prop:existence_of_admissible_morphism_factoring_thru_given_maximal_torus},  \ref{prop:equivalence_to_special_adimssible_morphism}, and Theorem \ref{thm:LR-Satz5.3}}

Recall that we have fixed a continuous section $\rho\mapsto q_{\rho}$ to the projection $\fP\thra\Gal(\Qb/\Q)$ (Remark \ref{rem:comments_on_zeta_v}, (3)).

\begin{lem} \label{lem:criterion_for_admissible_morphism_to_land_in_torus}
Let $\phi:\fP\rightarrow\fG_G$ be a well-located admissible morphism and $T$ a maximal $\Q$-torus of $G$. Let $I=\Cent_G(\phi(\delta_n))$ and $I_{\phi}$ the inner twist of $I$ (\ref{eqn:inner-twisting}).

(1) Suppose that there exist a $\Q$-subgroup $T'$ of $I_{\phi}$ and $a\in G(\Qb)$ such that
$\Int a^{-1}:T_{\Qb}\hra G_{\Qb}$ is a $\Q$-isomorphism from $T$ to $T'$, where $T'\subset I_{\phi}$ is considered as a $\Qb$-subgroup of $G_{\Qb}$ via the inner twisting $I_{\Qb}\isom (I_{\phi})_{\Qb}$ (\ref{eqn:inner-twisting}). Then, $\Int  a\circ\phi$ maps $\fP$ to $\fG_T$. 

(2) If $T$ is a $\Q$-subgroup of $I$ and $a\in I(\Qb)$, then $\Int  a\circ\phi$ maps $\fP$ to $\fG_T$ if and only if $T_{\Qb}\hookrightarrow I_{\Qb}\stackrel{\Int  a^{-1}}{\lra}I_{\Qb}\stackrel{\psi}{\rightarrow} (I_{\phi})_{\Qb}$ is defined over $\Q$ (i.e., $\psi\circ\Int  a^{-1}|_T:T\hra I_{\phi}$ is a transfer of maximal torus with respect to the inner twisting $I_{\Qb}\isom (I_{\phi})_{\Qb}$). 

In other words, a conjugate of $\phi$ maps into $\fG_T$ if and only if $T\subset I$ transfers to $I_{\phi}$ with respect to the inner twisting $I_{\Qb}\isom (I_{\phi})_{\Qb}$ (\ref{eqn:inner-twisting}). 
\end{lem}

\begin{proof} (1) First, since $\psi:I_{\Qb}\isom (I_{\phi})_{\Qb}$ is an inner twist, it restricts to a $\Q$-isomorphism $Z(I)\isom Z(I_{\phi})$. So, if $\phi(\delta_n)\in G(\Q)$, $\phi(\delta_n)\in Z(I)(\Q)=Z(I_{\phi})(\Q)\subset T'(\Q)$, that is, $\Int a\circ\phi(\delta_n)\in  T(\Q)$. This is equivalent to that $(\Int a\circ\phi)^{\Delta}$ maps $P$ to $T$, as $\{\delta_n^k\}_{k\in\N}$ is Zariski-dense in $P(K,n)$ (for any suitable CM field $K$) (Lemma \ref{lem:Reimann97-B2.3} (2)).
Next, via $\psi$ we identify $I_{\phi}(\Qb)$ with $I(\Qb)\subset G(\Qb)$ endowed with the twisted Galois action $g\mapsto g_{\rho}\rho(g)g_{\rho}^{-1}$, where $g\mapsto\rho(g)$ is the original Galois action on $G(\Qb)$. 
Let $\phi(q_{\rho})=g_{\rho}\rtimes\rho$. Then, the condition means that 
\begin{equation} \label{eqn:transfer_from_I_to_I_phi}
g_{\rho}\rho(a^{-1}ta)g_{\rho}^{-1}=a^{-1}\rho(t)a
\end{equation}
for all $t\in T(\Qb)$, which is the same as that $ag_{\rho}\rho(a)^{-1}\in T(\Qb)$, as $\Cent_G(T)=T$. As $a\phi a^{-1}(q_{\rho})=ag_{\rho}\rho(a)^{-1}\rtimes \rho$, this implies the assertion.

(2) This is similar to (1). 
\end{proof}

\subsubsection{Proof of Proposition \ref{prop:existence_of_admissible_morphism_factoring_thru_given_maximal_torus}.}

Let $I=\Cent_G(\phi(\delta_n))$ and $\psi:I_{\Qb}\isom (I_{\phi})_{\Qb}$ the inner twisting defined by $\phi$ (\ref{eqn:inner-twisting}). By Lemma \ref{lem:criterion_for_admissible_morphism_to_land_in_torus}, (2), it suffices to prove that $T$ transfers to $I_{\phi}$ (with respect to the conjugacy class of $\psi$). For this we choose a quasi-split inner-twisting $\psi^{\ast}:I^{\ast}_{\Qb} \rightarrow I_{\Qb}$, where $I^{\ast}$ is the quasi-split inner form of $I$. Then, it is well-known (\cite[p.340]{PR94}) that any maximal torus of $I$ (in particular, $T$) always transfers to the quasi-split inner form $I^{\ast}$. So by replacing $\psi^{\ast}$ by a conjugate of it, we may assume that $(\psi^{\ast})^{-1}|_T:T_{\Qb}\rightarrow I^{\ast}_{\Qb}$ is defined over $\Q$; let $T^{\ast}$ be its image (so a maximal $\Q$-subtorus of $I^{\ast}$). Obviously, to prove Proposition \ref{prop:existence_of_admissible_morphism_factoring_thru_given_maximal_torus}, we only need to show that $T^{\ast}$ transfers to $I_{\phi}$ (with respect to the inner twisting $\psi\circ\psi^{\ast}:I^{\ast}_{\Qb}\isom (I_{\phi})_{\Qb}$). 

First, note that all of $I$, $I_{\phi}$, and $I^{\ast}$, being inner forms of each other, share the same center, and by construction $T^{\ast}\approx T$ over $\Q$. So, for $v=\infty$ and $p$, $T^{\ast}_{\Qv}$ is an elliptic maximal torus of $I^{\ast}_{\Qv}$. Indeed, as $T_{\Qv}$ is an elliptic maximal torus of $I_{\Qv}$ (which is the same as that $Z(I_{\Qv})$ contains the maximal split $\Qv$-subtorus of $T_{\Qv}$), $Z(I^{\ast}_{\Qv})$ contains a split $\Qv$-torus of same rank, which then must be the maximal split subtorus of $T^{\ast}_{\Qv}\approx T_{\Qv}$
Therefore, according to \cite[Lemma 5.6]{LR87}, it suffices to check that $T^{\ast}$ transfers to $I_{\phi}$ everywhere locally. At $v=\infty, p$, this already follows from the fact that $T^{\ast}_{\Qv}$ is an elliptic torus of $I^{\ast}_{\Qv}$ (\cite[Lemma 5.8, 5.9]{LR87}). At a finite place $v\neq p$, since $\phi\circ\zeta_v$ is conjugate to the canonical trivialization $\xi_v:\fG_v\rightarrow \fG_G(v)$ (Def. \ref{defn:admissible_morphism}, (2)), the inner-twisting $\psi_{\Qlb}:I_{\Qvb}\isom (I_{\phi})_{\Qvb}$ (via fixed embedding $\Qb\hra\Qvb$) descends to an $\Qv$-isomorphism 
\[I_{\Qv}\isom (I_{\phi})_{\Qv}.\]
But, by our assumption that $\phi(\delta_n)\in T(\Q)$, $T$ is a subgroup of $I$. Therefore, $T^{\ast}_{\Qv}\approx T_{\Qv}$ transfers to $(I_{\phi})_{\Qv}$ with respect to the conjugacy class of $\psi_{\Qvb}$. \end{proof}

As was explained after the statement of Theorem \ref{thm:LR-Satz5.3}, Proposition \ref{prop:equivalence_to_special_adimssible_morphism} is a strengthening of the third step in the proof of Satz 5.3 in \cite{LR87}. 
The proof of this step in loc. cit. itself proceeds in three steps: Lemma 5.11, Lemma 5.12, and the rest of the proof of Satz 5.3 (p.181, line 1-19 of \cite{LR87}). 
Again we will prove our proposition along the same line. First, we need some facts from the Bruhat-Tits theory.

\begin{lem} \label{lem:specaial_parahoric_in_Levi}
(1) Let $G$ be a (connected) reductive group over a field $F$. Then, for any $F$-split $F$-torus $A_M$ in $G$, its centralizer $M:=\Cent_G(A_M)$ is an $F$-Levi subgroup of $G$ (i.e. a Levi factor defined over $F$ of an $F$-parabolic subgroup of $G$). If $G$ is quasi-split, then so is $M$. 

From now on, we suppose that $F$ is a complete discrete valued field with perfect residue field (mainly, local fields or $\mfk=\mathrm{Frac}(W(\Fpb))$), and $G$ a connected reductive group over $F$.
As before, let $M=\Cent_G(A_M)$ for a split $F$-torus $A_M$, and fix a maximal split $F$-torus $S$ of $G$ containing $A_M$. Let $\mcA^G=\mcA^G(S,F)$ and $\mcA^M=\mcA^M(S,F)$ denote respectively the apartments in the buildings $\mcB(G,F)$ and $\mcB(M,F)$ corresponding to $S$.

(2) Every affine root $\alpha$ of $\mcA^G$ whose vector part $a=v(\alpha)\in \Phi(G,S)$ is a root in $\Phi(M,S)$ is also an affine root of $\mcA^M$.

(3) For any special maximal parahoric subgroup $K$ of $G(F)$ associated with a special point in $\mcA^G$, the intersection $K\cap M(F)$ is also a special maximal parahoric subgroup of $M(F)$. 
\end{lem}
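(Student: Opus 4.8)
~The statement to prove is Lemma~\ref{lem:specaial_parahoric_in_Levi}, consisting of parts (1)--(3) on Levi subgroups and special parahorics. Here is how I would organize the argument.

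\textbf{Part (1).}~I would start by recalling that for a split $F$-torus $A_M\subset G$, the pair $(\alpha,-\alpha)$ of opposite roots of $G$ relative to $A_M$ (acting on $\mathrm{Lie}(G)$) generate the unipotent radicals of two opposite parabolic $F$-subgroups $P^{\pm}$ with common Levi factor $M=\Cent_G(A_M)$; this is the standard theory of parabolic subgroups associated to a cocharacter (one may take $P=P(\lambda)$ for a sufficiently generic $\lambda\in X_\ast(A_M)$, and $M=\Cent_G(\lambda)=Z_G(A_M)$). All of this is defined over $F$ since $A_M$ is. For the quasi-split assertion: if $B\supset T$ is a Borel over $F$ with $T$ a maximal $F$-torus, one can arrange $A_M\subset T$ (any split torus lies in a maximal torus, and one may conjugate so that a maximal torus containing $A_M$ lies in a Borel) and then $B\cap M$ is a Borel subgroup of $M$ defined over $F$, with maximal torus $T\subset M$; hence $M$ is quasi-split. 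I would cite \cite{BT84} or a standard reference for these facts.

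\textbf{Parts (2) and (3).}~These are the genuinely Bruhat--Tits statements and where the real content lies. For~(2), I would use the concrete description of the affine root system: an affine root of $\mcA^G(S,F)$ with vector part $a\in\Phi(G,S)$ has the form $\psi=a+r$ where $r$ runs over a discrete set $\Gamma'_a$ determined by the valuation of the root datum (the filtration on the root group $U_a(F)$, cf.~\cite[6.2]{BT72}, \cite[4.2]{BT84}). When $a\in\Phi(M,S)$, the root group $U_a$ is the \emph{same} algebraic group whether one views it inside $G$ or inside $M$ (root groups for roots of the Levi are literally the root groups of the ambient group), so its valuation filtration is identical; hence $\Gamma'_a$ computed in $G$ equals $\Gamma'_a$ computed in $M$, and $\psi=a+r$ is an affine root of $\mcA^M(S,F)$ as well. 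This is essentially the functoriality of Bruhat--Tits theory for Levi subgroups (\cite[6.1]{BT84}), and I would phrase it that way, noting $\mcA^M\subset\mcA^G$ naturally (they have the same underlying affine space once one quotients by $X_\ast(Z(M))$ appropriately, or one embeds $\mcA^M$ into $\mcA^G$). For~(3): let $x$ be a special point of $\mcA^G$ with associated special maximal parahoric $K=\mcG^{\mro}_x(\cO_F)$. Regard $x\in\mcA^M\subset\mcA^G$. By~(2), every affine root of $\mcA^M$ is the restriction of an affine root of $\mcA^G$ vanishing condition is inherited: since $x$ is special in $\mcA^G$ (for every $a\in\Phi(G,S)$ there is an affine root with vector part $a$ vanishing at $x$, cf.~\cite[1.9]{Tits79}), in particular for every $a\in\Phi(M,S)\subset\Phi(G,S)$ there is an affine root of $\mcA^G$ with vector part $a$ vanishing at $x$, which by~(2) is an affine root of $\mcA^M$; hence $x$ is a special point of $\mcA^M$. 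Then $K\cap M(F)$ is the parahoric of $M(F)$ attached to $x$: this follows from the compatibility $\mcG^{\mro}_{x,M}=$ (schematic closure of $M$ in $\mcG^{\mro}_{x,G}$)$^{\mro}$ and the identity $\mathrm{Fix}_G(x)\cap M(F)=\mathrm{Fix}_M(x)$ together with $\Ker(w_G)\cap M(F)=\Ker(w_M)$ (the latter because $w_G$ and $w_M$ are compatible via $\pi_1(M)\to\pi_1(G)$ and $v_M$ detects the same thing on the bounded part; more carefully, one uses that $K=\mathrm{Fix}_G(x)\cap\Ker w_G$ and the Levi-compatibility of Kottwitz's map). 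Since $x$ is special in $\mcA^M$, this parahoric is a special maximal parahoric of $M(F)$.

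\textbf{Main obstacle.}~I expect the delicate point to be~(3), specifically matching up the two normalizations in the definition $K_{\mbff}=\mathrm{Fix}\,\mbff\cap\Ker w_G$: one must check that intersecting the \emph{connected stabilizer} $\mcG^{\mro}_{x,G}(\cO_F)$ with $M(F)$ gives exactly the \emph{connected} group $\mcG^{\mro}_{x,M}(\cO_F)$, not some larger bounded subgroup. This is where the hypothesis ``perfect residue field'' and the structure theory of \cite{BT84} (smoothness of the integral models, behaviour of connected components under passage to a Levi) enter; I would handle it by the identity $\Ker(w_M)=\Ker(w_G)\cap M(F)$ (valid because $\Ker w$ is exactly the subgroup generated by all parahorics / the "connected" part, and this is preserved), combined with $\mcG^{\mro}_{x,G}(\cO_L)\cap M(L)=\mcG^{\mro}_{x,M}(\cO_L)$ after base change to $L=F^{\mathrm{ur}}$-completion and then taking $\sigma$-fixed points. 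The archimedean/split-rank bookkeeping in~(1) (arranging $A_M$ inside a maximally split maximal torus contained in a Borel) is routine but worth stating carefully. I would keep the write-up short, citing \cite{BT72,BT84,Tits79,HainesRapoport08} for the standard inputs and only spelling out the root-group filtration identity for~(2) and the special-point inheritance for~(3).
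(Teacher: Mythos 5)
Your approach to (1) and (2) is essentially the same as the paper's: for (2) both arguments rest on the fact that the root groups $U_a$ and their valuation filtrations for $a\in\Phi(M,S)$ are literally the same whether computed in $M$ or in $G$ (the paper just spells this out more explicitly via the root group datum of \cite[6.1]{BT72} and the quasi-closedness of $\Phi(M,S)$). For (3), your route to showing $x$ is a special point of $\mcA^M$ — apply (2) to the Tits criterion \cite[1.9]{Tits79} directly — is actually cleaner than the paper's, which first does a dimension count to show the facet of $\mcA^M$ through $\mbfo$ projects to a vertex $v_\mbfo$ of the semisimple building and only then checks specialness of $v_\mbfo$ via (2).

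However, there is a genuine gap in the step you correctly identify as the delicate one. The identity $\Ker(w_M)=\Ker(w_G)\cap M(F)$ that you propose to use is \emph{false} in general, and the parenthetical justification (``$\Ker w$ is the subgroup generated by parahorics, and this is preserved'') does not hold either. Take $G=\mathrm{GL}_2$ over $\Qp$ with $M$ the diagonal torus: then $\Ker(w_G)\cap M(L)=\{(a,b):v(a)+v(b)=0\}$, which strictly contains $\Ker(w_M)=(\cO_L^\times)^2$. The map $\pi_1(M)_I\to\pi_1(G)_I$ through which the two Kottwitz homomorphisms are compatible is in general not injective, so the inclusion $\Ker(w_M)\subseteq\Ker(w_G)\cap M(L)$ can be strict. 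What is true — and this is exactly what the paper quotes from \cite[Lemma 4.1.1]{HainesRostami10} — is the parahoric-level identity $P^G_x\cap M(F)=P^M_x$: equality is recovered only after intersecting with the bounded group $\mathrm{Fix}(x)$, and proving that requires more than the compatibility of Kottwitz maps. You should cite Haines--Rostami directly for this step rather than attempt to deduce it from the false kernel identity; with that citation in place, the rest of your argument goes through.
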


\begin{proof} (1) These are standard. For the first claim, see \cite[Thm. 4.15]{BT65}.
The second claim is easily seen. We use the fact that for a (connected) reductive group $H$ over a field $F$, $H$ is quasi-split if and only if for a (equiv. any) maximal $F$-split torus of $H$, its centralizer in $H$ is a (maximal) torus (cf. proof of Prop. 16.2.2 of \cite{Springer98}). Now, as any torus containing $A_M$ is a subgroup of $M=\Cent_{G}(A_M)$, so is any maximal $F$-split torus of $G$ containing $A_M$; choose one and call it $S$. As $G$ is quasi-split, the centralizer $T:=\Cent_{G}(S)$ of $S$ is a torus of $G$, thus is itself contained in $M$. But, then $T$ is also the centralizer of $S$ in $M$. 

(2) This also follows readily from definition. First, we recall that the relative root datum $\Phi(M,S)=(X_{\ast}(S),R_{\ast}(M),X^{\ast}(S),R^{\ast}(M))$ for $(M,S)$ is a closed sub-datum of the root datum $\Phi(G,S)=(X_{\ast}(S),R_{\ast}(G),X^{\ast}(S),R^{\ast}(G))$ defined by a subset $I$ of the set $\Delta=\{a_1,\cdots,a_n\}$ of simple roots (for some ordering on $R^{\ast}(G)$) (\cite[Thm. 4.15]{BT65}): 
\begin{equation} \label{eqn:basis_of_simple_roots_for_Levi}
R^{\ast}(M)=R^{\ast}(G)\cap \sum_{a_i\in I}\Z a_i,
\end{equation} 
and $A_M=(\cap_{\alpha\in I}\Ker(\alpha))^0$ (the largest split $F$-torus in the center $Z(M)$). 
Next, for an affine function $\alpha$ on $\mathcal{A}(S,F)\cong X_{\ast}(S)_{\R}$ (regarded as a common affine space without any apartment structure) whose vector part belongs to $\Phi$, let $X_{\alpha}^G$ be defined as in \cite[1.4]{Tits79} with respect to $G$, i.e.
\[X_{\alpha}^G=\{u\in U_{v(\alpha)}(F)\ |\ u=1\text{ or }\alpha(v(\alpha),u)\geq \alpha\}.\]
Here, for $a\in R^{\ast}(G,S)$, $U_a$ refers to the associated root group. This an unipotent $F$-group, which was denoted by ${}_FU_a$ or $U_{(a)}$ in \cite{BT65}, 5.2.
\footnote{In turn, this is the group that was denoted by $G^{\ast (S)}_{(a)}$ (or $G^{\ast}_{(a)}$) in loc. cit. 3.8. Namely, when we choose  a maximal $\overline{F}$-torus $T$ of $G_{\overline{F}}$ containing $S$, it is the group generated by ${}_{\overline{F}}U_b$ (the ``absolute" root group in $G_{\overline{F}}$ defined with respect to $(G_{\overline{F}},T)$) for the absolute roots in $R^{\ast}(G_{\overline{F}},T)$ whose restriction to $S$ belong to $(a)$, the set of relative roots in $R^{\ast}(G,S)$ that are positive integer multiples of $a$.}
When $a\in R^{\ast}(M,S)$, as $U_{a}\subset M$ for $a\in R^{\ast}(M)$, it follows from definition that this $U_a$ is the same group as that defined regarding $a$ as a root for $(M,S)$. Similarly, if $v(\alpha)\in R^{\ast}(M)$, the same is also true of the affine function $\alpha(v(\alpha),u)$. In more detail, its definition uses only the properties of \textit{root (group) datum} (of type some root system) in the sense of \cite[6.1]{BT72}. For $S$ and $U_{a}\ (a\in R^{\ast}(G,S))$ as above, there exist certain $S$-right cosets $\{M_a\}_{a\in R^{\ast}(G,S)}$ such that the family of subgroups 
\[\{S,\{U_{a},M_{a}\}_{a\in \Phi(G,S)}\}\] 
becomes a root group datum of type $\Phi(G,S)$ (in $G$) (in fact $M_a$ is then a subset of the group generated by $\{S,U_{a},U_{-a}\}$, cf. \cite[(6.1.2), (9)]{BT72}). In particular, the element $m(u)$ (for each $u\in U_a(F)\backslash\{1\}$) appearing in \cite[1.4]{Tits79} belongs to $M_a$ and is determined solely by the root group datum $\{S,\{U_{a},M_{a}\}_{a\in \Phi(G,S)}\}$ (\cite[(6.1.2), (2)]{BT72}). But, as $\Phi(M,S)$ is \textit{(quasi-)closed} in $\Phi(G,S)$ and $U_{a}\subset M$ for $a\in R^{\ast}(M)$, the subset $\{T,\{U_{\alpha},M_{\alpha}\}_{\alpha\in \Phi(M,S)}\}$ is also a root datum of type $\Phi(M,S)$ (in $G$), cf. \cite[7.6]{BT72}. Hence we can drop the superscript $G$ in $X_{\alpha}^G$ without ambiguity.
Now, we recall the definition (\cite[1.6]{Tits79}) that an affine function $\alpha$ is an \textit{affine root} of $G$ (relative to $S$ and $F$) if $X_{\alpha}$ is not contained in $X_{\alpha+\epsilon}\cdot U_{2v(\alpha)}\ (=X_{\alpha+\epsilon}\text{ if }2v(\alpha)\notin\Phi$) for any strictly positive constant $\epsilon$. The claim in question is immediate from this definition and the above discussions. 

(3) It is shown in \cite[Lemma  4.1.1]{HainesRostami10} that $K\cap M(F)$ is a parahoric subgroup of $M(F)$. So we just have to show that it is special maximal parahoric. 
Using the special point $\mbfo\in \mcA^G(S,F)$, we may embed $\mcB(M,F)$ into $\mcB(G,F)$ such that $\mbfo$ lies in the image (\cite[7.6.4]{BT72}, \cite[4.2.17-18]{BT84}). Let $\mcA((M^{\der}\cap S)^0,F)$ be the apartment corresponding to the maximal split $F$-torus $(M^{\der}\cap S)^0$ of $M^{\der}$.
As the affine hyperplanes in $\mcA^{M}$ form a subset of those in $\mcA^{G}$, it is obvious that $\mbfo$ is contained in a unique facet $\mbfa_{\mbfo}^{M}$ in $\mcA^{M}$, i.e. in
\[\mbfa^{M}_{\mbfo}\cong X_{\ast}(Z(M))_{\R}\times \{v_{\mbfo}\}.\]
for some unique facet $v_{\mbfo}$ in the apartment $\mcA((M^{\der}\cap S)^0,F)$ (recall that $M$ is the centralizer of a split torus $Z(M)$ which then must be the center). 
Now, we claim that $v_{\mbfo}$ is a vertex of $\mcA((M^{\der}\cap S)^0,F)$.
Indeed, as $\mbfo$ is a special point, we may identify the affine space $\mcA^{G}$ with the vector space $X_{\ast}(S)_{\R}$ ($\mbfo$ becoming the origin) so that the root hyperplanes $\{H_{\alpha}\}_{\alpha\in R^{\ast}(G,S)}$ are all affine hyperplanes. Clearly, $\mbfo=\cap_{\alpha\in R^{\ast}(G,S)}H_{\alpha}\cap X_{\ast}((G^{\der}\cap S)^0)_{\R} $. Let $I\subset \Delta$ be the subset defining the root datum $\Phi(M,S)$ as in (2); so, the center $Z(M)$ is equasl to $\cap_{\alpha\in I}H_{\alpha}$ (intersection of root hyperplanes in $X_{\ast}(S)_{\R}$).
But, by (2), $\{H_{\alpha}\}_{\alpha\in I}$ is also a subset of affine hyperplanes in the apartment $\mcA^M$ of $(M,S)$, and $\mbfo$ is contained in the intersection of these linearly independent affine hyperplanes in $\mcA(M,S)$, whose dimension is thus equal to rank $r_M$ of $Z(M)$. Hence, $\mbfo$ is contained in a facet of $\mcA^M$ of dimension at most $r_M$, which implies that the facet $v_{\mbfo}$ is of zero-dimension, i.e. a vertex in the building for $M^{\der}$.

Once we know that $v_{\mbfo}$ is a vertex, the fact that it is a special vertex also follows readily from (2). Indeed, 
by definition (\cite[1.9]{Tits79}), we need to check that every root of $a\in\Phi(M,S)$ is the vector part of an affine roof of $(M,S)$ vanishing at $v_{\mbfo}(\in \mcA^{M^{\der}})$. We know that $a$ also belongs to $\Phi(G,S)$, thus since $\mbfo$ is special, there exists an affine root $\alpha$ of $(G,S)$ with vector part $v(\alpha)=a$ and vanishing at $\mbfo$. But, by (2), such $\alpha$ is also an affine root of $\mcA^M$, and as such, it must vanish on the facet in $\mcA^M$ containing $\mbfo$, i.e. on $\mbfa^M_{\mbfo}\cong X_{\ast}(Z(M))_{\R}\times \{v_{\mbfo}\}$. \end{proof}

\begin{lem} \label{lem:LR-Lemma5.11}
Let $T_1\subset G_{\Qp}$ be a maximal $\Qp$-torus, split by a finite Galois extension $K$ of $\Qp$, $b\in T_1(\mfk)$, and $\{\mu\}$ a $G(\Qpb)$-conjugacy class of minuscule cocharacters of $G_{\Qpb}$. Let $\mbfK_p$ be a (not necessarily special maximal) parahoric subgroup of $G(\Qp)$.
If $X(\{\mu\},b)_{\mbfK_p}\neq\emptyset$, there exists $\mu\in X_{\ast}(T_1)\cap \{\mu\}$ such that
\begin{equation} \label{eq:equality_on_the_kernel}
\Nm_{K/\Qp}\mu=[K:\Qp]\nu_b,
\end{equation}
where $\nu_b\in X_{\ast}(T_1)_{\Q}$ is the Newton homomorphism attached to $b$.

In particular, if $\phi$ is an admissible morphism well-located in a maximal $\Q$-torus $T$ of $G$ that is elliptic over $\R$,  there exists a $\mu\in X_{\ast}(T)\cap\{\mu_X\}$ such that $\phi$ and $\psi_{T,\mu}$ coincide on the kernel of $\fP$. 
\end{lem}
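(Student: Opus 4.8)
The plan is to first prove the local statement (\ref{eq:equality_on_the_kernel}), and then deduce the global consequence about $\psi_{T,\mu}$ from it. For the local part, the hypothesis $X(\{\mu\},b)_{\mbfK_p}\neq\emptyset$ means there is some $g\in G(L)$ with $\mathrm{inv}_{\mbfKt_p}(g,b\sigma(g))\in\Adm_{\mbfKt_p}(\{\mu\})$; replacing $b$ by its $\sigma$-conjugate $g^{-1}b\sigma(g)$ (which changes neither the $\sigma$-conjugacy class nor, up to conjugacy, the torus in which a Newton cocharacter representative lies — though here I must be a little careful since $b$ is required to lie in $T_1(\mathfrak{k})$) one reduces to the case $g=1$, i.e. $\mathbf{1}\cdot\mbfKt_p\in X(\{\mu\},b)_{\mbfK_p}$, so that $\mathrm{inv}_{\mbfKt_p}(1,b)=\mbfKt_p b\,\mbfKt_p\in\Adm_{\mbfKt_p}(\{\mu\})$. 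The point is then to compare the Kottwitz map $w_{G}$ (restricted to $T_1$, where it is $w_{T_1}$) applied to $b$ with the translation parts of elements of $\Adm_{\mbfKt_p}(\{\mu\})$. First I would pass to $G^{\nr}$ over $L$ and use the description of $\Adm(\{\mu\})$: every $w\in\Adm(\{\mu\})$ satisfies $w\le t^\lambda$ for some $\lambda\in\Lambda(\{\mu\})=W_0\cdot\underline{\mu_B}$, and all elements of $W_a\rtimes\Omega_{\mbfa}$ lying below a translation $t^\lambda$ have the same image in $\tilde W/W_a\cong X_{\ast}(T)_I/X_{\ast}(T^{\uc})_I$, namely $\tau(\{\mu\})$. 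Applying this to the image $\underline{b}\in X_{\ast}(T_1)_I$ of $b$ under $w_{T_1}$ (and using that the image of $b$ in $\pi_1(G)_I$ is computed by $w_G$, hence equals $\tau(\{\mu\})$ modulo $X_{\ast}(T^{\uc})_I$), one gets a constraint on $\underline{b}$; taking a further quotient to $\pi_1(G)_I/\text{torsion}$ and averaging under $\langle\sigma\rangle$, together with the formula $\nu_b=\mathrm{av}\circ w_{T_{1,L}}(b)$ from \ref{subsubsec:Kottwitz_hom}, gives exactly the identity $\Nm_{K/\Qp}\mu=[K:\Qp]\nu_b$ in $X_{\ast}(T_1)$ for a suitable representative $\mu\in X_{\ast}(T_1)\cap\{\mu\}$ — here one uses that $\{\mu\}$ is minuscule so that $\Adm_{\mbfKt_p}(\{\mu\})$ is as small as possible (cf. the Proposition after Def. \ref{defn:mu-admissible_subset}), and that $T_1$ is split by $K$ so $[K:\Qp]\nu_b\in X_{\ast}(T_1)$. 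I expect this reduction — matching the translation part of an admissible element with $w_{T_1}(b)$ in the quotient $\pi_1(G)_I$, and then lifting to an honest cocharacter of $T_1$ in the minuscule class — to be the main obstacle, since one must track the image of $b$ under $w$ simultaneously in $X_{\ast}(T_1)_I$ and in $\pi_1(G)_I$ and reconcile the Bruhat-order constraint with the integrality of $\nu_b$.

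For the global consequence, suppose $\phi$ is admissible and well-located in a maximal $\Q$-torus $T$ elliptic over $\R$. By condition (3) of Def. \ref{defn:admissible_morphism}, the $\sigma$-conjugacy class $b:=b_\phi=\mathrm{cls}(\phi\circ\zeta_p)$ satisfies $X(\{\mu_X\},b)_{\mbfK_p}\neq\emptyset$. Since $\phi$ factors through $\fG_T\subset\fG_G$ (the well-located hypothesis in $T$), the morphism $\phi(p)\circ\zeta_p:\fG_p\rightarrow\fG_{T_{\Qp}}$ is a Galois-gerb morphism into a torus; by Lemma \ref{lem:unramified_morphism} it has an unramified conjugate $\theta^{\nr}:\fD\rightarrow\fG^{\nr}_{T_{\Qp}}$, and we may choose a representative $b\in T_1(\mathfrak{k})$ of its class lying in $T_{1}=T_{\Qp}$ (here $T_1=T_{\Qp}$ already is a maximal $\Qp$-torus of $G_{\Qp}$, split by some finite Galois $K/\Qp$). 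By Lemma \ref{lem:Newton_hom_attached_to_unramified_morphism}, the Newton homomorphism $\nu_b$ equals $-\tfrac1n(\phi(p)\circ\zeta_p)^{\Delta}$ up to conjugacy, hence is the restriction of $\phi^{\Delta}$ to the kernel, which (being $\psi_{T,\mu}^{\Delta}$-like) is defined over $\Q$. Applying the local statement (\ref{eq:equality_on_the_kernel}) to $(T_1,b,\{\mu_X\})$ produces $\mu\in X_{\ast}(T)\cap\{\mu_X\}$ with $\Nm_{K/\Qp}\mu=[K:\Qp]\nu_b$, i.e. $\nu_b=\tfrac{1}{[K:\Qp]}\Nm_{K/\Qp}\mu$, which is precisely the $\Qp$-component $\nu_2$-type formula identifying it with $\psi_{T,\mu}^{\Delta}$ restricted to the kernel at $p$ (Lemma \ref{lem:properties_of_psi_T,mu}, combined with the average-map description). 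Since $\phi^{\Delta}$ and $\psi_{T,\mu}^{\Delta}$ are both defined over $\Q$, agree over $\Qp$ on the pro-torus, and $P$ is a torus over $\Q$, they agree over $\Q$; thus $\phi$ and $\psi_{T,\mu}$ coincide on $\fP^{\Delta}=P_{\Qb}$. The only subtle point in this last paragraph is checking that knowing $\phi^{\Delta}=\psi_{T,\mu}^{\Delta}$ after restriction to the single place $p$ (plus rationality over $\Q$) forces equality globally; this follows because a homomorphism $P\rightarrow T$ of $\Q$-tori is determined by the induced map on cocharacter lattices, and the datum at $p$ pins down $\nu_2^K$, which together with the Serre-condition relation $\Tr_{K/K_0}(\nu_p)+[K_{v_2}:\Qp]\nu_\infty=0$ and the universal property of Lemma \ref{lem:Reimann97-B2.3} pins down the whole morphism on $P^K$.
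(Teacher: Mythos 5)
The first (local) part of your argument has a genuine gap, and it's the heart of the lemma. Your opening move — replacing $b$ by $g^{-1}b\sigma(g)$ to "reduce to $g=1$" — breaks the hypothesis $b\in T_1(\mfk)$: the $\sigma$-conjugate lives in no particular torus, and the Newton cocharacter $\nu_{g^{-1}b\sigma(g)}=\Int(g^{-1})\circ\nu_b$ is conjugated away from $T_1$. You flag this yourself as "a little careful," but there is no fix that preserves both $b\in T_1(\mfk)$ and $1\cdot\mbfKt_p\in X(\{\mu\},b)_{\mbfK_p}$. More seriously, the rest of your local argument tries to extract the identity $\Nm_{K/\Qp}\mu=[K:\Qp]\nu_b$ from the image of $b$ under $w_G$ in $\pi_1(G)_I$. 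But $\pi_1(G)_I$ only sees $b$ modulo $X_{\ast}(T^{\uc})_I$, so this can at most constrain $\nu_b$ modulo the derived group; it cannot pin down $\nu_b$ in $X_{\ast}(T_1)_{\Q}$ up to the precision needed. The minusculity statement you invoke (Prop.\ after Def.\ \ref{defn:mu-admissible_subset}) requires $G$ split and $K$ special, neither of which is assumed here; minusculity does enter, but only at the very end of the paper's argument and through Kottwitz's Lemma 2.3.3 of \cite{Kottwitz84b}, not through shrinking $\Adm$ to a point.

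The paper's proof uses machinery your sketch doesn't touch. It first passes to an Iwahori via He's nonemptiness theorem (\cite[Thm.\ 1.1]{He15}), chooses a special vertex $\mbfo$ in the closure of the alcove, sets $M=\Cent_{G_{\Qp}}(T_1^{\mathrm{split}})$ (the Levi in which $T_1$ is automatically elliptic), applies the Iwasawa decomposition $G(\mfk)=Q(\mfk)K_{\mbfo}(\mfk)$ to write the witness $g_1=nm$, then uses the Cartan decomposition in $M$ relative to $K_{\mbfo}(\mfk)\cap M(\mfk)$ (which is special maximal parahoric in $M$ by Lemma \ref{lem:specaial_parahoric_in_Levi}) to produce $\mu''$. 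The key input comparing $\mu''$ with the admissible bound is the Haines--Rostami fact that $K_{\mbfo}(\mfk)yN_Q(\mfk)\cap K_{\mbfo}(\mfk)xK_{\mbfo}(\mfk)\neq\emptyset$ forces $y\le x'$ for some $x'\in\widetilde{W}_{K_{\mbfo}}x\widetilde{W}_{K_{\mbfo}}$, after which the Bruhat-to-dominance transfer (Lemma \ref{lem:Bruhat_order_on_Coxeter_group}), \cite[Lem.\ 2.2]{RR96}, and \cite[Lem.\ 2.3.3]{Kottwitz84b} pin $\underline{\mu''}$ into $W_0\cdot\underline{\mu'}$. Ellipticity of $T_1$ in $M$ is then used, crucially, to transport this $W_0$-orbit information to the equality of $\Qp$-rational pairings $\langle\chi,\mu\rangle=\langle\chi^{\ab},\mu''\rangle$ against characters of $M^{\ab}$; this is what finally yields $(\ref{eq:equality_on_the_kernel})$. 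Your sketch has no substitute for this Levi-and-ellipticity mechanism, and I don't see how to get the pairing identity from $\pi_1(G)_I$ alone.

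Your second paragraph (deducing the global statement from $(\ref{eq:equality_on_the_kernel})$) is closer in spirit to the paper, which also computes $|\prod_\sigma\sigma\lambda(\phi(\delta_m))|_p$ and matches it with the characterizing property of $\psi_{T,\mu}(\delta_m)$, but you should replace the appeal to "agreement at $p$ plus $\Q$-rationality forces global agreement" with the explicit uniqueness characterization of $\psi_{T,\mu}(\delta_m)$ in terms of Weil-number valuations (cf.\ \cite[p.\ 143--144]{LR87}) together with Zariski density of $\{\delta_m^k\}$ in $P(L,m)$ (Lemma \ref{lem:Reimann97-B2.3}).
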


Here, $i\circ\psi_{T,\mu}$ is not necessarily admissible, because $\mu$ may not be $\mu_{h'}$ for some $h'\in X$. 

\begin{proof} This is proved in Lemma 5.11 of \cite{LR87} when $\mbfK_p$ is a hyperspecial subgroup. 
We will adapt its argument for a general parahoric subgroup. 
We first show how the first statement implies the second one.
Since the kernel of the Galois gerb $\fP$ is the projective limit of $P(L,m)(\Qb)$, where $L$ runs through CM Galois extensions of $\Q$ and $m\in\N$ varies with respect to divisibility (cf. Subsubsec. \ref{subsubsec:pseudo-motivic_Galois_gerb}), we only need to show it after restricting $\phi$, $\psi_{T,\mu}$ to $P(L,m)$ (for all sufficiently large Galois CM field $L$ and $m$). Let $L$ be a CM-field splitting $T$. Then, the Galois-gerb morphisms $\psi_{T,\mu}:\fP\rightarrow \fG_T$ and $\zeta_p:\fG_p\rightarrow\fP(p)$ factor through $\fP^L$ and $\fG_p^{L_{v_2}}$, respectively (Lemma \ref{lem:defn_of_psi_T,mu}, (2) and Remark \ref{rem:comments_on_zeta_v}), where as usual $v_2$ denotes (by abuse of notation) the place of $L$ induced by the fixed embedding $L\hra \Qb$. Let $\zeta_p^{L_{v_2}}:\fG_p^{L_{v_2}}\rightarrow\fP(p)$ denote the induced morphism.
Then, according to the definition of $\psi_{T,\mu}$ (cf. \cite{LR87}, p. 143-144), when $\mu+\iota\mu$ is defined over $\Q$, $\psi_{T,\mu}(\delta_m)$ is the unique element $t$ in $T(\Q)$ such that for all $\lambda\in X^{\ast}(T)$, $\lambda(t)$ is a Weil number and 
\[|\prod_{\sigma\in\Gal(L_{v_2}/\Q_p)}\sigma\lambda(t)|_p=q^{-\langle\lambda,\Nm_{L_{v_2}/\Qp}\mu\rangle} \]
holds with $q=p^m$.
Further, $\psi_{T,\mu}(p)\circ\zeta_p:\fG_p^{L_{v_2}}\rightarrow \fG_T$ is conjugate to $\xi_{-\mu}^{L_{v_2}}$ by an element of $T(\Qpb)$ (Lemma \ref{lem:properties_of_psi_T,mu}, (2)), where $\xi_{-\mu}^{L_{v_2}}:\fG_p^{L_{v_2}}\rightarrow \fG_T(p)$ is the morphism defined in Definition \ref{defn:psi_T,mu} for $(T_{\Qp},\mu,L_{v_2})$.
On the other hand, by enlarging $L$ if necessary, we may assume that $\phi$ also factors through $\fP^L$, and that there exists a Galois $\Qpnr/\Qp$-gerb morphism $\xi_p':\fD_l\rightarrow\fG_{T_{\Qp}}^{\nr}$ whose inflation to $\Qpb$ is $T(\Qpb)$-conjugate to $\xi_p=\phi(p)\circ\zeta_p$, where $l:=[L_{v_2}:\Qp]$, (Lemma \ref{lem:unramified_morphism}, (2), or Lemma \ref{lem:unramified_conj_of_special_morphism}). Then, for $b\rtimes\sigma:=\xi_p'(s_{\sigma}^l)$, we have that 
\[-[L_{v_2}:\Qp]\nu_b=\nu_p:=\phi(p)^{\Delta}\circ(\zeta_p^{L_{v_2}})^{\Delta}.\]
Next, recall (cf. Subsec. \ref{subsubsec:pseudo-motivic_Galois_gerb}) that each character of $P(L,m)$ is regarded as a Weil $q=p^m$-number in $L$, with the correspondence being realized in terms of $\delta_m$ by $\chi\mapsto \chi(\delta_m)$ (Lemma \ref{lem:Reimann97-B2.3}, (2)), and that for a Weil $q$-number $\pi$, $\chi_{\pi}$ is the notation regarding it as a character of $P(L,m)$. For $\lambda\in X^{\ast}(T)$, writing $\phi^{\ast}(\lambda)(\delta_m)$ as $\pi_{\lambda}$ for short (so that $\chi_{\pi_{\lambda}}=\phi^{\ast}(\lambda)$), we see that
\begin{eqnarray*}
|\prod_{\sigma\in\Gal(L_{v_2}/\Q_p)}\sigma\lambda(\phi(\delta_m))|_p&=&
|\prod_{\sigma\in\Gal(L_{v_2}/\Q_p)}\sigma\phi^{\ast}(\lambda)(\delta_m)|_p =|\prod_{\sigma\in\Gal(L_{v_2}/\Q_p)}\sigma\pi_{\lambda}|_p\\
&=&q^{\langle \chi_{\pi_{\lambda}},\nu_2^{L_{v_2}}\rangle}=q^{\langle\phi^{\ast}(\lambda),(\zeta_p^{L_{v_2}})^{\Delta}\rangle} \\
&=&q^{\langle \lambda,\nu_p\rangle}.
\end{eqnarray*}
Here, the third equality is the property of $\nu_2^{L_{v_2}}=(\zeta_p^{L_{v_2}})^{\Delta}$ (Def. \ref{defn:Weil-number_torus}, (\ref{eqn:cocharacters_nu^K}), cf. Subsec. \ref{subsubsec:pseudo-motivic_Galois_gerb}). 
This shows that the first statement implies the second claim.

Now, we establish the first statement. We remark that we will reduce the general parahoric subgroup case to a situation involving only a special maximal parahoric subgroup.

Let $T_1^{\textrm{split}}$ be the maximal $\Qp$-split subtorus of $T_1$, $M$ the centralizer of $T_1^{\textrm{split}}$; thus $T_1\subset M$, and $M$ is a $\Qp$-Levi subgroup (Lemma \ref{lem:specaial_parahoric_in_Levi}, (1)).  Below, there will be given a special point $\mbfo$ of the Bruhat-Tits building $\mcB(G,\mfk)$. Then, one can find a $\Qp$-torus $S'$ of $M$ whose extension to $\Qpnr$ becomes a maximal $\Qpnr$-split torus of $M_{\Qpnr}$, and a $M(\mfk)\rtimes\Gal(\Qpnr/\Qp)$-equivariant embedding $\mcB(M,\mfk)\hra\mcB(G,\mfk)$ such that $\mbfo$ lies in the image of the apartment $\mcA^{M}_{\mfk}\subset\mcB(M,\mfk)$ corresponding to $S'$.
Note that the centralizer $T':=\Cent_{G_{\Qp}}(S')$ is a maximal torus of $G_{\Qp}$ (thus, a maximal torus of $M$ as well), as $G_{\mfk}$ is quasi-split by a theorem of Steinberg.

We recall that for a facet $\mbff^{\sigma}$ in $\mcB(G_{\Qp},\Qp)$, there exists a unique $\sigma$-stable facet $\mbff$ in $\mcB(G_{\Qp},\mfk)$ with $\mbff^{\langle\sigma\rangle}=\mbff^{\sigma}$ (\cite[5.1.28]{BT84}).
Let $\mcG_{\mbff}^{\mro}$ be the smooth $\cO_{\mfk}$-group scheme canonically attached to $\mbff$, so that it has connected geometric fibers and the elements of $\mcG_{\mbff}^{\mro}(\cO_{\mfk})$ fixes $\mbff$ pointwise (cf. \cite[5.2]{BT84}). Then, 
\[K_{\mbff}(\mfk):=\mcG_{\mbff}^{\mro}(\cO_{\mfk}),\quad K_{\mbff}(\Qp):=\mcG_{\mbfo}^{\mro}(\cO_{\mfk})^{\sigma}\]
are the pararhoric groups associated with the facet $\mbff$ (or $\mbff^{\sigma}$) (cf. \cite{HainesRapoport08}, Prop. 3). 

Let $\mcA^{G_{\Qp}}_{\mfk}$ be the apartment corresponding to $S'$. By conjugation, we assume that the given $\sigma$-stable facet $\mbff$ defining $\mbfK_p$ (i.e. $\mbfK_p=K_{\mbff}(\Qp)$) lies in $\mcA^{G_{\Qp}}_{\mfk}$.
We fix a $\sigma$-stable alcove $\mbfa$ in $\mcA^{G_{\Qp}}_{\mfk}$ whose closure contains $\mbff$, and let $K_{\mbfa}(\mfk)$ be the corresponding Iwahoric subgroup of $G(\mfk)$. The alcove $\mbfa$ then must contain some (not necessarily $\sigma$-stable) special point $\mbfo$ in its closure. If $K_{\mbfo}(\mfk)\subset G(\mfk)$ denotes the associated special maximal parahoric subgroup, we have that $K_{\mbfa}(\mfk)\subset K_{\mbfo}(\mfk)$ since $\mbfo$ is in the closure of $\mbfa$.
Now, as both  $\mbfa$ and $\mbff$ are $\sigma$-stable, according to \cite[Thm.1.1]{He15}, the condition $X(\{\mu\},b)_{K_{\mbff}(\Qp)}\neq\emptyset$ implies that $X(\{\mu\},b)_{K_{\mbfa}(\Qp)}\neq\emptyset$. 
Let $\mu_B$ be a dominant representative of $\{\mu\}$, where we choose the dominant Weyl chamber \emph{opposite} to the unique Weyl chamber containing the base alcove $\mbfa$ with apex at the special vertex $\mbfo$ (following the convention of \cite{HeRapoport15}). Also, recall (\ref{eqn:splitting_of_EAWG2}) that the choice of a base alcove $\mbfa$ presents the extended affine Weyl group $\widetilde{W}$ as the semidirect product $W_a\rtimes \Omega_{\mbfa}$ of the affine Weyl group $W_a$ (attached to $S'$) with the normalizer subgroup $\Omega_{\mbfa}\subset \widetilde{W}$ of $\mbfa$, thereby fixes a Bruhat order $\leq$ on $\widetilde{W}$ as well.

Let $g_1\in G(\mfk)$ be such that $g_1 K_{\mbfa}(\mfk)\in X(\{\mu_X\},b)_{K_{\mbfa}(\mfk)}$, i.e. if 
\begin{equation} \label{eqn:Iwahori_invariant} 
\mathrm{inv}_{K_{\mbfa}(\mfk)}(g_1,b\sigma(g_1))=\widetilde{W}_{K_{\mbfa}(\mfk)}\cdot w_1\cdot \widetilde{W}_{K_{\mbfa}(\mfk)}\quad (w_1\in \widetilde{W}),
\end{equation}
under the isomorphism $K_{\mbfa}(\mfk)\backslash G(\mfk)/K_{\mbfa}(\mfk)\simeq \widetilde{W}_{K_{\mbfa}(\mfk)}\backslash\widetilde{W}/\widetilde{W}_{K_{\mbfa}(\mfk)}$, there exists $\mu'\in X_{\ast}(T')\cap W_0\cdot\mu_B$ that 
\begin{eqnarray} \label{eqn:Iwahori_inequality} 
\widetilde{W}_{K_{\mbfa}(\mfk)}\cdot w_1\cdot \widetilde{W}_{K_{\mbfa}(\mfk)} &\leq&\widetilde{W}_{K_{\mbfa}(\mfk)}\cdot t^{\underline{\mu'}}\cdot \widetilde{W}_{K_{\mbfa}(\mfk)}. 
\end{eqnarray} 
Here, we used the notations from (\ref{subsubsec:mu-admissible_set}); namely, $\underline{\mu'}$ is the image of $\mu'$ in $X_{\ast}(T')_{\Gamma_{\mfk}}$, and for $\lambda\in X_{\ast}(T')_{\Gamma_{\mfk}}$, $t^{\lambda}$ denotes the corresponding element of $\widetilde{W}$ via $X_{\ast}(T')_{\Gamma_{\mfk}}\cong T'(\mfk)/T'(\mfk)_1\subset \widetilde{W}$.
Since $K_{\mbfa}(\mfk)\subset K_{\mbfo}(\mfk)$, the same relations (\ref{eqn:Iwahori_invariant}), (\ref{eqn:Iwahori_inequality}) continue to hold with $K_{\mbfa}(\mfk)$ replaced by $K_{\mbfo}(\mfk)$ (see \cite[(3.5)]{Rapoport05} for (\ref{eqn:Iwahori_inequality})). Be warned that this does not mean that $X(\{\mu\},b)_{K_{\mbfo}(\mfk)}\neq\emptyset$: the latter definition makes sense only when the point $\mbfo$ is $\sigma$-stable.

Therefore, we are given a string of $\Qp$-subgroups of $G_{\Qp}$:
\[S'\subset T'\subset M,\quad T_1\subset M,\]
where
\begin{itemize}
\item[(a)] $M$ is a $\Qp$-Levi subgroup of $G_{\Qp}$ (i.e. $M$ is the centralizer of a $\Qp$-split torus of $G_{\Qp}$);
\item[(b)] $S'$ is a $\Qp$-torus of $M$ whose extension to $\Qpnr$ becomes a maximal $\Qpnr$-split torus of $M_{\Qpnr}$ (thus $S'$ is also such torus for $G_{\Qp}$);
\item[(c)] $T'=\Cent_{G_{\Qp}}(S')$ (thus, a subgroup of $M$);
\item[(d)] $T_1$ is an elliptic maximal torus of $M$ which $\nu_b$ factors through.
\end{itemize}
These satisfy the following properties: There exists a special point $\mbfo$ of $\mcB(G,\mfk)$ which lies in the image of the apartment $\mcA^{M}_{\mfk}\subset\mcB(M,\mfk)$ corresponding to $S'$, under a suitable embedding $\mcB(M,\mfk)\hra\mcB(G,\mfk)$. Also, the relations (\ref{eqn:Iwahori_invariant}), (\ref{eqn:Iwahori_inequality}) hold with $K_{\mbfa}(\mfk)$ replaced by $K_{\mbfo}(\mfk)$ (for some $g_1$, $w_1$, $\mu'$ as in there). In this set up, we establish the existence of $\mu\in X_{\ast}(T_1)\cap \{\mu\}$ satisfying (\ref{eq:equality_on_the_kernel}). We proceed in the following steps:

\begin{itemize}
\item[(1)] Let $Q$ be a $\Qp$-parabolic subgroup of $G_{\Qp}$ of which $M$ is a Levi factor. Then, 
by Iwasawa decomposition $G(\mfk)=Q(\mfk) K_{\mbfo}(\mfk)$, we may assume $g_1\in Q(\mfk)$: this follows from the classical Iwasawa decomposition $G(\mfk)=Q(\mfk) \mathrm{Fix}(\mbfo)$ (\cite[3.3.2]{Tits79}) and that $\mathrm{Fix}(\mbfo)\subset T'(\mfk)\cdot K_{\mbfo}(\mfk)$. Indeed, $\mathrm{Fix}(\mbfo)\subset G(\mfk)=K_{\mbfo}(\mfk)T'(\mfk) K_{\mbfo}(\mfk)$, so any $g\in \mathrm{Fix}(\mbfo)$ is written as $k_1 tk_2$ with $k_1,k_2\in K_{\mbfo}(\mfk)$ and $t\in T'(\mfk)\cap \mathrm{Fix}(\mbfo)$. But, as $K_{\mbfo}(\mfk)$ is normal in $\mathrm{Fix}(\mbfo)$, we see that $g\in T'(\mfk)\cdot K_{\mbfo}(\mfk)$.
When one writes $g_1=nm$ with $m\in M(\mfk)$ and $n\in N_Q(\mfk)$ ($N_Q$ being the unipotent radical of $Q$), one has that
\[g_1^{-1}b\sigma(g_1)=m^{-1}b\sigma(m)n',\]
where 
\[n'=\sigma(m)^{-1}b^{-1}n^{-1}b\sigma(n)\sigma(m).\]
One readily checks that $n'\in N(\mfk)$.

\item[(2)] Define $\mu''\in X_{\ast}(T')$ by
\[m^{-1}b\sigma(m)\in (K_{\mbfo}(\mfk)\cap M(\mfk))\ t^{\underline{\mu''}}\ (K_{\mbfo}(\mfk)\cap M(\mfk)),\]
using the Cartan decomposition for $(M,K_{\mbfo}(\mfk)\cap M(\mfk))$ (as $K_{\mbfo}(\mfk)\cap M(\mfk)$ is a special maximal parahoric subgroup of $M(\mfk)$, by Lemma \ref{lem:specaial_parahoric_in_Levi}, (3)). Note the equality: 
\[w_{G_{\mfk}}(t^{\underline{\mu''}})=w_{G_{\mfk}}(m^{-1}b\sigma(m))=w_{G_{\mfk}}(m^{-1}b\sigma(m)n')=w_{G_{\mfk}}(w_1)=w_{G_{\mfk}}(t^{\underline{\mu'}}),\]
where the last equality holds since by definition of the Bruhat order on $\widetilde{W}=W_a\rtimes \Omega_{\mbfa}$, $w_1$ and $t^{\underline{\mu'}}$ have the same component in $\Omega_{\mbfa}$ and $w_{G_{\mfk}}$ is trivial on the image in $G_{\Qp}(\mfk)$ of $G_{\Qp}^{\uc}(\mfk)$ ($G_{\Qp}^{\uc}$ being the universal covering of $G_{\Qp}^{\der}$) (\cite[7.4]{Kottwitz97}). It follows that the images of $\mu',\mu''\in X_{\ast}(T')$ in $\pi_1(G)_{\Gamma_{\mfk}}$ are the same. 

On the other hand, if $k_1,k_2\in K_0(\mfk)\cap M(\mfk)$ are such that $m^{-1}b\sigma(m)=k_1t^{\underline{\mu''}}k_2$, then
$m^{-1}b\sigma(m)n'=k_1t^{\underline{\mu''}}k_2n'=k_1t^{\underline{\mu''}}n''k_2$
for some $n''\in N(\mfk)$.
Consequently, we see that
\[K_{\mbfo}(\mfk)\ t^{\underline{\mu''}}\ N_Q(\mfk)\cap K_{\mbfo}(\mfk)w_1 K_{\mbfo}(\mfk)\neq\emptyset.\]
By (\ref{eqn:Iwahori_inequality}), this implies that with respect to the Bruhat order on $\widetilde{W}_{K_{\mbfo}(\mfk)}\backslash\widetilde{W}/\widetilde{W}_{K_{\mbfo}(\mfk)}$, \[\widetilde{W}_{K_{\mbfo}(\mfk)}\cdot t^{\underline{\mu''}}\cdot \widetilde{W}_{K_{\mbfo}(\mfk)} \leq \widetilde{W}_{K_{\mbfo}(\mfk)}\cdot t^{\underline{\mu'}}\cdot \widetilde{W}_{K_{\mbfo}(\mfk)}.\] 
Indeed, the argument of the proof of \cite{HainesRostami10}, Lemma 10.2 establishes the following fact: for $x,y\in \widetilde{W}$, if $K_{\mbfo}(\mfk)yN_Q(\mfk)\cap K_{\mbfo}(\mfk)x K_{\mbfo}(\mfk)\neq\emptyset$, then $y\leq x'$ for some $x'\in\widetilde{W}_{K_{\mbfo}(\mfk)}\cdot x\cdot \widetilde{W}_{K_{\mbfo}(\mfk)}$. 
Also, if $x\leq y$ in the Bruhat order on $\widetilde{W}$, then $\widetilde{W}_{K_{\mbfo}(\mfk)}\cdot x \cdot\widetilde{W}_{K_{\mbfo}(\mfk)} \leq \widetilde{W}_{K_{\mbfo}(\mfk)}\cdot y\cdot \widetilde{W}_{K_{\mbfo}(\mfk)}$ (\cite[8.3]{KR00}).

\item[(3)] 
Let $\underline{\mu'}_0$, $\underline{\mu''}_0$ denote the dominant representatives of $W_0\cdot\underline{\mu'}$, $W_0\cdot\underline{\mu''}\subset X_{\ast}(T')_I$ in $X_{\ast}(T')_I\otimes\R=X_{\ast}(S')_{\R}$, where $I=\Gamma_{\mfk}=\Gal(\overline{\mfk}/\mfk)$. Then, we claim that 
\[\underline{\mu''}_0\leq \underline{\mu'}_0.\]
for the dominance order on $X_{\ast}(S')_{\R}$ (determined by the choice of the alcove $\mbfa$).

Indeed, by (\cite[1.7]{Tits79}), with our choice of the special vertex $\mbfo$, the affine space $\mcA(S',\mfk)$ is identified with the real vector space $V:=X_{\ast}(S')_{\R}=X_{\ast}(T')_I\otimes\R$ (with the origin $\mbfo$), and there exists a reduced root system ${}^{\mbfo}\Sigma$ such that $W_a$ can be identified with its affine Weyl group 
\[W_a=Q^{\vee}({}^{\mbfo}\Sigma)\rtimes W({}^{\mbfo}\Sigma)\]
(i.e. $Q^{\vee}({}^{\mbfo}\Sigma)=X_{\ast}(T^{\uc})_I$ and $W({}^{\mbfo}\Sigma)=W_0$).
Also, the choice of the alcove $\mbfa$ containing $\mbfo$ determines a set $\mathrm{S}_{\mbfa}$ of simple affine roots on $\mcA(S',\mfk)$, and $\widetilde{W}_{K_{\mbfo}(\mfk)}\simeq W_0$ is the subgroup of $W_a$ generated by the subset ${}^{\mbfo}\Delta$ consisting of the simple affine roots whose corresponding affine hyperplanes pass through $\mbfo$ (thus, ${}^{\mbfo}\Delta$ is a set of simple roots for ${}^{\mbfo}\Sigma$). In this set-up of the Coxeter group $W_a$ endowed with a set of generators $\mathrm{S}_{\mbfa}$, the claim follows from Lemma \ref{lem:Bruhat_order_on_Coxeter_group}, applied with the choice $\theta=\mbfo$, noting the following two facts: 
First, for $w=t^{\underline{\nu}}\in W_a$ with $\underline{\nu}\in X_{\ast}(T^{\uc})_I$, $w\theta$ is identified with $\underline{\nu}\in V$. Secondly, $W_0 t^{\underline{\mu''}} W_0\leq W_0 t^{\underline{\mu'}} W_0$ if and only if $w''\leq w'$, where $w''$ (resp. $w'$) is the (unique) element of minimal length in the coset $W_0 t^{\underline{\nu''}} W_0$ (resp. $W_0 t^{\underline{\nu'}} W_0$) with $\underline{\nu''}\in X_{\ast}(T^{\uc})_I$ being the component of $\underline{\mu''}\in X_{\ast}(T)_I(\subset \widetilde{W}=W_a\rtimes \Omega_{\mbfa})$ (resp. $\underline{\nu'}\in X_{\ast}(T^{\uc})_I$ being the component of $\underline{\mu'}\in X_{\ast}(T)_I$).

Hence, by  \cite[Lemma 2.2]{RR96}, we have that
\[W_0\cdot\underline{\mu''}\leq \underline{\mu'}_0.\]

But, as $\underline{\mu'}-w\underline{\mu''}\in X_{\ast}(T'^{\uc})_I$ for any $w\in W_0$, it follows from \cite{Kottwitz84b}, Lemma 2.3.3 that \[\underline{\mu''}\in W_0\cdot\underline{\mu'},\] 
because $\mu'$ is a minuscule coweight. 
Namely, there exist $w\in W_0$ and $\mu_1\in \langle \tau x-x\ |\ \tau\in\Gal(\overline{\mfk}/\mfk), x\in X_{\ast}(T')\rangle$
such that 
\[\mu''=w\mu'\cdot\mu_1\] 
(multiplicative notation).

\item[(4)] So far, we have not used $T_1$ at all. Now, we will use the condition that $T_1$ is elliptic in $M$. Let $\mu\in X_{\ast}(T_1)$ (arbitrary cocharacter for a moment). Let us put $\nu_p:=[K:\Qp]\nu_b\in X_{\ast}(T_1)$.
As $\Nm_{K/\Qp}\mu$ and $\nu_p$ are both $\Qp$-rational, the required equation (\ref{eq:equality_on_the_kernel}) holds if and only if
\begin{equation} \label{eqn:defining_property_of_mu}
[K:\Qp]\langle\chi,\mu\rangle=\langle\chi,\nu_p\rangle
\end{equation}
for every $\Qp$-rational character $\chi$ of $T_1$. 

But, since $T_1$ is \emph{elliptic} in $M$, any $\Qp$-rational character $\chi$ of $T_1$ can be regarded as a $\Qp$-rational character $\chi^{\ab}$ of $M^{\ab}=M/M^{\der}\cong T_1/(T_1\cap M^{\der})$, thus also as that of $M$ (via the canonical projection $p:M\rightarrow M^{\ab}$) such that $\langle\chi,\mu\rangle=\langle\chi^{\ab},p\circ\mu\rangle$
(the first pairing is defined for $X^{\ast}(T_1)\times X_{\ast}(T_1)$ and the second one for $X^{\ast}(M^{\ab})\times X_{\ast}(M^{\ab})$). For a cocharacter $\nu$ of $M$, we will often write $\langle\chi^{\ab},p\circ\nu\rangle$ simply as $\langle\chi^{\ab},\nu\rangle$. 

Now, as $T_1$, $T'$ are both maximal tori of $M$, there exists $g\in M(\overline{\mfk})$ with $T_1=gT'g^{-1}$. Set
\begin{equation}
\mu:=gw(\mu')g^{-1}\in X_{\ast}(T_1)\cap W\cdot\{\mu\}
\end{equation}
Then, as $\chi$ is $\Qp$-rational, we have that $\langle\chi,g\mu_1 g^{-1}\rangle=\langle\chi^{\ab},p\circ g\mu_1 g^{-1}\rangle=\langle\chi^{\ab},p\circ\mu_1\rangle=0$, thus
\[\langle\chi,\mu\rangle=\langle\chi^{\ab},p\circ\mu\rangle=\langle\chi^{\ab},p\circ gw(\mu')g^{-1}\cdot g\mu_1g^{-1}\rangle=\langle\chi^{\ab},p\circ\mu''\rangle.\]

On the other hand, by definition of $\mu''$, for any $\Qp$-rational character $\lambda$ of $M$, we have that
\[|\lambda(m^{-1}b\sigma(m))|=p^{-\langle\lambda,\mu''\rangle}.\]
Since $|\lambda(b)|=p^{-\langle\lambda,\nu_b\rangle}$, we also get that wtih $l:=[K:\Qp]$, 
\[p^{-[K:\Qp]\langle\lambda,\mu''\rangle}=|\lambda(m^{-1}b\sigma(m)\cdots\sigma^{-(l-1)}(m)\sigma^{l-1}(b)\sigma^l(m))|=p^{-\langle\lambda,\nu_p\rangle}.\]
Thus by substituting $\lambda=\chi^{\ab}$ (for a $\Qp$-rational character $\chi$ of $T_1$) and using that $\langle\chi^{\ab},\nu_p\rangle=\langle\chi,\nu_p\rangle$, one obtains the equality (\ref{eqn:defining_property_of_mu}).
\end{itemize}

This completes the proof. 
\end{proof}

\begin{rem}
(1) Suppose that $\psi_{T,\mu}(p)\circ\zeta_p$ factors through $\fG_p^{K}$.
Since $\psi_{T,\mu}(p)\circ\zeta_p$ is conjugate to $\xi_{-\mu}$ under $T(\Qpb)$, their restrictions to the kernel $\Gm$ are the same (Lemma \ref{lem:properties_of_psi_T,mu}, (2)), i.e. equals $-\Nm_{K/\Qp}\mu$. So, the condition (\ref{eq:equality_on_the_kernel}) means that the two $\Qpb/\Qp$-Galois gerb morphisms $\psi_{T,\mu}(p)\circ\zeta_p$, $\phi(p)\circ\zeta_p$ have the same restrictions to the kernel. Then, by definition of $\psi_{T,\mu}$ (cf. \cite{LR87}, p.143-144), this implies that $(i\circ\psi_{T,\mu})^{\Delta}=\phi^{\Delta}$.

(2) In our proof, we had a maximal $\Qp$-torus $T'=\mathrm{Cent}_{G_{\Qp}}(S')$. In the proof of \cite{LR87}, Lemma 5.11, there appears (on p. 177, line -5) a maximal torus of $M$ which is also denoted by the same symbol $T'$. From our perspective, the role of their $T'$ is that of providing a common affine space underlying the apartments $\mcB(G_{\Qp},\mfk)$, $\mcB(M,\mfk)$ (which contains the given (hyper)special point), whose job in our situation is done by $S'$. Meanwhile, our $T'_{\mfk}$ is the centralizer of a maximal $\mfk$-split torus $S'_{\mfk}$ and enters the proof as such, for example, via the Iwasawa and Cartan decompositions (cf. \cite[3.3.2]{Tits79}).
On the other hand, when $G_{\Qp}$ is unramified, their $T'$ is unramified and this is how the unramified condition (or word) in their proof shows up.
\end{rem}

The next lemma is our strengthening of Lemma 5.12 of \cite{LR87}. Its proof does not involve the level subgroup $\mbfK_p$.

\begin{lem} \label{lem:LR-Lemma5.12}
Let $\phi$, $\psi_{T,\mu}$ be as in Lemma \ref{lem:LR-Lemma5.11}.
Then, there exists an admissible embedding of maximal torus $\Int g':T\hra G\ (g'\in G(\Qb))$ (with respect to the identity inner twisting $G_{\Qb}=G_{\Qb}$) such that 
\begin{itemize} 
\item[(i)] $\Int g'\circ\phi$ equals $\psi_{T',\mu_{h'}}$ on the kernel of $\fP$, for some $h'\in X_{\ast}(T')\cap X$.
\end{itemize}
Moreover, if $T_{\Ql}$ is elliptic in $G_{\Ql}$ for some prime $l\neq p$,
there exist $g'\in G(\Qb)$ and $h'\in X$ satisfying, in addition to (i), that 
\begin{itemize}
\item[(ii)] there exists $y\in G(\Qp)$ such that $(T'_{\Qp},\mu_{h'})=\Int y(T_{\Qp},\mu)$.
\end{itemize}
\end{lem}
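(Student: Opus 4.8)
The plan is to prove Lemma \ref{lem:LR-Lemma5.12} by combining Lemma \ref{lem:LR-Lemma5.11} with a cohomological transfer argument, following the structure of \cite[Lemma 5.12]{LR87} but keeping careful track of the local condition at $l$. Let me outline it.

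\textbf{Setup.} By Lemma \ref{lem:LR-Lemma5.11} (applied to the well-located admissible morphism $\phi$ and the torus $T$) there is a cocharacter $\mu\in X_{\ast}(T)\cap\{\mu_X\}$ such that $\phi$ and $\psi_{T,\mu}$ agree on the kernel of $\fP$. We want to correct $T$ by an admissible embedding $\Int g':T\hra G$, with respect to the identity inner twisting, so that $\Int g'(\mu)$ becomes $\mu_{h'}$ for some $h'\in X$ factoring through $T':=\Int g'(T)$. The constraint is that a cocharacter $\mu_0\in X_{\ast}(T_0)$ of a maximal $\Q$-torus $T_0$ of $G$, elliptic over $\R$, lies in $\{\mu_X\}$ and is of the form $\mu_{h_0}$ for $h_0\in X\cap\Hom(\dS,(T_0)_{\R})$ precisely when the Weyl chamber position of $\mu_0$ is ``compatible with $X$'' at the real place; since $T$ is already elliptic over $\R$ and $\mu\in\{\mu_X\}$, after possibly conjugating by an element of $T(\R)$ (the real Weyl group of an elliptic torus acts transitively on the relevant chambers, or rather, the $X$-condition pins down a $W(\R)$-orbit) we may assume $\mu=\mu_h$ for some $h\in X$. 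The issue is that this conjugation must be realized by a $\Q$-rational admissible embedding, i.e. an element $g'\in G(\Qb)$ with $\Int g'|_{T_{\Qb}}$ defined over $\Q$, and at the same time, in the second part, by an element of $G(\Qp)$ at the $p$-adic place.

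\textbf{Step 1: The real place and the first statement.} Fix $h_0\in X$ and let $\mu_{h_0}$ lie in some maximal $\R$-torus. The set of $G(\Qb)$-conjugates $\Int g'(T)$ of $T$ that are ``admissible'' (i.e. $\Int g'|_T$ defined over $\Q$) is governed by $\ker(H^1(\Q,T)\to H^1(\Q,G))$, and among these we must find one for which the transported $\mu$ is $X$-conjugate at $\R$. Over $\R$, since $T_{\R}$ is elliptic, $T(\R)$ contains a maximal compact torus and $\mu$ and $\mu_{h_0}$ (both in $\{\mu_X\}$, both ``compact type'' representatives because of ellipticity) are conjugate under $T(\R)$ up to an element of the real Weyl group $W(G_{\R},T_{\R})(\R)$; the element $w_\infty$ realizing this is the obstruction. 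One then invokes a Hasse principle for the torus-with-cocharacter: one constructs a class in $H^1(\Q,T)$ which is trivial at every finite place and has prescribed image $w_\infty$ at $\R$, using that $H^1(\R,T^{\mathrm{sc}})$ or the appropriate quotient surjects onto the relevant Weyl group piece, and that $\ker^1(\Q,T)$ vanishes for $T$ elliptic at $\R$ with $Z(G)$ CM-split (Serre condition). Twisting $T$ by this class produces the desired $T'$ and $g'$. This is exactly the argument of \cite[Lemma 5.12]{LR87}, and I would cite it, only spelling out the points where the Serre condition replaces the unramified hypothesis.

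\textbf{Step 2: Adding the condition at $l$ (the second statement).} Suppose now $T_{\Ql}$ is elliptic in $G_{\Ql}$ for some finite prime $l\neq p$. We want $g'$ to moreover have the property that $\Int g'|_{T_{\Qp}}$ is given by $\Int y|_{T_{\Qp}}$ for some $y\in G(\Qp)$. The strategy is to enlarge the cohomological input: instead of only prescribing the behavior at $\R$, we now prescribe the local behavior at both $\R$ and $p$, and use the ellipticity of $T$ at $l$ to absorb the resulting obstruction. Concretely, the transfer $\Int g'$ will be realized by $\Int y$ at $p$ exactly when the cocycle $\rho\mapsto g'\rho(g')^{-1}$ (valued in $T$) becomes a coboundary in $G(\Qpb)$, i.e. its class in $H^1(\Qp,T)$ lies in $\ker[H^1(\Qp,T)\to H^1(\Qp,G)]$ and is trivial — more precisely we want the class to be such that $T_{\Qp}$ and $T'_{\Qp}$ are $G(\Qp)$-conjugate, which by the standard dictionary (\cite[\S9]{Kottwitz84b}) means the local class at $p$ should be the trivial class in $H^1(\Qp,T)$, or rather in the image of $H^1(\Qp,N_G(T)/T)$ appropriately. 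So we need a global class in $H^1(\Q,T)$, trivial at all finite places except possibly $l$, with prescribed nontrivial image $w_\infty$ at $\R$ and trivial image at $p$; the obstruction to its existence (a reciprocity/product-formula constraint coming from Poitou–Tate) is then forced to live at $l$, and it can be killed there precisely because $T_{\Ql}$ is elliptic, so that $H^1(\Ql,T)\to H^1(\Ql,G)$ has large enough kernel (indeed $H^1(\Ql,T)$ for elliptic $T$ realizes all the needed classes, by Tate–Nakayama duality and ellipticity). This is the same mechanism as in \cite[Thm. 4.1.1]{Lee14}, where the idea of exploiting ellipticity at one auxiliary finite place to make the transfer $G(\Qp)$-rational was first used; I would reduce to that argument.

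\textbf{Main obstacle.} The hard part is the bookkeeping in Step 2: verifying that the global class in $H^1(\Q,T)$ with the prescribed local components at $\R$, $p$, and freedom only at $l$ actually exists. This requires the Hasse principle / Poitou–Tate exact sequence for $T$ together with the computation that the obstruction group $\ker^1$ vanishes (or is controlled) under the Serre condition, and the verification that ellipticity of $T$ at $l$ makes $H^1(\Ql,T)\to \prod'$ surjective onto the obstruction. One also has to check that the resulting $\mu_{h'}$ genuinely comes from an $h'\in X$ and not merely from a cocharacter in $\{\mu_X\}$ — this is where ellipticity over $\R$ of $T'$ (inherited since $T'$ is an inner/stable-conjugate twist of $T$, and the three conditions (i)--(iii) are preserved under transfer as noted before Lemma \ref{lem:criterion_for_admissible_morphism_to_land_in_torus}) combined with the real-place matching from Step 1 is used. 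I expect the real-place and $p$-adic compatibility to be routine given \cite[Lemmas 5.6, 5.8, 5.9]{LR87}, so the genuine work is the simultaneous global realization, which I would handle by the same Galois-cohomology computation as in \cite[Lemma 5.12]{LR87} augmented by the local argument at $l$ from \cite{Lee14}.
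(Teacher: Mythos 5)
Your high-level structure is right — agree on the kernel via Lemma \ref{lem:LR-Lemma5.11}, fix an $h$ and a Weyl element $w$ with $\mu=w(\mu_h)$, form the associated $1$-cocycle at $\infty$, globalize it, and (for the second statement) force triviality at $p$ by pushing the obstruction to the auxiliary place $l$ where $T$ is elliptic, as in \cite{Lee14}. But the proposal works throughout with $H^1(\Q,T)$ and ``the Hasse principle for the torus-with-cocharacter'', and this does not suffice. The cocycle $\alpha^\infty_\iota=w\iota(w^{-1})$ must be shown to take values in $T^{\uc}(\C)$ (this is where the paper invokes Shelstad, \cite[Prop.~2.2]{Shelstad79}), and the globalization must take place in $H^1(\Q,T^{\uc})$, not in $H^1(\Q,T)$. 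The reason is that you need the resulting global class to become trivial in $G(\Qb)$ so as to produce $g'$ with $\Int g'|_T$ defined over $\Q$, and the only thing that makes this automatic is the Hasse principle for $G^{\uc}$ (injectivity of $H^1(\Q,G^{\uc})\to H^1(\R,G^{\uc})$, which holds because $G^{\uc}$ is simply connected semisimple and $H^1(\Qv,G^{\uc})=1$ for finite $v$). There is no such Hasse principle for $G$ itself, so a class in $H^1(\Q,T)$ with prescribed local components need not die in $H^1(\Q,G)$, and your construction of an admissible embedding breaks down at exactly this step. Relatedly, appealing to the vanishing of $\ker^1(\Q,T)$ is the wrong tool: that controls injectivity of the localization map, not the existence of a global class with prescribed local data. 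The existence statement the proof actually needs is Langlands' \cite[Lemma~7.16]{Langlands83} for part (i), and, for part (ii), the surjectivity statement $H^1(\Q,T^{\uc})\twoheadrightarrow H^1(\R,T^{\uc})\oplus H^1(\Qp,T^{\uc})$ under ellipticity of $T_{\Ql}$ (Lemma \ref{lem:Lee14-lem.4.1.2} in the paper). Both of these live on the level of $T^{\uc}$, which is the point you are missing.

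One further loose thread: after producing $u\in G^{\uc}(\Qb)$ with $\alpha_\rho=u^{-1}\rho(u)$, you still have to check that the transported cocharacter $\mu'=\Int u(\mu)$ is of the form $\mu_{h'}$ for an actual $h'\in X$, not merely a cocharacter in $\{\mu_X\}$. Your remark about ``the $X$-condition pins down a $W(\R)$-orbit'' gestures at this but does not carry it out. The concrete verification is that since $\alpha^\infty_\iota=w\iota(w^{-1})=u^{-1}\iota(u)$, the element $uw$ lies in $G^{\uc}(\R)$, so $\mu'=\Int u(\mu)=\Int(uw)(\mu_h)$ with $uw\in G(\R)$, and hence $h':=\Int(uw)(h)$ lies in the $G(\R)$-orbit $X$. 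This step uses the precise form of the cocycle in $T^{\uc}$ and is not available if you work only with $T$.
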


The first statement, i.e. existence of a transfer of maximal torus $\Int g':T\hra G$ with the property (i) is the assertion of Lemma 5.12 of \cite{LR87} which we reproduce now, while the existence of such element with the additional property (ii) (under the given assumption on $T_{\Ql}$) is due to the author.

\begin{proof}
Let $T^{\uc}$ denote the inverse image of $T\cap G^{\der}$ under the isogeny $G^{\uc}\rightarrow G^{\der}$; it is a maximal torus of $G^{\uc}$. Choose $w\in N_{G}(T)(\C)$ such that $\mu=w(\mu_h)$. We have the cocycle $\alpha^{\infty}\in Z^1(\Gal(\C/\R),G^{\uc}(\C))$ defined by 
\[\alpha^{\infty}_{\iota}=w\cdot\iota(w^{-1}).\] 
One readily checks that this has values in $T^{\uc}(\C)$. Indeed, according to \cite[Prop. 2.2]{Shelstad79}, the automorphism $\Int(w^{-1})$ of $T^{\uc}_{\C}$ is defined over $\R$, so $\Int(w^{-1})(\iota(t))=\iota(\Int(w^{-1})t)=\Int(\iota(w^{-1}))(\iota(t))$ for all $t\in T^{\uc}(\C)$, i.e. $\iota(w)w^{-1}\in \mathrm{Cent}_{G^{\uc}}(T^{\uc})(\C)=T^{\uc}(\C)$, and so is $w\iota(w^{-1})=\iota(\iota(w)w^{-1})$.
Let $\phi$, $\psi_{T,\mu}$ be as in Lemma \ref{lem:LR-Lemma5.11}. Then, according to Lemma 7.16 of \cite{Langlands83}, one can find a global cocycle $\alpha\in Z^1(\Q,T^{\uc})$ mapping to $\alpha^{\infty}\in  H^1(\Q_{\infty},T^{\uc})$. If furthermore $T_{\Ql}$ is elliptic in $G_{\Ql}$ for some prime $l\neq p$, 
we can choose $\alpha\in Z^1(\Q,T^{\uc})$ mapping to $\alpha^{\infty}\in  H^1(\Q_{\infty},T^{\uc})$ and having trivial image in $H^1(\Qp,T^{\uc})$, according to \cite[Lemma 4.1.2]{Lee14}, which we now recall with its proof. It is a variant of the original argument of \cite[Lemma 5.12]{LR87}.

\begin{lem} \cite[Lemma 4.1.2]{Lee14} \label{lem:Lee14-lem.4.1.2}
Let $T$ be a maximal $\Q$-torus of $G$ which is elliptic at some finite place $l\neq p$.

(1) The natural map $(\pi_1(T^{\uc})_{\Gamma(l)})_{\mathrm{tors}} \rightarrow (\pi_1(T^{\uc})_{\Gamma})_{\mathrm{tors}}$
is surjective.

(2) The diagonal map $H^1(\Q,T^{\uc})\rightarrow H^1(\R,T^{\uc})\oplus H^1(\Qp,T^{\uc})$ is surjective.
\end{lem}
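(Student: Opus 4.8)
## Proof strategy for Lemma \ref{lem:Lee14-lem.4.1.2}

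The plan is to reduce both assertions to statements in Galois cohomology of the torus $T^{\uc}$, exploiting the hypothesis that $T_{\Ql}$ is elliptic in $G_{\Ql}$. Recall that ellipticity of $T$ at $l$ means precisely that the maximal $\Q_l$-split subtorus of $T_{\Q_l}$ is central in $G_{\Q_l}$; since $T^{\uc} = T\cap G^{\der}$ pulled back to $G^{\uc}$ has $X_{\ast}(T^{\uc})\otimes\Q = X_{\ast}(T)\otimes\Q \cap (\text{coroot space})$, ellipticity forces $(X_{\ast}(T^{\uc})_{\Gamma(l)})\otimes\Q = 0$, i.e. $\pi_1(T^{\uc})_{\Gamma(l)} = X_{\ast}(T^{\uc})_{\Gamma(l)}$ is finite (a torsion group). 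This is the key structural input: the whole group $\pi_1(T^{\uc})_{\Gamma(l)}$ coincides with its torsion subgroup.

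For part (1): the natural map $X_{\ast}(T^{\uc})_{\Gamma(l)}\to X_{\ast}(T^{\uc})_{\Gamma}$ is surjective since $\Gamma(l)\subset\Gamma$ (coinvariants for a subgroup surject onto coinvariants for the whole group). By the observation above, the source is already all torsion, so the image of $(X_{\ast}(T^{\uc})_{\Gamma(l)})_{\mathrm{tors}} = X_{\ast}(T^{\uc})_{\Gamma(l)}$ is all of the image of $X_{\ast}(T^{\uc})_{\Gamma(l)}$, which is all of $X_{\ast}(T^{\uc})_{\Gamma}$; in particular it contains $(X_{\ast}(T^{\uc})_{\Gamma})_{\mathrm{tors}}$. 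This gives the claimed surjectivity; I would phrase it using $\pi_1(T^{\uc}) = X_{\ast}(T^{\uc})$ as in the excerpt's conventions.

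For part (2): I would use the Poitou--Tate exact sequence for the torus $T^{\uc}$ over $\Q$. The cokernel of the localization map $H^1(\Q,T^{\uc})\to \bigoplus_v H^1(\Q_v,T^{\uc})$ is, by Poitou--Tate duality, dual to $\ker^1(\Q, \widehat{T^{\uc}})$ where $\widehat{T^{\uc}}$ is the dual (a finite-type group of multiplicative type with cocharacter group $X^{\ast}(T^{\uc})$); more precisely there is an exact sequence $H^1(\Q,T^{\uc})\to \bigoplus_v' H^1(\Q_v,T^{\uc})\to H^1(\Q,\widehat{T^{\uc}})^D$. It suffices to show that the composite $H^1(\Q,\widehat{T^{\uc}})^D \to (H^1(\R,T^{\uc})\oplus H^1(\Q_p,T^{\uc}))$-obstruction vanishes, or more directly: an element $(\xi_\infty,\xi_p)\in H^1(\R,T^{\uc})\oplus H^1(\Q_p,T^{\uc})$ lifts to $H^1(\Q,T^{\uc})$ iff its image in $H^1(\Q,\widehat{T^{\uc}})^D$ (under the global reciprocity pairing, extended by zero at all places $v\neq\infty,p$) vanishes. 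Here is where ellipticity at $l$ enters decisively: I would show that the class can be adjusted by a class supported at $l$ so as to kill the global obstruction. Concretely, since $H^1(\Q_l, T^{\uc})$ is, by Tate--Nakayama, dual to $\widehat{T^{\uc}}(\Q_l)^{\wedge}$-type data and surjects onto $(\pi_1(T^{\uc})_{\Gamma(l)})_{\mathrm{tors}} = \pi_1(T^{\uc})_{\Gamma(l)}$ which by part (1) surjects onto $(\pi_1(T^{\uc})_{\Gamma})_{\mathrm{tors}}$, the place $l$ "sees" every class in the Tate--Nakayama description of $\ker$; by the product formula in Poitou--Tate (sum of local invariants is zero), any prescribed pair $(\xi_\infty,\xi_p)$ can be completed to a collection with zero total obstruction by choosing an appropriate class at $l$, and then Poitou--Tate exactness produces the desired global class restricting to $(\xi_\infty,\xi_p,\cdot)$ — in particular mapping to $\xi_\infty$ at $\R$ and $\xi_p$ at $\Q_p$.

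The main obstacle will be organizing the Poitou--Tate bookkeeping cleanly: one must be careful that $T^{\uc}$ need not be a torus in the naive sense relevant to Tate--Nakayama unless one works with its cocharacter module, and that the relevant finiteness ($\ker^1$, $\Sha$) statements are invoked in the correct form. The honest route, and the one I expect to follow, is to cite the original argument of \cite[Lemma 5.12]{LR87} verbatim for the construction of a global cocycle with prescribed archimedean image, and then superimpose the refinement from \cite[Lemma 4.1.2]{Lee14}: namely that the freedom to modify by a class from $H^1(\Q_l,T^{\uc})$ (available because $l$ is a place where $T$ is elliptic, so part (1) holds and $H^1(\Q_l,T^{\uc})$ is "as large as possible" in the Tate--Nakayama dual picture) lets one simultaneously arrange the image in $H^1(\Q_p,T^{\uc})$ to be trivial. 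Since the statement is quoted as already proved in \cite{Lee14}, in the paper itself I would simply recall it with a one-line indication of proof along these lines rather than reprove it in full.
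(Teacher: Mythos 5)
Your argument for part (1) matches the paper's: anisotropy of $T^{\uc}_{\Q_l}$ makes $\pi_1(T^{\uc})_{\Gamma(l)}$ finite (the paper phrases this dually, via finiteness of $\widehat{T^{\uc}}^{\Gamma(l)}$), and composing the torsion inclusion with the surjection onto $\Gamma$-coinvariants and onto its torsion gives the claim. For part (2) your strategy --- a local-global exact sequence for $H^1(-,T^{\uc})$ plus part (1) to absorb the global obstruction at the place $l$ --- is also the paper's, but the exact sequence should be pinned down more carefully. The paper cites Kottwitz, \cite[Prop.\ 2.6]{Kottwitz86},
\[
H^1(\Q,T^{\uc})\lra \bigoplus_v H^1(\Qv,T^{\uc})\stackrel{\theta}{\lra}\pi_0\bigl(\widehat{T^{\uc}}^{\Gamma}\bigr)^D=(\pi_1(T^{\uc})_{\Gamma})_{\mathrm{tors}},
\]
whose cokernel term is $(\pi_1(T^{\uc})_{\Gamma})_{\mathrm{tors}}$, not $H^1(\Q,\widehat{T^{\uc}})^D$ as you wrote; the latter is a larger group and would not interface cleanly with the Tate--Nakayama/Kottwitz identification $H^1(\Qv,T^{\uc})\cong(\pi_1(T^{\uc})_{\Gamma(v)})_{\mathrm{tors}}$ (\cite[(3.3.1)]{Kottwitz84a}) at every place, which is precisely what makes $\theta$ the direct sum of the maps appearing in part (1), so that part (1) literally produces the required class $\gamma^l\in H^1(\Q_l,T^{\uc})$ cancelling $\theta(\gamma^\infty)+\theta(\gamma^p)$. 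With those two references in place, your argument and the paper's coincide.
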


\begin{proof} 
(1) This map equals the composite:
\[(\pi_1(T^{\uc})_{\Gamma(l)})_{\mathrm{tors}}\hra \pi_1(T^{\uc})_{\Gamma(l)}\twoheadrightarrow \pi_1(T^{\uc})_{\Gamma}\twoheadrightarrow (\pi_1(T^{\uc})_{\Gamma})_{\mathrm{tors}},\]
where the last two maps are obviously surjective. 
Therefore, it is enough to show that $\pi_1(T^{\uc})_{\Gamma(l)}$ is a torsion group. 
But, as $T^{\uc}_{\Q_l}$ is anisotropic, $\widehat{T^{\uc}}^{\Gamma(l)}$ is a finite group, and so is $\pi_1(T^{\uc})_{\Gamma(l)}=X^{\ast}(\widehat{T^{\uc}}^{\Gamma(l)})=\Hom(\widehat{T^{\uc}}^{\Gamma(l)},\C^{\times})$.

(2) For every place $v$ of $\Q$, non-archimedean or not, there exists a canonical isomorphism (\cite[(3.3.1)]{Kottwitz84a})
\[H^1(\Qv,T^{\uc})\isom \pi_0(\widehat{T^{\uc}}^{\Gamma(v)})^D=\Hom(\pi_0(\widehat{T^{\uc}}^{\Gamma(v)}),\Q/\Z)\cong X^{\ast}(\widehat{T^{\uc}}^{\Gamma(v)})_{\mathrm{tors}}\cong(X_{\ast}(T^{\uc})_{\Gamma(v)})_{\mathrm{tors}},\]
and a short exact sequence (\cite[Prop.2.6]{Kottwitz86})
\[H^1(\Q,T^{\uc})\rightarrow H^1(\Q,T^{\uc}(\overline{\A}))=\oplus_v H^1(\Qv,T^{\uc})
\stackrel{\theta}{\rightarrow} \pi_0(Z(\widehat{T^{\uc}})^{\Gamma})^D=(\pi_1(T^{\uc})_{\Gamma})_{\mathrm{tors}},\]
where $\theta$ is the composite 
\[\oplus_v H^1(\Qv,T^{\uc})\isom \oplus_v \pi_0(Z(\widehat{T^{\uc}})^{\Gamma(v)})^D\rightarrow \pi_0(Z(\widehat{T^{\uc}})^{\Gamma})^D\] (the second map is the direct sum of the maps considered in (1)). 
Let $(\gamma^{\infty},\gamma^p)\in H^1(\R,T^{\uc})\oplus H^1(\Qp,T^{\uc})$. By (1), there exists a class $\gamma^l\in H^1(\Q_l,T^{\uc})$ with $\sum_{v=l,\infty,p}\theta(\gamma^{v})=0$. Then, the element $(\beta^v)_v\in H^1(\Q,T^{\uc}(\overline{\A}))$ such that $\beta^{v}=\gamma^{v}$ for $v=l,\infty,p$ and $\beta^{v}=0$ for $v\neq l,\infty,p$ goes to zero in $ \pi_0(Z(\widehat{T^{\uc}})^{\Gamma})^D$. By exactness of the sequence, we find a class $\gamma$ in $H^1(\Q,T^{\uc})$ which maps to the class $(\beta^v)_v$. 
\end{proof}

Now, by changing $w$ (to another $w'\in N_G(T)(\C)$) if necessary, we may further assume that $\alpha^{\infty}$ is equal (as cocycles) to the restriction of $\alpha$ to $\Gal(\C/\R)$. Then, since the restriction map $H^1(\Q,G^{\uc})\rightarrow H^1(\Q_{\infty},G^{\uc})$ is injective (the Hasse principle), $\alpha$ becomes trivial as a cohomology class in $G_{\uc}(\Qb)$: 
\[\xymatrix{
H^1(\Q_{\infty},T^{\uc})\ar[r] & H^1(\Q_{\infty},G^{\uc}) & \alpha^{\infty}_{\iota}=w\iota(w^{-1}) \ar@{|->}[r] & 0 & \\
H^1(\Q,T^{\uc})\ar[r] \ar[u] & H^1(\Q,G^{\uc}) \ar@{^{(}->}[u] & \alpha \ar@{|->}[u]   \ar@{|->}[r] & \alpha'\ \ar@{^{(}->}[u] \ar@{=>}[r] & \alpha'=0} 
\]
In other words, there exists $u\in G^{\uc}(\Qb)$ such that
 \[\alpha_{\rho}=u^{-1}\rho(u),\]
for all $\rho\in\Gal(\Qb/\Q)$. It then follows that $\Int  u: (T)_{\Qb}\rightarrow G_{\Qb}$ is an admissible embedding of maximal torus with respect to the identity inner twisting of $G_{\Qb}$ (i.e. the homomorphism $\Int u$ and thus the torus $T'=\Int(T)$ as well are defined over $\Q$). We also note that since the restriction of $\Int u$ to $Z(G)$ is the identity, $T'_{\R}$ is also elliptic in $G_{\R}$. When one replaces $\phi$ by $\phi'=\Int  u\circ\phi$ and hence $\gamma_n$ by $\gamma_n'=u\gamma_n u^{-1}$, for the corresponding cocharacter $\mu'$ of $T'$ and the homomorphism $\psi_{T',\mu'}$, we have $\psi_{T',\mu'}=\phi'$ on the kernel. Furthermore, since 
\[u^{-1}\iota(u)=\alpha_{\iota}=\alpha^{\infty}_{\iota}=w\iota(w^{-1})\] 
for $\iota\in\Gal(\C/\R)$, one has that $uw\in G^{\uc}(\R)$ and $\mu'=\Int  u(\mu)=\mu_{h'}$ for $h':=\Int  (uw)(h)\in X$. This establishes the first claim (existence of a transfer of maximal torus $\Int g':T\hra G$ with the property (i)). 

Next, when we assume that $T_{\Ql}$ is elliptic for some $l\neq p$, by Lemma \ref{lem:Lee14-lem.4.1.2}, 
we may choose $\alpha\in Z^1(\Q,T^{\uc})$ such that it maps to $\alpha^{\infty}\in H^1(\Q_{\infty},T^{\uc})$ and to zero in $H^1(\Qp,T^{\uc})$. Then, by repeating the argument above, we find $u\in G^{\uc}(\Qb)$ such that $\alpha_{\rho}=u^{-1}\rho(u)$ for all $\rho\in\Gal(\Qb/\Q)$.
As $\alpha|_{\Gal/\Qpb/\Qp)}$ is trivial, there exists $x\in T(\Qpb)$ such that $x\rho(x^{-1})=\alpha_{\rho}=u^{-1}\rho(u)$ for all $\rho\in \Gal(\Qpb/\Qp)$, in other words. $y:=ux\in G(\Qp)$. But, the homomorphism $\Int u:T_{\Qpb}\rightarrow T'_{\Qpb}$ also equals $\Int u=\Int y$; in particular, it is defined over $\Qp$. This proves (ii) and finishes the proof of Lemma \ref{lem:LR-Lemma5.12}. 
\end{proof}

\subsubsection{Proof of Proposition \ref{prop:equivalence_to_special_adimssible_morphism}.}

We proceed in parallel with the arguments on p.181, line 1-19 of \cite{LR87}. By Lemma \ref{lem:LR-Lemma5.11} and Lemma \ref{lem:LR-Lemma5.12}, after some transfer of tori (always with respect to the identity inner twist $\mathrm{Id}_G$) whose restriction to the torus becomes a conjugation by an element of $G(\Qp)$ when $T_{\Ql}$ is elliptic in $G_{\Ql}$ for some $l\neq p$, we may assume that $\phi$ coincide with $i\circ\psi_{T,\mu_h}$ on the kernel for some $h\in X$ factoring through $T_{\R}$. Then, one readily checks that the map $\Gal(\Qb/\Q)\rightarrow T(\Qb):\rho\mapsto b_{\rho}$ defined by
\[\phi(q_{\rho})=b_{\rho}i\circ\psi_{T,\mu_h}(q_{\rho})\]
is a cocycle, where $\rho\mapsto q_{\rho}$ is the chosen section to the projection $\fP\rightarrow\Gal(\Qb/\Q)$ (Remark \ref{rem:comments_on_zeta_v}). We claim that its image in $H^1(\Q,G)$ under the natural map $H^1(\Q,T)\rightarrow H^1(\Q,G)$ is trivial.  As before, a diagram helps to visualize the proof:
\[\xymatrix{
& H^1(\Q_{\infty},G') & & & 0 &  \\
H^1(\Q_{\infty},T)\ar@{^{(}->}[ur] \ar[r]  & H^1(\Q_{\infty},G) & f(H^1(\Q_{\infty},G^{\der})) \ar@{_{(}->}[l] & 0=b^{\infty}_{\rho} \ar@{^{(}->}[ur] \ar@{|->}[rr] & & 0 \\
H^1(\Q,T)\ar[r] \ar[u] & H^1(\Q,G) \ar[u] & f(H^1(\Q,G^{\der})) \ar@{_{(}->}[l]  \ar@{^{(}->}[u] & b_{\rho} \ar@{|->}[u]  \ar@{|->}[rr]  & & b_{\rho}'=0\  \ar@{^{(}->}[u] } 
\]
Here, $G'$ is the inner twist of $G$ by $\phi$ (i.e. by the cocycle $\rho\mapsto g_{\rho}\in Z^1(\Q,G)$ with $\phi(q_{\rho})=g_{\rho}\rtimes\rho$) and 
$f$ is the natural map $H^1(\Q,G^{\der})\rightarrow H^1(\Q,G)$. 
The restriction of $[b_{\rho}]\in H^1(\Q,T)$ to $\R=\Q_{\infty}$ is trivial, since it maps to zero in $H^1(\Q_{\infty},G')$ and that map is injective (\cite[Lemma 5.14]{LR87}). The image $b_{\rho}'$ of $b_{\rho}$ under the map $H^1(\Q,T) \rightarrow H^1(\Q,G)$ lies in the image of $H^1(\Q,G^{\der})$ in $H^1(\Q,G)$. But, the Hasse principle holds for such image (\cite[Lemma 5.13]{LR87}; this lemma assumes that $G^{\der}$ is simply connected), so we deduce that $b_{\rho}'\in H^1(\Q,G)$ is zero. 
If \[b_{\rho}=v\rho(v^{-1}),\quad v\in G^{\uc}(\Qb),\]
then, $\Int v^{-1}:T_{\Qb}\hra G_{\Qb}$ is an admissible embedding of maximal torus (with respect to the identity twisting $G_{\Qb}=G_{\Qb}$), i.e. the image $T':=v^{-1}Tv$ and the isomorphism $\Int v^{-1}:T_{\Qb}\hra T'_{\Qb}$ are all defined over $\Q$. One has to check that $\mu'$ is $\mu_{h'}$ for some $h'\in X$. This can be seen as follows. Since the cohomology class $[b_{\sigma}]\in H^1(\Q_{\infty},T)$ is trivial, there exists $t_{\infty}\in T(\C)$ such that 
$t_{\infty}^{-1}\iota(t_{\infty})=b_{\iota}=v\iota(v^{-1})$, which implies that
$t_{\infty}v\in G(\R)$. Then, 
\[\mu'=v^{-1}\mu_h v=(t_{\infty}v)^{-1}\cdot\mu_h\cdot (t_{\infty}v)=\mu_{h'}\] 
for $h':=(t_{\infty}v)^{-1}\cdot h\cdot (t_{\infty}v)\in X$. 

Finally, by Lemma \ref{lem:equality_restrictions_to_kernels_imply_conjugacy} below (applied to $\phi(p)\circ\zeta_p$, $i\circ\psi_{T,\mu_h}(p)\circ\zeta_p$), there is $x\in T(\Qpb)$ such that $i\circ\psi_{T,\mu_h}(p)\circ\zeta_p=x(\phi(p)\circ\zeta_p)x^{-1}$ (as $\Qpb/\Qp$-Galois gerb morphisms $\fG_p\rightarrow\fG_{T_{\Qp}}$). But, as $\phi$ and $i\circ\psi_{T,\mu_h}$ are the same on the kernel $P$, the two $\Qpb/\Qp$-Galois gerb morphisms 
\[i\circ\psi_{T,\mu_h}(p),\ x\phi(p)x^{-1}\ :\fP(p)\rightarrow \fG_{T_{\Qp}}\]
agree on the kernel $P_{\Qp}(\Qpb)$. It follows that $i\circ\psi_{T,\mu_h}(p)$ and $x\phi(p)x^{-1}$ are equal on the whole $\fP(p)$. In other words, the restriction of $b_{\rho}$ to $\Gal(\Qpb/\Qp)$ is zero: 
\[x^{-1}\rho(x)=b_{\rho}=v\rho(v^{-1})\] 
for all $\rho\in \Gal(\Qpb/\Qp)$, and $xv\in G(\Qp)$. But, the homomorphism $\Int v^{-1}:T_{\Qpb}\rightarrow T'_{\Qpb}$ also equals $\Int v^{-1}=\Int (xv)^{-1}$. 
It follows from this and the discussion in the beginning of the proof that if the initial torus $T$ satisfies that $T_{\Ql}\subset G_{\Ql}$ is elliptic at some $l\neq p$, we may find a transfer of maximal torus $\Int g':T\hra G$ such that $\Int g'\circ\phi$ is special admissible and that $\Int g'|_{T_{\Qp}}=\Int y|_{T_{\Qpb}}$ for some $y\in G_{\Qp}$.
This completes the proof of Proposition \ref{prop:equivalence_to_special_adimssible_morphism}. $\square$

\begin{lem} \label{lem:equality_restrictions_to_kernels_imply_conjugacy}
Let $T$ be a $\Qp$-torus, and for $i=1,2$, $\theta_i:\fG_p\rightarrow\fG_T$ a morphism of $\Qpb/\Qp$-Galois gerbs.
If the restrictions $\theta_i^{\Delta}$ of $\theta_i$ to the kernel $\mathbb{D}$ are equal, then $\theta_1$ and $\theta_2$ are conjugate under $T(\Qpb)$.
\end{lem}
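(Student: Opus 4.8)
\textbf{Proof plan for Lemma \ref{lem:equality_restrictions_to_kernels_imply_conjugacy}.}

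The plan is to reduce to a finite-level computation and then use the interpretation of the set of conjugacies between two fixed morphisms as a torsor under a Galois cohomology group. First I would choose a finite Galois extension $K$ of $\Qp$ splitting $T$ and such that both $\theta_1$ and $\theta_2$ factor through $\fG_p^K$ (possible since $T$ is an algebraic group, so each $\theta_i$ factors through some $\fG_p^{K_i}$, and one enlarges $K$ to contain both $K_i$). After this reduction we are comparing two morphisms $\theta_1,\theta_2:\fG_p^K\to \fG_T$ of $\Qpb/\Qp$-Galois gerbs whose restrictions to the common kernel $\Gm(\Qpb)$ (the image of $\mathbb D$) agree; both then factor, after inflation, from morphisms $\fG_{p,K}^K\to T(K)\rtimes\Gal(K/\Qp)$ with the same restriction $\nu:=\theta_1^{\Delta}=\theta_2^{\Delta}$ to $K^\times$, namely $z\mapsto \nu(z)$.

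Next I would write, for the fixed section $\tau\mapsto s_\tau^K$ to $\fG_{p,K}^K\to\Gal(K/\Qp)$, $\theta_i(s_\tau^K)=c_{i,\tau}\rtimes\tau$ with $c_{i,\tau}\in T(K)$ (here I use that the kernel is already landing in $T$ and that $\theta_i$ induces the identity on the Galois quotient). Since $\theta_1$ and $\theta_2$ agree on the kernel, the ratio $a_\tau:=c_{2,\tau}\,c_{1,\tau}^{-1}$ is insensitive to the ambiguity coming from $K^\times$, and the homomorphism property of the $\theta_i$ together with $\theta_1^\Delta=\theta_2^\Delta$ forces $(\tau\mapsto a_\tau)$ to be a $1$-cocycle of $\Gal(K/\Qp)$ valued in $T(K)$ with the natural action (one checks $a_{\sigma\tau}=a_\sigma\cdot{}^\sigma a_\tau$ using that both sides are computed by conjugating $s_\sigma^K s_\tau^K=d^K_{\sigma,\tau}s^K_{\sigma\tau}$ and the $d^K$-contributions cancel because they lie in $K^\times$ where the two morphisms coincide). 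Its class lies in $H^1(\Gal(K/\Qp),T(K))$, which vanishes by Hilbert 90 for tori (Shapiro plus Hilbert 90, or directly: $H^1$ of a finite group with coefficients in a torus split by that extension is trivial). Hence there is $t\in T(K)$ with $a_\tau = t^{-1}\cdot{}^\tau t$; translating back, $\Int(t)\circ\theta_1$ and $\theta_2$ agree on all the $s_\tau^K$ and on the kernel, hence agree as morphisms $\fG_p^K\to\fG_T$, and therefore as morphisms $\fG_p\to\fG_T$ after inflation. Taking $x:=t\in T(K)\subset T(\Qpb)$ gives the claim.

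The only genuine subtlety — and the step I expect to need the most care — is the bookkeeping around the inflation from $\fG_{p,K}^K$ to $\fG_p^K$ and the verification that the $d^K_{\sigma,\tau}$-factors really cancel in the cocycle computation: one must make sure that the discrepancy $a_\tau$ is computed at the level where the two factor sets are literally the same cocycle in $K^\times$, and that the passage to $\Gm(\Qpb)$ (the push-out along $K^\times\hookrightarrow\Gm(\Qpb)$) does not reintroduce any obstruction. Once that is pinned down, vanishing of $H^1(\Gal(K/\Qp),T(K))$ does all the work. This is essentially the standard fact that the set of conjugacies between two Galois-gerb morphisms with equal kernel restriction is, if nonempty, a torsor under $H^1$ of the base field with coefficients in the (commutative) kernel of the target — here a split torus, so the torsor is nonempty and the conjugacy exists. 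Note this also shows $x$ may be taken in $T(K)$ for any finite $K$ splitting $T$ through which both morphisms factor, a mild refinement that is harmless.
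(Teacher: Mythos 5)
Your argument breaks down at the key cohomological step: the assertion that $H^1(\Gal(K/\Qp),T(K))=0$ is false for a general $\Qp$-torus $T$ that merely \emph{splits} over $K$. Hilbert 90 gives vanishing of $H^1(\Gal(K/\Qp),\Gm(K))$, and Shapiro extends this to \emph{induced} tori such as $\Res_{K/\Qp}\Gm$, but not to all tori split over $K$. Your parenthetical remark suggests you are treating $T$ as a split torus; but $T(K)\cong(K^\times)^n$ carries a genuinely twisted $\Gal(K/\Qp)$-action coming from $X_\ast(T)$, and the resulting $H^1$ can be nonzero. The simplest counterexample: for $T=\ker(\Nm_{K/\Qp}:\Res_{K/\Qp}\Gm\to\Gm)$ with $K/\Qp$ quadratic, one has $T(K)\cong K^\times$ with $\sigma\cdot a=\sigma(a)^{-1}$, and a direct calculation (or Tate--Nakayama) gives $H^1(\Gal(K/\Qp),T(K))\cong\Qp^\times/\Nm_{K/\Qp}K^\times\cong\Z/2\Z$. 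So the class of your cocycle $(a_\tau)$ has no a priori reason to vanish, and your proof stalls.

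The lemma is nonetheless true, but the reason is finer and does use the hypothesis $\theta_1^\Delta=\theta_2^\Delta$ in an additional way that your proof discards. The paper first conjugates to make both $\theta_i$ unramified (hence coming from $\Qpnr/\Qp$-gerb morphisms $\fD\to\fG^{\nr}_{T_{\Qp}}$), then observes via Lemma \ref{lem:Newton_hom_attached_to_unramified_morphism} that $\theta_1^\Delta=\theta_2^\Delta$ forces $\mathrm{cls}(\theta_1)=\mathrm{cls}(\theta_2)$ in $B(T_{\Qp})$; this gives $t_p\in T(L)$ with $b_2=t_p b_1\sigma(t_p^{-1})$. The discrepancy cocycle $b_\tau\in Z^1(\Gal(\Qpnr/\Qp),T(\Qpnr))$ is then shown to die: its restriction to $\langle\sigma\rangle$ is a coboundary (precisely because $b_\sigma=t_p\sigma(t_p^{-1})$), and one feeds this through the Kottwitz identifications $B(T_{\Qp})=H^1(\langle\sigma\rangle,T(L))\isom H^1(W(\Lb/\Qp),T(\Lb))$ together with the injectivity of the inflation $H^1(\Gal(\Qpnr/\Qp),T(\Qpnr))\hra H^1(W(\Lb/\Qp),T(\Lb))$. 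It is this injectivity plus the explicit triviality on $\langle\sigma\rangle$ — not blanket vanishing of an $H^1$ — that trivializes the class. Your proof would need to recover this input, which in your finite-level formulation amounts to knowing that $(a_\tau)$ is not an arbitrary cocycle but one that is already trivialized after passage to the Weil group.
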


\begin{proof}
By conjugating by elements of $T(\Qpb)$, we may assume that each $\theta_i$ is unramified; we still have that $\theta_1^{\Delta}=\theta_2^{\Delta}$.
We claim that this equality implies that $\theta_1$ and $\theta_2$ are conjugate under $T(\Qpb)$.

For this, we may replace each $\theta_i$ by a morphism $\theta_i^{\nr}:\fD\rightarrow\fG_{T_{\Qp}}^{\nr}$ of $\Qpnr/\Qp$-Galois gerbs such that $\theta_i$ is conjugate to the inflation $\overline{\theta_i^{\nr}}$ under $T(\Qpb)$. To ease the notations, we continue to use $\theta_i$ for such $\theta_i^{\nr}$. 
By Lemma \ref{lem:unramified_morphism}, $\mathrm{cls}_{T_{\Qp}}(\theta_i)\ (i=1,2)$ is the $\sigma$-conjugacy class of $b_i\in T(\Qpnr)$ with $\theta_i(s_{\sigma})=b_i\rtimes\sigma$. But, according to Lemma \ref{lem:Newton_hom_attached_to_unramified_morphism}, the equality $\theta_1^{\Delta}=\theta_2^{\Delta}$ tells us that $\mathrm{cls}(\theta_1)=\mathrm{cls}(\theta_2)$ in $B(T_{\Qp})$, that is, there exists $t_p\in T(L)$ such that $b_2=t_pb_1\sigma(t_p^{-1})$.

We will show that there exists $x_p\in T(\Qpnr)$ with  $\theta_2=\Int(x_p)\circ\theta_1$, i.e. such that 
\[\theta_2(s_{\tau})=x_p\theta_1(s_{\tau})x_p^{-1}\]
for all $\tau\in\Gal(\Qpnr/\Qp)$. The map $\tau\mapsto b_{\tau}:\Gal(\Qpnr/\Qp)\rightarrow T(\Qpnr)$ defined by 
\[\theta_2(s_{\tau})=b_{\tau}\theta_1(s_{\tau})\] 
is a cocycle in $Z^1(\Gal(\Qpnr/\Qp),T(\Qpnr))$ with $b_{\sigma}=b_2b_1^{-1}=t_p\sigma(t_p^{-1})$; here, we have to use again the condition that $\theta_1^{\Delta}=\theta_2^{\Delta}$. Let $\langle\sigma\rangle$ be the infinite cyclic group $\langle\sigma\rangle$ generated by $\sigma$ (endowed with discrete topology) and $W(\Lb/\Qp)$ the Weil group of continuous automorphisms of $\Lb$ which fix $\Qp$ pointwise and which induce on the residue field of $\Lb$ an integral power of the Frobenius automorphism. The exact sequence $1\rightarrow\Gal(\Lb/L)\rightarrow W(\Lb/\Qp)\rightarrow \langle\sigma\rangle\rightarrow1$
gives rise, via inflation, to a bijection (\cite[(1.8.1)]{Kottwitz85})
\begin{equation} \label{eq:Kottwitz85,(1.8.1)}
B(T_{\Qp})=H^1(\langle\sigma\rangle,T(L))\isom H^1(W(\Lb/\Qp),T(\Lb)).
\end{equation}
Similarly, the restrictions $W(\Lb/\Qp)\rightarrow \Gal(\Qpb/\Qp)\rightarrow \Gal(\Qpnr/\Qp)$ combined with the inclusions $T(\Qpnr)\hra T(\Qpb)\hra T(\Lb)$ induce (again via inflation) maps
\[H^1(\Gal(\Qpnr/\Qp),T(\Qpnr))\hra H^1(\Gal(\Qpb/\Qp),T(\Qpb))\hra H^1(W(\Lb/\Qp),T(\Lb)),\]
which are all injective (\cite[(1.8.2)]{Kottwitz85}).
The image of our cohomology class $[b_{\tau}]\in H^1(\Gal(\Qpnr/\Qp),T(\Qpnr))$ under this composite map is equal to the image under the isomorphism (\ref{eq:Kottwitz85,(1.8.1)}) of its restriction to $\langle\sigma\rangle$, hence is trivial. Therefore, its image in $H^1(\Gal(\Qpnr/\Qp),T(\Qpnr))$ is already trivial. Clearly, this is the claimed statement. 
\end{proof}

\textsc{Proof of Theorem \ref{thm:LR-Satz5.3}.} 
(1) By \cite[Lemma 3.7.7]{Kisin13}, we may assume that $G^{\der}$ is simply connected (cf. proof of Thm. 3.7.8 of loc. cit.).
We follow the original proof of Satz 5.3, as explained after statement of Theorem \ref{thm:LR-Satz5.3}. 
The first step is to replace given $\phi$ by a conjugate $\phi_0=\Int g_0\circ \phi\ (g_0\in G(\Qb))$ of it whose restriction to the kernel $\phi_0^{\Delta}:P_{\Qb}\rightarrow G_{\Qb}$ is defined over $\Q$ (which amounts to that $\phi_0(\delta_n)\in G(\Q)$ for all sufficiently large $n\in\N$, \cite[Lemma 5.5]{LR87}). This is Lemma 5.4 of \cite{LR87}. This lemma is a statement just concerned with the restriction $\phi^{\Delta}$, whose proof only requires that $G_{\Qp}$ is \emph{quasi-split} and does not use the level subgroup at all.

The second step is to find a conjugate $\phi_1=\Int g_1\circ\phi_0$ of $\phi_0$ (produced in the first step) that factors through $\fG_{T_1}$ for some maximal $\Q$-torus $T_1$ (elliptic over $\R$, as usual). As discussed before (after statement of Prop. \ref{prop:existence_of_admissible_morphism_factoring_thru_given_maximal_torus}), this is shown on p. 176, from line 1 to -5 of loc. cit., and the arguments given there again do not make any use of the level (hyperspecial) subgroup and thus carries over to our situation. The basic idea is, in view of Lemma \ref{lem:criterion_for_admissible_morphism_to_land_in_torus}, (2), to find a maximal torus $T_1$ of $I=\Cent_G(\phi_0(\delta_n))$ that can transfer to $I_{\phi}$. (An argument in similar style appears in the proof of Prop. \ref{prop:existence_of_admissible_morphism_factoring_thru_given_maximal_torus}). 

The final step is to find a conjugate $\phi:\fP\rightarrow\fG_{T}$ of $\phi_1:\fP\rightarrow\fG_{T_1}$ which becomes a special admissible morphism $i\circ\psi_{T,\mu_h}$ (for some special Shimura sub-datum $(T,h)$ and the canonical morphism $i:\fG_T\rightarrow\fG_G$ defined by the inclusion $i:T\hra G$). This is accomplished by successive admissible embeddings of maximal tori. It begins with showing existence of $\mu_1\in X_{\ast}(T_1)$ lying in the conjugacy class $\{\mu_h\}$ such that $\phi_1:\fP\rightarrow\fG_{T_1}$ coincides with $\psi_{T_1,\mu_1}$ on the \emph{kernel} of $\fP$. This is done in Lemma 5.11 of loc. cit. This lemma is the only place in the original proof where the level subgroup is involved in an explicit manner (through non-emptiness of the set $X_p(\phi)$). But, as stated in our Lemma \ref{lem:LR-Lemma5.11}, it continues to hold for general parahoric subgroup $\mbfK_p$ with the new condition $X(\{\mu_X\},b)_{\mbfK_p}\neq\emptyset$. 
After this, the rest of the proof is the same as the original one. In a bit more detail, one performs two more admissible embeddings. First, we need to find an admissible embedding of maximal torus $\Int g_2: T_1\hra G$ such that $\Int g_2\circ\phi_1$ equals a special admissible morphism $\psi_{T_2,\mu_{h_2}}$ again on the \emph{kernel} of $\fP$ (here $T_2=\Int g_2(T_1)$ and $(T_2,h_2)$ is a special Shimura sub-datum); the difference from the previous step is that in the previous step, $\mu_1$ did not need to be $\mu_h$ for some $h\in X$. This is shown in Lemma 5.12 of loc. cit., whose argument we adapted to prove our Lemma \ref{lem:LR-Lemma5.12} (which is a refinement of that lemma). Let $\phi_2:=\Int g_2\circ\phi_1:\fP\rightarrow\fG_{T_2}$ be the admissible morphism just obtained. Then, one looks for a (last) admissible embedding of maximal torus $\Int(g_3):T_2\hra G$ making finally $\Int(g_3)\circ\phi_2:\fP\rightarrow\fG_{\Int g_3(T_2)}$ special admissible. This is carried out in loc. cit., from after Lemma 5.12 to the rest of the proof of Satz 5.3; this part of the argument was adapted to prove our Prop. \ref{prop:equivalence_to_special_adimssible_morphism}. Now, we see that
\[(T,\phi):=(\Int g_3(T_2),\Int(g_3)\circ\phi_2)=(\Int (\prod_{i=2}^3g_i)(T_1),\Int(\prod_{i=0}^3g_i)\circ\phi)\]
is a special admissible morphism which is a conjugate of $\phi$.

(2) This is proved by the same argument from (1) with applying Prop. \ref{prop:existence_of_admissible_morphism_factoring_thru_given_maximal_torus} in the second step and then Prop. \ref{prop:equivalence_to_special_adimssible_morphism} in the third step. Note that as explained before, the three properties (i) - (iii) continue to hold under any transfer of maximal torus.
$\square$

\begin{lem} \label{lem:LR-Lemma5.23}
Retain the assumptions of Theorem \ref{thm:LR-Satz5.3}, and assume that $G^{\der}$ is simply connected. 
Then, for each admissible pair $(\phi,\epsilon),\epsilon\in I_{\phi}(\Q)$, one can find an equivalent pair $(\phi',\epsilon')$ and $T'$ and $h'$, such that $(\phi',\epsilon')$ is nested in $(T',h')$.
\end{lem}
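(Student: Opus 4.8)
The plan is to combine the two main structural results already established in this section. Given an admissible pair $(\phi,\epsilon)$ with $\epsilon\in I_{\phi}(\Q)$, I would first apply the conjugation-invariance of admissibility to reduce to the case where $\phi$ is well-located: by Lemma 5.4 of \cite{LR87} (whose proof uses only that $G_{\Qp}$ is quasi-split, as recalled in the proof of Theorem \ref{thm:LR-Satz5.3}) there is $g_0\in G(\Qb)$ with $\Int g_0\circ\phi$ having restriction to the kernel defined over $\Q$, i.e. $(\Int g_0\circ\phi)(\delta_n)\in G(\Q)$ for $n\gg 0$. Replacing $(\phi,\epsilon)$ by $(\Int g_0\circ\phi,g_0\epsilon g_0^{-1})$, we may assume $\phi$ is well-located; note that $g_0\epsilon g_0^{-1}\in I_{\Int g_0\circ\phi}(\Q)$ automatically, but a priori this element lies only in $G(\Qb)$, not in $G(\Q)$.

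The key point is then to arrange that $\epsilon$ and $\phi(\delta_n)$ lie in a \emph{common} maximal $\Q$-torus which is elliptic at $\R$, so that Proposition \ref{prop:equivalence_to_special_adimssible_morphism} applies. Set $I:=\Cent_G(\phi(\delta_n))$ and let $I_\phi$ be its inner twist (\ref{eqn:inner-twisting}); by hypothesis $\epsilon\in I_\phi(\Q)$. I would choose a maximal $\Q$-torus $T_1$ of $I_\phi$, elliptic at $\R$ and with $\epsilon\in T_1(\Q)$, that also satisfies the ellipticity condition at $p$ demanded in Theorem \ref{thm:LR-Satz5.3}, (2)(ii); this is the analogue of the ``second step'' and is exactly the content of the argument on p.176 of \cite{LR87} (finding an elliptic-at-$\infty$ maximal torus transferable to $I_\phi$, here further containing the prescribed semisimple element $\epsilon$, which is possible because the centralizer of $\epsilon$ in $I_\phi$ is connected as $G^{\der}$ is simply connected). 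Via the inner twisting $\psi:I_{\Qb}\isom(I_\phi)_{\Qb}$ and Lemma \ref{lem:criterion_for_admissible_morphism_to_land_in_torus}, (2), there is $a\in I(\Qb)$ such that $\Int a\circ\phi$ maps $\fP$ into $\fG_{T}$ for the maximal $\Q$-torus $T:=\psi^{-1}\Int a^{-1}(T_1)$ of $I$ — equivalently, after this conjugation both $\phi(\delta_n)$ and the transported $\epsilon$ lie in $T(\Q)$, so $(\Int a\circ\phi,\ a\epsilon a^{-1})$ is well-located in $T$. (One must check $a\epsilon a^{-1}\in T(\Q)$, not merely $T(\Qb)$; this follows because the transfer of maximal torus carries $T_1$ isomorphically to $T$ over $\Q$ and $\epsilon\in T_1(\Q)$.)

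Now $\phi$ (renamed) is an admissible morphism well-located in the maximal $\Q$-torus $T$, elliptic over $\R$, and $\epsilon\in T(\Q)$. Proposition \ref{prop:equivalence_to_special_adimssible_morphism} furnishes an admissible embedding $\Int g':T\hra G$ such that $\Int g'\circ\phi=\psi_{T',\mu_{h'}}$ for $T'=\Int g'(T)$ and some $h'\in X\cap\Hom(\dS,T'_\R)$. Setting $\phi':=\Int g'\circ\phi$ and $\epsilon':=g'\epsilon g'^{-1}\in T'(\Q)$, the pair $(\phi',\epsilon')$ is equivalent to the original one and is nested in $(T',h')$ by definition, since $\phi'=i\circ\psi_{T',\mu_{h'}}$ factors through $\fG_{T'}$ and $\epsilon'\in T'(\Q)\subset I_{\phi'}(\Q)$ (the last containment by the remark at the end of Subsec.~\ref{subsec:Langalnds-Rapoport conjeture} / the discussion following (\ref{eqn:inner-twisting}) on maximal tori being subgroups of $I_\phi$).

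\textbf{Main obstacle.} The delicate step is the middle one: producing the maximal $\Q$-torus $T_1\subset I_\phi$ that simultaneously (a) contains the prescribed rational element $\epsilon$, (b) is elliptic at $\infty$, (c) is elliptic at $p$ inside $I_{\phi,\Qp}$ (so that Proposition \ref{prop:equivalence_to_special_adimssible_morphism}, via Lemma \ref{lem:LR-Lemma5.11}, is applicable), and (d) actually transfers from $I_\phi$ back to $I$ with respect to the given inner twisting. Requirements (a)--(c) are a local-existence-plus-Hasse-principle problem of the type handled by \cite[Lemma 5.6]{LR87} together with the local results \cite[Lemmas 5.8, 5.9]{LR87} on elliptic tori in $p$-adic and real groups; the subtlety is that $\epsilon$ need not be regular, so one works inside $\Cent_{I_\phi}(\epsilon)$ (connected by the simply-connectedness assumption) and then checks that an elliptic-at-$\infty$ maximal torus there remains elliptic-at-$\infty$ in $I_\phi$ because $\epsilon$ is elliptic-at-$\infty$ in $G$. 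This is precisely the part of the argument of \cite[Satz 5.23]{LR87} (the ``step II'' reasoning, adapted to carry $\epsilon$ along) that must be reproduced; since it involves no level subgroup it goes through verbatim in the parahoric setting, and the only genuinely new input — handling general parahoric $\mbfK_p$ — has already been absorbed into Lemma \ref{lem:LR-Lemma5.11} and hence into Proposition \ref{prop:equivalence_to_special_adimssible_morphism}.
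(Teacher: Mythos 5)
Your overall skeleton (reduce to well-located, find a common maximal torus for $\epsilon$ and $\phi(\delta_n)$, then apply Proposition~\ref{prop:equivalence_to_special_adimssible_morphism}) matches the paper, but the middle step takes a genuinely different route and that route has a gap.

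The paper does \emph{not} try to transfer a torus from $I_\phi$ back to $I$ as you propose. Instead it picks a regular $\epsilon_1\in I_\phi(\Q)$ with $\epsilon\in T:=\Cent_{I_\phi}(\epsilon_1)$ and observes that the $G(\Qb)$-conjugacy class of $\epsilon_1$ is $\Q$-rational (precisely because $I_\phi$ is an inner form of $I\subset G$). Then it invokes Kottwitz's rationality theorem \cite[Thm.\ 4.4]{Kottwitz82}, which requires only that $G^{\der}$ be simply connected, to produce a rational representative $\epsilon_1^\ast$ in the quasi-split inner form $G^\ast$ of $G$, and finally transfers $T^\ast=\Cent_{G^\ast}(\epsilon_1^\ast)$ to $G$. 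Because $G_{\Qp}$ is quasi-split, $G^\ast_{\Qp}\isom G_{\Qp}$ and the local transfer at $p$ is \emph{automatic}; no $p$-adic ellipticity of $T$ is required anywhere. The relevant criterion is Lemma~\ref{lem:criterion_for_admissible_morphism_to_land_in_torus},~(1), which permits the conjugating element $g$ to range over all of $G(\Qb)$, not just $I(\Qb)$.

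Your route uses Lemma~\ref{lem:criterion_for_admissible_morphism_to_land_in_torus},~(2) instead, and so requires the torus $T_1\subset I_\phi$ to transfer to $I$ itself, with respect to the inner twisting $\psi:I_{\Qb}\isom(I_\phi)_{\Qb}$. For that transfer to succeed locally at $p$ you need $T_1$ to be elliptic in $I_{\phi,\Qp}$, since $I_{\Qp}$ need not be quasi-split even though $G_{\Qp}$ is. But you never establish that a maximal $\Q$-torus of $I_\phi$ containing $\epsilon$ and elliptic at $p$ exists. This is not automatic: $\epsilon$ need not lie in any elliptic maximal $\Qp$-torus of $I_{\phi,\Qp}$ (equivalently, the center of $\Cent_{I_{\phi,\Qp}}(\epsilon)^\circ$ may contain a $\Qp$-split torus not lying in $Z(I_{\phi,\Qp})$). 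You also misattribute the role of condition (c): Lemma~\ref{lem:LR-Lemma5.11} only needs $T$ elliptic over $\R$, not over $\Qp$; the $p$-adic ellipticity in your plan is needed only for the $I_\phi\to I$ transfer, which is exactly the step the paper avoids. A smaller point: the claim that $\Cent_{I_\phi}(\epsilon)$ is connected "by the simply-connectedness assumption" is unjustified, since $I^{\der}$ need not be simply connected even when $G^{\der}$ is; this is harmless for the torus-existence argument (work with the identity component), but it shows the simply-connected hypothesis is being used in your write-up in a place it does not actually apply. The place where $G^{\der}$ simply connected is genuinely needed is the Kottwitz rationality theorem — which your proof never invokes.
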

 
\begin{proof} In the original setting of \cite{LR87}, this is their Lemma 5.23. The proof given in loc. cit. works in our situation without any significant change. Most importantly, the level subgroup $\mbfK_p$ enters the proof only through Lemma 5.11 of \cite{LR87} (which is generalized by our Lemma \ref{lem:LR-Lemma5.11}). Hence here we only give a sketch of proof. We may assume that $\phi$ is well-located.
It will suffice to find a maximal $\Q$-torus $T$ of $I_{\phi}$ containing $\epsilon$ (i.e. $\epsilon\in T(\Q)$) and $g\in G(\Qb)$ such that
\[ G_{\Qb} \stackrel{\Int g}{\leftarrow} G_{\Qb}\hookleftarrow T_{\Qb}\] 
is defined over $\Q$. Here, $T_{\Qb}$ is regarded as a $\Qb$-subgroup of $G_{\Qb}$ via the inner twisting $I_{\Qb}\isom (I_{\phi})_{\Qb}$ (\ref{eqn:inner-twisting}). Indeed, then $T_1:=\Int g(T)$ is a $\Q$-subgroup of $\Cent_G(\Int g(\epsilon))\subset G$, and $\phi_1:=\Int g\circ\phi$ maps into $\fG_{T_1}$ by Lemma \ref{lem:criterion_for_admissible_morphism_to_land_in_torus}, (1) (applied with $(g,T_1)$ in the place of $(a,T)$). Hence, $\Int g(\epsilon)\in I_{\phi_1}(\Q)\cap T_1(\Qb)=T_1(\Q)$ (the equality holds since $\phi_1$ maps into $\fG_{T_1}$).
Therefore, we can apply Prop. \ref{prop:equivalence_to_special_adimssible_morphism} to $(\Int g\circ\phi, \Int g(T))$ and obtain the desired pair $(\phi'=\psi_{T',\mu_{h'}},T')$ by another admissible embedding; as $\epsilon\in \Int g(T)(\Q)$, we also have $\epsilon\in T'(\Q)$. 

Now, to find such torus $T\subset I_{\phi}$ and $g\in G(\Qb)$, we choose an element $\epsilon_1$ of $I_{\phi}(\Q)(\subset G(\Qb))$, whose centralizers in $I_{\phi}, I_{\Qb}\subset G_{\Qb}$ are maximal tori of $G_{\Qb}$ containing $\epsilon$. These centralizers are then the same (via the inner twisting  (\ref{eqn:inner-twisting})) which will be our $T$; $T$ is a $\Q$-subgroup of $I_{\phi}$, but only a $\Qb$-subgroup of $I_{\Qb}$ in general. 
The conjugacy class of $\epsilon_1$ in $G(\Qb)$ is rational, as $I$ is an inner form of $I_{\phi}$. So, according to a variant of Steinberg's theorem \cite[Thm.4.4]{Kottwitz82} (which assumes that $G^{\der}$ is simply connected), we can find an inner twist $\psi: G_{\Qb}\rightarrow G^{\ast}_{\Qb}$ with $G^{\ast}$ being quasi-split, such that $\epsilon_1^{\ast}=\psi(\epsilon_1)$ is rational. This implies that $\psi:T_{\Qb}\rightarrow T^{\ast}_{\Qb}$ is $\Q$-rational, where $T^{\ast}(=\psi(T))$ denotes the centralizer of $\epsilon_1^{\ast}$ in $G^{\ast}$. Indeed, if $G_{\phi}$ is the inner twist of $G$ twisted by $\phi$, i.e. defined by $G_{\phi}(\Q)=\{g\in G(\Qb) \ |\ g_{\rho}\rho(g)g_{\rho}^{-1}=g\}$, where $\phi(q_{\rho})=g_{\rho}\rtimes\rho$ (so that $T_{\Qb}\subset G_{\Qb}$ becomes a $\Q$-torus of $G_{\phi}$ and $\epsilon_1\in G_{\phi}(\Q)$), the inner twist $\psi:G_{\Qb}\isom G^{\ast}_{\Qb}$ gives rise to an inner twist $\psi_{\phi}:(G_{\phi})_{\Qb}\isom G^{\ast}_{\Qb}$ with $\psi_{\phi}(\epsilon_1)=\epsilon_1^{\ast}$. So the claim follows the easily verified fact that for an inner twist $\psi:H_1\rightarrow H_2$ with $\phi(\epsilon_1)=\epsilon_2$ for some regular semisimple $\epsilon_i\in H_i(\Q)\ (i=1,2)$, its restriction to the centralizers $I_i=\Cent_{H_i}(\epsilon_i)$ becomes a $\Q$-isomorphism. Now, by the proof of \cite[Lemma 5.23]{LR87} (more precisely by the argument in the last paragraph on p.190), there exists a transfer of maximal torus $\Int g:T^{\ast}\hra G$ with respect to the inner twist $\psi^{-1}$, namely 
\[\Int g\circ \psi^{-1}:T^{\ast}_{\Qb}\hra G^{\ast}_{\Qb}\stackrel{\psi^{-1}}{\rightarrow}G_{\Qb}\stackrel{\Int g}{\rightarrow} G_{\Qb}\] 
is defined over $\Q$; here one uses the assumption that $G_{\Qp}$ is quasi-split. Therefore, $T_1:=\Int g(T)=\Int g\circ\psi^{-1}(T^{\ast})$ is a $\Q$-torus of $G$, and $T_{\Qb}\hra G_{\Qb}\stackrel{\Int g}{\rightarrow} G_{\Qb}$ is a $\Q$-rational homomorphism (where $T_{\Qb}$ is regarded as a $\Qb$-subgroup of $G_{\Qb}$ via the inner twisting $(I_{\phi})_{\Qb}\isom I_{\Qb}$), as we wanted. \end{proof}

Now, as an application, we establish non-emptiness of Newton strata for general parahoric levels, when $G_{\Qp}$ is quasi-split. To talk about the reduction at a prime, we need to choose an integral model, i.e. a flat model $\sS_{\mbfK}$ over $\cO_{E_{\wp}}$ with generic fiber being the canonical model $\Sh_{\mbfK}(G,X)_{E_{\wp}}$. For the following result, it is enough to fix an integral model over $\cO_{E_{\wp}}$ having the extension property that every $F$-point of $\Sh_{\mbfK}(G,X)$ for a finite extension $F$ of $E_{\wp}$ extends uniquely to $\sS_{\mbfK}$ over its local ring (for example, a normal integral model); see \cite{KP15} for a construction of such integral model.

\begin{thm} \label{thm:non-emptiness_of_NS}
Suppose that $G_{\Qp}$ is quasi-split. Let $\mbfK_p$ be a parahoric subgroup of $G(\Qp)$ and $\mbfK=\mbfK_p\mbfK^p$ for a compact open subgroup $\mbfK^p$ of $G(\A_f^p)$. 

(1) Then, for any $[b]\in B(G_{\Qp},\{\mu_X\}$) (Subsec. \ref{subsubsec:B(G,{mu})}), there exists a special Shimura sub-datum $(T,h\in\Hom(\dS,T_{\R})\cap X)$ such that the Newton homomorphism $\nu_{G_{\Qp}}([b])$ equals the $G(L)$-conjugacy class of
\[\frac{1}{[K_{v_2}:\Qp]}\Nm_{K_{v_2}/\Qp}\mu_h\quad (\in X_{\ast}(T)_{\Q}),\]
where $K_{v_2}\subset\Qpb$ is any finite extension of $\Qp$ splitting $T$.

In particular, if $(G,X)$ is of Hodge type, for $g_f\in G(\A_f)$, the reduction in $\sS_{\mbfK}\otimes\Fpb$ of the special point $[h,g_f\cdot\mbfK]\in \Sh_{\mbfK}(G,X)(\Qb)$ has the $F$-isocrystal represented by
\[\Nm_{K_{v_2}/K_0}(\mu_h(\pi)),\]
where $K_0\subset K_{v_2}$ is the maximal unramified subextension and $\pi$ is a uniformizer of $K_{v_2}$.

(2) Suppose that $\mbfK_p$ is \emph{special maximal} parahoric. Moreover, assume that $G$ splits over a tamely ramified cyclic extension of $\Qp$ and is of classical Lie type. Then, one can choose a special Shimura datum $(T,h\in\Hom(\dS,T_{\R})\cap X)$ as in (1) such that furthermore the unique parahoric subgroup of $T(\Qp)$ is contained in $\mbfK_p$.

(3) Suppose that $(G,X)$ is a Shimura datum of Hodge type. Then the reduction $\sS_{\mbfK}(G,X)\otimes\Fpb$ has non-empty ordinary locus if and only if $\wp$ has absolute height one (i.e. $E(G,X)_{\wp}=\Q_p$). 
\end{thm}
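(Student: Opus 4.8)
The strategy for Part (1) is to first build a local datum at $p$ out of $[b]$ and then globalize it to a special Shimura sub-datum. Put $\nu:=\nu_{G_{\Qp}}([b])\in\mathcal{N}(G_{\Qp})$ and let $M:=\Cent_{G_{\Qp}}(\nu)$ be its centralizer, which by Lemma \ref{lem:specaial_parahoric_in_Levi}(1) is a quasi-split $\Qp$-Levi subgroup of $G_{\Qp}$ for which $\nu$ is central. By Kottwitz's classification of $\sigma$-conjugacy classes, $[b]$ is the image of a unique basic class $[b_M]\in B(M)$; choosing any maximal $\Qp$-torus $T_1$ of $M$ that is elliptic over $\Qp$, this class has a representative $b_1\in T_1(L)$ with $\nu_{b_1}=\nu$. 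Since $[b_1]=[b]\in B(G_{\Qp},\{\mu_X\})$, the set $X(\{\mu_X\},b_1)_{\mbfK_p}$ is non-empty by the Kottwitz--Rapoport non-emptiness theorem \cite{He15}; hence Lemma \ref{lem:LR-Lemma5.11}, applied to the minuscule class $\{\mu_X\}$, produces $\mu_1\in X_{\ast}(T_1)\cap\{\mu_X\}$ with $\Nm_{K/\Qp}\mu_1=[K:\Qp]\,\nu$, where $K\subset\Qpb$ splits $T_1$. I would then globalize $(T_1,\mu_1)$: a standard weak-approximation argument for maximal tori (as in \cite[Lemma 5.12]{LR87}, \cite[Thm. 4.1.1]{Lee14}) produces a maximal $\Q$-torus $T$ of $G$, elliptic at $\infty$, with $T\otimes\Qp$ isomorphic to $T_1$, and ellipticity at $\infty$ yields $h\in X\cap\Hom(\dS,T_{\R})$ whose Hodge cocharacter corresponds to $\mu_1$. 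By construction $\tfrac1{[K_{v_2}:\Qp]}\Nm_{K_{v_2}/\Qp}\mu_h$ is $G(L)$-conjugate to $\nu$, which is the first assertion.

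For the Hodge-type refinement of Part (1), the special point $[h,g_f\cdot\mbfK]$ is a CM point defined over a number field, so by the extension property of $\sS_{\mbfK}$ (\cite{KP15}) it has a well-defined reduction in $\sS_{\mbfK}\otimes\Fpb$. Its $F$-isocrystal is controlled by the localization of $\psi_{T,\mu_h}$ at $p$: by Lemma \ref{lem:properties_of_psi_T,mu}(2) the morphism $\psi_{T,\mu_h}(p)\circ\zeta_p$ is conjugate to $\xi_{-\mu_h}$, and by Lemma \ref{lem:unramified_conj_of_special_morphism} its $\sigma$-conjugacy class is that of $\Nm_{K_{v_2}/K_0}(\mu_h(\pi))$; that this class is actually realized by the reduction is the Shimura--Taniyama/Honda--Tate formula in its Hodge-type form, available from Kisin--Pappas integral models and the CM-lifting property of the point (cf. \cite{Kisin13}, \cite{KP15}, and the argument of \cite{Lee14}). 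One checks via Lemma \ref{lem:Newton_hom_attached_to_unramified_morphism} that the Newton homomorphism of this class is $\tfrac1{[K_{v_2}:\Qp]}\Nm_{K_{v_2}/\Qp}\mu_h=\nu$; moreover, applying Lemma \ref{lem:LR-Lemma5.2} to an auxiliary special maximal parahoric of $G(\Qp)$ shows this class already lies in $B(G_{\Qp},\{\mu_X\})$, hence equals $[b]$ by injectivity of $(\overline{\nu},\kappa)$ (reconciling the two formulations of the statement).

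Part (2) comes from running the construction of Part (1) with $T_1$ adapted to $\mbfK_p$. After conjugating so that the apartment of a maximal $\Qp$-split torus of $M$ contains the special point defining $\mbfK_p$, Lemma \ref{lem:specaial_parahoric_in_Levi}(3) shows $M(L)\cap\mbfKt_p$ is a special maximal parahoric of $M(L)$; as $M$ inherits the classical-type and tame-cyclic-splitting hypotheses, Prop. \ref{prop:existence_of_elliptic_tori_in_special_parahorics} supplies an elliptic maximal $\Qp$-torus $T_1$ of $M$ with $T_1(L)_1=\Ker w_{T_{1,L}}\subseteq M(L)\cap\mbfKt_p$. Carrying out the globalization so that $T\otimes\Qp=T_1$ (possible by the same approximation argument, or via the $G(\Qp)$-conjugation transfer of Prop. \ref{prop:equivalence_to_special_adimssible_morphism} after imposing ellipticity of $T$ at an auxiliary prime $l\neq p$), the unique parahoric subgroup of $T(\Qp)$ lies in $T_1(L)_1\cap G(\Qp)\subseteq\mbfKt_p\cap G(\Qp)=\mbfK_p$.

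Finally, for Part (3), a Hodge embedding into a Siegel Shimura variety identifies the ordinary locus of $\sS_{\mbfK}\otimes\Fpb$ with the locus where the Newton point of the $F$-isocrystal equals the Hodge point $\mu_X^{\mathrm{dom}}$. If $E_{\wp}=\Q_p$ then $\mu_X^{\mathrm{dom}}$ is $\Gamma_{\Qp}$-invariant, so it coincides with its average $\overline{\mu}_X$ and is therefore the Newton point of the (maximal, $\mu$-ordinary) class in $B(G_{\Qp},\{\mu_X\})$; Part (1) applied to this class exhibits a CM point with ordinary reduction, so the ordinary locus is non-empty. Conversely, if $E_{\wp}\neq\Q_p$, a short dominance-order argument shows $\mu_X^{\mathrm{dom}}\not\preceq\overline{\mu}_X$ (otherwise applying $\Gamma_{\Qp}$ gives $\overline{\mu}_X=\sigma\overline{\mu}_X\succeq\sigma\mu_X^{\mathrm{dom}}$ for all $\sigma$, and averaging forces all $\sigma\mu_X^{\mathrm{dom}}$ equal, i.e.\ $E_{\wp}=\Q_p$); since every $F$-isocrystal occurring on $\sS_{\mbfK}\otimes\Fpb$ has class in $B(G_{\Qp},\{\mu_X\})$ — a $\mu$-boundedness constraint from the local model of the parahoric integral model (\cite{KP15}) — and hence Newton point $\preceq\overline{\mu}_X$, no point is ordinary. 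The main obstacle relative to the hyperspecial case of \cite{Lee14} is precisely the group-theoretic step at $p$ for general parahoric level: Lemma \ref{lem:LR-Lemma5.11} (resting on \cite{He15}) and, for Part (2), Prop. \ref{prop:existence_of_elliptic_tori_in_special_parahorics}, which is the source of the classical-type and tame-cyclic-splitting hypotheses; a secondary point requiring care is the Newton constraint invoked in Part (3), imported from the theory of parahoric-level integral models.
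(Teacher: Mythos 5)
Your Part (1) follows the paper's template with the order of operations rearranged: you produce the local torus $T_1$, apply Lemma \ref{lem:LR-Lemma5.11}, and then globalize, whereas the paper globalizes first (to a maximal $\Q$-torus $T_0$ elliptic at $\infty$ \emph{and at some auxiliary prime} $l\ne p$), then applies Lemma \ref{lem:LR-Lemma5.11}, and finally performs a transfer of maximal tori (the cocycle argument of Lemma \ref{lem:LR-Lemma5.12}) to convert the abstract $\mu_1\in X_*(T_1)\cap\{\mu_X\}$ into an honest Hodge cocharacter $\mu_h$ for $h\in X$. Your phrasing ``ellipticity at $\infty$ yields $h\in X\cap\Hom(\dS,T_\R)$ whose Hodge cocharacter corresponds to $\mu_1$'' conflates the globalization with this second transfer and understates it: ellipticity at $\infty$ alone does not give a rational conjugate landing in $X$, and the ellipticity hypothesis at a second finite place is exactly what makes the transfer $\Int u$ restrict to a $G(\Qp)$-conjugation on the torus (needed to preserve the Newton homomorphism computation). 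You cite the right lemmas, so this is a matter of presentation more than logic, but as written the step is not justified.

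Part (2) has a genuine gap. After applying Prop.~\ref{prop:existence_of_elliptic_tori_in_special_parahorics} and globalizing, the methods of the paper (and of LR87) yield a global $T$ with $T_{\Qp}$ only \emph{$G(\Qp)$-conjugate} to the local torus $T_1$ adapted to $\mbfK_p$; in the notation of Prop.~\ref{prop:equivalence_to_special_adimssible_morphism}, $T_{\Qp}=\Int y(T_1)$ for some $y\in G(\Qp)$, so the unique parahoric of $T(\Qp)$ sits inside $y\mbfK_p y^{-1}$, not inside $\mbfK_p$ itself. Your concluding inclusion $T(\Qp)_1\subseteq T_1(L)_1\cap G(\Qp)\subseteq\mbfK_p$ tacitly assumes $T\otimes\Qp=T_1$ exactly, which the ``$G(\Qp)$-conjugation transfer'' you invoke does not deliver. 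The paper bridges this via Sansuc's density theorem ($G(\Q)$ dense in $G(\Qp)$ when $G_{\Qp}$ splits over a cyclic extension): one finds $g_0\in G(\Q)\cap\mbfK_p y^{-1}$ and replaces the special Shimura sub-datum by its conjugate under $\Int(g_0)$, whereupon the parahoric of the new torus does land in $\mbfK_p$. Note that the cyclic-splitting hypothesis in (2) is imposed precisely to invoke Sansuc; in your argument it would go unused, which is itself a signal that a step is missing.

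Your Part (3) takes a route genuinely different from the paper's. The paper argues (following \cite[Cor.\ 4.3.2]{Lee14}) by producing a special sub-datum $(T,h)$ with $\mu_h$ dominant for a $\Qp$-rational Borel, forcing $E(T,h)_{\mathfrak p}=E(G,X)_\wp$, and appealing to the ordinariness criterion for CM reductions in terms of the local reflex field. You instead compare the Hodge point $\mu_X^{\mathrm{dom}}$ with the $\mu$-ordinary Newton point $\overline{\mu}_X$ directly in the dominance order, and your averaging argument ($\sum_\sigma(\overline{\mu}_X-\sigma\mu_X^{\mathrm{dom}})=0$ with each summand a nonnegative combination of simple coroots forces each to vanish) is correct. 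What this route buys is a self-contained group-theoretic treatment of the ``only if'' direction; what it costs is an appeal to $\mu$-boundedness of the Newton strata on the parahoric integral model — that every $\Fpb$-point has $F$-isocrystal class in $B(G_{\Qp},\{\mu_X\})$ — which is a serious external input that the paper's reflex-field argument does not need in this form. Either way, the heart of the ``if'' direction is Part (1), so the gap in (2) does not propagate to (3), but the sketchy globalization in (1) does bear on (3) and should be tightened.
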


\begin{proof}
(1) We follow the strategy of our proof of the corresponding result in the hyperspecial case given in \cite{Lee14}, Thm. 4.1.1 and Thm. 4.3.1. Let $[b]\in B(G_{\Qp},\{\mu\}$). 
Since $G_{\Qp}$ is quasi-split, there exist a representative $b\in G(L)$ of $[b]$ and a maximal torus $T_p$ of $G_{\Qp}$ such that the Newton homomorphism $\nu_b:\mathbb{D}\rightarrow G_{\Qpnr}$ is $\Qp$-rational and factors through $T_p$ (\cite[Prop. 6.2]{Kottwitz86}). By the argument of Step 1 in the proof of \cite{Lee14}, Thm. 4.1.1, we may further assume that $T_p=(T_0)_{\Qp}$ for a maximal $\Q$-torus $T_0$ of $G$ such that $(T_0)_{\Qv}\subset G_{\Qv}$ is elliptic maximal for $v=\infty$ and some prime $v=l\neq p$. Then, Lemma \ref{lem:LR-Lemma5.11} tells us that there exists 
$\mu'\in X_{\ast}(T_0)\cap \{\mu\}$ such that the relation (\ref{eq:equality_on_the_kernel}) holds in $X_{\ast}(T_0)$:
\[\Nm_{K_{v_2}/\Qp}\mu'=[K_{v_2}:\Qp]\ \nu_b,\]
where $K$ is a finite Galois extension of $\Q$ splitting $T_0$ and $v_2$ is the place of $K$ induced by the pre-chosen embedding $\Qb\hra\Qpb$ (here, the sign is correct by Lemma \ref{lem:Newton_hom_attached_to_unramified_morphism}).
Next, by the argument of Step 2 in loc. cit. (which corresponds to that of Lemma \ref{lem:LR-Lemma5.12}), we can find a transfer of maximal torus $\Int u:T_0\hra G$ such that 
$\Int u (\mu')=\mu_h$ for some $h\in X\cap \Hom(\dS,T_{\R})$, where $T=\Int u(T_0)$ (again, be wary of the sign difference from \cite{Lee14}), and that $\Int u|_{(T_0)_{\Qpb}}=\Int y$ for some $y\in G(\Qp)$. By the latter property, for $(T,\mu_h, \Int u (b))$ we still have that 
\[\Nm_{K_{v_2}/\Qp}\mu_h=[K_{v_2}:\Qp]\ \nu_{yb\sigma(y)^{-1}}\]
(here, $\Nm_{K_{v_2}/\Qp}$ is taken on $X_{\ast}(T)$). This proves the first statement of (1).
According to Lemma \ref{lem:unramified_conj_of_special_morphism} and \cite[Thm. 1.15]{RR96}, the element of $T(L)$
\[b_T:=\Nm_{K_{v_2}/K_0}(\mu_h(\pi))\]
has the Newton homomorphism $\nu_{b_T}=\frac{1}{[K_{v_2}:\Qp]}\Nm_{K_{v_2}/\Qp}\mu_h=\nu_{yb\sigma(y)^{-1}}$.
As $\kappa_{T_{\Qp}}(b_T)=\mu^{\natural}\in X_{\ast}(T)_{\Gal(\Qpb/\Qp)}$ and $(\overline{\nu},\kappa):B(G_{\Qp})\rightarrow \mathcal{N}(G_{\Qp})\times\pi_1(G)_{\Gal(\Qpb/\Qp)}$ is injective (\cite[4.13]{Kottwitz97}), we see the equality of isocrystals $[b]=[b_T]\in B(G_{\Qp})$. 
Given this, the second statement is proved in the same fashion as in the hyperspecial case, using 
\cite[Lem. 3.24]{Lee14}.

(2) Let $(T_1,h_1)$ be a special Shimura sub-datum produced in (1). Thanks to our additional assumptions and Prop. \ref{prop:existence_of_elliptic_tori_in_special_parahorics}, in its construction, we could have started with a maximal torus $T_p$ of $G_{\Qp}$ such that the unique parahoric subgroup of $T_p(\Qp)$ is contained in a $G(\Qp)$-conjugate of $\mbfK_p$. Then, also by the fine property of our methods (it uses only transfers of maximal tori which become conjugacy by $G(\Qp)$-elements), the torus $T_1$ produced in (1) can be assumed to further satisfy that the unique parahoric subgroup of $T_1(\Qp)$ is contained in $g_p\mbfK_pg_p^{-1}$ for some $g_p\in G(\Qp)$. As $G_{\Qp}$ splits over a cyclic extension of $\Qp$, $G(\Q)$ is dense in $G(\Qp)$ by a theorem of Sansuc (\cite[Lem. 4.10]{Milne94}), thus there exists $g_0\in G(\Q)\cap \mbfK_p\cdot g_p^{-1}$. Then, one easily checks that the new special Shimura datum $(T,h):=\Int(g_0)(T_1,h_1)$ satisfies the required properties.

(3) Again. the proof is the same as that in the hyperspecial case given in \cite[Cor. 4.3.2]{Lee14}. In more detail, as was observed in loc. cit., it suffices to construct a special Shimura sub-datum $(T,\{h\})$ with the property that
there exists a $\Qp$-Borel subgroup $B$ of $G_{\Qp}$ containing $T_{\Qp}$ such that $\mu_h\in X_{\ast}(T)$ lies in the closed Weyl chamber determined by $(T_{\Qp},B)$. Indeed, then we have $E(T,h)_{\mathfrak{p}}=E(G,X)_{\wp}$, where $\mathfrak{p}$ and $\wp$ denote respectively the places of each reflex field induced by the given embedding $\Qb\hra\Qpb$. We remark that this is the property (ii) found in the proof of loc. cit., and for our conclusion one does not really need the property (i) from it.
But, since $G_{\Qp}$ is quasi-split, there exists a Borel subgroup $B'$ defined over $\Qp$. Moreover, by the same argument as was used in (1) (i.e. Step 1 in the proof of \cite[Thm. 4.1.1]{Lee14}), we may assume that $B$ contains $T'_{\Qp}$ for a maximal $\Q$-torus $T'$ of $G$ such that $T'_{\Qv}\subset G_{\Qv}$ is elliptic for $v=\infty$ and some prime $v=l\neq p$. Let $\mu'\in \{\mu_X\}\cap X_{\ast}(T')$ be the cocharacter lying in the closed Weyl chamber determined by $(T'_{\Qp},B')$. Then, the argument in (1) again produces a special Shimura sub-datum $(T,\{h\})$ such that $(T,\mu_h)=\Int y(T',\mu_{h'})$ for some $y\in G(\Qp)$, and $(T,\{h\})$ is the looked-for special Shimura sub-datum. Note that as we do not need the property (i) in the original proof of \cite[Thm. 4.1.1]{Lee14}, the condition in (2) on splitting of $G_{\Qp}$ is no longer necessary. 
\end{proof}

\begin{rem}
(1) For more on the Newton stratification, we refer to the recent survey article \cite{Viehmann15}. 

(2) As is obvious from the proof, the assumption that $G$ splits over a tamely ramified (not necessarily cyclic) extension of $\Qp$ and is of classical Lie type and the one that $G_{\Qp}$ splits over a \emph{cyclic} extension of $\Qp$ are needed only to invoke, respectively, Prop. \ref{prop:existence_of_elliptic_tori_in_special_parahorics} and the fact that $G(\Q)$ is dense in $G(\Qp)$. Therefore, any weaker assumption guaranteeing those facts could be used in their places. On the other hand, when $(G,X)$ is of abelian type, $G$ is always of classical Lie type.
\end{rem}


\section{Admissible pairs and Kottwitz triples}
 
Here. again our running assumption is that $G_{\Qp}$ is quasi-split. Otherwise, please refer to each statement for precise assumptions made there.

\begin{prop} \label{prop:phi(delta)=gamma_0_up_to_center}
Assume that $G_{\Qp}$ is quasi-split. 
Let $(\phi,\gamma_0)$ be an admissible pair that is well-located in a maximal $\Q$-torus $T$ of $G$ which is elliptic over $\R$ and such that the image of $\gamma_0$ in $G^{\ad}(\A_f^p)$ lies in a compact open subgroup of $G^{\ad}(\A_f^p)$ (equiv. in a compact open subgroup of $T/Z(G)(\A_f^p)$). Suppose that $(\phi,\gamma_0)$ is of level $n$ (cf. Def. \ref{def:admissible_pair}). 
 
(1) For sufficiently large $k\in\N$ divisible by $n$, the element $\phi(\delta_k)\cdot \gamma_0^{-\frac{k}{n}}$ of $G(\Q)$ lies in the center of $G$. 

(2) Suppose further that $\phi=i\circ\psi_{T,\mu_h}$ for some $h\in \Hom(\dS,T_{\R})\cap X$ and let $K\supset \Qp$ be a finite Galois extension splitting $T_{\Qp}$.
If $H$ is the centralizer in $G_{\Qp}$ of the maximal $\Qp$-spit torus $A$ in the center of $\Cent_G(\gamma_0)_{\Qp}$, then $\Nm_{K/\Qp}\mu_h$ maps into the center of $H$ (which equals $A$).

(3) Assume that the weight homomorphism $w_X=\mu_h\cdot\iota(\mu_h)\ (h\in X)$ is rational. 
If $\gamma_0$ is a Weil $q=p^n$-number of weight $w=w_X$, in the sense that for every character $\chi$ of $T$, $\chi(\gamma_0)\in\Qb$ is a Weil $q=p^n$-number of weight $\langle \chi,w_X\rangle\in\Z$ in the usual sense, then $\gamma_0^{\frac{k}{n}}=\phi(\delta_k)$ for sufficiently large $k\in\N$.

(4) If the anisotropic kernel of $Z(G)$ remains anisotropic over $\R$ and $\gamma_0\in G(\A_f^p)$ lies in a compact open subgroup of $G(\A_f^p)$, then $\gamma_0^{\frac{k}{n}}=\phi(\delta_k)$ for sufficiently large $k\in\N$.
\end{prop}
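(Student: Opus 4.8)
\textbf{Proof proposal for Proposition \ref{prop:phi(delta)=gamma_0_up_to_center}, (4).}

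The plan is to derive (4) from (3) by showing that, under the stronger hypothesis that the anisotropic kernel of $Z(G)$ stays anisotropic over $\R$ and that $\gamma_0$ lies in a compact open subgroup of $G(\A_f^p)$, the element $\gamma_0$ is automatically a Weil $q=p^n$-number of weight $w_X$ in the sense of (3). Granting that, part (3) gives $\gamma_0^{k/n}=\phi(\delta_k)$ for all sufficiently large $k$, which is exactly (4). (One also has to check that the Serre condition hypothesis of (3), i.e. rationality of $w_X$, is in force here; since we are working throughout Section 4 under the Serre condition for $(G,X)$, the weight homomorphism is defined over $\Q$, so (3) applies.)

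First I would reduce the Weil-number claim to a statement about the central torus. By part (1), $z:=\phi(\delta_k)\gamma_0^{-k/n}\in Z(G)(\Q)$ for $k$ large and divisible by $n$; since $\phi=i\circ\psi_{T,\mu_h}$ may be assumed by Theorem \ref{thm:LR-Satz5.3}/Lemma \ref{lem:LR-Lemma5.23} (we may replace $(\phi,\gamma_0)$ by an equivalent nested pair, which changes neither the Weil-number property nor the conclusion up to conjugacy), $\phi(\delta_k)\in T(\Q)$, and for every $\chi\in X^{\ast}(T)$ the value $\chi(\phi(\delta_k))$ is the Weil $p^{km/n}$-number associated to the character $\psi_{T,\mu_h}^{\ast}(\chi)$ of $P(K,m)$ — in particular $\chi(\phi(\delta_k))$ has the expected archimedean absolute values and is a unit away from $p$, with $p$-adic valuations controlled by $\Nm_{K_{v_2}/\Qp}\mu_h$, by the description of $\psi_{T,\mu}$ recalled in the proof of Lemma \ref{lem:LR-Lemma5.11}. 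Thus $\gamma_0^{k/n}=z^{-1}\phi(\delta_k)$ differs from a Weil number of weight $w_X$ only by the central element $z^{-1}$, and it suffices to prove that $z$ is a root of unity (equivalently $z^{k'}=1$ for some $k'$), for then replacing $k$ by a multiple kills $z$.

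The key step, and I expect the main obstacle, is showing $z\in Z(G)(\Q)$ is torsion. Here is where the two extra hypotheses of (4) enter. Write $Z=Z(G)$, and let $Z_a\subset Z$ be its maximal anisotropic ($\Q$-)subtorus, so $Z(\Q)$ has $Z_a(\Q)$ as a finite-index-up-to-isogeny piece and $Z/Z_a$ is $\Q$-split. The hypothesis that $Z_a$ remains anisotropic over $\R$ means $Z_a(\R)$ is compact, hence $Z_a(\Q)$ is a finitely generated group whose torsion-free part embeds discretely; combined with the hypothesis that $\gamma_0$, hence its image in $G(\Q_l)$, lies in a compact open subgroup for every $l\neq p$, one controls the $l$-adic valuations of $\chi(z)$ for all $l\neq p$: indeed $\chi(\phi(\delta_k))$ is an $l$-adic unit (property (b) of Definition \ref{defn:Weil-number_torus}) and $\chi(\gamma_0)$ is an $l$-adic unit by compactness, so $\chi(z)$ is an $l$-adic unit for all finite $l\neq p$. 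At $p$ one compares valuations: the $p$-adic valuations of $\chi(\phi(\delta_k))$ are exactly those prescribed by $\Nm_{K/\Qp}\mu_h$, while $\chi(\gamma_0^{k/n})$ must realize the $F$-isocrystal slope data dictated by the condition $(\ast(\delta))$/Newton compatibility — concretely, using Lemma \ref{lem:unramified_conj_of_special_morphism} and Lemma \ref{lem:Newton_hom_attached_to_unramified_morphism} the slopes of $\gamma_0$ at $p$ also match $-\Nm_{K/\Qp}\mu_h$ up to the normalizing factor — so $\chi(z)$ is a $p$-adic unit as well. Finally at $\infty$, both $\chi(\phi(\delta_k))$ and $\chi(\gamma_0^{k/n})$ have the same complex absolute values, namely those forced by the weight $w_X$ (using rationality of $w_X$ and ellipticity of $T$ over $\R$), so $\chi(z)$ has absolute value $1$ at every archimedean place. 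Hence $\chi(z)$ is an algebraic number which is a unit at every finite place and has all archimedean absolute values equal to $1$; by Kronecker's theorem it is a root of unity. Since this holds for all $\chi\in X^{\ast}(T)$ and $z\in T(\Q)$, $z$ itself is torsion, completing the argument.

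I would close by remarking that the only genuinely new input beyond parts (1)–(3) is the place-by-place unit analysis of $z$, and that the archimedean hypothesis on $Z_a$ is what guarantees the archimedean absolute values of $\chi(z)$ are all $1$ rather than merely bounded — without it the weight-$w_X$ matching at the real places could fail for the split part of the center, and $z$ could a priori be a non-torsion element of a $\Q$-split central torus. The compact-open hypothesis on $\gamma_0$ at the finite places $l\neq p$ plays the analogous role there. Once $z$ is known to be torsion, the passage to a larger $k$ and the appeal to (3) are immediate.
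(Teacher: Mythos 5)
Your overall plan — reduce (4) to showing $z:=\phi(\delta_k)\gamma_0^{-k/n}\in Z(G)(\Q)$ is torsion — is the right idea, and the finite-place analysis (units at $l\neq p$ from the compactness hypothesis, units at $p$ from the Newton comparison) is correct and essentially reproduces what the paper does in the argument of (1). However, there are two problems.

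First, the archimedean step, as you have written it, is circular. You assert that $\chi(\gamma_0^{k/n})$ has archimedean absolute values ``forced by the weight $w_X$'' (citing rationality of $w_X$ and ellipticity of $T$ over $\R$), but whether $\gamma_0$ has the archimedean absolute values prescribed by $w_X$ is precisely what you are trying to establish when proving $\gamma_0$ is a Weil number; ellipticity of $T$ over $\R$ controls only the image of $\gamma_0$ in $T/Z(G)(\R)$, not its central component. The correct way to get $|\chi(z)|_\infty=1$ is to use the structure of $z\in Z(G)(\Q)$: write $Z(G)$ up to isogeny as $Z_a\cdot Z_s$ with $Z_a$ the $\Q$-anisotropic kernel and $Z_s$ $\Q$-split. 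For $\chi$ factoring through $Z_s$ one applies the product formula over $\Q$ together with the already-established finite-place unit property; for $\chi$ factoring through $Z_a$ one uses the hypothesis that $Z_a$ is anisotropic over $\R$, so that complex conjugation acts by $-1$ on $X^{\ast}(Z_a)$, whence $\overline{\chi(z)}=\chi(z)^{-1}$ and $|\chi(z)|_\infty=1$. Your closing remark shows you have some sense that the hypothesis on $Z_a$ is what controls the archimedean place, but the body of the proof gives a different (incorrect) reason, and the closing remark itself misidentifies the danger: a non-torsion $z$ in the $\Q$-split central torus is never a worry (the product formula rules it out); the worry is the $\Q$-anisotropic-but-$\R$-split part, which the hypothesis on $Z_a$ is precisely designed to exclude.

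Second, the appeal to (3) is a detour that does no work. Once you have shown $z$ is torsion, choosing $k$ a suitable multiple of the order of $z$ gives $\gamma_0^{k/n}=\phi(\delta_k)$ directly, which is the conclusion of (4); there is no need to first conclude that $\gamma_0$ is a Weil number and then re-invoke (3). Note also that the paper's own proof of (4) takes a somewhat cleaner route: it observes that the hypotheses imply the Serre condition for $T$ in Kisin's sense, namely that $T(\Q)$ is discrete in $T(\A_f)$, and then the finite-place analysis of the argument of (1) alone shows $z$ lies in a compact open of $T(\A_f)$, hence is torsion. This sidesteps the archimedean check entirely. Your route via Kronecker's theorem is viable but requires the archimedean argument to be carried out correctly, which it currently is not.
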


\begin{proof} The first statement (1) is asserted in \cite{LR87}, p.194 (line -8) - p.195 (line 12) with a sketchy proof. Here we will give a detailed proof. To show the proposition, we need a fact which was stated in \cite{LR87}, p.195, line 5-9,  but without an explanation:

\begin{lem} \label{lem:equality_of_two_Newton_maps}
(1) For an admissible morphism $\phi:\fP\rightarrow \fG_G$ well-located in a maximal $\Q$-torus $T$ of $G$, elliptic over $\R$, let $\xi_p':\fD\rightarrow\fG_{T_{\Qp}}^{\nr}$ be an unramified conjugate of $\xi_p=\phi(p)\circ\zeta_p$ under $T(\Qpb)$, and $b\in T(\mfk)$ be defined by $\xi'_p(s_{\sigma})=b\rtimes\sigma$. 
Then, the two Newton homomorphisms $\nu_{\phi(\delta_k)}$, $\nu_b\in X_{\ast}(T)_{\Q}^{\Gal_{\Qp}}$ attached to $\phi(\delta_k)$, $b$ are related by: $\nu_{\phi(\delta_k)}=k\nu_b$.

(2) Let $\epsilon\in T_{\Qp}(\Qp)$ for a maximal $\Qp$-torus $T_{\Qp}$ of $G_{\Qp}$ and suppose there exists a $\delta\in G(L_n)$ such that $\Nm_{L_n/\Qp}\delta=c\epsilon c^{-1}$ for some $c\in G(\Qpnr)$, where $L_n=\mathrm{Frac}(W(\F_{p^n}))$; it follows that $b':=c^{-1}\delta\sigma(c)$ belongs to $G(\epsilon)(\Qpnr)$. Then, the two Newton homomorphisms $\nu_{\epsilon}$, $\nu_{b'}\in X_{\ast}(T)_{\Q}^{\Gal_{\Qp}}$ are related by: $\nu_{\epsilon}=n\nu_{b'}$. 

(3) If $(\phi,\epsilon\in I_{\phi}(\Q))$ is an admissible pair well-located in a maximal $\Q$-torus $T$ of $G$ that is elliptic over $\R$, the two Newton homomorphisms $\nu_b$, $\nu_{b'}$ in (1) and (2) are the same: $\nu_b=\nu_{b'}$.
\end{lem}

For (1), recall that for an admissible morphism $\phi:\fP\rightarrow \fG_G$ factoring through $\fG_T$ for a maximal $\Q$-torus $T$ of $G$, $b\in T(\mfk)$ defined by an unramified conjugate $\xi_p'$ \emph{under $T(\Qpb)$} of $\xi_p=\phi(p)\circ\zeta_p$, its Newton homomorphism $\nu_b\in X_{\ast}(T)_{\Q}^{\Gamma(p)}$ does not depend on the choice of $\xi_p'$ (as long as $\xi_p'$ is a conjugate of $\xi_p$ by an element of $T(\Qpb)$).

\begin{proof}
(1) Recall that if $\phi:\fP(L,m)\rightarrow \fG_T$ for some CM field $L$ Galois over $\Q$ and $m\in\N$, for $\lambda\in X^{\ast}(T)$, the $\Qb$-character $\lambda\circ\phi^{\Delta}$ is also a Weil $p^m$-number $\pi$, in which case when we write $\chi_{\pi}:=\lambda\circ\phi^{\Delta}$, we have $\chi_{\pi}(\delta_k)=\pi^{\frac{k}{m}}$ for every $k\in\N$ divisible by $m$. Next, when we regard any $\lambda\in X^{\ast}(T_{\Qp})^{\Gal_{\Qp}}$ as a $\Qb$-character of $T$ via the chosen embedding $\iota_p:\Qb\hra\Qpb$, 
\[|\lambda(\phi(\delta_k))|_p^{[K_w:\Qp]}=|\lambda(\phi(\delta_k))|_w=|\chi_{\pi}(\delta_k)|_w=|\pi|_w^{\frac{k}{m}}=p^{k\nu_2(\pi,w)}=p^{k\langle\chi_{\pi},\nu_2\rangle}=p^{k\langle\lambda,\xi_p^{\Delta}\rangle},\]
where $\xi_p^{\Delta}=\phi(p)^{\Delta}\circ\nu_2$ and $w$ is the place of $K$ induced by $\iota_p$. (Notice that $\lambda$ can be considered as a $\Qp$-character in the first three expressions, while the $3^{\text{rd}}$ equality makes sense only when $\chi_{\pi}$ is a $\Qb$-character.) 
This shows (\cite[2.8, 4.4]{Kottwitz85}) that the Newton homomorphism $\nu_{\phi(\delta_k)}\in X_{\ast}(T)_{\Q}^{\Gal_{\Qp}}$ attached to $\phi(\delta_k)\in T(\Qp)$ is $-\frac{k}{[K_w:\Qp]}\xi_p^{\Delta}$.

On the other hand, if $\xi_p'$ factors through $\fD_n$, the Newton homomorphism $\nu_b$ attached to $b$ is $\frac{1}{n}(\xi_p')^{-1}|_{\Gm}$ (Lemma \ref{lem:Newton_hom_attached_to_unramified_morphism}, cf. \cite{LR87}, Anmerkung).
In our case, we may assume that $n=[K_w:\Qp]$, by Remark \ref{rem:comments_on_zeta_v}, (2) and Lemma \ref{lem:unramified_morphism}, (2). Also, we have $(\xi_p')^{\Delta}=(\xi_p)^{\Delta}$ (restriction to the kernel $\Delta$ of $\fD_n$) as $\phi^{\Delta}$ maps to the torus $T$. Therefore, we get $\nu_b=-\frac{1}{[K_w:\Qp]}\xi_p^{\Delta}$, which proves the claim.

(2) This is proved in Lemma 5.15 of \cite{LR87}. We briefly sketch its arguments. First, we observe that for any $c\in G(\mfk)$ and $n'\in N$, $c_{n'}:=c^{-1}\sigma^{n'}(c)$ lies in any small neighborhood of $1$ as $n'$ becomes large (in fact, even becomes $1$ if $c\in G(\Qpnr)$). Secondly, set $\nu_1:=\frac{1}{n}\nu_{\epsilon}\in X_{\ast}(T)_{\Q}^{\Gal_{\Qp}}$ for the Newton (quasi-)cocharacter $\nu_{\epsilon}$ of $\epsilon\in T_{\Qp}(\Qp)$ so that
\[|\lambda(\epsilon)|_p=p^{-\langle\lambda,n\nu_1\rangle}\]
holds for every $\Qp$-rational $\lambda\in X_{\ast}(T)$ (\cite[4.4]{Kottwitz85}). It follows from this equation that $\nu_1$ maps into the center of $G(\epsilon)$ and that $p^{-n\nu_1}\epsilon\in T_{\Qp}(\Qp)$ lies in the maximal compact subgroup of $T_{\Qp}(\Qp)$. Especially, $(p^{-n\nu_1}\epsilon)^k$ also lies in any small neighborhood of $1$ as $k$ becomes large. Therefore, according to \cite[Prop. 3]{Greenberg63}, for sufficiently large $k\in\N$, there exists $d\in G(\mfk)$ such that with $n'=nk$, 
\[\epsilon^k c_{n'}=p^{n'\nu_1}d^{-1}\sigma^{n'}(d).\]
Here, we used the fact that $G_{\Qpnr}$ admits a smooth $\cO_{\Qpnr}$-integral model with connected special fiber (e.g. parahoric group schemes, cf. \cite{HainesRapoport08}).
Finally, from $\Nm_{L_n/\Qp}\delta=c\epsilon c^{-1}$, one easily checks that $b':=c^{-1}\delta\sigma(c)$ and $c_{n'}=c^{-1}\sigma^{n'}(c)$ belong to $G(\epsilon)(\Qpnr)$, and the equality
\[\Nm_{L_{n'}/\Qp}b'=\epsilon^k c_{n'}(=p^{n'\nu_1}d^{-1}\sigma^{n'}(d))\]
holds.
Therefore, the definition (\cite[4.3]{Kottwitz85}) tells us that the Newton homomorphism $\nu_{b'}\in\Hom_{\mfk}(\mathbb{D},G)$ of $b'$ is equal to $\nu_1$. 

(3) This follows from Remark \ref{rem:two_different_b's}. 
\end{proof}

\textsc{Proof of Proposition \ref{prop:phi(delta)=gamma_0_up_to_center} continued.} 
(1) As $\delta_{kd}=\delta_k^{d}$, it is enough to show that for some $k\in\N$ (divisible by $n$), the image of $\phi(\delta_k)\cdot \gamma_0^{-\frac{k}{n}}$ in $G^{\ad}(\Q)$ is a torsion element. For that, we use the fact that for any linear algebraic group $G$ over a number field $F$, $G(F)$ is discrete in $G(\A_F)$, so for any compact subgroup $K\subset G(\A_F)$, $G(F)\cap K$ will be finite, particularly, a torsion group. 
We will check that the image of $\phi(\delta_k)\cdot \gamma_0^{-\frac{k}{n}}$ in $T/Z(G)(\Qv)$ lies in a compact (open) subgroup of $T/Z(G)(\Qv)$ for every place $v$ of $\Q$. Recall that for an $F$-torus $T$ and any finite place $v$ of $F$, a subgroup $H$ of $T(F_v)$ is compact if and only if $H$ is contained in 
\[\bigcap_{\chi\in X^{\ast}(T),\ F_v-\text{rational}}\mathrm{Ker}(\mathrm{val}_v\circ\chi),\]
where $\mathrm{val}_v$ is the (normalized) valuation on $F_v$. 
For every finite place $l\neq p$, the image of $\gamma_0$ in $T/Z(G)(\Ql)$ is a unit (i.e. lies in a compact subgroup) by assumption, and so is $\phi(\delta_k)$ by definition of $\delta_k$ (in fact, $\phi(\delta_k)$ is itself a unit in $T(\Ql)$ for every $l\neq p$). As $T/Z(G)$ is anisotropic over $\R$, the claim is trivial for the archimedean place. Hence, it suffices to show that for every $\Qp$-rational character $\chi$ of $T/Z(G)$, $|\chi(\phi(\delta_k))|_p=|\chi(\gamma_0^{\frac{k}{n}})|_p$. In fact, we will show this for $\Qp$-rational characters $\chi$ of $T$.
Choose a finite Galois CM-extension $L$ of $\Q$ and $m\in\N$ such that $\phi$ factors through $\fP(L,m)$. Then, for all sufficiently large $k\in\N$ divisible by $[L:\Q]n$ and for any $\Qp$-rational character $\chi$ of $T$, one has 
\[ |\chi(\phi(\delta_k))|_p=p^{-k\langle\chi,\nu_b\rangle}=p^{-\frac{k}{n}\langle\chi,n\nu_{b'}\rangle}=p^{-\frac{k}{n}\langle\chi,\nu_{\gamma_0}\rangle}=|\chi(\gamma_0)|_p^{\frac{k}{n}}. \]
Here, the first (resp. the second, and the third) equality holds by Prop. \ref{lem:equality_of_two_Newton_maps}, (1) (resp. (3), and (2)). 

(2) After enlarging $K$, if necessary, the Galois $\Qpb/\Qp$-gerb morphisms $\xi_p:=\psi_{T,\mu_h}(p)\circ\zeta_p$, $\xi_{-\mu}:\fG_p\rightarrow \fG_{T_{\Qp}}$ factor through $\fG_p^K$, and $\xi_p$ is conjugate to $\xi_{-\mu}^K$ under $T(\Qpb)$, so $\xi_p^{\Delta}=(\xi_{-\mu}^K)^{\Delta}=-\Nm_{K/\Qp}\mu_h$.
By (1), the centralizer $\Cent_G(\gamma_0)$ equals that of $\phi(\delta_k)\in T(\Q)$ which is in turn equal to the centralizer $\Cent_G(\phi^{\Delta})$ of $\phi^{\Delta}$ (as $\{\delta_k^n\}_{n\in\N}$ is Zariski-dense in $P$). But, obviously $-\Nm_{K/\Qp}\mu_h=(\phi(p)\circ \zeta_p)^{\Delta}=(\phi^{\Delta})_{\Qp}\circ \zeta_p^{\Delta}$ maps into a $\Qp$-split torus in $\mathrm{Im}(\phi^{\Delta})_{\Qp}$, thus a posteriori into a $\Qp$-split torus in the center of $\Cent_G(\phi^{\Delta})_{\Qp}$, from which the claim follows. 

(3) The additional assumption tells us that $|\chi(\phi(\delta_k))|_{\infty}=|\chi(\gamma_0^{\frac{k}{n}})|_{\infty}$ for every $\Q_{\infty}$-rational character $\chi$ of $T$, and also implies that $\gamma_0\in G(\A_f^p)$ itself lies in a compact open subgroup of $G(\A_f^p)$.
Hence, by the argument of (1), $\phi(\delta_k)\cdot \gamma_0^{-\frac{k}{n}}\in G(\Q)$ is a torsion element.

(4) It is well-known that the stated condition implies that for any maximal $\Q$-torus $T_0$ of $G$, elliptic over $\R$, $T_0(\Q)$ is discrete in $T_0(\A_f)$; this is the condition what Kisin called \emph{the Serre condition for $T_0$}, \cite[(3.7.3)]{Kisin13}. Then, again we resort to the argument of (1).
\end{proof}

\begin{lem} \label{lem:invariance_of_(ast(gamma_0))_under_transfer_of_maximal_tori}
Let $\gamma_0, \gamma_0'\in G(\Q)$ be semi-simple elements. 

(1) If there exists a transfer of maximal tori $\Int g:T\hra G$ such that $\gamma_0\in T(\Q)$ and $\gamma_0'=\Int g(\gamma_0)$, then the condition $(\ast(\gamma_0))$ in Subsec. \ref{subsubsec:pre-Kottwitz_triple} holds for $(\gamma_0, T)$ and some $\mu\in X_{\ast}(T)\cap \{\mu_X\}$ if and only if it holds for $(\gamma_0', T',\mu'):=\Int g(\gamma_0,T,\mu)$.

(2) If $\gamma_0':=\Int g(\gamma_0)$ for some $g\in G(\Qb)$, for any place $v$ of $\Q$, the image of $\gamma_0$ in $G^{\ad}(\Ql)$ lies in a compact open subgroup of $G^{\ad}(\Ql)$ if and only if $\gamma_0'$ is so.
\end{lem}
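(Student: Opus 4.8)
The plan is to prove (1) and (2) separately. For (1) the key point is that the conjugating element $g$ yields a $\Q$-rational isomorphism only on $T$ and on the centre of the centraliser of $\gamma_0$, not on the full Levi subgroup $H$ attached to $\gamma_0$, so the comparison of the Kottwitz maps must be carried out through the maximal torus. Write $I_0:=\Cent_G(\gamma_0)$, $I_0':=\Cent_G(\gamma_0')=\Int g(I_0)$, $T':=\Int g(T)$ and $\mu':=g\mu g^{-1}\in X_{\ast}(T')$; it suffices to show that for each $\mu\in X_{\ast}(T)\cap\{\mu_X\}$ the condition $(\ast(\gamma_0))$ for $(\gamma_0,T,\mu)$ is equivalent to $(\ast(\gamma_0'))$ for $(\gamma_0',T',\mu')$. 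A transfer of maximal torus $\Int g|_T\colon T\isom T'$ (with respect to $\mathrm{Id}_G$) is defined over $\Q$, restricts to the identity on $Z(G)$, and restricts to a $\Q$-isomorphism $Z(I_0)\isom Z(I_0')$ (the remark after Prop.~\ref{prop:equivalence_to_special_adimssible_morphism}). Using these I would first dispose of the ``side conditions'': $T'$ is a maximal $\Q$-torus of $I_0'$, elliptic over $\R$ (ellipticity passes across the $\Q$-isomorphism $T/Z(G)\cong T'/Z(G)$); $\mu'\in\{\mu_X\}\cap X_{\ast}(T')$, being $G(\Qb)$-conjugate to $\mu$; and $\mu'$ is defined over the same finite Galois extension $K$ of $\Qp$ as $\mu$, because $\Int g|_{T_{\Qp}}$ is $\Qp$-rational, whence $\Nm_{K/\Qp}\mu'=\Int g_{\ast}(\Nm_{K/\Qp}\mu)$, and the $\Q$-isomorphism $Z(I_0)\isom Z(I_0')$ then shows this maps into $Z((I_0')_{\Qp})$. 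Consequently the maximal $\Qp$-split torus $A'$ of $Z((I_0')_{\Qp})$ is $\Int g(A)$, and hence $H':=\Cent_{G_{\Qp}}(A')=\Int g(H)$ over $\Qpb$.

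It then remains to establish the equality $\lambda_{H'}(\gamma_0')=n\underline{\mu'}$. Although $\Int g$ need not be a morphism of $\Qp$-groups on $H$, the cocharacter isomorphism $\Int g_{\ast}\colon X_{\ast}(T_{\Qp})\isom X_{\ast}(T'_{\Qp})$ is Galois-equivariant and carries the coroot lattice of $(H,T)$ onto that of $(H',T')$ (since $\Int g$ sends the pair $(H,T)$ to $(H',T')$ over $\Qpb$); it therefore induces a Galois-equivariant isomorphism $\pi_1(H)_I\isom\pi_1(H')_I$ compatible with the natural maps from $\pi_1(T)_I=X_{\ast}(T)_I$ and with passage to the torsion-free quotients and to $\langle\sigma\rangle$-invariants. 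As $\gamma_0\in T(\Qp)$ and $\mu\in X_{\ast}(T)$, both $\lambda_H(\gamma_0)$ and $\underline{\mu}$ are the images under $\pi_1(T)_I\to\pi_1(H)_I/\mathrm{tors}$ of $w_{T_L}(\gamma_0)$ and of the class of $\mu$ in $X_{\ast}(T)_I$; moreover $w_{T'_L}(\gamma_0')=\Int g_{\ast}(w_{T_L}(\gamma_0))$ follows from $\langle\chi',w_{T'_L}(\gamma_0')\rangle=\mathrm{val}\big((\Int g^{\ast}\chi')(\gamma_0)\big)$ and the defining property of $w_{T_L}$. Pushing this through the compatible maps gives $\lambda_{H'}(\gamma_0')=\Int g_{\ast}(\lambda_H(\gamma_0))$ and $\underline{\mu'}=\Int g_{\ast}(\underline{\mu})$, so $\lambda_H(\gamma_0)=n\underline{\mu}$ is equivalent to $\lambda_{H'}(\gamma_0')=n\underline{\mu'}$. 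This settles one implication of (1); the converse follows by applying the same argument to $\Int g^{-1}$, which is again a transfer of maximal torus.

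For (2), the image $x$ of $\gamma_0$ (and likewise $x'$ of $\gamma_0'$) in $G^{\ad}(\Qv)$ is semisimple, and whether a semisimple element of $G^{\ad}(\Qv)$ lies in a compact open subgroup is determined by the $v$-adic absolute values of its eigenvalues in a fixed faithful $\Qv$-representation $(V,r)$ of $G^{\ad}$, hence depends only on its $G^{\ad}(\Qvb)$-conjugacy class among $\Qv$-points. Since $x$ and $x'$ are conjugate in $G^{\ad}(\Qvb)$ via the image of $g$ (using the fixed embedding $\Qb\hookrightarrow\Qvb$), the operators $r(x)$ and $r(x')$ are conjugate in $\mathrm{GL}(V)(\Qvb)$ and therefore have the same eigenvalues, so the two boundedness conditions coincide; the same eigenvalue criterion, applied to $G$ rather than $G^{\ad}$, is the one invoked in Remark~\ref{rem:admissible_pair}(3). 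I expect the main obstacle to be part (1): one must make rigorous that the identification $\pi_1(H)_I\isom\pi_1(H')_I$ and the resulting comparison of $\lambda_H$ and $\lambda_{H'}$ are legitimate despite $\Int g$ not being $\Qp$-rational on $H$, which is precisely what forces every step to be phrased through the maximal tori $T\subseteq H$ and $T'\subseteq H'$, where $\Int g$ is $\Qp$-rational.
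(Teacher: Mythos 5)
Your proof is correct. For part (1) you are following essentially the same route as the paper: both arguments hinge on the observation that while $\Int g$ need not be a $\Qp$-morphism on $H$, its restriction to $T$ is $\Q$-rational (and it restricts to a $\Q$-isomorphism $Z(I_0)\isom Z(I_0')$), and this is enough to transport the relevant equalities. The one technical difference is that the paper exploits the simply-connectedness of $H^{\der}$ more bluntly: since $\lambda_H=\lambda_{H^{\ab}}$ (\cite[7.4]{Kottwitz97}) and $H^{\ab}\cong T/(T\cap H^{\der})$, the $\Qp$-rationality of $\Int g|_T$ immediately gives a $\Qp$-isomorphism $H^{\ab}\isom H'^{\ab}$ through which both sides of the equality are compared. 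You instead work directly at the level of $\pi_1(H)_I$ via the coroot-lattice comparison, which is a slightly longer chain but equivalent; your careful phrasing of why the induced map $\pi_1(H)_I\isom\pi_1(H')_I$ is Galois-equivariant is correct and is exactly what the paper's shortcut packages into the $H^{\ab}$-reduction.

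For part (2) your argument is genuinely different and somewhat more elementary than the paper's. The paper introduces the $\Q$-subgroups $P$, $P'$ of multiplicative type generated by $\gamma_0$, $\gamma_0'$, observes that $\Int g|_P\colon P\isom P'$ is automatically $\Q$-rational (since $\Int g$ and all of its Galois conjugates agree on the Zariski-dense element $\gamma_0$), and then transports boundedness through the $\Q$-isomorphism $Q\isom Q'$ of their images in $G^{\ad}$. You instead invoke an eigenvalue criterion in a faithful $\Qv$-representation of $G^{\ad}$, using only geometric conjugacy. Both arguments are valid; the paper's has the mild advantage of making no reference to a choice of representation and of producing the explicit $\Q$-isomorphism (which is reused elsewhere in the paper), whereas yours is more portable since it needs nothing beyond the semisimplicity of $\gamma_0$ and the existence of a faithful representation.
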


\begin{proof}
(1) Let $K\supset \Qp$ be a splitting field of $T_{\Qp}$ (and thus of $T'_{\Qp}\approx T_{\Qp}$ too).
Then, using that $\Int g:T\isom T'$ is a $\Q$-isomorphism, we see that
$\Int g(\Nm_{K/\Qp}\mu)=\Nm_{K/\Qp}\mu'$, which implies that
$\Nm_{K/\Qp}\mu$ maps into the center of $\Cent_G(\gamma_0)$ if and only if $\Nm_{K/\Qp}\mu'$ does so for $\Cent_G(\gamma_0')$. 
Next, let $H$ be the centralizer of the maximal $\Qp$-split subtorus of the center of $\Cent_G(\epsilon)_{\Qp}$ and $H'$ the similarly defined group for $\gamma_0'$. 
Since $H^{\der}$ is simply connected, $\lambda_H(\gamma_0)=\lambda_{H^{\ab}}(\gamma_0)$, so $\lambda_H(\gamma_0)=[K:\Qp]\underline{\mu}$ if and only if $\lambda_{H^{\ab}}(\gamma_0)=[K:\Qp]\underline{\mu}$, where the latter $\underline{\mu}$ is the image of $\mu$ in $X_{\ast}(H^{\ab})$.
But, $\Int g:\Cent_G(\gamma_0)_{\Qb}\isom \Cent_G(\gamma_0')_{\Qb}$ restricts to a $\Q$-isomorphism between their centers (as they are contained in $T$ and $T'$, respectively), so also induces a $\Qp$-isomorphism between the maximal $\Qp$-split subtori of their base-change to $\Qp$. Hence, $\Int g:H_{\Qpb}\isom H'_{\Qpb}$ and this induces a $\Qp$-isomorphism $H^{\ab}=H/H^{\der}\isom H'^{\ab}=H'/H'^{\der}$. Clearly, this proves the claim. 

(2) Let $P$ and $P'$ be the $\Q$-subgroups of $G$ generated by $\gamma_0$, $\gamma_0'$, respectively.
Then, the $\Qb$-isomorphism $\Int g:G_{\Qb}\isom G_{\Qb}$ restricts to a $\Q$-isomorphism $P\isom P'$, thus if the image of $\gamma_0$ in $G^{\ad}(\Ql)$ lies in a compact open subgroup of $G^{\ad}(\Ql)$, then as it also lies in a compact open subgroup of $Q(\Ql)$, where $Q$ is the image of $P$ in $G^{\ad}$, the same property holds for $\gamma_0'$.
\end{proof}

\begin{thm} \label{thm:LR-Satz5.21}
Suppose that $G_{\Qp}$ is quasi-split and tamely ramified, $G$ is of classical Lie type with simply connected $G^{\der}$, and that the Serre condition for $(G,X)$ holds. Let $\mbfK_p$ be special maximal parahoric. 

(1)  Let $\epsilon\in G(\Q)$ be a rational element, elliptic over $\R$, and whose image in $G^{\ad}(\Ql)$ lies in a compact open subgroup of $G^{\ad}(\Ql)$ for every finite place $l\neq p$. If there exists an admissible morphism $\phi$ making $(\phi,\epsilon)$ an admissible pair, then the stable conjugacy class of $\epsilon$ contains an element $\gamma_0\in G(\Q)$ which satisfies the condition $(\ast(\gamma_0))$ of Subsec. \ref{subsubsec:pre-Kottwitz_triple} with the same level as $(\phi,\epsilon)$.

Conversely, if $\gamma_0\in G(\Q)$ satisfies the condition $(\ast(\gamma_0))$ (especially, $\gamma_0$ is elliptic over $\R$), there exists an admissible pair $(\phi,\epsilon)$ with $\epsilon$ stably conjugate to $\gamma_0$. In fact,
we can also find a $\mbfK_p$-effective admissible pair $(\phi,\epsilon)$ with $\epsilon$ stably conjugate to $\gamma_0^t$ for some $t\in\N$ (cf. Remark \ref{rem:admissible_pair}).

(2) Suppose that $\gamma_0\in G(\Q)$ is elliptic over $\R$ and that there exists $\delta\in G(L_n)$ such that $c\gamma_0c^{-1}=\Nm_{L_n/\Qp}(\delta)$ for some $c\in G(\Qpb)$ and $X(\delta,\{\mu_X\})_{\mbfK_p}\neq \emptyset$.
Then, the condition $(\ast(\gamma_0))$ of Subsec. \ref{subsubsec:pre-Kottwitz_triple} holds with level $n=m[\kappa(\wp):\Fp]$. In particular, there exists an admissible pair $(\phi,\epsilon)$ with $\epsilon$ stably conjugate to $\gamma_0$.
\end{thm}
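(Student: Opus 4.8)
The plan is to prove both implications of (1), then to obtain (2) from Lemma \ref{lem:LR-Lemma5.11} together with the converse half of (1).

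\emph{The implication ``admissible pair $\Rightarrow$ $(\ast(\gamma_0))$''.} First I would replace $(\phi,\epsilon)$ by an equivalent pair nested in a special Shimura sub-datum $(T,h)$, using Lemma \ref{lem:LR-Lemma5.23}; since the condition $(\ast(\gamma_0))$ and the $l$-adic boundedness hypothesis depend only on the stable conjugacy class of $\epsilon$ and are invariant under transfer of maximal tori (Lemma \ref{lem:invariance_of_(ast(gamma_0))_under_transfer_of_maximal_tori}), no information is lost. One may thus assume $\phi=i\circ\psi_{T,\mu_h}$ with $\gamma_0:=\epsilon\in T(\Q)$; here $T$ is elliptic over $\R$ and $\gamma_0\in T\subset\Cent_G(\gamma_0)$, so I take $\mu:=\mu_h\in X_{\ast}(T)\cap\{\mu_X\}$. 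The first sub-condition of $(\ast(\gamma_0))$, that $\Nm_{K/\Qp}\mu_h$ lands in the centre of $\Cent_G(\gamma_0)_{\Qp}$, is exactly Proposition \ref{prop:phi(delta)=gamma_0_up_to_center}, (2) (its hypothesis is met because $\epsilon\in G(\Q)$ with bounded image in $G^{\ad}(\Ql)$ for all finite $l\neq p$ has image in $G^{\ad}(\A_f^p)$ lying in a compact open subgroup). For the remaining condition $\lambda_H(\gamma_0)=n\,\underline{\mu_h}$ I would compute $\lambda_H(\gamma_0)$ via the Newton homomorphism: by Remark \ref{rem:two_different_b's} the auxiliary element may be chosen so that $b'=b$, whence Lemma \ref{lem:equality_of_two_Newton_maps}, (2) and (3) give $\nu_{\gamma_0}=n\nu_{b'}=n\nu_{b}$, and Lemma \ref{lem:unramified_conj_of_special_morphism} identifies $\nu_b$ (up to the sign convention of \cite{LR87}) with $\tfrac1{[K:\Qp]}\Nm_{K/\Qp}\mu_h$; since $\lambda_H(\gamma_0)$ is the image of $\nu_{\gamma_0}$ in $\pi_1(H)_I/\mathrm{tors}$ and the image of $\tfrac1{[K:\Qp]}\Nm_{K/\Qp}\mu_h$ there is $\underline{\mu_h}$, this gives $\lambda_H(\gamma_0)=n\,\underline{\mu_h}$ at the required level. (Proposition \ref{prop:phi(delta)=gamma_0_up_to_center}, (1) is used here as well, to identify $\Cent_G(\gamma_0)$ with the centraliser of $\phi^{\Delta}$, which is what pins down the group $H$.)

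\emph{The converse.} Given $\gamma_0$ with $(\ast(\gamma_0))$ for data $(T,\mu,n)$, one has $\gamma_0\in Z(\Cent_G(\gamma_0))(\Q)\subset T(\Q)$, and as $T$ is elliptic over $\R$ with $\mu\in\{\mu_X\}$ there is $h\in X\cap\Hom(\dS,T_{\R})$ with $\mu_h=\mu$; then $\phi:=i\circ\psi_{T,\mu_h}$ is admissible by Lemma \ref{lem:LR-Lemma5.2} (this is where $\mbfK_p$ special maximal parahoric, $G_{\Qp}$ quasi-split and the Serre condition enter). Since $\phi$ factors through $\fG_T$, $T(\Q)\subset I_{\phi}(\Q)$, so I set $\epsilon:=\gamma_0$. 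Condition (3) of Definition \ref{def:admissible_pair} is then checked place by place: away from $p$, for any $y\in X^p(\phi)$ one has $\epsilon y\in X^p(\phi)$, so $\gamma:=y^{-1}\epsilon y\in G(\A_f^p)$ automatically, that set being a $G(\A_f^p)$-torsor; at $p$, choosing an unramified conjugate $\Int u\circ\xi_p$ with $u\in T(\Qpb)$ so that $\epsilon'=\epsilon$ and $b\in T(L)$, the existence of $x\in G(L)/\mbfKt_p$ with $\epsilon x=\Phi^m x$ is, by Lemma \ref{lem:Kottwitz84-a1.4.9_b3.3}, (1), equivalent to triviality of the $\sigma^n$-conjugacy class of $\epsilon^{-1}\Nm_{L_n}b$; this class has trivial Newton point by Lemma \ref{lem:equality_of_two_Newton_maps} ($\nu_{\epsilon}=n\nu_b$) and trivial Kottwitz point by $\lambda_H(\gamma_0)=n\,\underline{\mu}$, hence is trivial by injectivity of $(\overline{\nu},\kappa)$ on $B(G_{\Qp})$. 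This produces an admissible pair with $\epsilon$ stably conjugate to $\gamma_0$. To obtain a $\mbfK_p$-effective one I would further arrange $x\in X(\{\mu_X\},b)_{\mbfK_p}$, which for a special maximal parahoric and minuscule $\{\mu_X\}$ pins $x$ down to the unique $\{\mu_X\}$-invariant; matching the refined, $\pi_1$-valued (not $\pi_1/\mathrm{tors}$-valued) Kottwitz datum at $p$ is only possible after passing to a power $\gamma_0^t$ that clears the torsion, which is exactly the origin of the exponent $t$ in the statement.

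\emph{Part (2) and the main obstacle.} For (2) I would put $b':=c^{-1}\delta\sigma(c)$, which lies in $\Cent_G(\gamma_0)(L)$ and is basic there (as in \S\ref{subsec:Kottwitz_invariant}), take a maximal $\Qp$-torus of $\Cent_G(\gamma_0)_{\Qp}$ through which $\nu_{b'}$ factors, and apply Lemma \ref{lem:LR-Lemma5.11} to $\delta$ and $\{\mu_X\}$ to get $\mu_1$ with $\Nm_{K/\Qp}\mu_1=[K:\Qp]\nu_{\delta}$; then a transfer-of-maximal-tori argument carried out \emph{inside} $\Cent_G(\gamma_0)$ — in the style of the proof of Theorem \ref{thm:non-emptiness_of_NS}, (1), using Proposition \ref{prop:existence_of_elliptic_tori_in_special_parahorics} and the fact that every maximal torus transfers to the quasi-split inner form — globalises $\mu_1$ to a maximal $\Q$-torus $T\subset\Cent_G(\gamma_0)$, elliptic over $\R$, carrying a cocharacter $\mu$. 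The two sub-conditions of $(\ast(\gamma_0))$ at level $n$ then follow, as in the first part, from $\nu_{\gamma_0}=n\nu_{b'}$ (Lemma \ref{lem:equality_of_two_Newton_maps}, (2)) and from $\nu_{b'}$ being the class of $\tfrac1{[K:\Qp]}\Nm_{K/\Qp}\mu_1$; the final ``in particular'' clause is the converse of (1). I expect the genuine difficulty to sit in two places: first, the bookkeeping at $p$ in the converse, namely showing $(\ast(\gamma_0))$ forces $\epsilon^{-1}\Nm_{L_n}b$ to be a $\sigma^n$-coboundary and, for $\mbfK_p$-effectivity, that after passing to $\gamma_0^t$ the exact Kottwitz invariant at $p$ equals $\mu^{\natural}$ so that the fixed point lands in the $\{\mu_X\}$-admissible set; and second, performing the transfer-of-tori step of (2) inside the smaller group $\Cent_G(\gamma_0)$ while retaining control of the local torus at $p$ and of ellipticity at $\infty$.
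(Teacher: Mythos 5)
Your overall skeleton — reduce to a nested pair, extract $(\ast(\gamma_0))$ by comparing Newton data, run the converse by building $\phi=i\circ\psi_{T,\mu_h}$ and checking condition (3) at $p$, and derive (2) from Lemma~\ref{lem:LR-Lemma5.11} plus the converse of (1) — matches the paper's, and the Newton-point route you propose for the necessity direction is a genuine simplification of the paper's argument (which instead reproduces the boundedness-of-$k$ computation from \cite{LR87} and then needs the first sub-condition of $(\ast(\gamma_0))$ to patch a gap). Your route does work in principle because $\lambda_H$ takes values in the torsion-free group $(\pi_1(H)_I/\mathrm{tors})^{\langle\sigma\rangle}$ and is therefore pinned down by its pairing with $\Qp$-rational characters of $H$, which is exactly what the Newton homomorphism records; but you should say this explicitly, since it is the only reason the rational equality $\nu_{\gamma_0}=n\nu_b$ upgrades to the integral one $\lambda_H(\gamma_0)=n\underline{\mu_h}$.

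The converse direction, however, has two real gaps. First, it is false in general that $\mu\in X_{\ast}(T)\cap\{\mu_X\}$ with $T_{\R}$ elliptic is of the form $\mu_h$ for some $h\in X\cap\Hom(\dS,T_{\R})$: the latter set is a single orbit under the \emph{real} Weyl group of $T_{\R}$, whereas $X_{\ast}(T)\cap\{\mu_X\}$ is the full complex Weyl orbit. You must first perform the transfer of $(T,\mu)$ to a special Shimura sub-datum via Lemma~\ref{lem:LR-Lemma5.12} (and use Lemma~\ref{lem:invariance_of_(ast(gamma_0))_under_transfer_of_maximal_tori} to carry $(\ast(\gamma_0))$ along); without it $i\circ\psi_{T,\mu}$ need not even satisfy condition (2) of admissibility at $\infty$, so Lemma~\ref{lem:LR-Lemma5.2} does not apply. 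Second, your verification of condition (3) at $p$ is circular and, even once uncircularised, incomplete. You cite Lemma~\ref{lem:equality_of_two_Newton_maps}, (2) to get $\nu_{\epsilon}=n\nu_b$, but that lemma presupposes the existence of $\delta$ with $\Nm_{L_n/\Qp}\delta=c\epsilon c^{-1}$, which by Lemma~\ref{lem:Kottwitz84-a1.4.9_b3.3}, (1) is equivalent to the existence of the very fixed point $x$ you are trying to produce. You would need to derive $\nu_{\epsilon}=n\nu_b$ directly from $(\ast(\gamma_0))$. More seriously, ``trivial Newton point and trivial Kottwitz point imply triviality in $B(G_{L_n})$'' is not reducible to $\lambda_H(\gamma_0)=n\underline{\mu}$: $\lambda_H$ is the image of $w_H$ in a torsion-free quotient, while $\kappa_G$ on $B(G_{L_n})$ lands in $\pi_1(G)_{\Gal(\Qpb/L_n)}$, which has torsion, so the equality of $\lambda_H$-values does not by itself kill $\kappa$. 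The paper's proof of this step is long precisely because of this torsion: it reduces modulo $T'^{\der}$ and invokes \cite[Prop.~5.4]{Kottwitz85} for basic elements in the simply-connected derived group to get plain admissibility without taking a power, and it is only for $\mbfK_p$-effectivity (solving inside the parahoric via Greenberg's theorem) that the power $\gamma_0^t$ is introduced, in order to drive the element $k_0^a$ of (\ref{eqn:epsilon'^{-1}F^n}) into $\Ker(\lambda_{Z_\epsilon})$ and kill the torsion of $(Z_\epsilon)_{\Qp}\cap H^{\der}$. Your ``$t$ clears the torsion of the refined Kottwitz datum'' is the right intuition but is far from a proof.

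For (2), the ``transfer of maximal tori inside $\Cent_G(\gamma_0)$'' you propose is unnecessary and diverges from what the paper does: it suffices to take any maximal $\Q$-torus $T_0$ of $G$ containing $\gamma_0$ and elliptic over $\R$, to $\sigma$-conjugate the basic element $b=c^{-1}\delta\sigma(c)\in \Cent_G(\gamma_0)(\Qpnr)$ into $T_0(L)$, and to apply Lemma~\ref{lem:LR-Lemma5.11} to $(T_0)_{\Qp}$ directly; one then reads off $\lambda_H(\gamma_0)=n\underline{\mu}$ from $\Nm_{K/\Qp}\mu=[K:\Qp]\nu_b$ and $\nu_{\gamma_0}=n\nu_b$ exactly as in the paper, again using that $\lambda_{T_0}$ and $\lambda_H$ are torsion-free.
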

 
The claim (1) in the hyperspecial level case is Satz 5.21 of \cite{LR87}, except for two differences: in the original theorem, there is no condition on $\epsilon\in G^{\ad}(\A_f^p)$ and a condition (denoted by $(\ast(\epsilon))$) different from our condition $(\ast(\gamma_0))$ here is used (cf. Remark \ref{rem:condition_(ast(gamma_0))}, (2)). However, the proof (especially, of the second implication) given in loc. cit. based on that condition seemed to us to be incomplete: see Footnotes \ref{ftn:pseudo_Cartan_decomp}, \ref{ftn:1st_gap}, \ref{ftn:2nd_gap}, and \ref{ftn:3rd_gap} for explanation of the points that we believe to be gaps. On the other hand, hopefully the claim (2) (which is due to us) and thm. \ref{thm:LR-Satz5.25} (which follows from it) should justify the introduction of the new condition, cf. Remark \ref{rem:LR-Satz5.25}. Also, we remark that the main point of the condition $(\ast(\epsilon))$ in the original work \cite{LR87} was Satz 5.21, which corresponds to this theorem.

\begin{proof}
To a large extent, we follow the original strategy, but using our condition $(\ast(\gamma_0))$ (instead of the original one $(\ast(\epsilon))$) as well as some of those facts that were established in our general setting of (special maximal) parhoric level, especially Prop. \ref{prop:existence_of_elliptic_tori_in_special_parahorics} and Lemma \ref{lem:unramified_conj_of_special_morphism}. 

Before entering into the proof, we discuss an explicit expression of the Frobenius automorphism $\Phi=F^n$ attached to a special admissible morphism.
Let $\phi=i\circ\psi_{T,\mu_h}$ be a special admissible morphism, where $(T,h)$ is a special Shimura sub-datum and $i:\fG_T\rightarrow\fG_G$ is as usual the canonical morphism induced by the inclusion $T\hra G$. Put $\xi_p:=\phi(p)\circ\zeta_p$; so $\xi_p$ and $\xi_{-\mu_h}$ are conjugate under $T(\Qpb)$ (and in particular, $\xi_p^{\Delta}=\xi_{-\mu_h}^{\Delta}$). The centralizer $J=\Cent_{G_{\Qp}}(\xi_p^{\Delta})$ of (the image of) $\xi_p^{\Delta}:\mathbb{D}\rightarrow G_{\Qp}$ is a quasi-split $\Qp$-Levi subgroup of $G_{\Qp}$. 
Suppose given an elliptic maximal torus $T'$ of $J$. We choose $j\in J'(\Qpb)$ with $T'=\Int (j)(T_{\Qp})$, and set
\[\mu':=\Int (j)(\mu_h)\quad \in X_{\ast}(T').\]
Let $K$ be a finite Galois extension of $\Qp$ splitting $T'$, $\pi$ a uniformizer of $K$, and $K_0$ the maximal unramified subextension of $K$. We take $K$ to be big enough such that there exists a finite Galois extension $K_h$ of the same degree (over $\Qp$) as $K$ and splitting $T$. Note that in doing so, we can always assume $K$ to be an unramified extension of the splitting filed of $T'$.
If $K_1$ is the composite of $K$ and $L_s$ with $s=[K:\Qp]$ and $\xi_{-\mu'}^{K_1}$ is the pull-back of $\xi_{-\mu'}^{K}$ to $\fG_p^{K_1}$, 
according to Lemma \ref{lem:unramified_conj_of_special_morphism}, there exists $t_p\in T'(\Qpb)$ such that $\Int(t_p)(\xi_{-\mu'}^{K_1})$ is an unramified morphism mapping into $\fG_{T'}$ and factors through $\fG_p^{L_s}$. Moreover, we can choose $t_p$ further such that if $\xi_p':\fG_p^{L_s}\rightarrow \fG_{T_{\Qp}}$ denotes the induced unramified Galois $\Qpb/\Qp$-gerb morphism,
\[\xi_p'(s_{\sigma}^{L_s})=\Nm_{K/K_0}(\mu'(\pi))\rtimes \sigma.\] 
From now on, we write $\xi_{-\mu_h}$, $\xi_{-\mu'}$ for $\xi_{-\mu_h}^{K_h}$, $\xi_{-\mu'}^{K}$, respectively. 
In this set-up, we note that if we put 
\[\qquad \nu_p':=-\Nm_{K/\Qp}\mu'\] 
($\Qp$-rational cocharacter of $T'$), there holds the equality: 
\begin{equation} \label{eqn:equality_of_two_norm_cochars} 
\nu_p'=-\Nm_{K_h/\Qp}\mu_h. 
\end{equation}
This was already noted in our proof of Lemma \ref{lem:LR-Lemma5.2}.
Indeed, first we see that they both map into the center of $J$: for $\Nm_{K_h/\Qp}\mu_h$, this is by definition of $J$ (and as $-\Nm_{K_h/\Qp}\mu_h=(\xi_{-\mu_h}^{K_h})^{\Delta}=(\phi(p)\circ\zeta_p)^{\Delta}$), while $\nu_p'=-\Nm_{K/\Qp}\mu'$ maps into a $\Qp$-split sub-torus of the elliptic maximal torus $T'$ of $J$, so factors through the center. But, also their projections into $J^{\ab}=J/J^{\der}$ are the same, as $\mu'$ and $\mu_h$ are conjugate under $J(\Qpb)$ and $J^{\ab}$ is $\Qp$-split (and since $[K:\Qp]=[K_h:\Qp]$). Clearly this proves the claim.

Next, let $[K:K_0]=e_K$, $[K_0:\Qp]=f_K$. For any $j\in \N$, if it is divisible by $[K:\Qp]=e_Kf_K$, say $j=t'[K:\Qp]$, we have the following expression for $F^j$ in terms of $\nu_p'$:
\begin{eqnarray} \label{eqn:F^j}
F^{j}&=&(\Nm_{K/K_0}(\mu'(\pi))\rtimes \sigma)^j=\Nm_{K/\Qp}(\mu'(\pi^{e_K}))^{t'}\rtimes \sigma^{j} \\
&=&(p^{\Nm_{K/\Qp}\mu'}\cdot \Nm_{K/\Qp}(\mu'(u)))^{t'}\rtimes \sigma^{j} \nonumber \\
&=&p^{-t'\nu_p'}\cdot u_0^{t'}\rtimes\sigma^{j} \nonumber
\end{eqnarray}
where $\pi^{e_K}=pu$ for $u\in \cO_K^{\times}$ and 
\[u_0:=\Nm_{K/\Qp}(\mu'(u)).\] 
A priori, $u_0\in T'(\Qp)_0(=\Ker(v_{T'_L})\cap T'(\Qp))$ (maximal compact subgroup of $T'(\Qp)$), but in fact it belongs to $T'(\Qp)_1(=\Ker (w_{T'_L})\cap T'(\Qp))$. To see that, by funtoriality for tori $T$ endowed with a cocharacter $\mu\in X_{\ast}(T)$, we can take $T'=\Res_{K/\Qp}\Gm$ and $\mu'=\mu_K$, the cocharacter of $T'_K=\Gm^{\Hom(K,K)}$ corresponding to the identity embedding $K\hra K$. But in this case, $X_{\ast}(T')$ is an induced $\Gal(K/\Qp)$-module, so $w_{T'_L}=v_{T'_L}$, and clearly $u_0\in \Ker_{v_{T'_L}}$.

(1) Let us turn to the proof and first establish the necessity of the condition $(\ast(\gamma_0))$. 
By a suitable transfer of maximal tori (Lemma \ref{lem:LR-Lemma5.23}), we may assume that the admissible pair $(\phi,\epsilon)$ is nested in some Shimura sub-datum $(T,h)$, i.e. $\phi=i\circ\psi_{T,\mu_h}$ and $\epsilon\in T(\Q)$. We will verify the condition $(\ast(\gamma_0))$ for $(\gamma_0:=\epsilon,T,\mu_h)$. 
First, according to Lemma \ref{lem:invariance_of_(ast(gamma_0))_under_transfer_of_maximal_tori}, (2), this new $\epsilon$ still satisfies the assumption, i.e. its image in $T/Z(G)(\Ql)$ is a unit (i.e. lies in a compact open subgroup) for every (finite) place $l\neq p$. Therefore, if $K$ splits $T$, by Prop. \ref{prop:phi(delta)=gamma_0_up_to_center}, (2), $\Nm_{K/\Qp}\mu_h$ maps into the center of $\Cent_G(\epsilon)$.
For the special admissible morphism $\phi=i\circ\psi_{T,\mu_h}$, let $\xi_p$, $J$, $T'$, $\mu'$, $\xi_p'$, ... be defined as discussed in the beginning of the proof. It was shown in the proof of Lemma \ref{lem:LR-Lemma5.2} (using that $T'\subset J$ is elliptic) that the two Galois $\Qpb/\Qp$-gerb morphisms $\xi_{-\mu_h}$, $\xi_{-\mu'}$ are conjugate under $J(\Qpb)$. So, since $\xi_p=i\circ\psi_{T,\mu_h}\circ\zeta_p$ and $\xi_{-\mu_h}$ are conjugate under $T(\Qpb)$, there exists $v\in J(\Qpb)$ such that 
\[\Int (v)(\xi_p)=\xi_p'(=\Int (t_p)(\xi_{-\mu'})).\]
Moreover, we could choose $v$ such that $T'=\Int(v)(T_{\Qp})$ (so that $\epsilon'\in T'(\Qpnr)$). Indeed, pick an arbitrary $v_1\in J(\Qpb)$ with this property. Here, we used the condition that $\nu_p'=-\Nm_{K_h/\Qp}\mu_h$ maps into the center of $\Cent_G(\epsilon)_{\Qp}$, since then $(T\subset)H\subset J$. By Lemma \ref{lem:equality_restrictions_to_kernels_imply_conjugacy}, we just need to show that $\Int(v_1)(\xi_p)^{\Delta}=(\xi_p')^{\Delta}(=\xi_{-\mu'}^{\Delta})$. But, $\Int(v_1)(\xi_p)^{\Delta}=\Int(v_1)(\xi_p^{\Delta})=\Int(v_1)(-\Nm_{K_h/\Qp}\mu_h)=-\Nm_{K_h/\Qp}\mu_h=-\Nm_{K/\Qp}\mu'(=\xi_{-\mu'}^{\Delta})$ by (\ref{eqn:equality_of_two_norm_cochars}).
Using such $v$, we set
\[\epsilon':=\Int (v)(\epsilon).\] 
This is an element of $T'(\Qpnr)$: a priori, this is only an element of $T'(\Qpb)$, but since it commutes with an unramified morphism $\xi_p'=\Int(v)(\xi_p)$, it belongs to $T'(\Qpnr)$.

Now, by definition of admissible pair, there exists $x\in G(L)/\mbfKt_p$ such that $\epsilon'x=\Phi^mx$. But then $(\epsilon')^tx=\Phi^{tm}x$ for every $t\in\Z$, since $\epsilon'$ commutes with $\Phi=F^r$ (as $\epsilon$ commutes with $\xi_p=\phi(p)\circ\zeta_p$). Hence, for sufficiently large $t\in\N$ such that $trm$ is divisible by $e_Kf_K$, say $trm=t'[K:\Qp]$, if $k\in G(L)$ is defined by that $k\rtimes \sigma^{tn}=(\epsilon')^{-t}\Phi^{tm}$, i.e. 
\begin{equation} \label{eqn:k_lies_in_cpt_open}
k=(\epsilon')^{-t}\cdot \Nm_{K/\Qp}(\mu'(\pi))^{e_Kt'} =(\epsilon')^{-t}\cdot p^{-t'\nu_p'}\cdot u_0^{t'},
\end{equation}
$k$ lies in the special maximal bounded subgroup $\mathrm{Stab}(x)$ of $G(L)$.
Moreover, $k\in T'(L)$ as both $\epsilon'$ and $\Nm_{K/\Qp}(\mu'(\pi))$ are so. Therefore, $k$ lies in the maximal bounded subgroup $T'(L)_0=\Ker(v_{T'_L})$ of $T'(L)$, and so is $\epsilon'^{-t}\cdot p^{-t'\nu_p'}=k\cdot u_0^{-t'}$.
\footnote{Noe that this is a consequence of our assumption that  $\nu_p'=-\Nm_{K_h/\Qp}\mu_h$ maps into the center of $\Cent_G(\epsilon)_{\Qp}$. Without such additional condition, it seems not clear whether $v_{H'}((\epsilon')^{-t}\cdot p^{-t'\nu_p'})=0$ for $H':=v'(H_L)v'^{-1}$.
On the other hand, in the original proof, Langlands and Rapoport deduce this condition by appealing to uniqueness of ``a kind of polar decomposition" $(\epsilon')^{t}= p^{-t'\nu_p'}\cdot k^{-1}$ in (\ref{eqn:k_lies_in_cpt_open}) (in the hyperspecial case, $u_0=1$). By this they must mean a Cartan decomposition for $\Cent_{G_L}(\epsilon')$ (since $\nu_p'$ is wanted to map into its center), which needs that $k$ lies in a \emph{special} maximal bounded subgroup of $\Cent_{G_L}(\epsilon')(L)$. But, this seems not quite obvious either: one only knows that $k$ lies in \emph{some} maximal bounded subgroup of $\Cent_{G_L}(\epsilon')(L)$. Since $v_{H'}$ vanishes on special maximal bounded subgroups (Lemma \ref{lem:Kottwitz84-a1.4.9_b3.3}), hence we see that the condition in question is in fact ``equivalent" to the property we need.\label{ftn:pseudo_Cartan_decomp}}
Since $\Int (v')$ is an $L$-isomorphism from $H_L$ to its image $H'$ which must contain $T'$, and $v'\in J(\Qpb)=\Cent_{G_{\Qp}}(\nu_p')$ (\ref{eqn:equality_of_two_norm_cochars}), it follows that 
\begin{equation} \label{eqn:lambda(epsilon)=nu_p'}
\epsilon^{-t}\cdot p^{-t'\nu_p'}\stackrel{}{=}v'^{-1}(\epsilon'^{-t}\cdot p^{-t'\nu_p'})v'\in \Ker(v_{H_L}),
\end{equation}
that is, $\lambda_H(\epsilon^{-t}\cdot p^{-t'\nu_p'})=0$.
Hence, noting that the canonical $\Gamma(p)$-action on $\pi_1(H)=X_{\ast}(H^{\ab})$ is trivial, i.e. $\pi_1(H)_I=\pi_1(H)=X_{\ast}(H^{\ab})$ ($H^{\ab}$ splits over $\Qp$), we see that
\[t\lambda_{H}(\epsilon)=\lambda_{H}(\epsilon^t)\stackrel{(\ref{eqn:lambda(epsilon)=nu_p'})}{=}\lambda_{H}(p^{-t'\nu_p'})\stackrel{(\ref{eqn:equality_of_two_norm_cochars})}{=}\lambda_{H}(p^{t'\sum_{K/\Qp}\mu'})=t'\underline{\Nm_{K/\Qp}\mu'}=tn\underline{\mu'},\]
Here, as usual for a maximal torus $T$ of $H$ and $x\in X_{\ast}(T)$, $\underline{x}$ denotes the image of $x$ in $\pi_1(H)_{\Gal_{\Qpnr}}=\pi_1(H)$.
\footnote{In \cite[p.192, line 3-6]{LR87}, Langlands and Rapoport take up a $G(\Qpb)$-conjugate 
$\mu''$ of $\mu'$ (equiv. of $\mu_h$) factoring through $H$ and defined over $L_n$ (which exists, according to the definition of the reflex field and \cite[Lemma 1.1.3]{Kottwitz84b}) and insist that $\underline{\mu'}_{\pi_1(H)}=\underline{\mu''}_{\pi_1(H)}$. But this equality is questionable, since $\mu'$ and $\mu''$ may not be conjugate under $H$. Fortunately, for our main application of the theorem at hand, this property does not play a significant role.\label{ftn:1st_gap}}

Next, assume that $\gamma_0\in G(\Q)$ satisfies the two conditions in the statement of the theorem:
There exists a maximal $\Q$-torus $T_0$ of $\Cent_G(\gamma_0)$ that is elliptic at $\R$, and a cocharacter $\mu_0$ of $T_0$ that satisfies the condition $(\ast(\gamma_0))$ of Subsec. \ref{subsubsec:pre-Kottwitz_triple}  (with some level $n$ divisible by $[\kappa(\wp):\Fp]$).
Then, with such choice, one applies Lemma \ref{lem:LR-Lemma5.12} (or Lemma 5.12 of \cite{LR87}) to find an admissible embedding of maximal torus $\Int g_0:T_0\hra G$ such that $\Int g_0\circ i\circ\psi_{T_0,\mu_0}$ equals $i\circ\psi_{T,\mu_h}$ for some special Shimura sub-datum $(T,h)$. By Lemma \ref{lem:invariance_of_(ast(gamma_0))_under_transfer_of_maximal_tori}, the condition $(\ast(\gamma_0))$ continues to hold for $(\epsilon:=\Int g_0(\gamma_0),T,\mu_h)$.
Now, we check that for the resulting pair $(\phi,\epsilon \in I_{\phi}(\Q))=(i\circ\psi_{T,\mu_h},\Int g_0(\gamma_0))$, $(\phi,\epsilon^t)$ is $\mbfK_p$-effective admissible for some $t\in\N$ and admissible with $t=1$ (Def. \ref{def:admissible_pair}, Remark \ref{rem:admissible_pair}). As we are working with special maximal parahoric $\mbfK_p$, by Lemma \ref{lem:LR-Lemma5.2}, $\phi:=\Int g_0\circ i\circ\psi_{T,\mu_h}$ is admissible. The condition (2) is obvious. So, it remains to establish the condition (3).

As $H$ is a quasi-split $\Qp$-Levi subgroup of $G_{\Qp}$, there exists a $G(\Qp)$-conjugate ${}^gH:=\Int g(H)$ ($g\in G(\Qp))$ of $H$ such that ${}^gH(\Qp)\cap \mbfK_p$ is a special maximal parahoric subgroup of ${}^gH(\Qp)$ (Lemma \ref{lem:specaial_parahoric_in_Levi}, cf. proof of Lemma \ref{lem:LR-Lemma5.2}). Then, we apply Prop. \ref{prop:existence_of_elliptic_tori_in_special_parahorics} to ${}^gH(\Qp)\cap \mbfK_p^g$ and choose an elliptic maximal $\Qp$-torus $T'$ of ${}^gH$ such that $T'_{\Qpnr}$ contains (equiv. is the centralizer of) a maximal $\Qpnr$-split $\Qpnr$-torus of ${}^gH_{\Qpnr}$ and that the (unique) parahoric subgroup $T'(L)_1$ of $T'(L)$ is contained in $T'(L)\cap \mbfKt_p$ (as usual, $\mbfKt_p$ being the parahoric subgroup of $G(L)$ corresponding to $\mbfK_p$); in fact, $T'(L)_1=T'(L)\cap \mbfK_p$. So, we are in the situation discussed in the beginning of the proof.
Let $\mu'$, $\xi_p'$, $K$, $K_0$, $K_h$, ... be defined as before, except for the following changes, caused by that $T'$ is now an elliptic maximal torus of ${}^gH$, not $J$: We take $\mu'\in X_{\ast}(T')$ to be a conjugate of ${}^g\mu_h:=\Int g(\mu_h)(\in X_{\ast}({}^gT))$ \emph{under ${}^gH(\Qpb)$}. Then, because $\Nm_{K/\Qp}{}\mu_h$ maps into the center of $H$ by our assumption, the same argument above establishing (\ref{eqn:equality_of_two_norm_cochars}) gives that $\Nm_{K/\Qp}\mu'=\Nm_{K_h/\Qp}{}^g\mu_h$. Then, in turn using this equality, again the proof of Lemma \ref{lem:LR-Lemma5.2} establishes that the two Galois $\Qpb/\Qp$-gerb morphisms into $\fG_{{}^gH}$, $\xi_{-{}^g\mu_h}$ and $\xi_{-\mu'}$, are conjugate under ${}^gH(\Qpb)$, and so are ${}^g\xi_p={}^g\phi(p)\circ\zeta_p$ and $\xi_{-\mu'}$.
\footnote{This is the first reason why we need the new assumption that $\Nm_{K_h/\Qp}\mu_h$ maps into the center of $\Cent_G(\epsilon)$: without it, there is no gurantee that $\xi_{-\mu'}$ (which defines $F$) is conjugate to $\xi_{-{}^g\mu_h}$ (so conjugate to $\xi_p=\phi(p)\circ\zeta_p$). \label{ftn:2nd_gap}}
Hence, there exists a $u\in {}^gH(\Qpb)$ such that $\xi_p':=\Int (ug)(\xi_p)$ is an unramified conjugate of ${}^g\xi_p$ which factors through $\fG_p^{L_s}$. As was noted above, we can choose such $u$ so as to satisfy further that $T'=\Int (ug)(T_{\Qp})$ and $\xi_p'$ maps into $\fG_{T'}$; in particular, $\epsilon':=\Int (ug)(\epsilon)\in T'(\Qpnr)$. 
Also, as $\epsilon',{}^g\epsilon:=\Int g(\epsilon)\in {}^gH(\Qpnr)$ are conjugate under ${}^gH(\Qpb)$, they are so under ${}^gH(\Qpnr)$, by a theorem of Steinberg; suppose $\epsilon'=\Int (u'g)(\epsilon)$ for $u'\in {}^gH(\Qpnr)$.
With this preparation, for $j\in\N$, let us define $b_j\in T'(K_0)$ by
\[ b_j\rtimes\sigma^{j}:=F^j=(\Nm_{K/K_0}(\mu'(\pi))\rtimes\sigma)^{j}. \]
So, we have that $b_n = \prod_{i=1}^n\sigma^i(\Nm_{K/K_0}(\mu'(\pi)))$.
Also, if $[K:\Qp]$ divides $j$, by (\ref{eqn:F^j}), we have that $b_j = (p^{-\nu_p'}\cdot u_0)^{\frac{j}{[K:\Qp]}}$,
for $\nu_p'=-\Nm_{K/\Qp}\mu'$ and $u_0\in T'(\Qp)_1$; in particular, $b_j\in T'(\Qp)$. 
We will also write $b$ for $b_1$.

Now, by assumption $(\ast(\gamma_0))$, we have that 
\[[K:\Qp]\lambda_{H}(\epsilon)=[K:\Qp]n\underline{\mu_h}=\lambda_{H}(p^{n\Nm_{K_h/\Qp}\mu_h}),\]
and that $\Nm_{K_h/\Qp}\mu_h$ lies in the \emph{connected} center $Z_{\epsilon}$ of $\Cent_G(\epsilon)$.
It follows from this that the element of $T(\Qp)\ (\subset H(\Qp)\cap\Cent_G(\epsilon)(\Qp))$:
\[k_0=\epsilon^{-[K:\Qp]}\cdot p^{-n\Nm_{K_h/\Qp}\mu_h}\]
lies in $\Ker(\lambda_H)\cap Z(\Cent_G(\epsilon))$. We claim that for some $a\in\N$,
\[k_0^{a}\in \Ker(\lambda_{Z_{\epsilon}}),\]
i.e. $k_0^a$ lies in the maximal compact open subgroup of $Z_{\epsilon}(\Qp)\subset T(\Qp)$.
First, take $a_1\in\N$ with $\epsilon^{a_1}\in Z_{\epsilon}(\Q)$, so that 
\[k_0^{a_1}\in Z_{\epsilon}(\Qp).\]
As the natural map $Z(H)\twoheadrightarrow H^{\ab}$ is an isogeny of split tori, there exists $a_2\in\N$ such that $H^{\ab}(\Qp)_0^{a_2}$ is contained in the image of $Z(H)(\Qp)_0$ under this isogeny (clearly, for $a_2$ we can take the degree of the isogeny). Then, since the kernel of the isogeny $Z_{\epsilon}\hra H\twoheadrightarrow H^{\ab}$ is $Z_{\epsilon}\cap H^{\der}$, we can find $x_0\in Z(H)(\Qp)_0(:=\Ker(\lambda_{Z(H)}))$ and $a_3\in \N$ such that 
\[(k_0^{a_1a_2}\cdot x_0^{-1})^{a_3}\in Z_{\epsilon}'(\Qp),\]
where $Z_{\epsilon}':=((Z_{\epsilon})_{\Qp}\cap H^{\der})^0$ (it suffices that $a_3$ kills the finite group $\pi_0((Z_{\epsilon})_{\Qp}\cap H^{\der})$).
But, by definition of $H$, $Z(H)$ is the maximal $\Qp$-split sub-torus of $(Z_{\epsilon})_{\Qp}$, hence
$Z_{\epsilon}'$ is the anisotropic kernel of $(Z_{\epsilon})_{\Qp}$ (the maximal anisotropic subtorus of $(Z_{\epsilon})_{\Qp}$), and $Z_{\epsilon}'(\Qp)=Z_{\epsilon}'(\Qp)_0(:=\Ker(\lambda_{Z_{\epsilon}'}))$. Therefore,  $a:=\prod_{i=1}^{3}a_i$ satisfies that $k^{a}\in \Ker(\lambda_{Z_{\epsilon}})$.

It follows that for sufficiently large $t$ divisible by $a[K:\Qp]$, and for $v':=u'g$, the element of $T'(\Qpnr)$:
\begin{eqnarray} \label{eqn:epsilon'^{-1}F^n}
(\epsilon'^{-1}F^n)^{t}&=&v' \epsilon^{-t} v'^{-1} \cdot (p^{-\nu_p'}\cdot u_0)^{\frac{nt}{[K:\Qp]}}\rtimes\sigma^{nt}\\
&=&v' (\epsilon^{-[K:\Qp]}\cdot p^{n\Nm_{K_h/\Qp}\mu_h})^{\frac{t}{[K:\Qp]}} v'^{-1} \cdot u_0^{\frac{nt}{[K:\Qp]}}\rtimes\sigma^{nt}  \nonumber \\
&=& v' k_0^{\frac{t}{[K:\Qp]}} v'^{-1}\cdot (u_0)^{\frac{nt}{[K:\Qp]}}\rtimes\sigma^{nt} \nonumber
\end{eqnarray}
lies in any given neighborhood of $1$ in $T'(L)$, in particular in the unique (special maximal) parahoric subgroup $T'(L)_1$ of $T'(L)$ which is, by our choice of ${}^gH$ and $T'$, equal to $ T'(L)\cap \mbfKt_p$. Consequently by \cite[Prop. 3]{Greenberg63}, for sufficiently large $t$, there exists $h\in T'(L)\cap \mbfKt_p$ such that
\begin{equation} \label{eqn:epsilon'^{-1}F^n_has_fixed_pt}
(\epsilon'^{-1}\Phi^m)^{t}=h^{-1}\sigma^{tn}(h)\rtimes\sigma^{tn}.
\end{equation}
We note in passing that for sufficiently large $t$ with $\nu':=-\frac{nt}{[K:\Qp]}\nu_p'\in X_{\ast}(T'),$
\[(\epsilon')^t= p^{\nu'}\cdot k\]
with $k:=(\epsilon')^t\cdot p^{-\nu'}=v'(\epsilon^{[K:\Qp]}\cdot p^{n\Nm_{K_h/\Qp}\mu_h})^{\frac{t}{[K:\Qp]}} v'^{-1}=v' k_0^{-\frac{t}{[K:\Qp]}} v'^{-1}$ lying in $\Ker(v_{{}^gH_L})$.
\footnote{In \cite[p.193, line 9]{LR87}, Langlands and Rapoport state that this decomposition $\epsilon'^{t}=p^{\nu'}k$ is ``a polar decomposition" with $p^{\nu'}$ belonging to the center (of ${}^gH$); by this they must mean some Cartan decomposition and, as such, claim that $k$ lies in some (hyper)special subgroup, or at least in a compact open subgroup of ${}^gH(\Qpnr)$. But it is not clear how to show this from their condition $(\ast(\epsilon))$ only, which just tells us that  $v_{{}^gH_L}(k)=v_{{}^gH^{\ab}_L}(k)=0$. This is our second reason for introducing the new assumption $(\ast(\gamma_0))$. \label{ftn:3rd_gap}}

We fix $t\in\N$ for which the equation (\ref{eqn:epsilon'^{-1}F^n_has_fixed_pt}) has a solution $h\in T'(L)\cap \mbfKt_p$. Then, $\epsilon'^{-t}\Phi^{mt}$ fixes $h x^0=x^0$ ($x^0=1\cdot \mbfKt_p$ being the base point of $G(L)/\mbfKt_p$).
But, we have seen in the proof of Lemma \ref{lem:LR-Lemma5.2} that for our choice of $F$ and $x^0$,
$\mathrm{inv}_{\mbfKt_p}(x^0,F x^0)=\Adm_{\mbfKt_p}(\{\mu_X\})$. This proves that $(\phi,\epsilon^t)$ is $\mbfK_p$-effective admissible.

Next, we show that there exists $e\in G(L)$ such that \[e^{-1}(\epsilon')^{-1}\Phi^m e=\sigma^n.\] Obviously, this will establish the sufficiency direction of the admissibility of $(\phi,\epsilon)$. 
We first claim that there exists $c\in T'(L)$ such that \[c^{-1}(\epsilon')^{-1}\Phi^m c\in T'^{\der}(L)\times\sigma^n,\] where $T'^{\der}:={}^gH\cap T'$ (which is connected, i.e. a torus, since $H^{\der}$ is simply connected). 
Since $T'$ splits over $L$, we make take $K$ to be an unramified extension of $\Qp$. So, by (\ref{eqn:comparison_of_two_norms}), 
\begin{eqnarray*}
[K:K_0]w_{T'_L}(b_n)&=&[K:K_0]\sum_{i=1}^n\sigma^i(w_{T'_L}(\Nm_{K/K_0}(\mu'(\pi)))) \\
&=&\sum_{i=1}^n\sigma^i(w_{T'_L}(\Nm_{K/K_0}(\mu')(p))) \\
&=& \sum_{i=1}^n\sigma^i(\underline{\Nm_{K/K_0}(\mu')}),
\end{eqnarray*}
thus we have that $v_{{}^gH^{\ab}_L}(b_n)=n\underline{\mu'}$ (as $H^{\ab}$ is $\Qp$-split).
Then, since $\mu'$ and $\mu_h$ are ${}^gH$-conjugate, by assumption $(\ast(\gamma_0))$, it follows that $v_{{}^gH^{\ab}_L}(\epsilon'^{-1}b_n)=v_{{}^gH_L}(\epsilon'^{-1}b_n)=0$. 
So, there exists $f\in H^{\ab}(L)_0$ such that $\mathrm{pr}_H(\epsilon'^{-1}b_n)=f\sigma^n(f^{-1})$, where $\mathrm{pr}_H:H\rightarrow H^{\ab}$ is the natural projection. Since $T'^{\der}$ is connected, the natural map $T'(L)\rightarrow {}^gH^{\ab}(L)$ is surjective, thus there exists a $c\in T'(L)$ mapping to $f$. Then, we proceed as in the proof (on p. 193) of \cite[Satz 5.21]{LR87} to find $d\in {}^gH^{\uc}(L)\cap \mbfKt_p$ such that $d^{-1}c^{-1}\epsilon'^{-1}\Phi^mcd=\sigma^n$. By \cite[Prop. 5.4]{Kottwitz85}, It suffices to show that $c^{-1}(\epsilon')^{-1}\Phi^m c\in {}^gH^{\der}(L)\rtimes\sigma^n$ is basic, which is by definition (\cite[(4.3.3)]{Kottwitz85}) the same as the existence of $d'\in {}^gH^{\der}(L)$ with
$c^{-1}(\epsilon'^{-1}\Phi^m)^tc=d'\sigma^{tn}(d'^{-1})$ 
for some sufficiently large $t$, which obviously follows from the equation (\ref{eqn:epsilon'^{-1}F^n}). Again, here we need that $k$ (equiv. $k_0$) lies in a compact open subgroup of ${}^g(H)(\Qpnr)$.

(2) By Steinberg's theorem, we can assume that $c\in G(\Qpnr)$. 
Let $b:=c^{-1}\delta \sigma(c)$. Then, as was discussed before, one can readily check that $b\in \Cent_G(\gamma_0)(\Qpnr)$ (cf. \cite[p.183]{LR87}), and that $b\in \Cent_G(\gamma_0)(L)$ is a basic element (\cite[Lemma 5.15]{LR87}), namely $\nu_b$ maps into the center of $\Cent_G(\gamma_0)_L$.
In fact, what that proof establishes is that the Newton homomorphisms $\nu_{\gamma_0}$, $\nu_b$ attached to $\gamma_0$, $b$ are related by:
\[\nu_{\gamma_0}=n\nu_b,\]
from which follows the fact that $b$ is basic in $\Cent_G(\gamma_0)(L)$, cf. proof of Lemma \ref{lem:equality_of_two_Newton_maps}, (2).
Let us choose a maximal $\Q$-torus $T_0$ of $G$ that contains $\gamma_0$ and elliptic over $\R$.
Then, thanks to our assumption that $X(b,\{\mu_X\})_{\mbfK_p}\neq \emptyset$ and Lemma \ref{lem:LR-Lemma5.11}, we can find $\mu\in X_{\ast}(T_0)\cap \{\mu_X\}$ such that if $K$ is a finite Galois extension of $\Qp$ splitting $T_0$, $\Nm_{K/\Qp}\mu= [K:\Qp]\nu_b$.
Also $\gamma_0^t\cdot p^{-t\nu_{\gamma_0}}\in T_0(\Qp)_0$ for any $t\in\N$ with $t\nu_{\gamma_0}\in X_{\ast}(T_0)$ by definition of $\nu_{\gamma_0}$. Hence, $[K:\Qp]\nu_{\gamma_0}=n\Nm_{K/\Qp}\mu\in X_{\ast}(T_0)$, so
\[[K:\Qp]\lambda_{T_0}(\gamma_0)=[K:\Qp]\nu_{\gamma_0}=n\underline{\Nm_{K/\Qp}\mu}=n[K:\Qp]\underline{\mu},\]
and $\lambda_{H}(\gamma_0)=n\underline{\mu}$.
Therefore, the condition $(\ast(\gamma_0))$ of Subsec. \ref{subsubsec:pre-Kottwitz_triple} holds (with level $n$), and the claim follows from (1). This completes the proof. 
\end{proof}


\subsection{Criterion for a Kottwitz triple to come from an admissible pair}

In this subsection, it is assumed that $G^{\der}$ is simply connected, since we will work with Kottwitz triples.
With the works done so far, the proofs of the statements in this section are even closer to the original proofs in the hyperspecial case, so from now on we will be contented with a sketch of the arguments.
We continue to work in the same set-up from the previous subsection.

\begin{thm} \label{thm:LR-Satz5.25} Keep the assumptions of Theorem \ref{thm:LR-Satz5.21}.
Let $(\gamma_0;(\gamma_l)_{l\neq p},\delta)$ be a Kottwitz triple with trivial Kottwitz invariant and such that $\gamma_0$ (equiv. $(\gamma_l)_l$) lies in a compact open subgroup of $G(\A_f^p)$ and $X(\delta,\{\mu_X\})_{\mbfK_p}\neq \emptyset$.
Then, there exists an admissible pair $(\phi,\epsilon)$ that gives rise to the triple $(\gamma_0;(\gamma_l)_{l\neq p},\delta)$. In this case, there are exactly $i(\gamma_0)$ non-equivalent pairs corresponding to $(\gamma_l)$ and $\delta$. Here, $i(\gamma_0)$ is the cardinality of the kernel of the natural map
\[ H^1(\Q,\Cent_G(\gamma_0))\rightarrow H^1(\Q,G^{\ab})\times \prod_v H^1(\Q_v,\Cent_G(\gamma_0)).\]
\end{thm}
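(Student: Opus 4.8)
The plan is to deduce Theorem \ref{thm:LR-Satz5.25} from Theorem \ref{thm:LR-Satz5.21}, (2), together with a cohomological bookkeeping of admissible pairs mapping to a fixed triple, following the strategy of \cite[Satz 5.25]{LR87} but now with all the necessary ingredients (Prop. \ref{prop:existence_of_elliptic_tori_in_special_parahorics}, Lemma \ref{lem:unramified_conj_of_special_morphism}, Lemma \ref{lem:LR-Lemma5.23}, Prop. \ref{prop:phi(delta)=gamma_0_up_to_center}) available in the parahoric setting. First I would establish \emph{existence}: since the Kottwitz invariant is trivial, the triple satisfies condition (iv) of Def. \ref{defn:Kottwitz_triple}, and $X(\delta,\{\mu_X\})_{\mbfK_p}\neq\emptyset$ with $\gamma_0$ elliptic over $\R$; then Theorem \ref{thm:LR-Satz5.21}, (2) applies (note $\gamma_0$ lying in a compact open subgroup of $G(\A_f^p)$ forces its image in $G^{\ad}(\A_f^p)$ into a compact open subgroup, so the hypothesis of (1) used inside the proof of (2) is met) and produces an admissible pair $(\phi,\epsilon)$ with $\epsilon$ stably conjugate to $\gamma_0$. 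By Lemma \ref{lem:LR-Lemma5.23} we may take $(\phi,\epsilon)$ nested in a special Shimura sub-datum $(T,h)$, so $\epsilon\in T(\Q)$; replacing $\epsilon$ by the stably conjugate $\gamma_0$ inside $T$ (via a transfer of maximal torus, using Lemma \ref{lem:invariance_of_(ast(gamma_0))_under_transfer_of_maximal_tori} to preserve all relevant conditions) we arrange $\epsilon=\gamma_0$. One then checks, exactly as in the proposition before Thm. \ref{thm:LR-Satz5.25}, that the triple associated to this pair is equivalent to the given $(\gamma_0;(\gamma_l)_{l\neq p},\delta)$: the $\gamma_l$-components agree up to $G(\Q_l)$-conjugacy because $X^p(\phi)$ is a torsor under $G(\A_f^p)$ and both are $G(\overline{\Q}_l)$-conjugate to $\gamma_0$, while the $\delta$-components agree up to $\sigma$-conjugacy by the uniqueness in Lemma \ref{lem:delta_from_b} combined with the fact that $\mathrm{cls}(\phi(p))=\overline{\delta}\in B(G_{\Qp})$ is pinned down by $(\ast(\delta))$ and the admissibility condition (3) (via \cite{He15}).

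Next I would carry out the \emph{counting}. The standard approach is: fix the triple $(\gamma_0;(\gamma_l)_{l\neq p},\delta)$ and parametrize, up to equivalence, the admissible pairs $(\phi,\epsilon)$ giving rise to it with $\epsilon=\gamma_0$ nested in some $(T,h)$. Two such pairs are equivalent iff they differ by conjugation by an element of $G(\Qb)$ centralizing $\gamma_0$, i.e. by an element of $I_0(\Qb)=\Cent_G(\gamma_0)(\Qb)$ (using that $G^{\der}$ simply connected makes $I_0$ connected reductive). The morphism $\phi$ restricted to $\fG_{T}\subset\fG_G$ is determined up to such conjugacy by a class in $H^1(\Q,I_0)$: concretely, given the ``reference'' pair, any other pair is obtained by twisting $\phi$ by a cocycle $\rho\mapsto b_\rho\in Z^1(\Q, I_0)$, and admissibility of the twisted morphism plus the requirement that it still produce the \emph{same} triple forces the local conditions $[b_\rho]|_{\Q_v}=0$ for all $v$ (at $v=\infty,p$ this comes from the ellipticity/Newton-map constraints exactly as in Prop. \ref{prop:phi(delta)=gamma_0_up_to_center} and Lemma \ref{lem:equality_of_two_Newton_maps}; at finite $v\neq p$ from the rigidity of $\phi\circ\zeta_v$) as well as triviality of the image in $H^1(\Q,G^{\ab})$ (the obstruction to $\phi$ still being an admissible morphism of $G$, via the Hasse-principle input \cite[Lemma 5.13]{LR87}). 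Hence the set of equivalence classes is a torsor — or rather, a principal homogeneous space — under the kernel of
\[
H^1(\Q,I_0)\longrightarrow H^1(\Q,G^{\ab})\times\prod_v H^1(\Q_v,I_0),
\]
whose cardinality is by definition $i(\gamma_0)$. The non-emptiness of this torsor is exactly the existence statement already proved, so the number of non-equivalent pairs corresponding to the fixed $(\gamma_l)$ and $\delta$ is precisely $i(\gamma_0)$.

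The main obstacle, and the step I would spend the most care on, is the counting argument: making rigorous the claim that the local conditions cutting out ``admissible pairs with the \emph{same} associated triple'' are exactly $[b_\rho]|_{\Q_v}=0$ for every place $v$, and that globally the only further constraint is triviality in $H^1(\Q,G^{\ab})$. This requires (i) knowing that two special admissible morphisms factoring through the \emph{same} torus $\fG_T$ and equal on the kernel differ by a $Z^1(\Q,T)$-cocycle, and analyzing when the twisted morphism remains admissible — here the key local lemma is Lemma \ref{lem:equality_restrictions_to_kernels_imply_conjugacy} at $p$ and the analogous rigidity at the other places; (ii) checking that changing the cocycle within $\ker(H^1(\Q,T)\to H^1(\Q,I_0))$ does not change the equivalence class of the pair, while a nonzero class in the image does change it — so that the count genuinely takes place in $H^1(\Q,I_0)$ and not just $H^1(\Q,T)$ (this uses that $I_0$ may be larger than $T$, and that $\epsilon=\gamma_0$ is fixed, not just its $G$-conjugacy class); and (iii) reconciling the local-at-$\infty$-and-$p$ conditions, which in \cite{LR87} are phrased via the Newton homomorphism and the condition $(\ast(\gamma_0))$, with the cohomological vanishing — this is where Prop. \ref{prop:phi(delta)=gamma_0_up_to_center}, Lemma \ref{lem:equality_of_two_Newton_maps}, and the basicness of the element $b'\in\Cent_G(\gamma_0)(L)$ (Remark \ref{rem:two_different_b's}) do the work. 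Once these are in place, the $i(\gamma_0)$ count follows from the exact sequence of Kottwitz \cite[Prop. 2.6]{Kottwitz86} relating $H^1(\Q,I_0)$ to its local counterparts and $\pi_1(I_0)$, exactly as in the hyperspecial case, so I would only sketch that final bookkeeping.
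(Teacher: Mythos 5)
Your existence argument has a genuine gap at the point where you claim that the admissible pair constructed via Theorem \ref{thm:LR-Satz5.21}, (2) and Lemma \ref{lem:LR-Lemma5.23} automatically gives rise to the given triple $(\gamma_0;(\gamma_l)_{l\neq p},\delta)$. You argue that ``the $\gamma_l$-components agree up to $G(\Q_l)$-conjugacy because \dots both are $G(\overline{\Q}_l)$-conjugate to $\gamma_0$'' — but $G(\overline{\Q}_l)$-conjugacy (stable conjugacy) does \emph{not} imply $G(\Q_l)$-conjugacy; the rational classes inside a stable class are a torsor under (a subset of) $H^1(\Q_l,\Cent_G(\gamma_0))$. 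Similarly for $\delta$: knowing $\mathrm{cls}(\phi(p))\in B(G_{\Qp})$ does not pin down the $\sigma$-conjugacy class of $\delta$ in $G(L_n)$, which is what equivalence of Kottwitz triples requires (Remark \ref{rem:Kottwitz_triples}). So the reference pair produced by Theorem \ref{thm:LR-Satz5.21}, (2) yields \emph{some} Kottwitz triple $(\gamma_0';(\gamma_l'),\delta')$ with the same stable/$\sigma$-stable data, but a priori a different one.

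The paper resolves this by the cocycle-modification step (the lemma immediately following the theorem): starting from the well-located reference pair $(\phi_1,\epsilon)$, one twists $\phi_1$ by a cocycle $a\in Z^1(\Q,G(\epsilon)')$ (where $G(\epsilon)'$ is the inner twist of $\Cent_G(\epsilon)$ defined by $\phi_1$), and part (3) of that lemma — which uses the vanishing of the Kottwitz invariant — asserts that $a$ can be chosen so that $(a\phi_1,\epsilon)$ gives rise to exactly the prescribed $(\gamma_l)$ and $\delta$. This twisting does not change $\phi_1^{\Delta}$, hence preserves well-locatedness (Prop. \ref{prop:phi(delta)=gamma_0_up_to_center}, (1) applies). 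In your writeup the twisting picture only appears in the counting paragraph as a bookkeeping device, but it is already needed for existence: without it you have not matched the $\gamma_l$- and $\delta$-components. Once that is corrected, your counting discussion is in the right spirit (and essentially reproduces the paper's use of part (2) of the lemma together with the triviality in $H^1(\Q,G^{\ab})$ and the local conditions), though you should take the cocycle values in the inner twist $G(\epsilon)'$ rather than in $I_0=\Cent_G(\gamma_0)$ and then identify the relevant kernels, as the paper does by reference to \cite[Satz 5.25]{LR87}.
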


\begin{rem} \label{rem:LR-Satz5.25}
This is essentially Satz 5.25 of \cite{LR87} (in the hyperspecial level case), except for the following differences: there, the triple $(\gamma_0;(\gamma_l)_{l\neq p},\delta)$ is still required to satisfy their condition $(\ast(\epsilon))$ while here we do not demand it, nor our condition $(\ast(\gamma_0))$ (recall that our definition of a Kottwitz triple does not include any of these conditions). Instead in this theorem we added conditions at each finite place $l\neq p$ and replaced the original $(\ast(\epsilon))$ (a condition at $p$) by $X(\delta,\{\mu_X\})_{\mbfK_p}\neq \emptyset$. This is not only harmless but also more natural, because according to the conjecture of Langlands-Rapoport, these conditions are a necessary condition for a triple to come from an admissible pair contributing to $\sS_{\mbfK}(\F_{q^m})$ for some $m$  (cf. Remark \ref{rem:admissible_pair}), and are also a sufficient condition according to this theorem.
\end{rem}

\begin{proof} 
By Theorem \ref{thm:LR-Satz5.21}, (2), there exists an admissible pair $(\phi_1,\epsilon)$ with $\epsilon$ stably conjugate to $\gamma_0$, which we may further assume to be nested in a maximal $\Q$-torus, elliptic over $\R$, i.e. $\phi_1=\psi_{T,\mu_h}$ for a special Shimura sub-datum $(T,h)$ and $\epsilon\in T(\Q)$. Then, we have the important fact that the restriction of $\phi$ to the kernel of $\fP$ is determined by $\epsilon$ alone, and that its image $\mathrm{Im}(\phi_1^{\Delta})$ lies in the center of $\Cent_G(\epsilon)$. 
With our assumption on $\gamma_0\in G(\A_f^p)$, this follows from Prop. \ref{prop:phi(delta)=gamma_0_up_to_center}, (1), since $\phi_{\ab}:\fP\rightarrow \fG_{G^{\ab}}$ is uniquely determined (as $G^{\der}$ is simply connected).
The next step in the proof is to modify $\phi_1$ using a suitable cocycle, to get a new admissible morphism $\phi$ so that $(\phi,\epsilon)$ becomes an admissible pair which gives rise to the Kottwitz triple $(\gamma_0;(\gamma_l)_{l\neq p},\delta)$. 
This modification of an admissible morphism $\phi_1$ by a cocycle $a$ in $Z^1(\Gal(\Qb/\Q),G(\epsilon)')$ enjoys the crucial property that it does not change the restriction of $\phi_1$ to the kernels (i.e. $\phi_1^{\Delta}:P\rightarrow G$). In other words, $\phi=a\phi_1$ coincides with $\phi_1$ on $P$, hence if $\phi_1$ is well-located in a maximal torus $T$, elliptic over $\R$, $\phi:\fP\rightarrow\fG_G$ is again well-located in $T$, i.e. $\phi(\delta_n)=\phi_1(\delta_n)\in T(\Q)$.
Such modification is carried out in the proof of Satz 5.25 in \cite{LR87} which works for any kind of level subgroup, and we provide its summary in the next lemma. 
\end{proof}

\begin{lem}
Let $(\phi_1,\epsilon)$ be a well-located admissible pair. Let $G(\epsilon)'$ be the twist of $G(\epsilon):=\Cent_G(\epsilon)$ defined by $\phi_1$ (i.e. $G(\epsilon)'$ is the inner form of $G(\epsilon)$ defined by the cocycle $[\rho\mapsto \phi_1(q_{\rho})]\in Z^1(\Gal(\Qb/\Q),\Cent_G(\epsilon))$). 

(1) For any cocycle $a=\{a_{\sigma}\}$ of $\Gal(\Qb/\Q)$ with values in $G(\epsilon)'$, the map $\phi:\fP\rightarrow\fG_G$ defined by
$\phi|_P=\phi_1|_P$ and $\phi(q_{\rho})=a_{\rho}\phi_1(q_{\rho})$ is a morphism of Galois gerbs over $\Q$; we write $\phi=a\phi_1$.

(2) $(\phi,\epsilon)$ is admissible if and only if the images of $a$ in $H^1(\Q,G^{\ab})$ and in $H^1(\R,G(\epsilon)')$ are trivial.

(3) If $(\epsilon,(\gamma_l),\delta)$ is a Kottwitz triple with trivial Kottwitz invariant, there exists a cocycle 
$a=\{a_{\sigma}\}$ such that the pair $(\phi=a\phi_1,\epsilon)$ is an admissible pair corresponding to $(\epsilon,(\gamma_l),\delta)$.
\end{lem}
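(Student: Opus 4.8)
The plan is to follow the proof of \cite[Satz 5.25]{LR87}, organizing it into the three assertions of the lemma. For part (1), the verification that $\phi := a\phi_1$ is a morphism of Galois gerbs over $\Q$ is formal: since $a_\rho \in G(\epsilon)'(\Qb) \subset G(\Qb)$ and $\phi_1(q_\rho) = g_\rho \rtimes \rho$, the cocycle relation for $a$ with respect to the twisted Galois action on $G(\epsilon)'$ (which is $\rho(x) \mapsto g_\rho \rho(x) g_\rho^{-1}$) translates precisely into the condition that $\rho \mapsto a_\rho g_\rho$ is a cocycle for the original action, which is what is needed for $\phi(q_\rho) = a_\rho g_\rho \rtimes \rho$ to extend $\phi_1|_P$ to a gerb morphism. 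One checks compatibility with $\phi_1|_P$: because $a_\rho$ centralizes $\epsilon$ and hence (by Prop.~\ref{prop:phi(delta)=gamma_0_up_to_center}, (1), applied to the well-located pair $(\phi_1,\epsilon)$, whose kernel image $\mathrm{Im}(\phi_1^\Delta)$ lies in $Z(\Cent_G(\epsilon))$) commutes with $\phi_1^\Delta$, conjugation of $\phi_1|_P$ by $a_\rho$ is trivial, so the two recipes for $\phi$ on $P$ and on the section agree on overlaps.

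For part (2), the point is that $\phi$ and $\phi_1$ agree on the kernel $P$, so they have the same $\sigma$-conjugacy class at $p$ (condition (3) of Def.~\ref{defn:admissible_morphism} is automatic once it holds for $\phi_1$; indeed $\xi_p = \phi(p)\circ\zeta_p$ is conjugate to $\phi_1(p)\circ\zeta_p$ under an element of $G(\epsilon)'(\Qpb)$, hence has the same class, hence $X(\{\mu_X\},b)_{\mbfK_p}$ is unchanged). Condition (2) of Def.~\ref{defn:admissible_morphism} at finite $l \ne p$ holds because $a$ has values in $G(\epsilon)'$ which, after restriction to $\Ql$, is an \emph{inner} form of $G(\epsilon)$, and modifying by such a cocycle does not affect the triviality of $\phi\circ\zeta_l$ up to $G(\Qlb)$-conjugacy (the obstruction lives in $H^1(\Ql, G)$ and the image of $H^1(\Ql, G(\epsilon)')$ there is detected by the abelianization). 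The condition $\phi_{\widetilde{\ab}} \sim \Psi_{\mu_{\widetilde\ab}}$ (condition (1) of Def.~\ref{defn:admissible_morphism}) is governed exactly by the image of $a$ in $H^1(\Q, G^{\ab})$ since $\phi$ and $\phi_1$ have the same kernel restriction and $G^{\der}$ is simply connected, so the pushout to $\fG_{G^{\ab}}$ is changed precisely by that class. Finally, the condition at $\infty$, namely $\phi\circ\zeta_\infty \sim \xi_\infty$, which for an admissible pair amounts to $I(\infty)/Z(G)$ being anisotropic over $\R$, is controlled by the image of $a$ in $H^1(\R, G(\epsilon)')$; one uses \cite[Lemma 5.14]{LR87} (injectivity of $H^1(\R, T) \to H^1(\R, G')$) together with the fact that ellipticity of $\epsilon$ over $\R$ is preserved exactly when this local class vanishes. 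Combining these, $(\phi,\epsilon)$ is admissible iff both displayed images are trivial.

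For part (3), given a Kottwitz triple $(\epsilon; (\gamma_l)_{l\ne p}, \delta)$ with trivial Kottwitz invariant, one constructs the cocycle $a$ by prescribing its local components: at each $l \ne p$ one takes the class in $H^1(\Ql, G(\epsilon)')$ measuring the discrepancy between $\gamma_l$ and $\epsilon$ (the class of $g_l^{-1}\tau(g_l)$ where $g_l \gamma_l g_l^{-1} = \epsilon$ in $G(\Qlb)$, transported through the inner twisting $\psi_l$); at $p$ one takes the class of the basic element $b' = c^{-1}\delta\sigma(c) \in G(\epsilon)(L)$ attached to $\delta$ as in Remark~\ref{rem:two_different_b's} and Lemma~\ref{lem:delta_from_b}, viewed in $B(G(\epsilon)')$; and at $\infty$ one takes the class coming from $\mu_h$ ensuring ellipticity. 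The vanishing of the Kottwitz invariant $\alpha(\epsilon; (\gamma_l), \delta)$ is precisely the compatibility condition (via Kottwitz's exact sequence, \cite[Prop.~2.6]{Kottwitz86}) that allows these prescribed local classes to be realized by a single global cocycle $a \in Z^1(\Q, G(\epsilon)')$ whose images in $H^1(\Q, G^{\ab})$ and $H^1(\R, G(\epsilon)')$ are trivial; this uses the hypothesis that $\gamma_0$ lies in a compact open subgroup of $G(\A_f^p)$ (so that the $l$-adic classes are unramified almost everywhere and the product formula applies) and the identification $\mathfrak{K}(I_0/\Q) = (\cap_v Z(\widehat{I}_0)^{\Gamma(v)} Z(\widehat{G}))/Z(\widehat{G})$. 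One then sets $\phi := a\phi_1$; by part (2) it is admissible, and by construction the triple attached to $(\phi,\epsilon)$ (via the recipe of Subsec.~\ref{subsubsec:pre-Kottwitz_triple}, using that $(\phi,\epsilon)$ is nested in $(T,h)$ so that one may take $b' = b$ as in Remark~\ref{rem:two_different_b's}) is $(\gamma_0; (\gamma_l)_{l\ne p}, \delta)$ up to the equivalence of Kottwitz triples. The main obstacle is the bookkeeping in part (3): matching the cohomological normalizations in Kottwitz's exact sequence with the sign conventions of \cite{LR87} (opposite to \cite{Kisin13}, as flagged in the introduction), and checking that the global cocycle can simultaneously hit all the prescribed local data \emph{and} have trivial image in $H^1(\Q, G^{\ab})$ — this is exactly where the triviality of the Kottwitz invariant is consumed, and it deserves careful statement even though the computation itself is standard and parallels \cite[p.~196--197]{LR87}.
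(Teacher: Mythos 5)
Your overall strategy — treating (1) as a direct verification, reducing (2) and (3) to the cohomological arguments in \cite[Lemma 5.26, Satz 5.25]{LR87} — is exactly the route the paper takes, which simply cites those sources without reproducing the arguments.

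There is, however, a wrong step in your part (2). You claim that condition (3) of Def.~\ref{defn:admissible_morphism} at $p$ is ``automatic'' because $\phi$ and $\phi_1$ agree on the kernel $P$ and are ``conjugate under an element of $G(\epsilon)'(\Qpb)$.'' Neither claim holds unconditionally: agreement on the kernel fixes the Newton point of $\mathrm{cls}(\phi(p)\circ\zeta_p)$ but not its Kottwitz point, and the two morphisms are conjugate under a single element only if $[a|_{\Qp}]$ already vanishes in $H^1(\Qp,G(\epsilon)')$, which is not among the hypotheses. If you take this reasoning at face value, the ``if and only if'' of (2) could not even be well-posed, since admissibility of $(\phi,\epsilon)$ would become insensitive to the local $p$-adic contribution of $a$. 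The fix is to use the hypothesis: triviality of $[a]$ in $H^1(\Q,G^{\ab})$ gives triviality of the restricted class in $H^1(\Qp,G^{\ab})$, and Kneser's vanishing $H^1(\Qp,G^{\der})=0$ (using that $G^{\der}$ is simply connected) shows that $H^1(\Qp,G')\hookrightarrow H^1(\Qp,G^{\ab})$, so $[a|_{\Qp}]$ becomes trivial in $H^1(\Qp,G')$; this yields conjugacy of $\phi(p)\circ\zeta_p$ and $\phi_1(p)\circ\zeta_p$ by an element of $G(\Qpb)$ (not $G(\epsilon)'(\Qpb)$), hence equality of $\sigma$-conjugacy classes and preservation of the nonemptiness of $X(\{\mu_X\},b)_{\mbfK_p}$. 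The same mechanism (Kneser at each finite $l\neq p$) replaces your vague invocation of ``the image of $H^1(\Ql,G(\epsilon)')$ being detected by the abelianization.'' Parts (1) and (3) as you sketch them are consistent with the paper's intent, with the caveat that your appeal to Prop.~\ref{prop:phi(delta)=gamma_0_up_to_center}(1) to get $\mathrm{Im}(\phi_1^\Delta)\subset Z(\Cent_G(\epsilon))$ in part (1) tacitly imports the boundedness hypothesis on $\epsilon$ from the enclosing Theorem~\ref{thm:LR-Satz5.25}; this is fine in context but worth making explicit.
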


\begin{proof} The first statement is a straightforward verification. The second one is Lemma 5.26 of \cite{LR87}. Finally, the third claim is established in the proof of Satz 5.25 of loc. cit., more precisely, in the part from the third paragraph in p.195 to the end of the proof. 
\end{proof}


\subsection{Effectivity criteria for stable conjugacy classes and Kottwitz triples}

Let $T$ be a $\Q$-torus and $\mu\in X_{\ast}(T)$ a cocharacter defined over a CM field $E\subset\Q$.  Let $\mfp$ the prime ideal of $\cO_E$ corresponding to the place $v_2$ of $E$ (the place determined by the chosen embedding $\Qb\hra\Qpb$). We fix an element $\pi$ of $E$ such that $(\pi)=\mfp^r$ holds for some $r\in\N$: it suffices that the class number of $\cO_E$ divides $r$. Note that $\pi$ is a unit in $E_w$ for any finite place $w\neq v_2$ of $E$, and $|\Nm_{E_{\mfp}/\Qp}\pi|_p=p^{-m}$ with $m=[E_{\mfp}:\Qp]r$. Thus, for fixed $r$, any two such numbers $\pi$, $\pi'$ differ by a root of unity. Then, with a choice of $(E,\pi)$, we define a triple $(\gamma_0;(\gamma_l)_{l\neq p},\delta)\in T(\Q)\times T(\A_f)\times T(\Qpnr)$ as follows:
Consider the reciprocity map (cf. \cite[2.2]{Deligne77}):
\[ N(\mu):E^{\times}=\Res_{E/\Q}\Gm\stackrel{\Res_{E/\Q}\mu}{\longrightarrow}\Res_{E/\Q}(T_E)\stackrel{\Nm_{E/\Q}}{\longrightarrow}T.\]
First, we set 
\[\gamma_0:=N(\mu)(\pi)=\Nm_{E/\Q}(\mu(\pi)),\]
where $\Nm_{E/\Q}$ is the norm map $T(E)\rightarrow T(\Q)$, and $\gamma_l:=\gamma_0$ for $l\neq p$. For $\delta$, let 
$E_{\mfp,0}$ be the maximal unramified subextension of $E_{\mfp}$ and $n:=[E_{\mfp,0}:\Qp]$. Then, $\Nm_{E/\Q}(\mu(\pi))=\prod_{w|p} N_{E_{w}/\Qp}(\mu(\pi))$ via $(\Nm_{E/\Qp})_{\Qp}=\prod_{w|p} N_{E_{w}/\Qp}:T(E\otimes\Qp)=\prod_w T(E_w)\rightarrow T(\Qp)$, and for each place $w\neq v_2$ of $E$ above $p$, $w_{T_L}(N_{E_{w}/\Qp}(\mu(\pi)))=0$ (this was seen in the proof of Thm. \ref{thm:LR-Satz5.21}). Thus, we can find
$c\in T(L)$ such that $c N_{E_{\mfp}/\Qp}(\mu(\pi)) \sigma^n(c)^{-1}= \gamma_0$. If we set 
\[\delta:=c\cdot N_{E_{\mfp}/E_{\mfp,0}}(\mu(\pi))\cdot \sigma(c^{-1}),\]
one easily checks that $\delta\in T(L_n)$ and $\Nm_{L_n/\Qp}\delta=\gamma_0$. 
In the case of need for distinction, we write $\gamma_0(T,\mu)$ (or $\gamma_0(T,h)$ when $\mu=\mu_h$ for a special Shimura subdatum $(T,h)$) for the $\gamma_0$ defined this way (with a choice of $(E,\pi)$ understood).
It follows from the definition of $\psi_{T,\mu}$ (\cite[p.144]{LR87}) that 
\[\gamma_0(T,\mu)=\psi_{T,\mu}(\delta_m),\] 
when we use the same $E$ and $\pi$ in both definitions.
Here, $\delta_m\in P(E,m)$ is as in Lemma \ref{lem:Reimann97-B2.3}.
Note that a different choice $(E\subset E',\pi'|\pi)$ of $(E,\pi)$ changes $\gamma_0(T,\mu)$ to its power.

Next, let $(G,X)$ be a Shimura datum meeting our running assumptions in this section (as imposed in Thm. \ref{thm:LR-Satz5.21}).

\begin{lem}
When $(T,\mu)$ comes from a special Shimura subdatum $(T,h)$ of $(G,X)$ (i.e. $\mu=\mu_h$), the triple $(\gamma_0;(\gamma_l)_{l\neq p},\delta)$ just constructed is a Kottwitz triple, i.e, satisfies the conditions (iv), $(\ast(\delta))$ of Subsec. \ref{subsubsec:pre-Kottwitz_triple}. It also satisfies the condition $(\ast(\gamma_0))$ and its Kottwitz invariant is equal to $1$.
\end{lem}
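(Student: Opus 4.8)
The plan is to reduce everything to properties of the canonical morphism $\psi_{T,\mu_h}$ and the explicit formulas for $\gamma_0$, $\delta$ derived above, invoking the results of the previous subsections rather than re-deriving them. First I would observe that the pair $(\phi:=i\circ\psi_{T,\mu_h},\epsilon:=\gamma_0)$ is, by Lemma \ref{lem:LR-Lemma5.2} (using that $\mbfK_p$ is special maximal parahoric and $(T,h)$ satisfies the Serre condition), an admissible morphism together with an element $\epsilon=\gamma_0=\psi_{T,\mu_h}(\delta_m)\in T(\Q)\subset I_{\phi}(\Q)$; moreover $\phi$ is well-located in $T$ and $(\phi,\epsilon)$ is nested in $(T,h)$. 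By the construction in Subsec.~\ref{subsubsec:pre-Kottwitz_triple} (the paragraphs building $\delta$ from an admissible pair), the triple associated with the admissible pair $(\phi,\gamma_0)$ is, up to powers, exactly the triple $(\gamma_0;(\gamma_l)_{l\neq p},\delta)$ constructed here — one checks this by comparing $\mathrm{cls}(\phi(p))=\overline{\delta}$ with the explicit $\delta=c\cdot N_{E_{\mfp}/E_{\mfp,0}}(\mu_h(\pi))\cdot\sigma(c^{-1})$, using Lemma \ref{lem:properties_of_psi_T,mu}, (2) and Lemma \ref{lem:unramified_conj_of_special_morphism}. Once this identification is in hand, conditions (iv) and $(\ast(\delta))$ and the vanishing of the Kottwitz invariant all follow immediately from the Proposition at the end of Subsec.~3.9, which asserts that the triple attached to any admissible pair is a Kottwitz triple of the appropriate level with trivial Kottwitz invariant (for this one needs $G^{\der}$ simply connected, part of our standing assumptions).

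For the condition $(\ast(\gamma_0))$, I would argue directly rather than through the general machinery. Take the torus $T$ and the cocharacter $\mu=\mu_h\in X_{\ast}(T)\cap\{\mu_X\}$ as the witnesses. The first sub-condition — that $\Nm_{K/\Qp}\mu_h$ maps into the center of $\Cent_G(\gamma_0)_{\Qp}$, where $K$ is a Galois splitting field of $T_{\Qp}$ — is exactly Prop.~\ref{prop:phi(delta)=gamma_0_up_to_center}, (2), applied to the well-located admissible pair $(i\circ\psi_{T,\mu_h},\gamma_0)$; its hypothesis that $\gamma_0$ has image in a compact open subgroup of $G^{\ad}(\A_f^p)$ holds because $\gamma_0=\Nm_{E/\Q}(\mu_h(\pi))$ and $\pi$ is a unit outside $v_2$, so in fact $\gamma_0$ is a unit in $T(\Ql)$ for every $l\neq p$. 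The second sub-condition, $\lambda_H(\gamma_0)=n'\underline{\mu_h}$ for some $n'\in\N$ with $H=\Cent_{G_{\Qp}}(A)$ and $A$ the maximal $\Qp$-split torus in $Z(\Cent_G(\gamma_0)_{\Qp})$, I would extract from the computation already performed inside the proof of Theorem \ref{thm:LR-Satz5.21}, (2): there it is shown that for $b:=c^{-1}\delta\sigma(c)$ one has $\nu_{\gamma_0}=n\nu_b$ and, via Lemma \ref{lem:LR-Lemma5.11}, a choice of $\mu\in X_{\ast}(T_0)\cap\{\mu_X\}$ with $\Nm_{K/\Qp}\mu=[K:\Qp]\nu_b$; but here we can take $\mu=\mu_h$ and $T_0=T$ directly, since by the very definition $\Nm_{E_{\mfp}/\Qp}\mu_h$ already controls $\delta$ and hence $\nu_b$. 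Concretely, from $\Nm_{L_n/\Qp}\delta=\gamma_0$ and the formula for $\delta$ one reads off $[K:\Qp]\nu_{\gamma_0}=n\,\Nm_{K/\Qp}\mu_h$ in $X_{\ast}(T)$ (using $w_{T_L}(N_{E_w/\Qp}(\mu_h(\pi)))=0$ for $w\neq v_2$, recorded in the proof of Thm.~\ref{thm:LR-Satz5.21}), and then applying $\lambda_{T}=v_{T_L}|_{T(\Qp)}$ and pushing to $H^{\ab}$ (which is $\Qp$-split, so the canonical $\Gamma(p)$-action on $\pi_1(H)$ is trivial) gives $\lambda_H(\gamma_0)=n\underline{\mu_h}$, i.e.\ $(\ast(\gamma_0))$ with level $n$.

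I expect the main obstacle to be purely bookkeeping: making the identification of \emph{this} triple with the triple attached to the admissible pair $(i\circ\psi_{T,\mu_h},\gamma_0)$ precise \emph{up to powers}, since the $\delta$ here is built from $\mu_h(\pi)$ with $\pi$ chosen via the class number of $\cO_E$, whereas the $\delta$ produced from an admissible pair in Lemma \ref{lem:delta_from_b} is built from the unramified datum $\xi_p^{\nr}(s_\sigma)=b\rtimes\sigma$ and Remark \ref{rem:two_different_b's}; reconciling the two requires Remark \ref{rem:two_different_b's} (the pair is well-located in $T$, so one may arrange $b'=b$) together with Lemma \ref{lem:unramified_conj_of_special_morphism}, which computes precisely $\xi_p^{\nr}(s_\sigma)$ for a special morphism as $\Nm_{K/K_0}(\mu_h(\pi^{-1}))\rtimes\sigma$ — note the sign, which is why $\psi_{T,\mu}$ involves $-\mu$ at $p$ and why the $\gamma_0$, $\delta$ here come out with the correct orientation. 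Everything else (the conditions (iv), $(\ast(\delta))$, triviality of $\alpha(\gamma_0;\gamma,\delta)$) is then a direct citation of the Proposition ending Subsec.~3.9 and needs no new input.
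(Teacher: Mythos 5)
Your proposal is correct in substance but takes a genuinely different route from the paper's own (very terse) proof. The paper handles the four conclusions by three separate citations: condition (iv) is reduced to the trivial Kottwitz invariant via Remark~\ref{rem:Kottwitz_triples}, and the vanishing of the Kottwitz invariant is dispatched by an external reference to [Lee14, Thm.~5.3.1, Step 4]; $(\ast(\delta))$ is read off directly from Lemma~\ref{lem:unramified_conj_of_special_morphism} (which computes $\kappa_T$ on the explicit $\delta$); and $(\ast(\gamma_0))$ is obtained by invoking Theorem~\ref{thm:LR-Satz5.21}, part (2) (the paper writes ``(3)'', but there is no part (3); this is a typo for (2)), whose hypotheses $\Nm_{L_n/\Qp}\delta=\gamma_0$ and $X(\delta,\{\mu_X\})_{\mbfK_p}\neq\emptyset$ hold by construction and by Lemmas~\ref{lem:unramified_conj_of_special_morphism},~\ref{lem:LR-Lemma5.2}. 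You instead route (iv), $(\ast(\delta))$, and the Kottwitz invariant all through the Proposition at the end of Subsec.~3.6 (triple attached to an admissible pair is a Kottwitz triple with trivial invariant), and you verify $(\ast(\gamma_0))$ directly via Prop.~\ref{prop:phi(delta)=gamma_0_up_to_center},(2) plus a computation with $\lambda_H$. Your route has the advantage of being internal to this paper rather than outsourcing the Kottwitz invariant to [Lee14], while the paper's route is shorter and avoids re-establishing the admissible-pair framework.

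However, your plan has a gap you should flag and close. Both the identification step (matching your triple with the one produced from an admissible pair) and the application of Prop.~\ref{prop:phi(delta)=gamma_0_up_to_center},(2) \emph{presuppose} that $(i\circ\psi_{T,\mu_h},\gamma_0)$ is an admissible \emph{pair}, i.e.\ satisfies condition~(3) of Def.~\ref{def:admissible_pair}: there must exist $x\in G(L)/\mbfKt_p$ with $\gamma_0 x=\Phi^m x$. You only invoke Lemma~\ref{lem:LR-Lemma5.2}, which gives admissibility of the \emph{morphism}, not of the pair. The needed fixed point is not free: by Lemma~\ref{lem:Kottwitz84-a1.4.9_b3.3},(1) it amounts to showing $\gamma_0^{-1}\Nm_{L_n/\Qp}(b)\rtimes\sigma^n$ is $G(L)$-conjugate to $\sigma^n$, where $b=\Nm_{K/K_0}(\mu_h(\pi))$ is determined by Lemma~\ref{lem:unramified_conj_of_special_morphism}; this in turn uses precisely the fact that $\gamma_0^{-1}\Nm_{L_n/\Qp}(b)=\bigl(\prod_{w\mid p,\,w\neq v_2}\Nm_{E_w/\Qp}(\mu_h(\pi_w))\bigr)^{-1}$ lies in $T(L)_1$, which rests on $w_{T_L}(\Nm_{E_w/\Qp}(\mu_h(\pi_w)))=0$ for $w\neq v_2$ — exactly the observation recorded in the paragraph preceding the lemma. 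This is true and elementary, but it must be stated, since otherwise your citation of Prop.~\ref{prop:phi(delta)=gamma_0_up_to_center},(2) is circular-looking (it takes ``admissible pair'' as hypothesis). Once you add this verification, your argument is sound and genuinely shorter for $(\ast(\gamma_0))$ than the paper's indirect appeal to Theorem~\ref{thm:LR-Satz5.21},(2).
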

\begin{proof}
By Remark \ref{rem:Kottwitz_triples}, the condition (iv) will be implied by the vanishing of the Kottwitz invariant $\alpha(\gamma_0;(\gamma_l)_l,\delta)$, for which we refer to the proof of Thm. 5.3.1 of \cite{Lee14} (more specifically, Step 4). The condition $(\ast(\delta))$ is Lemma \ref{lem:unramified_conj_of_special_morphism}. Finally, $(\ast(\gamma_0))$ follows from Thm. \ref{thm:LR-Satz5.21}, (3), in view of Lemma \ref{lem:unramified_conj_of_special_morphism} and Lemma \ref{lem:LR-Lemma5.2}.
\end{proof}

Suppose that $(G,X)$ is of Hodge type, endowed with an embedding $\rho:G\hra\mathrm{GSp}(V,\langle\ ,\ \rangle)$. Fix a level subgroup $\mbfK=\mbfK_p\times\mbfK^p\subset G(\A_f)$ and also an integral model $\sS_{\mbfK_p}$ over $\cO_{E(G,X)_{\wp}}$ with the usual extension property (e.g. \cite[Thm. 0.1]{KP15}). Then, for a special Shimura sub-datum $(T,h)$ and $g_F\in G(\A_f)$, the corresponding CM point $x=[(h,g_f\cdot \mbfK)] \in \Sh_{\mbfK}(G,X)(\C)$ is defined over $\Qb(\subset\C)$, thus via the chosen embedding $\Qb\hra\Qpb$, reduces to a point $x_0$ defined over some finite field $\F_{q^m}$. Using $\rho$, we may find a CM-algebra $L$ of degree $2\dim V$, containing $\rho(T)(\subset \mathrm{End}(V))$, and an embedding $L\hra\mathrm{End}(A_x)_{\Q}$. There is a natural CM type $\Phi$ of $L$ determined by $h$. Then we have the inclusions $E(L,\Phi)\subset E\subset E(x)$, where $E(L,\Phi)$ is the reflex field of $(L,\Phi)$ and $E(x)\subset\Qb$ is a field over which $x$ is defined (which can be uniquely determined by $g_f\mbfK$ from the canonical model theory, cf. \cite[2.2]{Deligne77}), and $j\circ N(\mu)\circ \Nm_{E(x)/E}:E(x)^{\times}\rightarrow L^{\times}$ is the usual reflex norm map attached to the CM abelian variety $A_x/E(x)$, where $j:T\hra L^{\times}$ is the inclusion (cf. \cite{CCO14}). Then, the theory of Complex Multiplication (\cite[A.2.5.7, A.2.5.8]{CCO14}) tells us that for large $t\in\N$, $\gamma_0(T,\mu_h)^t\in L^{\times}$ acts as the relative Frobenius on $A_{x_0}/\F_{p^{mt}}$.

When $\mbfK_p$ is hyperspecial, Kisin \cite[Cor. 2.3.1]{Kisin13} also attaches a Kottwitz triple to any point $x\in \sS_{\mbfK}(\F_{q^m})$ over a finite field, using his work on the Tate theorem and the CM-lifting theorem (\cite[Cor. 2.2.5]{Kisin13}). This is equivalent to the one attached to $(T,\mu_h)$ when $x$ is the reduction of a CM point defined by the datum $(T,h)$ (\cite[Prop. 4.6.3]{Kisin13}).

By definition, a \textit{stable conjugacy class} in $G(\Q)$ is an equivalence class in $G(\Q)$ with respect to the stable conjugation relation. For a stable conjugacy class $\mathcal{C}\subset G(\Q)$ and $t\in\N$, $\mathcal{C}^t$ is the stable conjugacy class containing the set $\{g^t\ |\ g\in \mathcal{C}\}$.

\begin{thm} \label{thm:effectivity_criteria} 
Keep the assumptions of Theorem \ref{thm:LR-Satz5.21}.
Assume further that $(G,X)$ is of Hodge type and the anisotropic kernel of the center $Z(G)$ remains anisotropic over $\R$. Fix a special maximal parahoric subgroup $\mbfK_p\subset G(\Qp)$.
 
(1) Let $\mathcal{C}\subset G(\Q)$ be a stable conjugacy class. Then, some power of $\mathcal{C}$ contains the relative Frobenius of the reduction of a CM point of $\Sh_{\mbfK}(G,X)(\C)$, i.e. a power of $\gamma_0(T,h)$ for some special Shimura sub-datum $(T,h)$, if and only if a power of $\mathcal{C}$ contains some $\gamma_0\in G(\Q)$ which satisfies the condition $(\ast(\gamma_0))$ of Subsec. \ref{subsubsec:pre-Kottwitz_triple} and lies in a compact open subgroup of $G(\A_f^p)$.

(2) Let $(\gamma_0;(\gamma_l)_{l\neq p},\delta)$ be a Kottwitz triple with trivial Kottwitz invariant and such that $\gamma_0$ (equiv. $(\gamma_l)_l$) lies in a compact open subgroup of $G(\A_f^p)$ and
\[Y_p:=\{x\in G(L)/\mbfKt_p\ |\ \sigma^n x=x,\ \mathrm{inv}_{\mbfKt_p}(x,\delta\sigma x)\in \Adm_{\mbfKt_p}(\{\mu_X\})\}\neq \emptyset.\]
Then, there exists a special Shimura datum $(T,h)$ such that the reduction of the CM point $[h,1\cdot\mbfK]\in \Sh_{\mbfK}(G,X)(\Qb)$ has the associated Kottwitz triple equal to $(\gamma_0;(\gamma_l)_{l\neq p},\delta)$, up to powers.
\end{thm}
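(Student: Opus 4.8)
\textbf{Proof plan for Theorem \ref{thm:effectivity_criteria}.}
The plan is to deduce both statements from the machinery already assembled, principally Theorems \ref{thm:LR-Satz5.21} and \ref{thm:LR-Satz5.25}, together with the Complex Multiplication computation of the Frobenius of a reduced CM point recalled just above (which identifies the relative Frobenius of the reduction of $[h,1\cdot\mbfK]$ with a power of $\gamma_0(T,\mu_h)=\psi_{T,\mu_h}(\delta_m)$), and the dictionary between admissible pairs and Kottwitz triples from Subsec. \ref{subsubsec:pre-Kottwitz_triple}. The key link is that an admissible pair $(\phi,\epsilon)$ nested in a special Shimura subdatum $(T,h)$ has $\phi=i\circ\psi_{T,\mu_h}$, and by Prop. \ref{prop:phi(delta)=gamma_0_up_to_center} (in the form (4), using that the anisotropic kernel of $Z(G)$ stays anisotropic over $\R$ and that $\gamma_0$ lies in a compact open subgroup of $G(\A_f^p)$) one has $\epsilon^{k/n}=\phi(\delta_k)$ for $k\gg 0$; hence $\epsilon$ is, up to powers, exactly $\gamma_0(T,h)=\psi_{T,\mu_h}(\delta_m)$. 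So producing a special Shimura subdatum whose reduced CM point has the prescribed Frobenius (resp. Kottwitz triple) is the same problem as producing an admissible pair nested in a torus with $\epsilon$ in the prescribed stable class (resp. giving the prescribed triple).

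\emph{Proof of (1).} For the ``if'' direction, suppose a power $\mathcal{C}^s$ contains $\gamma_0\in G(\Q)$ satisfying $(\ast(\gamma_0))$ and lying in a compact open subgroup of $G(\A_f^p)$. Since $(\ast(\gamma_0))$ forces $\gamma_0$ to be elliptic over $\R$, Theorem \ref{thm:LR-Satz5.21}, (1) (converse direction) gives an admissible pair $(\phi,\epsilon)$ with $\epsilon$ stably conjugate to $\gamma_0$, and by Lemma \ref{lem:LR-Lemma5.23} we may take it nested in a special Shimura subdatum $(T,h)$, so $\phi=i\circ\psi_{T,\mu_h}$ and $\epsilon\in T(\Q)$. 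Applying Prop. \ref{prop:phi(delta)=gamma_0_up_to_center} as above, $\epsilon$ is stably conjugate to $\gamma_0$ and $\epsilon^{k/n}=\psi_{T,\mu_h}(\delta_k)=\gamma_0(T,h)^{k/m}$ for $k\gg 0$; by the CM theory recalled before the theorem, a further power of $\gamma_0(T,h)$ is the relative Frobenius of the reduction of the CM point $[h,1\cdot\mbfK]$. Thus a power of $\mathcal{C}$ contains that relative Frobenius. Conversely, if a power of $\mathcal{C}$ contains $\gamma_0(T,h)$ for some special Shimura subdatum $(T,h)$, then $\gamma_0(T,h)=\psi_{T,\mu_h}(\delta_m)$ and by Lemma \ref{lem:LR-Lemma5.2} the morphism $i\circ\psi_{T,\mu_h}$ is admissible; one checks directly from the construction of $\gamma_0(T,\mu)$ above that $(i\circ\psi_{T,\mu_h},\gamma_0(T,h))$ is an admissible pair (it is nested in $(T,h)$), and then the lemma preceding this theorem shows $\gamma_0(T,h)$ satisfies $(\ast(\gamma_0))$. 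That $\gamma_0(T,h)$ lies in a compact open subgroup of $G(\A_f^p)$ is clear from its definition as $N(\mu_h)(\pi)$ with $\pi$ an $\ell$-adic unit for all $\ell\neq p$. Replacing $\gamma_0(T,h)$ by a power if necessary, the displayed power of $\mathcal{C}$ contains an element with the required properties.

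\emph{Proof of (2).} The hypothesis $Y_p\neq\emptyset$ is exactly the condition $X(\{\mu_X\},\delta)_{\mbfK_p}\neq\emptyset$ of Def. \ref{defn:admissible_morphism}, (3), so $(\gamma_0;(\gamma_l)_{l\neq p},\delta)$ meets all the hypotheses of Theorem \ref{thm:LR-Satz5.25}; that theorem yields an admissible pair $(\phi,\epsilon)$ giving rise to the triple, and by Lemma \ref{lem:LR-Lemma5.23} we may assume it nested in a special Shimura subdatum $(T,h)$, so $\phi=i\circ\psi_{T,\mu_h}$ and $\epsilon\in T(\Q)$ is stably conjugate to $\gamma_0$. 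As in part (1), Prop. \ref{prop:phi(delta)=gamma_0_up_to_center} gives $\epsilon^{k/n}=\gamma_0(T,h)^{k/m}$ for $k\gg0$; hence the reduction of $[h,1\cdot\mbfK]$ has relative Frobenius a power of $\gamma_0(T,h)$, and the Kottwitz triple attached to this reduced CM point is, by the lemma just above, precisely the triple built from $(T,\mu_h)$. It remains to match that triple with $(\gamma_0;(\gamma_l)_{l\neq p},\delta)$ up to powers: the $\gamma_0$-components agree up to stable conjugacy and powers since both are powers of (elements stably conjugate to) $\epsilon$; the away-from-$p$ components agree because for the pair nested in $(T,h)$ the construction of Subsec. \ref{subsubsec:pre-Kottwitz_triple} gives $\gamma_l$ conjugate to $\gamma_0$ in $G(\Q_l)$ on both sides; and the $\sigma$-conjugacy class of $\delta$ is in both cases determined by $\mathrm{cls}_{G_{\Qp}}(\phi(p))=\overline{\delta}$. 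After passing to a common power one obtains equality of the triples, as required.

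\textbf{Main obstacle.} The routine part is assembling the cited results; the genuinely delicate point is controlling the \emph{level} and matching the triples exactly ``up to powers'': Theorem \ref{thm:LR-Satz5.21} only produces $(\phi,\epsilon)$ with $\epsilon$ stably conjugate to $\gamma_0^t$ for some $t$ in the $\mbfK_p$-effective statement, and the CM-theory Frobenius is itself only well-defined up to powers (the choice of $(E,\pi)$), so one must carefully track which powers of $\gamma_0(T,h)$, $\delta$ (now viewed in $G(L_{tn})$) and $(\gamma_l)$ occur, and check that a single common power makes all three components of the two triples simultaneously equivalent. Getting the compatibility of these power ambiguities — rather than any one of the individual steps — is where the argument needs the most care.
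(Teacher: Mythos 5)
Your proof takes essentially the same route as the paper's. The ``only if'' direction of (1) proceeds by observing that $(\psi_{T,\mu_h},\gamma_0(T,h))$ is an admissible pair and invoking the necessity half of Theorem~\ref{thm:LR-Satz5.21}(1) (or equivalently the lemma immediately preceding the theorem, which is what you cite — both bottom out in Thm.~\ref{thm:LR-Satz5.21}); the ``if'' direction proceeds by producing an admissible pair from the sufficiency half of Theorem~\ref{thm:LR-Satz5.21}, nesting it in a special Shimura subdatum via Lemma~\ref{lem:LR-Lemma5.23}, identifying $\epsilon^k=\psi_{T,\mu_h}(\delta_k)^n$ via Prop.~\ref{prop:phi(delta)=gamma_0_up_to_center}(4) (which is where the standing hypothesis on the anisotropic kernel of $Z(G)$ enters), and concluding via the CM-theory identification of $\gamma_0(T,h)$ with a power of the relative Frobenius. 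This is exactly the paper's argument. For (2) the paper is terse, saying only ``the same proof as in (1) works, using Thm.~\ref{thm:LR-Satz5.21},~(2)''; your version is the more careful one, explicitly routing through Theorem~\ref{thm:LR-Satz5.25} to obtain an admissible pair giving rise to the entire triple $(\gamma_0;(\gamma_l)_{l\neq p},\delta)$ and then checking that all three components match the CM-point triple after a common power, which is what is actually required by the statement. So your fuller treatment of (2) is a mild improvement in exposition, not a different argument.
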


Note that these facts are consequences of the Langlands-Rapoport conjecture and the conjecture (\ref{eqn:LRconj-ver2}) derived from it. 

\begin{proof} 
(1) Suppose $\gamma_0(T,h)^s\in\mathcal{C}^t$ for some $s,t\in\N$. Obviously $(\phi,\epsilon):=(\psi_{T,\mu_h},\psi_{T,\mu_h}(\delta_m))$ (for any $m\in\N$) is an admissible pair such that $\epsilon$ lies in a compact open subgroup of $G(\A_f^p)$, so $\psi_{T,\mu_h}(\delta_m)$ satisfies the stated condition by Thm. \ref{thm:LR-Satz5.21}, (1). But, $\gamma_0(T,h)=\psi_{T,\mu_h}(\delta_m)$. Now, the claim follows since the condition $(\ast(\gamma_0))$ holds for $\gamma_0$ if and only if it holds for $\gamma_0^s$ for any $s\in\N$.

Conversely, if $\gamma_0$ is an element in $G(\Q)\cap \mathcal{C}^r$ satisfying the condition $(\ast(\gamma_0))$ and lying in a compact open subgroup of $G(\A_f^p)$, by Thm. \ref{thm:LR-Satz5.21}, 
there exists an admissible pair $(\phi,\epsilon)$ with $\epsilon$ stably conjugate to $\gamma_0$, which we may further assume to be nested in some special Shimura sub-datum $(T,h)$, i.e. $\phi=\psi_{T,\mu_h}$ and $\epsilon\in T(\Q)$. Then, by Prop. \ref{prop:phi(delta)=gamma_0_up_to_center}, (4), $\epsilon^k=\psi_{T,\mu_h}(\delta_k)^n$ for some $k,n\in\N$. So, $\gamma_0^{kt}\in \mathcal{C}^{rkt}$ is stably conjugate to $\psi_{T,\mu_h}(\delta_k)^{nt}$, which is, for large $t$, the relative Frobenius of the reduction of a CM point, as we have seen above.

(2) The same proof as in (1) works, using Thm. \ref{thm:LR-Satz5.21}, (2). 
\end{proof}
 
This generalizes \cite[Lemma 18.1]{Kottwitz92} which concerns the Shimura varieties of PEL-type whose Lie types are either $A$ or $C$ and with hyperspecial level $\mbfK_p$. It says that a triple $(\gamma_0;(\gamma_l)_{l\neq p},\delta)$ as in Def. \ref{defn:Kottwitz_triple} arises from a point over the finite field $\F_{p^n}$ if and only if it is a Kottwitz triple of level $n$ (in the same sense as ours) with $\gamma_0$ being a Weil $p^n$-number, has the trivial Kottwitz invariant, and the affine Deligne-Lusztig variety $X(\{\mu_X\},\delta)_{\mbfK_p}$ is non-empty.


\begin{appendix}

\section{The quasi-motivic Galois gerb} \label{sec:quasi-motivic_Galois_gerb}

As explained by Reimann, the original definition of the quasi-motivic Galois gerb contains errors. He gave a correct construction in his book \cite[$\S$B2]{Reimann97}. We provide a review of it in loc. cit. 

For every place $l$ of $\Q$, we fix an embedding $\Qb\subset\Qlb$. 
For every Galois extension $L\subset\Qb$ of $\Q$, we first define a $\Q$-torus $Q^L$ which will serve as the kernel of the quasi-motivic Galois gerb to be defined later. 
We set
\[L(l):=L\cap \Ql,\quad L_l:=L\cdot \Ql\]
(intersection and composite in $\Qlb$, respectively). Let 
$Q^L$ be the $\Q$-torus defined by the exact sequence of $\Q$-tori 
\[1\lra \Gm\stackrel{\Delta}{\lra} L(\infty)^{\times}\times L(p)^{\times}\lra Q^L\lra 1\]
where $\Delta$ is induced by the diagonal embedding of $\Q$ into $L(\infty)\times L(p)$, and let
\[\psi^L:Q^L\lra L^{\times}\]
be the homomorphism of $\Q$-tori induced by the natural inclusion of $L(p)^{\times}$ into $L^{\times}$ and the \textit{inverse} of the natural inclusion of $L(\infty)^{\times}$ into $L^{\times}$. On the other hand, the embedding $L(l)\hra \Ql$ determines a place of $L(l)$ and accordingly a factor of $L(l)\otimes_{\Q}\Ql$ isomorphic to $\Ql$, namely a cocharacter of $L(l)^{\times}$ which is defined over $\Ql$. For $l\in\{\infty,p\}$, let $\nu(l)^L$ denote the resulting cocharacter of $Q^L$.

The $\Q$-torus $Q^L$ equipped with two cocharacters $\nu(\infty)^L$, $\nu(p)^L$ is also characterized by certain universal property.

\begin{lem} \cite[B2.2]{Reimann97} \label{lem:Reimann97-B2.2}
For every Galois extension $L\subset\Qb$ of $\Q$, $(Q^L,\nu(\infty)^L,\nu(p)^L)$ is an initial object in the category of all triples $(T,\nu_{\infty},\nu_p)$ where $T$ is a $\Q$-torus which splits over $L$, and, for $l\in\{\infty,p\}$, $\nu_l$ is a cocharacter of $T$ which is defined over $\Ql$, and such that 
\[\sum_{l\in\{\infty,p\}}[L_l:\Ql]^{-1}\Tr_{L/\Q}(\nu_l)=0.\]
The $\Q$-torus $Q^L$ satisfies the Hasse principle for $H^1$, i.e. the natural map
\[H^1(\Q,Q^L)\rightarrow \bigoplus_{all\ l}H^1(\Ql,Q^L)\]
is injective.
\end{lem}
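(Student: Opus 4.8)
\textbf{Proof proposal for Lemma \ref{lem:Reimann97-B2.2}.}

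The plan is to split the statement into two independent parts: first the universal property of the triple $(Q^L,\nu(\infty)^L,\nu(p)^L)$, and second the Hasse principle for $H^1(\Q,Q^L)$.

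For the universal property, I would begin by dualizing. A cocharacter of a $\Q$-torus $T$ that is defined over $\Ql$ corresponds to a $\Ql$-point of $X_{\ast}(T)$, and since $T$ splits over $L$, specifying such data is the same as specifying a $\Gal(L_l/\Ql)$-fixed element of $X_{\ast}(T)$. So the category of triples $(T,\nu_{\infty},\nu_p)$ with $T$ split over $L$ is anti-equivalent to a category of $\Gal(L/\Q)$-modules (finitely generated, $\Z$-free) equipped with a pair of elements fixed by the respective decomposition groups, subject to the linear relation $\sum_{l\in\{\infty,p\}}[L_l:\Ql]^{-1}\Tr_{L/\Q}(\nu_l)=0$. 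Tracing through the defining exact sequence, $X^{\ast}(Q^L)$ is the kernel of $X^{\ast}(\Gm)\to X^{\ast}(L(\infty)^{\times})\oplus X^{\ast}(L(p)^{\times})$ is the wrong direction; rather $X_{\ast}(Q^L)$ is the cokernel in the dual sequence, i.e. $X_{\ast}(Q^L) = (X_{\ast}(\Res_{L(\infty)/\Q}\Gm)\oplus X_{\ast}(\Res_{L(p)/\Q}\Gm))/\Z$, where $\Z$ embeds diagonally. One identifies $X_{\ast}(\Res_{L(l)/\Q}\Gm)$ with the permutation module $\Z[\Gal(L/\Q)/\Gal(L/L(l))] = \Z[\Gal(L/\Q)/\Gal(L_l/\Ql)]$, and $\nu(l)^L$ with the image of the distinguished basis vector. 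The verification that $(X_{\ast}(Q^L),\nu(\infty)^L,\nu(p)^L)$ is then \emph{terminal} among such dual data (equivalently $Q^L$ is initial) is a direct diagram chase: given $(M,m_\infty,m_p)$ with $m_l$ fixed by $\Gal(L_l/\Ql)$ and the trace relation, Frobenius reciprocity produces unique $\Gal(L/\Q)$-maps $\Z[\Gal(L/\Q)/\Gal(L_l/\Ql)]\to M$ sending the basepoint to $m_l$, these glue over the diagonal $\Z$ precisely because of the trace relation (this is where one checks the normalization constants $[L_l:\Ql]^{-1}$ match the size of the orbit), and one gets a unique $X_{\ast}(Q^L)\to M$. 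This is essentially the content of \cite[B2.2]{Reimann97}, so I would cite it but spell out the module identification.

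For the Hasse principle, the cleanest route is via the Tate--Nakayama / Kottwitz description: for a $\Q$-torus $T$, the kernel $\ker^1(\Q,T) = \ker[H^1(\Q,T)\to \bigoplus_l H^1(\Ql,T)]$ is Tate--dually related to $\ker^1$ of the dual torus, or one can use that $\ker^1(\Q,T)$ is computed by $H^1$ of the complex built from $X^{\ast}(T)$ and its localizations; concretely $\ker^1(\Q,T)\cong \ker^1(\Q,X^{\ast}(T))^D$ in the appropriate degree. Since $Q^L$ sits in the exact sequence $1\to\Gm\to \Res_{L(\infty)/\Q}\Gm\times\Res_{L(p)/\Q}\Gm\to Q^L\to 1$ and the two induced tori have vanishing $\ker^1$ (Shapiro's lemma reduces their $H^1$ to $H^1$ of fields, for which the classical Hasse principle holds, and in fact $H^1(\Q,\Res_{K/\Q}\Gm)=0$ by Hilbert 90), the long exact cohomology sequence gives $\ker^1(\Q,Q^L)$ as a subquotient of $\ker^1(\Q,\Gm) = 0$ together with a connecting piece; one checks the connecting map $H^0$ contribution dies because $\mathrm{Br}(\Q)\to\bigoplus_l\mathrm{Br}(\Ql)$ is injective on the relevant pieces. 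I expect the main obstacle to be getting the normalization constants and the direction of arrows exactly right in the module-theoretic computation of the universal property — in particular making sure the $[L_l:\Ql]^{-1}$ factors in the trace relation correspond correctly to the orbit-size normalization forced by Frobenius reciprocity — since a sign or index error there silently breaks the universal property. The Hasse-principle part, by contrast, should be routine once the defining sequence and Shapiro's lemma are in hand.
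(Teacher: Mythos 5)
The paper gives no proof of this lemma; it simply cites Reimann's monograph. Your outline is a correct reconstruction of the argument, and the module identification and Frobenius-reciprocity step are the heart of it: a $\Gal(L/\Q)$-map out of $\Z[\Gal(L/\Q)/\Gal(L_l/\Ql)]$ is freely determined by the $\Gal(L_l/\Ql)$-fixed image $m_l$ of the basepoint, and $[L_l:\Ql]^{-1}\Tr_{L/\Q}(\nu_l)$ is exactly the image under that map of the norm element $\sum_{\text{cosets}} g\cdot(\text{basepoint})$, which is what the diagonal $\Z$ hits, so the trace relation is precisely the compatibility condition for $(f_\infty,f_p)$ to descend to the cokernel $X_{\ast}(Q^L)$. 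Two small cleanups. First, since $X_{\ast}$ is covariant, initiality of $Q^L$ translates to \emph{initiality} (not terminality) of $X_{\ast}(Q^L)$ in the category of pairs $(M,(m_l)_l)$ — and indeed what you then construct is a unique map $X_{\ast}(Q^L)\to M$, which is the correct direction, so this is only a slip in wording. Second, the Hasse-principle step is more cleanly stated without the ``connecting piece'' language: Hilbert 90 via Shapiro kills $H^1$ of the induced torus $\Res_{L(\infty)/\Q}\Gm\times\Res_{L(p)/\Q}\Gm$ over $\Q$ and over each $\Ql$, so the long exact sequence gives compatible injections $H^1(\Q,Q^L)\hookrightarrow\mathrm{Br}(\Q)$ and $H^1(\Ql,Q^L)\hookrightarrow\mathrm{Br}(\Ql)$, and injectivity of $\mathrm{Br}(\Q)\to\bigoplus_l\mathrm{Br}(\Ql)$ (Hasse--Brauer--Noether) finishes it — $\ker^1(\Q,Q^L)$ is not a subquotient of $\ker^1(\Q,\Gm)$ but rather sits inside $\ker\bigl[\mathrm{Br}(\Q)\to\bigoplus_l\mathrm{Br}(\Ql)\bigr]=0$. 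You might also record, for completeness, that $(\nu(\infty)^L,\nu(p)^L)$ itself satisfies the trace relation: the two norm elements sum to the image of $1\in\Z$ under the diagonal, which is by definition killed in $X_{\ast}(Q^L)$.
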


Note that $\nu_l$, being a cocharacter of $T$, is also defined over $L$, which gives meaning to the expression $\Tr_{L/\Q}(\nu_l)$. 

For every successive Galois extensions $L\subset L'\subset\Qb$ of $\Q$, there exists a surjective homomorphism $\omega_{L'|L}:Q^{L'}\rightarrow Q^L$ which commutes with the maps $\psi^{L'}:Q^{L'}\rightarrow L'^{\times}$, $\psi^{L}:Q^{L}\rightarrow L^{\times}$, and $\Nm_{L'|L}:Q^{L'}\rightarrow Q^L$, 
Hence, we get two projective systems $\{Q^L\}_{L}$, $\{L^{\times}\}_{L}$
with respective transition maps $\omega_{L'|L}$, $\Nm_{L'|L}$, and the maps $\{\psi^L\}_{L}$ induce a homomorphism between the corresponding pro-tori
\begin{equation} \label{eq:morphism_from_Q_to_R}
\psi^{\Delta}:Q:=\varprojlim_{L}Q^L\rightarrow R:=\varprojlim_{L}L^{\times}.
\end{equation}
The character group of $R$ can be naturally identified with the set of all continuous maps $f:\Gal(\overline{\Q}/\Q)\rightarrow\Z$, the Galois action being given by $\rho(f)(\tau)=f(\rho^{-1}\tau)$ for all $\rho,\tau\in\Gal(\overline{\Q}/\Q)$. Further easy properties of the system $\omega_{L'|L}$ show that there exist a cocharacter 
\[\nu(\infty):\mathbb{G}_{\mathrm{m},\R}\rightarrow Q_{\R}\]
whose composite with $Q\rightarrow Q^L$, for totally imaginary $L$, is $\nu(\infty)^L$, and 
a cocharacter
\[\nu(p):\mathbb{D}\rightarrow Q_{\Qp}.\]

\begin{defn} \cite[B2.7]{Reimann97} \label{defn:Reimann97-B2.7}
A \textit{quasi-motivic groupoid} is the data of a Galois gerb $\fQ$ over $\Q$ together with morphisms $\zeta_l=\zeta_l^Q:\fG_v\rightarrow \fQ(l)$ for all places $l$ of $Q$ such that
\begin{itemize}
\item[(1)] $(\fQ^{\Delta},\zeta_{\infty}^{\Delta},\zeta_{p}^{\Delta})=(Q,\nu(\infty),\nu(p))$, the identification $\fQ^{\Delta}=Q(\Qb)$ being compatible with the action of $\Gal(\overline{\Q}/\Q)$.
\item[(2)] the morphisms $\zeta_l$, for all $l\neq \infty,p$, are induced by a section of $\fQ$ over $\Spec(\overline{\A_f^p}\otimes_{\A_f^p}\overline{\A_f^p})$;
\end{itemize}
where $\overline{\A_f^p}$ denotes the image of the map $\Qb\otimes_{\Q}\A_f^p\rightarrow \prod_{l\neq\infty,p}\Qlb$.
\end{defn}

For more concrete equivalent description of the condition (2), see (3.1.7) of \cite{Kisin13}.

\begin{thm} \label{thm:Reimann97-B.2.8}
There exists a quasi-motivic Galois gerb $(\fQ,(\zeta_l=\zeta_l^Q))$. If $(\fQ',(\zeta_l'))$ is another quasi-motivic Galois gerb, there is an isomorphism $\alpha:\fQ\rightarrow\fQ'$ such that, for all places $l$ of $\Q$, $\zeta_l'$ is algebraically equivalent to $\alpha\circ\zeta_l$. Moreover, there is a morphism $\psi:\fQ\rightarrow\fG_R$, whose restriction to the kernel is the homomorphism $\psi^{\Delta}$ (\ref{eq:morphism_from_Q_to_R}); $\psi$ is unique up to algebraic equivalence.
\end{thm}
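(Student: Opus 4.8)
\textbf{Proof proposal for Theorem \ref{thm:Reimann97-B.2.8} (the quasi-motivic Galois gerb).}

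The plan is to follow Reimann \cite[$\S$B2]{Reimann97} closely, reducing the existence of $\fQ$ to the existence of each finite-level gerb $\fQ^L$ and then passing to the inverse limit, exactly as in the treatment of $\fP$ sketched in the proof of Theorem \ref{thm:pseudo-motivic_Galois_gerb}. First I would fix a totally imaginary Galois extension $L\subset\Qb$ of $\Q$ and, for $l\in\{\infty,p\}$, let $d_l^L\in H^2(\Q_l,Q^L)$ be the image under $(\nu(l)^L)_\ast$ of the fundamental class of $L_l/\Q_l$. Using the Hasse principle for $H^1$ of $Q^L$ from Lemma \ref{lem:Reimann97-B2.2} together with the Poitou–Tate sequence, one checks that there is a unique cohomology class in $H^2(\Q,Q^L)$ whose localizations are $(0,d_p^L,d_\infty^L)$ at $(\text{finite }l\neq p,\, p,\, \infty)$; here the compatibility condition $\sum_{l}[L_l:\Q_l]^{-1}\Tr_{L/\Q}(\nu(l)^L)=0$ from Lemma \ref{lem:Reimann97-B2.2} is what guarantees that the putative adelic class comes from a global one (the sum of local invariants vanishes). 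This class defines $\fQ^L$, and the morphisms $\zeta_l^{Q,L}$ on kernels are $\nu(l)^L$; the prescription of the local classes forces $\zeta_l^{Q,L}$ to exist with the required properties, and the section over $\Spec(\overline{\A_f^p}\otimes_{\A_f^p}\overline{\A_f^p})$ at the places $l\neq\infty,p$ is produced exactly as in \cite{LR87}, $\S$3 (the local gerbs there being trivial).

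Next I would assemble $\fQ=\varprojlim_L\fQ^L$. The transition maps $\omega_{L'|L}:Q^{L'}\to Q^L$ induce morphisms $\fQ^{L'}\to\fQ^L$ compatible with the $\zeta_l$'s, and the point requiring care is that $\varprojlim$ behaves well cohomologically: one needs $H^2_{\mathrm{cts}}(\Q,Q)\isom\varprojlim_L H^2(\Q,Q^L)$ and $H^1_{\mathrm{cts}}(\Q,Q)\isom H^1_{\mathrm{cts}}(\A,Q)$, which are the analogues for $Q$ of Milne's Prop. 3.6 and Prop. 3.10 \cite{Milne03} used for $P$. These hold because all the relevant finiteness and surjectivity statements for the system $\{Q^L\}$ (in particular that the transition maps are surjective with connected kernels, and the Mittag–Leffler condition on the $H^1$'s) follow from Lemma \ref{lem:Reimann97-B2.2}. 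The uniqueness statement is then formal: given $(\fQ',(\zeta_l'))$, the difference of the two classes in $H^2(\Q,Q)$ localizes to the trivial class everywhere (both systems have the same prescribed local classes, using \cite[Def. B1.1]{Reimann97} for algebraic equivalence at the finite places $l\neq\infty,p$), hence by the Hasse principle for $H^1$ — which controls the ambiguity in choosing an isomorphism — there is an isomorphism $\alpha:\fQ\to\fQ'$ with $\zeta_l'$ algebraically equivalent to $\alpha\circ\zeta_l$ for all $l$, unique up to algebraic equivalence.

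Finally, for the morphism $\psi:\fQ\to\fG_R$: the homomorphism $\psi^\Delta:Q\to R$ of \eqref{eq:morphism_from_Q_to_R} pushes the class of $\fQ$ in $H^2(\Q,Q)$ forward to a class in $H^2(\Q,R)$; since $R=\varprojlim_L\Res_{L/\Q}\Gm$ is an induced (quasi-trivial) pro-torus, $H^2(\Q,R)=0$ (Shapiro plus $H^2(L,\Gm)\hookrightarrow$ Brauer group considerations in the limit), so the pushed-forward gerb is neutral, i.e. isomorphic to $\fG_R$, and this isomorphism is a morphism $\psi$ restricting to $\psi^\Delta$ on kernels. Its uniqueness up to algebraic equivalence again comes from $H^1_{\mathrm{cts}}(\Q,R)=0$ together with the Hasse principle. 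I expect the main obstacle to be the limit-cohomology bookkeeping in the middle paragraph — verifying that $H^2_{\mathrm{cts}}(\Q,Q)\to\varprojlim_L H^2(\Q,Q^L)$ is an isomorphism and not merely injective, which is exactly the subtlety Milne \cite[Prop. 2.8]{Milne03} warns about and which forces one to use the explicit structure of the tower $\{Q^L\}$ rather than a black-box argument; everything else is either the Hasse principle of Lemma \ref{lem:Reimann97-B2.2} or a routine transcription of \cite{LR87}, $\S$3 and \cite[$\S$B2]{Reimann97}.
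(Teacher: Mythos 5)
Your overall outline --- construct each $\fQ^L$ from an element of $H^2(\Q,Q^L)$ with prescribed local components, pass to the limit using continuity statements of the Milne Prop.~3.6/3.10 type, then push forward along $\psi^\Delta$ to produce $\psi$ --- is indeed how Reimann proceeds and matches the sketch this paper gives for $\fP$ in the proof of Theorem~\ref{thm:pseudo-motivic_Galois_gerb}; the paper itself only cites \cite[B2.8]{Reimann97} and gives no proof, so you are reconstructing from the references. But there are two places where the cohomological input you cite does not actually do the job you ask of it.

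First, you invoke the Hasse principle for $H^1$ of $Q^L$ (Lemma~\ref{lem:Reimann97-B2.2}) to get \emph{uniqueness} of the class in $H^2(\Q,Q^L)$ with prescribed localizations. This is the wrong duality: for a torus $T$ over a number field, Poitou--Tate/Nakayama pairs $H^i(T)$ with $H^{2-i}(X^*(T))$, so the vanishing of $\Sha^1(T)$ (the Hasse principle for $H^1$) is dual to $\Sha^2(X^*(T))=0$, whereas the injectivity of $H^2(\Q,Q^L)\to\bigoplus_v H^2(\Q_v,Q^L)$ --- which is what uniqueness of the class requires --- is $\Sha^2(Q^L)=0$, dual to $\Sha^1(X^*(Q^L))=0$. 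These are a priori independent, and the latter needs its own argument; this is exactly why the paper's sketch for $\fP$ points to two separate sources (\cite[$\S3$]{LR87} and \cite[(3.5b)]{Milne03}) for the uniqueness of the $H^2$-class, while the Hasse principle for $H^1$ enters only later, via the limit statement $H^1_{\mathrm{cts}}(\Q,P)\isom H^1_{\mathrm{cts}}(\A,P)$, to control the $H^1$-torsor worth of ambiguity in the isomorphism $\alpha$ and in $\psi$. You use the Hasse principle for $H^1$ twice --- once for the $H^2$-class and once for $\alpha$ --- but only the second use is correct.

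Second, the assertion that $H^2(\Q,R)=0$ is false. By Shapiro's lemma $H^2(\Q,\Res_{L/\Q}\Gm)\cong H^2(L,\Gm)=Br(L)$, the full Brauer group of the number field $L$, which is far from zero. The inverse limit over CM Galois $L$ (along the corestriction maps induced by the norms $\Nm_{L'|L}$) is also not zero. What the argument actually needs is not vanishing of the whole $H^2$ but (i) that the pushed-forward class $\psi^\Delta_*[\fQ]$ has trivial image in each $H^2(\Q_v,R)$ --- which requires an explicit check that the local fundamental classes die under $(\psi^L\circ\nu(l)^L)_*$, not a general vanishing --- and (ii) the Brauer--Hasse--Noether principle $\Sha^2(L,\Gm)=0$ (equivalently, $\Sha^2(\Q,\Res_{L/\Q}\Gm)=0$ by Shapiro), to conclude that a class with trivial localizations is itself trivial. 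So the correct statement is a $\Sha$-vanishing, not an $H^2$-vanishing, together with a local computation you have not made.
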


\section{Existence of elliptic tori inside special maximal parahoric group schemes}

\begin{prop} \label{prop:existence_of_elliptic_tori_in_special_parahorics}
Let $k$ be a local field with residue characteristic not equal to $2$ and $L$ the completion of the maximal unramified extension $k^{\nr}$ in an algebraic closure $\overline{k}$ of $k$ with respective rings of integers $\cO_k$ and $\cO_L$. Let $G$ be a connected reductive group over $k$ of classical Lie type. Assume that $G$ is quasi-split and splits over a tamely ramified extension of $k$. 

Then, for any special parahoric subgroup $K$ of $G(k)$, there exists a maximal elliptic $k$-torus $T$ of $G$ such that $T_{k^{\nr}}$ contains (equiv. is the centralizer of) a maximal ($k^{\nr}$-)split $k^{\nr}$-torus $S_1$ of $G_{k^{\nr}}$ and that the unique parahoric subgroup of $T(L)$ is contained in $\tilde{K}$, the parahoric subgroup of $G(L)$ corresponding to $K$.
\end{prop}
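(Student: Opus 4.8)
The statement is essentially a Bruhat--Tits existence result for elliptic maximal tori contained in a given special parahoric group scheme, and my plan is to reduce it to a case-by-case analysis over $\overline k$ using the classification of classical groups that split over a tame extension. First I would fix a special point $\mbfo \in \mcB(G,k)$ defining $K$ and the associated $\sigma$-stable point $\mbfo \in \mcB(G,L)$ with parahoric $\tilde K = \mcG_{\mbfo}^{\mro}(\cO_L)$; since $G_L$ is quasi-split, I pick a maximal $L$-split torus $S_1$ whose apartment $\mcA(S_1,L)$ contains $\mbfo$, and put $T_1 := \Cent_{G_L}(S_1)$, a maximal torus of $G_L$ defined over $L$ (in fact over $k$, being the centralizer of a maximal unramified-split torus). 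The required $T$ will be an inner form of $T_1$; the content is to twist $T_1$ by a cocycle valued in the relative Weyl group (equivalently, to choose a $k$-rational structure on $T_1$ making it elliptic) in such a way that the maximal bounded subgroup $T(L)_0 = \Ker(v_{T_L})$, which for a torus always contains the unique parahoric $T(L)_1 = \Ker(w_{T_L})$, actually lands inside $\tilde K$. The key local fact is that $T(L)_1$ is contained in $\tilde K$ precisely when $T_L$ extends to a closed subgroup scheme of $\mcG_{\mbfo}^{\mro}$ whose $\cO_L$-points give $T(L)_1$; this translates into a condition on the reduction $\overline{\mcG_{\mbfo}^{\mro}} := \mcG_{\mbfo}^{\mro} \otimes \Fpb$, namely that $\overline{\mcG_{\mbfo}^{\mro}}$ contains a maximal torus defined over $\Fpb$ whose generic fiber is elliptic, i.e. anisotropic modulo the center.

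The reduction step works as follows. By \cite{Lee14}, Lemma A.0.4, this is known when $K$ is hyperspecial, so the new content is genuinely ramified/non-hyperspecial special parahorics. Because $\mbfo$ is a \emph{special} vertex, the special fiber $\overline{\mcG_{\mbfo}^{\mro}}$ is a connected reductive group over $\Fpb$ (not merely quasi-reductive) whose root datum is explicitly computed by Bruhat--Tits and Tits (\cite[1.9, 3.5]{Tits79}, \cite[4.6]{BT84}): it is the group attached to the reduced root system $\,{}^{\mbfo}\Sigma\,$ appearing already in \S\ref{subsubsec:EAWG} and in the proof of Lemma \ref{lem:LR-Lemma5.2}. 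So I would: (i) for each $G$ of classical type splitting over a tame extension, list the possibilities for $\overline{\mcG_{\mbfo}^{\mro}}$ over $\Fpb$ at each special vertex (these are the ``residue groups'' of classical groups: products of $\mathrm{GL}_n$, $\mathrm{Sp}_{2n}$, $\mathrm{SO}_m$, $\mathrm{U}_n$ and small exceptional-looking cases in the ramified unitary/orthogonal situation, all well-documented); (ii) for each such connected reductive $\overline H$ over the finite residue field $\kappa$ of $k$ (obtained by $\sigma$-descent from $\overline{\mcG_{\mbfo}^{\mro}}\otimes\Fpb$), exhibit a maximal $\kappa$-torus $\overline T$ that is anisotropic modulo center — a standard finite-field fact, using that $\overline H$ is quasi-split and, since $p\neq 2$ and the tame hypothesis, avoiding the pathologies that block anisotropic tori; (iii) lift $\overline T$ through the smooth morphism $\mcG_{\mbfo}^{\mro}$ to a torus $\mathcal T$ over $\cO_L$ (smoothness of tori and of $\mcG_{\mbfo}^{\mro}$ plus Hensel) with $\mathcal T(\cO_L) \subset \tilde K$ and $\sigma$-stable, whose generic fiber $T := \mathcal T \otimes L$ descends to a maximal $k$-torus of $G$; finally check $T$ is elliptic over $k$ by noting its reduction $\overline T$ is anisotropic mod center, hence $X^\ast(T)^{\Gamma_k}$ has rank equal to $\dim Z(G)$, using that for a torus with good reduction the $\Gamma_k$-invariants are detected on the special fiber.

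The main obstacle I anticipate is step (i)--(ii): controlling the residue group $\overline{\mcG_{\mbfo}^{\mro}}$ and proving it always carries an anisotropic-mod-center maximal torus over the residue field, uniformly across all special vertices of all classical tamely-split quasi-split groups — in particular the ramified quasi-split unitary and even/odd special orthogonal groups, where the echelonment of special vertices and the structure of the residue group are delicate, and where the condition $p \neq 2$ is genuinely used. A related subtlety is ensuring the $\sigma$-action on $\overline{\mcG_{\mbfo}^{\mro}}\otimes\Fpb$ used for descent is the one induced by the chosen $k$-structure, so that an anisotropic-mod-center $\kappa$-torus in $\overline H$ really does lift to a $k$-torus of $G$; this is where I would invoke the ``descent theory'' of \cite[Remark 9]{HainesRapoport08} and the compatibility of $\sigma$-fixed points of buildings with parahoric group schemes \cite[5.1.25--5.1.28]{BT84}. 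Everything downstream (Hensel lifting of tori, the valuation-kernel identifications $T(L)_1 = \Ker(w_{T_L})$, $T(L)_0 = \Ker(v_{T_L})$ of \cite{HainesRapoport08}, and the equality $T(L)_1 \subset \tilde K \iff \mathcal T(\cO_L) \subset \tilde K$) is routine once (i)--(ii) is in hand; the hyperspecial case \cite[Lemma A.0.4]{Lee14} provides both a template and a consistency check.
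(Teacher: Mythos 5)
Your proposed strategy — lift an anisotropic-mod-center maximal torus from the special fiber of $\mcG_{\mbfo}^{\mro}$ by smoothness and Hensel — is a genuinely different route from the paper's explicit lattice-theoretic construction, and it fails for a concrete rank reason in exactly the cases that are new here. A torus obtained by lifting a maximal torus of the special fiber over $\cO_L$ (or over $\cO_k$ and then base-changing) is necessarily \emph{unramified}: inertia acts trivially on its character lattice, and its rank equals the $L$-rank of $G$, i.e.\ the dimension of a maximal $L$-split torus. When $G$ is ramified — precisely the cases ${}^2A'_{2m}$ at the non-hyperspecial vertex and the three ramified types $B\operatorname{-}C_m$, $C\operatorname{-}BC_m$, $C\operatorname{-}B_m$ — the $L$-rank is strictly smaller than the absolute rank, so no unramified torus can be a maximal torus of $G$, and your lift can never produce the $T$ you need. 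This is why \cite[Lemma A.0.4]{Lee14} and the Hensel-lifting template are confined to the hyperspecial (equivalently, unramified) situation. A secondary inaccuracy: for special but not hyperspecial vertices the special fiber $\overline{\mcG_{\mbfo}^{\mro}}$ is in general not reductive — it has a nontrivial unipotent radical, and only its reductive quotient is governed by the residual root system ${}^{\mbfo}\Sigma$ — so the groundwork of step (i) is also off.

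The paper avoids this by never leaving characteristic zero: after reducing to almost-simple groups of the form $\Res_{F/k}H$ and invoking Gross's list of tamely ramified quasi-split classical $H$, it handles the remaining four types by decomposing the Hermitian or quadratic space into hyperbolic planes $\mathbb H_i$ plus an anisotropic line, producing the elliptic maximal torus as a product of anisotropic rank-one tori inside the $\SU(\mathbb H_i)\cong\mathrm{SL}_2$'s together with norm-one tori $\underline E^\times_c$ for the ramified quadratic extension $E/k$. These constituent tori are deliberately \emph{ramified}, which is how the absolute rank is reached; their parahorics visibly stabilize the lattice $\Lambda_{\{0\}}$ or $\Lambda_{\{m\}}$ defining $K$, which is checked directly from the explicit lattice formulas. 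If you want to repair your approach, you would need to work not with the special fiber but with the smooth model of a suitably chosen ramified torus over $\cO_k$ whose generic fiber already has the full rank — but at that point you are essentially forced into the same case-by-case lattice bookkeeping.
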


\begin{rem} \label{rem:properties_of_certain_elliptic_tori_in_special_parahorics}
(1) The unique parahoric subgroup of $T(L)$ (resp. of $T(k)$) is $T(L)_1:=\Ker(w_{T_L})$ (resp. $T(k)_1:=\Ker(w_{T_L})\cap T(k)$).

(2) With $S_1$ and $T$ as in the statement, the second property of $T$ can be translated into a statement about the Bruhat-Tits building:
any special point $\mbfv\in \mcB(G,k)$ giving $K$ (i.e. $K=\mathrm{Stab}_{G(F)}(\mbfv)\cap\Ker(w_{G_L})$) lies in an apartment $\mcA(S_1,L)$ attached to $S_1$.
Indeed, $S_1(L)_1\subset \tilde{K}$ if and only if $\mbfv\in\mcA(S_1,L)$. This is because $\mcA(S_1,L)$ is the full fixed point set of the pararhoric subgroup $S_1(L)_1$, which in turn is due to \cite[3.6.1]{Tits79} since every relative root $a$ of the root datum $(G_L,(S_1)_L)$, being a non-trivial character (of a split torus), satisfies that $a(S_1(L)_1)\nsubseteq 1+\pi_F$ (i.e. $\bar{a}(\overline{S_1(L)_1})\neq 1$), as the residue field of $\cO_L$ is infinite. But, $S_1(L)_1\subset\tilde{K}$ if and only if $T(L)_1\subset\tilde{K}$, since $T(L)=\Cent_{G_L}(S_1)(L)$ acts on $\mcA(S_1,L)$ via $w_{T_L}\otimes\R:T(L)\rightarrow X_{\ast}(T)_{\Gal(\overline{L}/L)}\otimes\R=X_{\ast}(S_1)_{\R}$ (\cite[1.2.(1)]{Tits79}), so if $S_1(L)_1\subset\tilde{K}$, $T(L)_1$ fixes $\mbfv(\in \mcA(S_1,L))$.
\end{rem}

\begin{proof} 
We reduce a general case to the case of absolutely almost-simple groups. First, under the isomorphism
\[\mcB(G,k)\simeq \mcB(G^{\der},k)\times X_{\ast}(A(G))_{\R},\] 
where $A(G)$ is the maximal split $F$-torus in the center $Z(G)$, a special point in $\mcA(G,k)$ corresponds to $(v,x)$ for a special vertex $v$ in $\mcB(G^{\der})$ and a point $x\in X_{\ast}(A(G))_{\R}$. This implies that we may assume that $G$ is semisimple, and by the same reasoning further that $G$ is simply-connected.
Then as $G(=G^{\der})$ is a product of almost-simple groups of the same kind (by which we mean quasi-split, tamely ramified, classical groups), we may also assume that $G$ is almost-simple. Hence, $G=\Res_{F/k}(H)$ for an absolutely-(almost)simple, quasi-split, classical, tamely ramified, semi-simple group $H$ over a finite extension $F$ of $k$. 
The building $\mcB(G,k)$ (resp. $\mcB(G,L)$) is canonically isomorphic to $\mcB(H,F)$ (resp. $\prod_{\sigma\in\Hom_k(F_0,L)}\mcB(H,F\otimes_{F_0,\sigma}L))$, where $F_0$ is the maximal unamified subextension of $F$); this is true for any finite separable extension $F$ of $k$ (\cite[2.1]{Tits79}). 
This shows that the claim for $G$ follows from the claim for $H$. Let $L$ and $\cO_L$ now be the completion of the maximal unramified extension $F^{\nr}$ in an algebraic closure $\overline{F}$ of $F$ and its ring of integers.

From this point, we prove the proposition case by case. We use the complete list of isomorphism classes of quasi-split, tamely ramified, classical groups over local fields as provided in \cite{Gross12}, where Gross gives a complete list of isomorphism classes of (not necessarily quasi-split or classical) tamely ramified groups over local fields and it is fairly easy to determine the quasi-split, classical ones from that list. There are totally ten such isomorphism classes, among which the first seven are unramified ones (including four split ones). 

\begin{itemize}
\item[$A_m$]$(m\geq1)$: $H=\mathrm{SL}_m$ (split group).
\item[${}^2A'_{2m}$]$(m\geq2)$: Let $E$ be the unramified quadratic extension of $F$ and let $W$ be a non-degenerate Hermitian space of odd rank $n=2m+1$ over $E$ (its Witt-index must be $m$). Then $H=\SU(W)$ (non-split unramified group).
\item[${}^2A'_{2m-1}$]$(m\geq2)$: Let $E$ be the unramified quadratic extension of $F$ and let $W$ be a non-degenerate Hermitian space of even rank $n=2m$ over $E$ which contains an isotropic subspace of dimension $m$. Then $H=\SU(W)$ (non-split unramified group).
\item[$B_m$]$(m\geq3)$: Let $W$ be a non-degenerate orthogonal space of odd dimension $2m+1$ over $F$ which contains an isotropic subspace of dimension $n$. Then $H=\Spin(W)$ (split group).
\item[$C_m$]$(m\geq2)$: Let $W$ be a non-degenerate symplectic space of dimension $2m$ over $F$. Then $H=\Sp(W)$ (split group).
\item[$D_m$]$(m\geq4)$: Let $W$ be a non-degenerate orthogonal space of dimension $2m$ over $F$ which contains a (maximal) isotropic subspace of dimension $m$. Then, the center of the Clifford algebra is the split \'etale quadratic extension $E=F\oplus F$ of $F$, and $H=\Spin(W)$ is a split group.
\item[${}^2D_m$]$(m\geq4)$: Let $W$ be a non-degenerate orthogonal space of even dimension $2m$ over $F$ where the center of the Clifford algebra is the unramified quadratic extension $E$ of $F$. Then $H=\Spin(W)$ (non-split unramified group).
\item[$B\operatorname{-}C_m$]$(m\geq3)$: Let $E$ be a tamely ramified quadratic extension of $F$ and let $W$ be a non-degenerate Hermitian space of even rank $n=2m$ over $E$ which contains an isotropic subspace of dimension $m$. Then $H = SU(W)$ (ramified group)
\item[$C\operatorname{-}BC_m$]$(m\geq2)$: Let $E$ be a tamely ramified quadratic extension of $F$ and let $W$ be a non-degenerate Hermitian space of odd rank $n=2m+1$ over $E$. Then $H = SU(W)$ (ramified group).
\item[$C\operatorname{-}B_m$]$(m\geq2)$: Let $W$ be a non-degenerate orthogonal space of even dimension $2m$ over $F$ where the center of the Clifford algebra is a tamely ramified quadratic extension $E$ of $F$. Then $H=\Spin(W)$ (ramified group).
\end{itemize}

Here, each heading is the name for the corresponding isomorphism class that is used in \cite{Tits79}, tables 4.2 and 4.3. 

Now, when the group is unramified and given special vertex is hyperspecial, the claim is known (\cite{Lee14}, Appendix 1.0.4). For convenience, we split these cases into three kinds. The first kind consists of the split groups, thus, all special vertices are automatically hyperspecial: these are $A_m$, $B_m$, $C_m$, $D_m$. The second case is when there are two special vertices and both of them are hyperspecial: they consist of ${}^2A'_{2m-1}$, ${}^2D_m$. The remaining ones constitute the last case, i.e. there is some non-hyperspecial, special vertex.
So, we only need to take care of the last case: 
\[{}^2A'_{2m},\quad B\operatorname{-}C_m,\quad C\operatorname{-}BC_m,\ \text{ and }\ C\operatorname{-}B_m.\] 
Note that except for the last one, all these are (special) unitary groups.

(1) First, we treat the special unitary groups of \emph{even} absolute rank (i.e. $H_{\overline{F}}\simeq \SU(2m+1)$ for an algebraic closure $\overline{F}$ of $F$). We will reduce the proof in this case to the special unitary groups of \emph{odd} absolute rank.
For a moment, we let $E$ be an arbitrary quadratic extension of $F$ with respective rings of integers $\cO_E$, $\cO_F$ (we assume that the residue characteristic of $\cO_F$ is not $2$). We choose a uniformizer $\pi$ of $\cO_E$ such that $\pi+\overline{\pi}=0$ for the non-trivial automorphism $\overline{\cdot}$ of $E/F$.
Let $(W,\phi:W\times W\rightarrow E)$ be a non-degenerate Hermitian space of dimension $n=2m+1\ (m\geq1)$. As is well-known, $\psi$ has maximal Witt-index $m$, so there exists a Witt basis $\{e_{-m},\cdots,e_{m}\}$, i.e. such that 
\[\phi(e_i,e_j)=\delta_{i,-j},\quad\text{for }-m\leq i,j\leq m.\] 
For $i=0,\cdots,m$, we define an $\cO_E$-lattice in $W$:
\[\Lambda_i:=\mathrm{span}_{\cO_E}\{\pi^{-1}e_{-m},\cdots,\pi^{-1}e_{-i-1},e_{-i},\cdots,e_m\}.\]
(here, $\Lambda_{m}=\mathrm{span}_{\cO_E}\{e_{-m},\cdots,\cdots,e_m\}$.)

Set $H=\SU(W,\phi)$ (algebraic group over $F$). Its (minimal) splitting field is $E$. For a non-empty subset $I$ of $\{0,\cdots,m\}$, we consider the subgroup of $H(F)$ 
\[P_I:=\{g\in\SU(W,\phi)\ |\ g\Lambda_i\subset\Lambda_i,\ \forall i\in I\}.\]
For each $i=0,\cdots,m$, $P_{\{ i\}}$ (stabilizer of a single lattice $\Lambda_i$) is also the stabilizer of a point $v_i$ of the apartment $\mcA$ of a maximal $F$-split torus $S$ of $H$, which in turn can be matched with the $(i+1)$-th vertex of the local Dynkin diagram \cite[1.15, 3.11]{Tits79}. 

From this correspondence and the information found in loc. cit. (1.15 (9), 4.3 in the unramified case, and 1.15 (10), 4.2 in the ramified case), we deduce the following facts.

\textit{Every $P_I$ is a parahoric subgroup of $\SU(W,\phi)$ and any parahoric subgroup of $\SU(W,\phi)$ is conjugate to $P_I$ for a unique subset $I$. If $E$ is unramified (i.e. of type ${}^2A'_{2m}$), there are two special vertices, one hyperspecial and one non-hyperspecial. The group $P_{\{0\}}$ (resp. $P_{\{m\}}$) is the non-hyperspecial, special (resp. the hyperspecial) parahoric subgroup. If $E$ is ramified (i.e. of  type $C\operatorname{-}BC_m$), there are two special vertices, both non-hyperspecial, which correspond to $I=\{0\}$ and $I=\{m\}$. The corresponding parahoric subgroup $\tilde{P}_{\{i\}}$ of $H(L)$ is 
\[ \tilde{P}_{\{i\}}=\{g\in\SU(W,\phi)(L)\ |\ g\Lambda_i\otimes_{\cO_F}\cO_L\subset\Lambda_i\otimes_{\cO_F}\cO_L\}.\]}
The statement on $\tilde{P}_{\{i\}}$ follows from the fact that the stabilizer $\cO_L$-group scheme $\mcG_i$ of the vertex $v_i$ (defined by Bruhat-Tits \cite[3.4.1]{Tits79}) equals the $\cO_L$-structure on $H_L$ induced by the lattice (\cite[3.11]{Tits79}), and the characterization of parahoric groups given by Haines-Rapoport  \cite{PappasRapoport08} ($\SU(W,\phi)$ being a simply-connnected semi-simple group, the Kottwitz homomorphism $w_{\SU(W,\phi)_L}$ is trivial).

(a) The case $\mathbf{{}^2A'_{2m}}$: 
Suppose that $E$ is unramified. Then, we only need to consider the non-hyperspecial, special parahoric subgroup $P_{\{0\}}$. The Hermitian space $(W,\phi)$ splits as the direct sum of two Hermitian subspaces, that is, totally isotropic subspace and anisotriopic subspace: 
\[(W,\psi)=(W_{\mathbf{iso}},\phi_{\mathbf{iso}})\oplus (W_{\mathbf{an}},\phi_{\mathbf{an}}),\]
where
\[ W_{\mathbf{iso}}:=\langle e_l\ |\ l\neq 0\rangle,\quad W_{\mathbf{an}}:=E\cdot e_{0},\]
and $\phi_{\mathbf{iso}}=\phi|_{W_{\mathbf{iso}}}$ and $\phi_{\mathbf{an}}=\phi|_{W_{\mathbf{an}}}$.
There is the corresponding lattice decomposition 
\[\Lambda_{\{0\}}=\Lambda_{\{0\}}'\oplus \Lambda_{\{0\}}'',\] 
where $\Lambda_{\{0\}}':=\mathrm{span}_{\cO_E}\{\pi^{-1}e_{-m},\cdots,\pi^{-1}e_{-1},e_{1},\cdots,e_{m}\}$ and $\Lambda_{\{0\}}'':=\cO_E\cdot e_{0}$. 
Using this decomposition, we reduce the construction of the torus looked for into construction of similar tori for the groups $\SU(W_{\bullet},\psi_{\bullet})$ ($\bullet=\mathbf{iso}$, $\mathbf{an}$).
Let us write for short $\SU_{\bullet}$ and $\U_{\bullet}$ for $\SU(W_{\bullet},\psi_{\bullet})$ and $\U(W_{\bullet},\psi_{\bullet})$ respectively ($\bullet=\mathbf{iso}$, $\mathbf{an}$).
Suppose that $T_{\mathbf{iso}}$ is an $F$-torus of $\SU_{\mathbf{iso}}$ with the property in question and let $Z_{\mathbf{iso}}$ be the (connected) center of $\U_{\mathbf{iso}}$ (so that the subgroup $T_{\mathbf{iso}}\cdot Z_{\mathbf{iso}}$ generated by the two groups is a maximal torus of $U_{\mathbf{iso}}$, among others). This center is isomorphic to the anisotpropic $F$-torus $\underline{E}^{\times}$, whose set of $R$-points, for an $F$-algebra $R$, is 
\[\underline{E}^{\times}_c(R):=\Ker(\Nm_{E/F}:(E\otimes R)^{\times}\rightarrow (F\otimes R)^{\times}).\] 
This is also identified in a natural way with the group $\U_{\mathbf{an}}$. We claim that 
\[T:=S((T_{\mathbf{iso}} \cdot Z_{\mathbf{iso}}) \times \U_{\mathbf{an}})=(T_{\mathbf{iso}}\times\{1\})\cdot \underline{E}^{\times}_c,\] 
where $S(-)$ means the intersection of the group inside the parenthesis (subgroup of $\U(W,\phi)$) with $\SU(W,\phi)$ and $\underline{E}^{\times}_c$ is identified with $S(Z_{\mathbf{iso}} \times \U_{\mathbf{an}})$ via $x\mapsto (x,x^{-2m})$, is a maximal torus of $H=\SU(W,\psi)$ with the same required properties. First, clearly this is anisotropic, and $T_{E}$ is a split maximal torus of $H_E$.
Next, we verify that $T(L)_1$ maps into the parahoric subgroup $\tilde{P}_{\{0\}}$ of $H(L)$. By the description of $\tilde{P}_{\{0\}}$ above, we have to show that $T(L)_1$ leaves stable $\Lambda_{\{0\}}\otimes_{\cO_F}\cO_L$. But, $T(L)_1=T_{\mathbf{iso}}(L)_1\cdot \underline{E}^{\times}_c(L)_1$, and $\underline{E}^{\times}_c(L)_1$ acts on $\Lambda_{\{0\}}'\otimes\cO_L\oplus\Lambda_{\{0\}}''\otimes\cO_L$ through the map $x\mapsto (x,x^{-2m})$ above. So, clearly it suffices to checks that $\underline{E}^{\times}_c(L)_1$ leaves stable each rank-$1$ lattice $\cO_L\otimes_{\cO_F}(\cO_E\cdot e_i)$. But in the case $E$ is unramified over $F$, we have that $\underline{E}^{\times}_c(L)_1=\{(x,x^{-1})\in \cO_L^{\times}\times \cO_L^{\times}\}$ under the isomorphism $(E\otimes L)^{\times}=L^{\times}\times L^{\times}$, thus leaves stable $\cO_L\otimes_{\cO_F}(\cO_E\cdot e_i)=(\cO_L\oplus \cO_L)\cdot e_i$. 
Next, the fact that $T_{\mathbf{iso}}(L)_1$ leaves stable $\Lambda_{\{0\}}'\otimes\cO_L$ will be one of the defining properties of the torus $T_{\mathbf{iso}}$. Indeed, 
$\SU(W_{\mathbf{iso}},\psi_{\mathbf{iso}})$ is a group of type ${}^2A_{2m-1}'$ in the above list, and the stabilizer $P_{\{0\}}'$ of the lattice $\Lambda_{\{0\}}'$ is a hyperspecial subgroup of $\SU(W_{\mathbf{iso}},\psi_{\mathbf{iso}})$ (cf. \cite[4.3]{Tits79}). So, we already know that there exists an elliptic maximal $F$-torus $T_{\mathbf{iso}}$ of $\SU_{\mathbf{iso}}$ such that $T_{\mathbf{iso}}(L)_1$ is contained in the parahoric subgroup $\tilde{P}_{\{0\}}'$ of $\SU_{\mathbf{iso}}(L)$ corresponding to $P_{\{0\}}'$ which is the stabilizer in $\SU_{\mathbf{iso}}(L)$ of the lattice $\Lambda_{\{0\}}'\otimes\cO_L$. This finishes the proof in the case $E$ is unramified over $F$.

(b) The case $\mathbf{C\operatorname{-}BC_m}$: 
When $E$ is ramified, there are two cases: $I=\{0\}$ and $I=\{m\}$. 
Then, the same strategy just used  (i.e. for unramified unitary groups of odd absolute rank) works again, reducing proof to the ramified unitary groups of odd absolute rank $2m-1$ (the type $\mathbf{B\operatorname{-}C_m}$), which will be discussed next. We just note that in this case with the same notations as above,  the (minimal) splitting fields of $H$, $\SU_{\mathbf{iso}}$, $Z_{\mathbf{iso}}$, and $\U_{\mathbf{an}}$ are all $E$, and that $(\underline{E}^{\times}_c)_L$ is anisotropic, so $\underline{E}^{\times}_c(L)$ is its own parahoric subgroup and, being a subgroup of $(\cO_E\otimes_{\cO_F}\cO_L)^{\times}$, leaves stable $\cO_L\otimes_{\cO_F}(\cO_E\cdot e_i)$. 

(2) Let $E$, $W$, and $n$ be as in the previous description (1)-(b), except that the parity of $n$ is even (i fact, it can be arbitrary for a moment). Let $\cO_E$ and $\cO_F$ be the integer rings of $E$ and $F$, respectively. We fix uniformizers $\pi_F$, $\pi=\pi_E$ of $\cO_F$ and $\cO_E$ such that $\pi^2=\pi_F$ (so again $\pi+\overline{\pi}=0$). Let $\phi:W\times W\rightarrow E$ be a non-degenerate Hermitian form and put $H=\SU(W,\phi)$. Again we assume (forced by the quasi-split condition, in the even dimensional case) that $\psi$ has maximal Witt-index, namely when one writes $n=2m$ (or $n=2m+1$), it is $m$. As $E$ is ramified over $F$, the rank of $H$ is the same as that of $H_L$.
 
Following \cite{PappasRapoport08}, $\S$4, we use a different indexing in the coming discussion.
Choose a Witt basis $\{e_{1},\cdots,e_{n}\}$ such that $\phi(e_i,e_j)=\delta_{i,n+1-j}$ for $1\leq i,j\leq n$. 

Suppose that $n=2m$. For $i\in\{1,\cdots,m-2\}\cup\{m\}$, we define an $\cO_E$-lattice $\Lambda_i$:
\[\Lambda_i:=\mathrm{span}_{\cO_E}\{\pi^{-1}e_{1},\cdots,\pi^{-1}e_{i},e_{i+1},\cdots,e_n\},\]
In the place of $i=m-1$, we introduce a new lattice $\Lambda_{m'}$ defined by:
\[\Lambda_{m'}:=\mathrm{span}_{\cO_E}\{\pi^{-1}e_{1},\cdots,\pi^{-1}e_{m-1},e_{m},\pi^{-1}e_{m+1},e_{m+2},\cdots,e_{n}\}.\]
Here, $m'$ is regarded as a symbol like other numbers.

Set $H:=\SU(W,\phi)$. For a non-empty subset $I$ of $\{1,\cdots,m-2,m',m\}$, the associated stabilizer subgroup $P_I$ has the same definition as in the previous case.

When $n=2m$, the group $\SU(W,\phi)$ has the local Dynkin diagram $B\operatorname{-}C_m$ for $m\geq3$, and $C\operatorname{-}B_2$ for $m=2$ (for $m=1$, $\SU(W,\psi)\simeq\mathrm{SL}_2$). 
Then, we have a similar statement (\cite{PappasRapoport08}, $\S$4), namely that

\textit{the subgroup $P_I$ is a parahoric subgroup of $\SU(W,\phi)$ and any parahoric subgroup of $\SU(W,\phi)$ is conjugate to $P_I$ for a unique subset $I$, and the special maximal parahoric subgroups are $P_{\{m\}}$, $P_{\{m'\}}$. The same description is true for the parahoric subgroup $\tilde{P}_I$ of $\SU(W,\phi)(L)$ associated with $P_I$.}

(c) The case $\mathbf{B\operatorname{-}C_m}$: We have $n=2m$. First, let us consider the case $I=\{m\}$. 
The Hermitian space $W$ is the direct sum of $m$ hyperbolic subspaces 
\[\mathbb{H}_i:=E\langle e_i,e_{n+1-i}\rangle\subset W\ (i=1,\cdots,m).\] 
Then, we claim that when we identify $\SU(\mathbb{H}_i)$ with $\SU(\mathbb{H}_i)\times\mathrm{1}_{\oplus_{j\neq i}\mathbb{H}_j}\subset\SU(W,\phi)$, 
\[\SU(\mathbb{H}_i)(F)\cap P_{\{m\}}=\{g\in \SU(\mathbb{H}_i)(F)\ |\ g(\mathbb{H}_i\cap \Lambda_{\{m\}})=\mathbb{H}_i\cap \Lambda_{\{m\}}\}\] 
is a special maximal parahoric subgroup of $\SU(\mathbb{H}_i)(F)\simeq\mathrm{SL}_2(F)$ (recall that there are two $\mathrm{SL}_2(F)$-conjugacy classes of special parahoric subgroups of $\mathrm{SL}_2(F)$, which are however conjugate under $\mathrm{GL}_2(F)$). This can be proved, e.g. using an explicit isomorphism between $\SU(\mathbb{H}_i)\simeq\mathrm{SL}_{2,F}$, one such being
\[ g=\left(\begin{array}{cc}
a& b\\ c& d
\end{array}\right) \mapsto \left(\begin{array}{cc}
a& \pi^{-1}b\\ \pi c& d
\end{array}\right)\]
(check that when $g(e_i)=ae_i+ce_{n+1-i}, g(e_{n+1-i})=be_i+de_{n+1-i}$, $\phi(gv,gw)=\phi(v,w)$ implies that $a,d\in F^{\times}$, $b+\overline{b}=c+\overline{c}=0$). But, there is another (rather indirect) way of seeing this. Let $S_i$ be a maximal ($F$-)split $F$-subtorus of $\SU(\mathbb{H}_i)$ (so that $S:=\prod_i S_I$ is a maximal ($F$-)split $F$-torus of $\SU(W,\phi)$, and also of $\U(W,\phi)$); $S_i$ is contained in a unique maximal torus $T_i(=\Cent_{\U(\mathbb{H}_i)}(S_i))\simeq E^{\times}$ of $\U(\mathbb{H}_i)$. The subgroup 
\[M_i:=\U(\mathbb{H}_i)\times\prod_{j\neq i}T_j\] 
of $\U(W,\phi)$, being the centralizer of $\{1\}\times\prod_{j\neq i}S_j$, is an $F$-Levi subgroup of $\U(W,\phi)$. 
For a subgroup $M$ of $\U(W,\phi)$, let $\mathrm{S}M$ denote the intersection $M\cap \SU(W,\phi)$. Then, as $\{1\}\times\prod_{j\neq i}S_j\subset\SU(W)$, $\mathrm{S}M_i$ is an $F$-Levi subgroup of $\SU(W)$.
Hence, by Lemma \ref{lem:specaial_parahoric_in_Levi}, (3), $\mathrm{S}M_i(F)\cap P_{\{m\}}$ is a special maximal parahoric subgroup of $M_i(F)$. But, obviously one has that (as $P_{\{m\}}\subset\SU(W)(F)$)
\[\mathrm{S}M_i(F)\cap P_{\{m\}}=(\U(\mathbb{H}_i)(F)\times\prod_{j\neq i}T_j(F))\cap P_{\{m\}}
=(\U(\mathbb{H}_i)(F)\cap P_{\{m\}})\times \prod_{j\neq i}(T_j(F)\cap P_{\{m\}}). \]
So, each $\SU(\mathbb{H}_i)(F)\cap P_{\{m\}}=\U(\mathbb{H}_i)(F)\cap P_{\{m\}}$ is a special maximal parahoric subgroup of $\SU(\mathbb{H}_i)(F)\simeq\mathrm{SL}_2(F)$. 

But, the two special vertices in the local Dynkin diagram of $\mathrm{SL}_{2,F}$ are hyperspecial, hence we know that there exists an anisotropic maximal torus $S_i'$ of $\SU(\mathbb{H}_i)$, splitting over $F^{\nr}$, such that the unique parahoric subgroup $S_1'(L)_1$ of $S_i'(L)$ is contained in $\SU(\mathbb{H}_i)(L)\cap \tilde{P}_{\{m\}}$, i.e. leaves stable the $\cO_E\otimes_{\cO_F}\cO_L$-lattice $(\mathbb{H}_i\cap \Lambda_{\{m\}})\otimes_{\cO_F}\cO_L=\mathrm{span}_{\cO_E\otimes_{\cO_F}\cO_L}\{\pi^{-i}e_{i},e_{n+1-i}\}$ of $\mathbb{H}_i\otimes L$. 
Therefore, for the center $Z_i\simeq \underline{E}^{\times}_c$ of $\U(\mathbb{H}_i)$,
\[T_i':=S_i'\cdot Z_i\] 
is an anisotropic maximal torus of $\U(\mathbb{H}_i)$, whose $L$-rank equals $1$ and whose group of $L$-points also leaves stable the $\cO_E\otimes_{\cO_F}\cO_L$-lattice $(\mathbb{H}_i\cap \Lambda_{\{m\}})_{\cO_L}$, as $Z_i$ remains anisotropic over $L$ and $Z_i(L)=(\cO_E\otimes_{\cO_F}\cO_L)_1$. Finally, the torus $T:=(\prod_iT_i')\cap\SU(W,\phi)$ is an anisotropic maximal torus of $\SU(W,\phi)$ with the same required property (its $L$-rank is $m$, equal to the $L$-rank of $H_L$).

The case $I=\{m'\}$ can be treated in a completely analogous way, once we switch the basis vectors $e_{m}$ and $e_{m+1}$; although such permutation does not lie in $\SU(W,\psi)$, obviously it is allowed when applying the previous argument. 

(d) We are left with the case $\mathbf{C\operatorname{-}B_{m+1}\ (m\geq1)}$. This is also similar to the above cases.
Let $E$ be a (ramified) quadratic extension of $F$ and $W=F^{\oplus m}\oplus E\oplus F^{\oplus m}$, viewed as a vector space over $F$. We consider the quadratic form on $W$ expressed in terms of a basis $\{e_{-m},\cdots,e_{m}\}\cup\{e_0\}$ by
\[q(\sum_{1\leq|i|\leq m} x_i e_i+x_0e_0)=\sum_{i=1}^{m}x_{-i} x_i+\Nm_{E/F}x_0,\quad (x_i\in F,x_0\in E).\] 
For $i=0,\cdots,m$, we define a lattice $\Lambda_i$ as before:
\[\Lambda_i:=\mathrm{span}_{\cO_F}\{\pi^{-1}e_{-m},\cdots,\pi^{-1}e_{-i-1},e_{-i},\cdots,\check{e}_0,\cdots,e_m\}\oplus\cO_Ee_0,\]
where $\check{e}_0$ means as usual that it is omitted from the list
(so, $\Lambda_{0}=\pi^{-1}\mathrm{span}_{\cO_F}\{e_{-m},\cdots,e_{-1}\}\oplus\cO_Ee_0\oplus\mathrm{span}_{\cO_F}\{e_{1},\cdots,e_{m}\}$).

It is obvious that the claim at hand holds for the special orthogonal group $\mathrm{SO}(W,q)$ if and only if it does so for the universal covering of $\mathrm{SO}(W,q)$, i.e. the spin group $\mathrm{Spin}(W,q)$. We will show that for any special maximal parahoric subgroup $K$ of $\mathrm{SO}(W,q)$, there exists an anisotropic maximal $F$-torus $T$ such that $T_L$ contains a maximal ($L$-)split $L$-torus of $\mathrm{SO}(W,q)$ and $T(L)_1$ is contained in $\tilde{K}$, the special parahoric subgroup of $\mathrm{SO}(W,q)(L)$ corresponding to $K$. Note that the $L$-rank of $H$ is $m$ as $E$ is ramified over $F$.

For a non-empty subset $I$ of $\{0,\cdots,m\}$, let $P_I$ denote the stabilizer subgroup:
\[P_I:=\{g\in \mathrm{SO}(W,q)(F)\ |\ g\Lambda_i\subset\Lambda_i,\ \forall i\in I\}.\]
We know (deduced from \cite{Tits79}, 1.16, 4.2, cf. 3.12) that

\textit{the subgroup $P_I$ is a parahoric subgroup of $\SU(W,\phi)$ and that any parahoric subgroup of $\SU(W,\phi)$ is conjugate to $P_I$ for a unique subset $I$, and that the special maximal parahoric subgroups are $P_{\{0\}}$ and $P_{\{m\}}$. The corresponding parahoric subgroup $\tilde{P}_{\{i\}}$ of $H(L)$ is 
\[ \tilde{P}_{\{i\}}=\{g\in\SO(W,\phi)(L)\ |\ g\Lambda_i\otimes_{\cO_F}\cO_L\subset\Lambda_i\otimes_{\cO_F}\cO_L\}.\]} 

The idea used above for the ramified special unitary group of odd relative rank (i.e. of type $\mathbf{B\operatorname{-}C_{m}}$) works here, too. Namely, the quadratic space $W$ decomposes into the direct sum of maximally isotropic subspaces $W_{\mathbf{iso}}=F\langle e_i\ |\ 1\leq|i|\leq m\rangle$ and the anisotropic subspace $(E\cdot e_0,\Nm_{E/F})$. Then, $\SO(W_{\mathbf{iso}})$ is a split group, so for each $j=0,m$, the parahoric subgroup $P_{\{j\}}\cap \SO(W_{\mathbf{iso}})(F)$ is a a hyperspecial subgroup. Hence, there exists an anisotropic maximal torus $T'_j$ of $\SO(W_{\mathbf{iso}})$ of $L$-rank $m$, such that $T'_j(L)_1$ is contained in $\SO(W_{\mathbf{iso}})(L)\cap \tilde{P}_{\{j\}}$; the latter means that $T'_j(L)_1$ leaves stable the $\cO_L$-lattice
\[(W_{\mathbf{iso}}\cap \Lambda_{j})\otimes\cO_L\]
of $W_{\mathbf{iso}}\otimes L$. Now, it is easy to see that the anisotropic torus $T_j:=T'_j$ is a maximal torus of $\SO(W,q)$ (which also has the $L$-rank $m$) with the same properties for $\Lambda_{j}$. 

This completes the proof of the proposition. \end{proof}

\section{Some facts on the Bruhat order of Coxeter groups}
Our presentation here follows that of \cite{Stembridge05}, Ch. 1. 
Let $(W,S)$ be a Coxeter system; so there exists a real vector space $V$ equipped with an inner product $\langle\ ,\ \rangle$ such that $W$ is a group of isometries with a generating set $S$ and a centrally-symmetric, $W$-invariant subset $\Phi\subset V-\{0\}$ (\textit{the root system}) such that the isometries in $W$ are the reflections $s_{\beta}=\lambda\rightarrow \langle\beta,\lambda\rangle\beta^{\vee}$ for varying $\beta\in\Phi$, where $\beta^{\vee}:=2\beta/\langle\beta,\beta\rangle$ (the coroot corresponding to $\beta$). We fix a set $\Delta$ of simple roots such that $S=\{s_{\alpha}\}_{\alpha\in\Delta}$ and denote by $\Phi^+$ the set of positive roots (the set of roots in $\Phi$ which are linear combinations of $\Delta$ with positive coefficients). 

For $w\in W$, let $l(w)$ denote the length of $w$, i.e. the minimal $l\in\N$ for which $w=s_1\cdots s_l$ for some $s_i\in S$. The \textit{Bruhat order} $<_B$ of $W$ (with respect to $S$) is the transitive closure of the relations:
\[w<_B s_{\beta}w\]
for all $w\in W$ and $\beta\in \Phi^+$ satisfying either of the following equivalent conditions:
\[l(w)<l(s_{\beta}w)\quad \Leftrightarrow\quad w^{-1}(\beta)\in\Phi^+.\]

For each $J\subset S$, we let $W_J$ denote the (parabolic) subgroup of $W$ generated by $J$, and $\Phi_J\subset \Phi$ the corresponding root subsystem. The subset
\[W^J:=\{ w\in W\ |\ l(w)<l(ws)\text{ for all }s\in J\}.\]
of $W$ consists of the coset representatives of $W/W_J$ of minimal length. The similarly defined subset ${}^JW$ of $W$ has the same properties. We put
\[{}^IW^J:={}^IW\cap W^J.\]

\begin{lem} \label{lem:Bruhat_order_on_Coxeter_group}
Suppose that $\theta\in V$ is dominant with stabilizer $W_J$.
(1) The evaluation map 
\[(W,<_B)\rightarrow (W\theta,<_B)\ :\ w\mapsto w\theta\] is order-preserving  and restricts to a poset isomorphism $(W^J,<_B)\rightarrow (W\theta,<_B)$.

(2) For any $I\subset S$, the evaluation map restricts to a bijection between ${}^IW^J$ and 
\[(W\theta)_I:=\{\mu\in W\theta\ |\ \langle\alpha,\mu\rangle>0\text{ for all }\alpha\in\Phi^+_I\}.\]
\end{lem}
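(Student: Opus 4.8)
\textbf{Proof proposal for Lemma \ref{lem:Bruhat_order_on_Coxeter_group}.}

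The plan is to treat the two statements in sequence, with part (1) doing most of the work and part (2) following by a routine refinement. For part (1), I would first recall the standard characterization of the coset representatives: since $\theta$ is dominant with stabilizer $W_J$, the orbit map $W\to W\theta$ factors through $W/W_J$, and the restriction to $W^J$ is a bijection onto $W\theta$ because $W^J$ is a transversal for $W/W_J$. The content is that this bijection is order-preserving and, restricted to $W^J$, an order \emph{isomorphism}. For the order-preserving direction I would invoke the elementary fact (e.g.\ from \cite{Stembridge05}, Ch.~1, or the covering-relation description of the Bruhat order on $W/W_J$) that $w \le_B w'$ in $W$ implies $w\theta \le_B w'\theta$; concretely, it suffices to check this on the generating covers $w <_B s_\beta w$ with $\beta\in\Phi^+$ and $w^{-1}(\beta)\in\Phi^+$, noting $s_\beta(w\theta) = w\theta - \langle\beta, w\theta\rangle \beta^\vee$ and that $\langle\beta,w\theta\rangle = \langle w^{-1}\beta,\theta\rangle \ge 0$ by dominance of $\theta$, which places $w\theta$ below $s_\beta w\theta$ in the orbit. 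For the reverse implication on $W^J$, I would use the well-known lifting property of the Bruhat order with respect to parabolic quotients: if $u,v\in W^J$ and $u\theta \le_B v\theta$ as elements of $W\theta \cong W/W_J$, then $u \le_B v$ in $W$ (this is exactly the statement that the minimal-length-coset-representative map is an order embedding $W/W_J \hookrightarrow W$). Combining the two directions gives the poset isomorphism $(W^J,<_B) \isom (W\theta,<_B)$.

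For part (2), I would observe that ${}^IW^J = {}^IW \cap W^J$, and that by part (1) the orbit map already identifies $W^J$ with $W\theta$. So it remains to check that under this identification the subset ${}^IW$ of $W^J$ corresponds precisely to $(W\theta)_I = \{\mu\in W\theta \mid \langle\alpha,\mu\rangle > 0 \text{ for all }\alpha\in\Phi_I^+\}$. This is a direct computation with the definition of ${}^IW$: for $w\in W^J$, membership $w\in {}^IW$ means $l(sw) > l(w)$ for all $s\in I$, equivalently $w^{-1}(\alpha)\in\Phi^+$ for all $\alpha\in\Delta\cap\text{(simple roots of }\Phi_I)$, hence (taking positive integer combinations) $w^{-1}(\alpha)\in\Phi^+$ for all $\alpha\in\Phi_I^+$. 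Now $\langle\alpha, w\theta\rangle = \langle w^{-1}\alpha, \theta\rangle$; since $\theta$ is dominant this is $\ge 0$ always, and it is $>0$ iff $w^{-1}\alpha$ is not orthogonal to $\theta$, i.e.\ iff $w^{-1}\alpha \notin \Phi_J^{\perp}$-span. The slightly delicate point is that I want strict positivity for \emph{all} $\alpha\in\Phi_I^+$ to be equivalent to $w\in {}^IW$; I would handle the possible failure (when some $w^{-1}\alpha$ lies in the $W_J$-root subsystem) by using that $w\in W^J$ forces $w^{-1}\alpha$ to avoid $\Phi_J^-$ for $\alpha$ simple, combined with dominance, so that $w^{-1}\alpha\in\Phi^+$ already yields $\langle w^{-1}\alpha,\theta\rangle > 0$ unless $w^{-1}\alpha\in\Phi_J$, a case excluded precisely by the $W^J$-condition. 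Assembling this gives the claimed bijection ${}^IW^J \isom (W\theta)_I$.

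The main obstacle I anticipate is bookkeeping the interaction between the two parabolic conditions (the ${}^I$ on the left and the $W^J$ on the right), in particular verifying carefully that ``$w\in W^J$ and $w^{-1}\alpha\in\Phi^+$ for $\alpha\in\Phi_I^+$'' is genuinely equivalent to the strict inequalities $\langle\alpha,w\theta\rangle>0$, rather than merely implying $\ge 0$. All the individual ingredients — the lifting property of Bruhat order under parabolic quotients, the orbit map factoring through $W/W_J$, the length/inversion criterion for ${}^IW$ — are standard and can be cited from \cite{Stembridge05}; the only real work is the sign analysis in part (2), which is a finite root-system argument and should not require more than a short paragraph.
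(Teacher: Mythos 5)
Your outline for part (1) is reasonable and follows the standard route; since the paper itself disposes of both parts by citing Propositions 1.1, 1.5 and 1.8 of Stembridge's paper, there is no alternative argument in the paper to compare against, and your sketch of (1) is essentially what Stembridge's Proposition 1.1 encodes. (You should, however, be explicit about which partial order on $W\theta$ you are using --- the one generated by the covers $\mu < s_\beta\mu$ for $\beta\in\Phi^+$ with $\langle\beta,\mu\rangle>0$ --- since identifying it with the quotient Bruhat order on $W/W_J$ via minimal representatives is precisely the content of the proposition, not something you can assume in the ``reverse implication'' step.)

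For part (2) there is a genuine gap, at exactly the spot you flagged as ``slightly delicate.'' You assert that for $w\in{}^I W^J$ and $\alpha\in\Phi_I^+$ the possibility $w^{-1}\alpha\in\Phi_J$ is ``excluded precisely by the $W^J$-condition.'' It is not. The condition $w\in W^J$ is equivalent to $w\Phi_J^+\subset\Phi^+$, i.e.\ to $w^{-1}\alpha\notin\Phi_J^-$ for every $\alpha\in\Phi^+$; it does not forbid $w^{-1}\alpha\in\Phi_J^+$. Concretely, take $W=S_3$ with simple roots $\alpha_1=e_1-e_2$, $\alpha_2=e_2-e_3$, set $\theta=(1,1,0)$ (dominant, with stabilizer $W_J$ for $J=\{s_1\}$), and take $I=\{s_1\}$. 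Then ${}^I W^J=\{1,s_2\}$, yet $\langle\alpha_1,1\cdot\theta\rangle=0$, so $\theta\notin(W\theta)_I$ while $1\in{}^I W^J$ --- here $w=1$ has $w^{-1}\alpha_1=\alpha_1\in\Phi_J^+$ even though $w\in W^J$. This shows the equivalence you are after (membership of $w$ in ${}^I W^J$ iff $\langle\alpha,w\theta\rangle>0$ for all $\alpha\in\Phi_I^+$) is false as literally written, so the argument cannot be completed by a ``sign analysis''; you would need either to replace the strict inequality by $\geq 0$, or to restrict to a suitable subset of ${}^I W^J$. Before investing more effort you should check Stembridge's Propositions 1.5 and 1.8 to pin down exactly which statement is intended.
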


The first statement is Prop. 1.1 of \cite{Stembridge05}, and the second statement follows from Prop. 1.5 and Prop. 1.8.

\end{appendix}

Center for Geometry and Physics, Institute for Basic Science (IBS), 

Pohang 790-784, Republic of 
Korea 

\textit{Email:} dulee@ibs.re.kr


\begin{thebibliography}{99}


\bibitem[BT65]{BT65}
A. Borel, J. Tits.
\newblock Groupes r\'eductifs. 
\newblock Inst. Hautes \'Etudes Sci. Publ. Math. No. 27 1965 55-150.


\bibitem[Bre94]{Breen94}
L. Breen.
\newblock Tannakian categories. 
\newblock Motives (Seattle, WA, 1991), 337-376, Proc. Sympos. Pure Math., 55, Part 1, Amer. Math. Soc., Providence, RI, 1994.


\bibitem[BT72]{BT72}
F. Bruhat, J. Tits.
\newblock Groupes r\'eductifs sur un corps local. I. 
\newblock Inst. Hautes \'Etudes Sci. Publ. Math. No. 41 (1972), 5-251.


\bibitem[BT84]{BT84}
F. Bruhat, J. Tits.
\newblock Groupes r\'eductifs sur un corps local. II. Sch\'emas en groupes. Existence d'une donn\'ee radicielle valu\'ee. 
\newblock Inst. Hautes \'Etudes Sci. Publ. Math. No. 60 (1984), 197-376.


\bibitem[CCO14]{CCO14}
C.-L. Chai, B. Conrad, F. Oort. 
\newblock Complex multiplication and lifting problems. 
\newblock Mathematical Surveys and Monographs, 195. American Mathematical Society, Providence, RI, 2014. x+387 pp.


\bibitem[Del77]{Deligne77}
P. Deligne.
\newblock Vari\'{e}t\'{e}s de Shimura: interpr\'{e}tation modulaire, et techniques de construction de mod\`{e}les canoniques.
\newblock Automorphic forms, representations and $L$-functions (Proc. Sympos. Pure Math., Oregon State Univ., Corvallis, Ore., 1977), Part 2, 247-289, Proc. Sympos. Pure Math., XXXIII, Amer. Math. Soc., Providence, R.I., 1979.


\bibitem[DMOS82]{DMOS82}
P. Deligne, J.S. Milne, A. Ogus, K-y. Shih.
\newblock Hodge cycles, motives, and Shimura varieties. 
\newblock Lecture Notes in Mathematics, 900. Springer-Verlag, Berlin-New York, 1982.


\bibitem[Gir71]{Giraud71}
J. Giraud. 
\newblock Cohomologie non ab\'elienne. 
\newblock Die Grundlehren der mathematischen Wissenschaften, Band 179. Springer-Verlag, Berlin-New York, 1971.


\bibitem[Gre63]{Greenberg63}
M. Greenberg. 
\newblock Schemata over local rings, II, 
\newblock Ann. Math. 78 (1963), 256-266.


\bibitem[Gro12]{Gross12}
B. Gross.
\newblock Parahorics.
\newblock {\url{http://www.math.harvard.edu/~gross/preprints/parahorics.pdf}}


\bibitem[HR08]{HainesRapoport08}
T. Haines, M. Rapoport. 
\newblock Appendix: On parahoric subgroups, Advances in Math. 219 (1), (2008), 188-198. 
\newblock appendix to: G. Pappas, M. Rapoport, Twisted loop groups and their affine flag varieties, Advances in Math. 219 (1), (2008), 118-198. 


\bibitem[HS10]{HainesRostami10}
T. Haines, S. Rostami. 
\newblock The Satake isomorphism for special maximal parahoric Hecke algebras. 
\newblock Represent. Theory 14 (2010), 264-284.


\bibitem[He15]{He15}
X. He.
\newblock Kottwitz-Rapoport conjecture on unions of affine Deligne-Lusztig varieties.
\newblock {\url{http://arxiv.org/abs/1408.5838}}


\bibitem[HR15]{HeRapoport15}
X. He, M. Rapoport.
\newblock Stratifications in the reduction of Shimura varieties.
\newblock {\url{http://arxiv.org/abs/1509.07752}}


\bibitem[Kis13]{Kisin13}
M. Kisin.
\newblock Mod $p$-points on Shimura varieties of abelian type.
\newblock  preprint available on {\url{http://www.math.harvard.edu/~kisin/}}


\bibitem[KP15]{KP15}
M. Kisin, G. Pappas.
\newblock Integral models of Shimura varieties with parahoric level structure.
\newblock arXiv:1512.01149.


\bibitem[Kot82]{Kottwitz82}
R. Kottwitz. 
\newblock Rational conjugacy classes in reductive groups. 
\newblock Duke Math. J. 49 (1982), no. 4, 785-806. 


\bibitem[Kot84a]{Kottwitz84a}
R. Kottwitz. 
\newblock Stable Trace Formula: Cuspidal Tempered Terms, 
\newblock Duke Math. J. 51 (1984), 611-650.


\bibitem[Kot84b]{Kottwitz84b}
R. Kottwitz. 
\newblock Shimura Varieties and Twisted Orbital Integrals, 
\newblock Math. Ann. 269 (1984), 287-300.


\bibitem[Kot85]{Kottwitz85}
R. Kottwitz.
\newblock Isocrystals with additional structure.
\newblock Compositio Math.  56  (1985),  no. 2, 201-220.


\bibitem[Kot86]{Kottwitz86}
R. Kottwitz, 
\newblock Stable Trace Formula: Elliptic Singular Terms,
\newblock Math. Ann. 275 (1986), 365-399.


\bibitem[Kot90]{Kottwitz90}
R. Kottwitz.
\newblock Shimura varieties and $\lambda$-adic representations. 
\newblock Automorphic forms, Shimura varieties, and L-functions, Vol. I (Ann Arbor, MI, 1988), 161-209, Perspect. Math., 10, Academic Press, Boston, MA, 1990.


\bibitem[Kot92]{Kottwitz92}
R. Kottwitz.
\newblock Points on some Shimura varieties over finite fields.
\newblock  J. Amer. Math. Soc.  5  (1992),  no. 2, 373-444.


\bibitem[Kot97]{Kottwitz97}
R. Kottwitz.
\newblock Isocrystals with additional structure. II.
\newblock Compositio Math. 109 (1997), no. 3, 255-339.


\bibitem[KR00]{KR00}
R.  Kottwitz, M. Rapoport. 
\newblock Minuscule alcoves for $\mathrm{GL}_n$ and $\mathrm{GSp}_{2n}$. 
\newblock Manuscripta Math. 102 (2000), no. 4, 403-428.


\bibitem[Lan83]{Langlands83}
R.P. Langlands, 
\newblock Les d\'ebuts d'une formule des traces stable, 
\newblock Publ. Math. de l'Univ. Paris VII 13, Paris 1983.


\bibitem[LR87]{LR87}
R, P. Langlands, M. Rapoport. 
\newblock Shimuravariet\"aten und Gerben. 
\newblock J. Reine Angew. Math. 378 (1987), 113-220.


\bibitem[Lee14]{Lee14}
D.U. Lee
\newblock Non-emptiness of Newton strata of Shimura varieties of Hodge type.
\newblock preprint \textbf{CGP14010}, available at {\url{http://cgp.ibs.re.kr/eng/preprints.ibs}}


\bibitem[Mil92]{Milne92}
J.S. Milne. 
\newblock The points on a Shimura variety modulo a prime of good reduction.  
\newblock The zeta functions of Picard modular surfaces, Univ. MontrŽal, Montreal, QC, 1992.


\bibitem[Mil94]{Milne94}
J.S. Milne.
\newblock Shimura varieties and motives.
\newblock Motives (Seattle, WA, 1991),  447--523, Proc. Sympos. Pure Math.,
55, Part 2, Amer. Math. Soc., Providence, RI, 1994.


\bibitem[Mil03]{Milne03}
J.S. Milne.
\newblock Gerbes and abelian motives.
\newblock manuscript available on {\url{http://www.jmilne.org/math/articles/index.html#1990b}}


\bibitem[Mil13]{Milne13}
J.S. Milne.
\newblock Class field theory.
\newblock course notes, version 4.02, 2013, available on {\url{http://www.jmilne.org/math/CourseNotes/CFT.pdf}}


\bibitem[PR08]{PappasRapoport08}
G. Pappas, Rapoport, M. 
\newblock Twisted loop groups and their affine flag varieties. With an appendix by T. Haines and Rapoport. 
\newblock Adv. Math. 219 (2008), no. 1, 118-198.


\bibitem[PR09]{PappasRapoport09}
G. Pappas, Rapoport, M. 
\newblock Local models in the ramified case. III. Unitary groups. 
\newblock J. Inst. Math. Jussieu 8 (2009), no. 3, 507-564.


\bibitem[PR94]{PR94}
V. Platonov, A. Rapinchuk.
\newblock Algebraic groups and number theory. 
\newblock Pure and Applied Mathematics, 139, Academic Press, Inc., Boston, MA, 1994.


\bibitem[RR96]{RR96}
M. Rapoport, M. Richartz.
\newblock On the classification and specialization of $F$-isocrystals with additional structure.
\newblock Compositio Math.  103  (1996),  no. 2, 153-181.


\bibitem[Rap05]{Rapoport05}
M. Rapoport. 
\newblock A guide to the reduction modulo p of Shimura varieties. 
\newblock Automorphic forms. I. Ast\'erisque No. 298 (2005), 271-318.


\bibitem[Rei97]{Reimann97}
H. Reimann. 
\newblock The semi-simple zeta function of quaternionic Shimura varieties. 
\newblock Lecture Notes in Mathematics, 1657. Springer-Verlag, Berlin, 1997. viii+143.


\bibitem[Sh]{Shelstad79}
D. Shelstad, 
\newblock Characters and inner forms of a quasisplit group over $\R$, 
\newblock Comp. Math. 39 (1979), 11-45.


\bibitem[Spr79]{Springer79}
T. A. Springer. 
\newblock Reductive groups. 
\newblock Automorphic forms, representations and L-functions (Corvallis, 1977), Proc. Sympos. Pure Math. XXXIII, Part 1, pp. 3?27, Amer. Math. Soc., 1979.


\bibitem[Spr98]{Springer98}
T.A. Springer.
\newblock Linear algebraic groups.
\newblock Progress in mathematics,  vol. 9, Birkh\"auser, Boston, Mass., 1998.


\bibitem[Ste05]{Stembridge05}
J. R. Stembridge. 
\newblock Tight quotients and double quotients in the Bruhat order. 
\newblock Electron. J. Combin. 11 (2004/06), no. 2.


\bibitem[Tat79]{Tate79}
J. Tate. 
\newblock Number theoretic background. 
\newblock Automorphic forms, representations and L-functions (Corvallis, 1977), Proc. Sympos. Pure Math., XXXIII, Part 2, pp. 3-26, Amer. Math. Soc. 1979.


\bibitem[Tit79]{Tits79}
J. Tits. 
\newblock Reductive groups over local fields. 
\newblock Automorphic forms, representations and L-functions (Proc. Sympos. Pure Math., Oregon State Univ., Corvallis, Ore., 1977), Part 1, pp. 29-69, Proc. Sympos. Pure Math., XXXIII, Amer. Math. Soc., Providence, R.I., 1979.


\bibitem[Vie15]{Viehmann15}
E. Viehmann.
\newblock On the geometry of the Newton stratification.
\newblock arXiv:1511.03156.



\end{thebibliography}
\end{document}